\documentclass[12pt,reqno]{amsart}
\usepackage[utf8]{inputenc}

\title[Plane partitions and rowmotion]{Plane partitions and rowmotion on rectangular and trapezoidal posets}
\author{Joseph Johnson}
\address{Department of Mathematics, KTH Royal Institute of Technology, Stockholm, Sweden}
\email{josjohn@kth.se}

\author{Ricky Ini Liu}
\address{Department of Mathematics, University of Washington, Seattle, WA 98195}
\email{riliu@uw.edu}

\thanks{The second author was partially supported by grants from the National Science Foundation (DMS 1700302/2204415 and CCF-1900460).}
\date{\today}

\usepackage{amssymb}
\usepackage{amsfonts}
\usepackage{amsthm}
\usepackage{amsmath}
\usepackage{amscd}
\usepackage{enumerate}
\usepackage{t1enc}
\usepackage{mathrsfs}
\usepackage{indentfirst}
\usepackage{graphicx}
\usepackage{graphics}
\usepackage{hyperref}
\usepackage{pict2e}
\usepackage{epic}
\numberwithin{equation}{section}
\usepackage[margin=2.9cm]{geometry}
\usepackage{epstopdf} 
\usepackage{transparent}
\usepackage{wasysym}

\usepackage{caption}

\usepackage{mathtools}

\usepackage{tikz-cd}
\usepackage{tikz}
\usetikzlibrary{decorations.pathreplacing,calligraphy}

\usepackage{mathdots}
\usepackage{todonotes}
\usepackage{verbatim}

 \colorlet{myGreen}{green!50!gray!120!}

\theoremstyle{plain}
\newtheorem{Th}{Theorem}[section]
\newtheorem{Lemma}[Th]{Lemma}
\newtheorem{Cor}[Th]{Corollary}
\newtheorem{Prop}[Th]{Proposition}

 \theoremstyle{definition}
\newtheorem{Def}[Th]{Definition}

\newtheorem{Prob}[Th]{Problem}
\newtheorem{Ex}[Th]{Example}

\theoremstyle{remark}
\newtheorem{Rem}[Th]{Remark}

\DeclareMathOperator{\incorner}{In}
\DeclareMathOperator{\outcorner}{Out}
\DeclareMathOperator{\updeg}{udeg}
\DeclareMathOperator{\downdeg}{ddeg}

\newcommand{\op}{\mathcal{O}(P)}
\newcommand{\cp}{\mathcal{C}(P)}

\newcommand{\spicyrho}{\tilde \varrho}

\renewcommand{\subset}{\subseteq}

\newcommand{\RR}{\mathbb R}
\newcommand{\ZZ}{\mathbb Z}

\newcommand{\phibar}{\psi}
\newcommand{\themap}{\zeta}

\tikzstyle{wW}=[circle, draw, fill=white, inner sep=0pt, minimum width=4pt]
\tikzstyle{wB}=[circle, draw, fill=black, inner sep=0pt, minimum width=4pt]
\tikzstyle{wR}=[circle, draw, fill=red, inner sep=0pt, minimum width=4pt]
\tikzstyle{wBlue}=[circle, draw, fill=blue, inner sep=0pt, minimum width=4pt]

\tikzstyle{bigW}=[circle, draw, fill=white, inner sep=0pt, minimum width=6pt]
\tikzstyle{bigB}=[circle, draw, fill=black, inner sep=0pt, minimum width=6pt]
\tikzstyle{bigR}=[circle, draw, fill=red, inner sep=0pt, minimum width=6pt]
\tikzstyle{bigBlue}=[circle, draw, fill=blue, inner sep=0pt, minimum width=6pt]

\tikzstyle{v}=[circle, fill=black, inner sep=2pt]

\begin{document}

\maketitle

\begin{abstract} 
We define a birational map between labelings of a rectangular poset and its associated trapezoidal poset. This map tropicalizes to a bijection between the plane partitions of these posets of fixed height, giving a new bijective proof of a result by Proctor. We also show that this map is equivariant with respect to birational rowmotion, resolving a conjecture of Williams and implying that birational rowmotion on trapezoidal posets has finite order. 

%
\end{abstract}

\section{Introduction}


For a finite poset $P$, a \emph{plane partition} of $P$ (also known as a \emph{$P$-partition}) is an order-preserving labeling of $P$ with nonnegative integers. Plane partitions and their variations arise in a number of important contexts in combinatorics, representation theory, and related areas; see, for instance, \cite{stanley4} for an overview. 

When $P$ is the \emph{rectangular poset} $R_{r,s}$, the Cartesian product of two chains of $r$ and $s$ elements, an elegant product formula for the number of plane partitions of $P$ with maximum label at most $\ell$ was given by MacMahon \cite{macmahon}. Surprisingly, Proctor \cite{proctor} showed that there is another poset, namely the \emph{trapezoidal poset} $T_{r,s}$, that has the same number of plane partitions with maximum label at most $\ell$ for all $\ell$. (See Figure~\ref{fig:rectangleandtrapezoid} for a depiction of $R_{4,3}$ and $T_{4,3}$.)

Proctor's proof relies on a branching rule for Lie algebra representations and is not bijective---he describes the question of giving a combinatorial correspondence between these two sets of plane partitions as ``a complete mystery.'' Partial bijections were later constructed by Stembridge \cite{stembridge} and Reiner \cite{reiner} for $\ell=1$, and Elizalde \cite{elizalde} for $\ell=2$, but a full bijection for all $\ell$ was not given until work of Hamaker, Patrias, Pechenik, and Williams \cite{hamakerpatriaspechenikwilliams} using a tool from Schubert calculus known as \emph{$K$-theoretic jeu de taquin}. 

Although the bijection given in \cite{hamakerpatriaspechenikwilliams} has many nice properties, it also has a few shortcomings. First, it cannot be extended in a natural way to a continuous piecewise-linear map on real-valued labelings of the rectangle and trapezoid. As a result, it cannot be written using expressions in the tropical semiring (that is, using the operations addition, subtraction, and maximum). Second, it does not appear to be generally well-behaved with respect to a certain map on labelings of posets studied in dynamical algebraic combinatorics called \emph{rowmotion}.

\medskip

(Combinatorial) rowmotion is a term coined by Striker and Williams \cite{strikerwilliams} to describe a map first studied by Brouwer and Schrijver \cite{brouwerschrijver} that permutes the set of order ideals (downward-closed subsets) of a poset. Specifically, rowmotion sends an order ideal $I \subset P$ to the order ideal generated by the minimal elements of $P \setminus I$. It was shown in \cite{brouwerschrijver} that the action of rowmotion on order ideals of the rectangle $R_{r,s}$ has order exactly $r+s$. Since then, rowmotion has received much study; see, for instance, \cite{cameronfonderflaass, fonderflaass, joseph, propproby, striker, strikerwilliams, thomaswilliams}. One important observation by Einstein and Propp \cite{einsteinpropp1} is that one can generalize combinatorial rowmotion to a piecewise-linear map on real labelings of posets, and even further (via ``detropicalization'') to a birational map. Results about birational rowmotion typically then descend to results for piecewise-linear rowmotion (via tropicalization) and further to combinatorial rowmotion. Birational rowmotion on rectangular posets is closely related to the \emph{birational Robinson-Schensted-Knuth (RSK) correspondence}, also known as \emph{tropical} or \emph{geometric RSK}---see \cite{dauvergne, noumiyamada, osz} for some discussion. 

Although the generalized versions of rowmotion no longer act on finite sets, it turns out that for rectangular posets, they still retain many of the important dynamical properties of combinatorial rowmotion. For example, it was shown by Grinberg and Roby \cite{grinbergroby2} that birational rowmotion on $R_{r,s}$ still has order $r+s$. This result was observed by Glick and Grinberg (as noted in \cite{musikerroby, grinbergroby3}) to be equivalent to a phenomenon in discrete dynamics known as \emph{type AA Zamolodchikov periodicity}, first proved by Volkov \cite{volkov}. However, the class of posets for which birational rowmotion is known to have finite order is very small \cite{grinbergroby2, grinbergroby1}. Grinberg and Roby conjecture that birational rowmotion on $T_{r,s}$ also has order $r+s$. (For more on the conjectural good behavior of birational rowmotion and the related \emph{$R$-systems} with respect to singularity confinement and algebraic entropy, see Galashin-Pylyavskyy~\cite{galashinpylyavskyy}.)

Given the apparent close relationship between the rectangular and trapezoidal posets, Williams conjectures (as noted in \cite{grinbergroby2}, based on work in \cite{williams}) that there should exist a birational map between labelings of $R_{r,s}$ and $T_{r,s}$ that intertwines with the action of rowmotion (see also Hopkins \cite{hopkins2} for further discussion). In particular, such a map would prove that birational rowmotion on $T_{r,s}$ has finite order $r+s$.  In work of Dao, Wellman, Yost-Wolff, and Zhang \cite{daowellmanyostwolffzhang}, it was shown that the bijection given in Hamaker-Patrias-Pechenik-Williams \cite{hamakerpatriaspechenikwilliams} does intertwine with combinatorial rowmotion on plane partitions of height 1, thereby showing that combinatorial rowmotion on $T_{r,s}$ has the correct order. However, they also note that it does not respect piecewise-linear or birational rowmotion, so that it cannot be used to prove periodicity on $T_{r,s}$ in these cases.

\medskip

Our main result is to settle this question. We construct a birational map between labelings of $R_{r,s}$ and $T_{r,s}$. We then show that this map:
\begin{itemize}
    \item tropicalizes to a continuous, piecewise-linear map that restricts to a bijection between plane partitions of $R_{r,s}$ and $T_{r,s}$ of height at most $\ell$, and
    \item is equivariant with respect to rowmotion on $R_{r,s}$ and $T_{r,s}$, implying that birational (as well as piecewise-linear and combinatorial) rowmotion on $T_{r,s}$ has order $r+s$.
\end{itemize}

Our construction uses tools from the study of rowmotion, namely the \emph{toggles} of Cameron and Fon-der-Flaass \cite{cameronfonderflaass}, generalized to birational involutions by Einstein and Propp \cite{einsteinpropp1}. The proof that our map gives a bijection between plane partitions utilizes a generalized version of the \emph{chain shifting lemma} proved by the current authors in \cite{johnsonliu} (also closely related to the conversion lemma by Grinberg and Roby \cite{grinbergroby3}). For the rectangle, this lemma shows that the action of rowmotion shifts certain chain statistics in the poset in a predictable way---it was shown by the current authors in \cite{johnsonliu2} to be closely related to properties of Sch\"utzenberger promotion on semistandard Young tableaux, and also appears in a different form in the study of the hidden invariance of last passage percolation by Dauvergne \cite{dauvergne}. We derive a new, simple proof of this lemma based on the duality of plane trees. 

To prove that our birational map respects rowmotion, we utilize an alternate construction of rowmotion known as \emph{birational antichain rowmotion}, introduced by Joseph and Roby \cite{josephroby2}. Our construction deforms labelings of the trapezoid into labelings of the rectangle via certain intermediate posets, and this alternate construction allows us to define a version of rowmotion on these intermediate posets that respects these deformations.

\medskip

The outline of this paper is as follows. In Section~\ref{sec:paper3Background}, we introduce some background and preliminary results about posets and rowmotion. In Section~\ref{section:skewChainShifting}, we show how the action of rowmotion on chains in the poset can be described in terms of weighted arborescences. We then define a key bijection $\aleph$ on arborescences and show how it can be used to give a straightforward combinatorial proof of the chain shifting lemma from \cite{johnsonliu} as well as a number of important generalizations to other skew shapes, including trapezoids. In Section~\ref{section:equivariantMap}, we use rowmotion to construct a birational map between labelings of the rectangle and trapezoid and use the chain shifting lemma to prove that, when tropicalized, it can be used to give a bijection between plane partitions of the same height. (A detailed example of the bijection on plane partitions is given in Section~\ref{sec:polytopes}.) In Section~\ref{sec:rowmotionequivariance}, we prove that the map intertwines with rowmotion on the rectangle and trapezoid, which in particular implies that birational rowmotion on the trapezoid has finite order. We conclude in Section~\ref{sec:conclusion} with some possible directions for future study.

\section{Background}
\label{sec:paper3Background}

\subsection{Posets and labelings}

We will assume all posets to be finite. Given a poset $P$ and a set $K$, a \emph{$K$-labeling} (or just a \emph{labeling} if $K$ is clear) of $P$ is an element $x = (x_p)_{p \in P} \in K^P$, that is, an assignment of an element of $K$ to each element of $P$.

\begin{Rem} \label{rem:semiring}
Typically $K$ can be a field or a semifield (commutative semiring with multiplicative inverses) such as the positive real numbers $(\RR_+, +, \cdot)$. For simplicity, we will state most of our results using $K = \RR_+$, except when discussing plane partitions or polytopes in which case we will let $K$ be the \emph{tropical semiring} $(\RR, \max, +)$. Regardless, most of our results will hold in general for fields (provided one takes the necessary precautions such as using rational maps) and semirings. We refer the reader to Grinberg-Roby \cite{grinbergroby1} for further discussion, as well as Noumi-Yamada \cite{noumiyamada} for discussion of tropicalization of rational expressions.

In the case of semirings, all of our formulas will be subtraction-free until Section~\ref{sec:rowmotionequivariance} so the same proofs will apply. However, even in Section~\ref{sec:rowmotionequivariance}, as noted in \cite[Section 3]{grinbergroby1}, any equality between subtraction-free rational expressions that holds over every field  also holds over semirings, so in particular Theorem~\ref{thm:equivariance} will hold for semirings as well. 
\end{Rem}

A poset $P$ is called \emph{bounded} if it has unique minimum and maximum elements. It will be useful to augment $P$ to a bounded poset $\widehat P$ by adding a minimum element $\hat 0$ and a maximum element $\hat 1$ such that $\hat 0 < p < \hat 1$ for all $p \in P$. We will naturally identify $P$ with the induced subposet $\widehat P \setminus \{\hat 0, \hat 1\}$. Thus any labeling of $\widehat P$ can be restricted to a labeling of $P$, and any labeling of $P$ yields a labeling of $\widehat P$ if we specify the labels at $\hat 0$ and $\hat 1$.

We will call an induced subposet $I$ of $P$ \emph{saturated} if all of its cover relations $p \lessdot q$ are cover relations of $P$. In other words, $I$ is saturated if the Hasse diagram of $I$ is a subgraph of the Hasse diagram of $P$. Note that any maximal chain of a saturated subposet $I$ is a saturated chain in $P$. By default, we will assume that all subposets of interest are saturated, and we will use the word \emph{chain} to refer to a saturated chain.

\subsubsection{Chain and order polytopes}

Following Stanley \cite{stanley2}, we can associate with a poset $P$ two polytopes called the chain and order polytopes.

\begin{Def}
The \emph{chain polytope} $\cp \subset \RR^P$ is the set of all $\RR$-labelings $x = (x_p)_{p \in P}$ such that $x_p \geq 0$ for all $p \in P$, and $\sum_{p \in C} x_p \leq 1$ for all (maximal) chains $C \subset P$.

The \emph{order polytope} $\op \subset \RR^P$ is the set of all $\RR$-labelings $y = (y_p)_{p \in P}$ such that $0 \leq y_p \leq 1$ for all $p \in P$, and $y_p \leq y_q$ if $p \leq q$ in $P$.
\end{Def}

\begin{Def}
\label{Def:planePartition}
A \emph{plane partition} of $P$ (or \emph{$P$-partition}) is a $\mathbb Z_{\geq 0}$-labeling $x \in \mathbb{Z}_{\geq 0}^{P}$ such that $x_p \leq x_q$ whenever $p \leq q$ in $P$.

We say a plane partition $x$ has \emph{height $\ell$} if the maximum label appearing in $x$ is $\ell$.
\end{Def}

Note that plane partitions of height at most $\ell$ are exactly the lattice points in $\ell \mathcal O(P)$. As shown by Stanley \cite{stanley2}, there exists a continuous, piecewise-linear, volume-preserving map between the chain and order polytopes called the \emph{transfer map}, which we will define in Section~\ref{sec:rowmotion}.


\subsection{Rectangles and Trapezoids}

Throughout this paper, fix positive integers $r \geq s$. We will use $[n]$ to denote the set $\{1, 2, \dots, n\}$ with the usual total order.

Given two posets $P$ and $Q$, the \emph{Cartesian product} $P \times Q$ is the poset 
of pairs $(p,q)$ for $p \in P$ and $q \in Q$ with $(p,q) \leq (p', q')$ if $p \leq p'$ in $P$ and $q \leq q'$ in $Q$.

\begin{Def}
The \emph{rectangle poset} $R_{r,s}$ is the poset \[R_{r,s}=[r] \times [s].\]

The \emph{right trapezoid poset} $RT_{r,s}$ is the induced subposet
\[RT_{r,s}=\{(i,j) \mid i-j < r\} \subset R_{r+s-1,s}.\]

The \emph{trapezoid poset} $T_{r,s}$ is the induced subposet
\[T_{r,s}=\{(i,j) \mid i+j > s\} \subset RT_{r,s}.\]
\end{Def}

Note that $R_{r,s}$ is also an induced subposet of $RT_{r,s}$, and $|R_{r,s}| = |T_{r,s}| = rs$.
See Figure~\ref{fig:rectangleandtrapezoid} for an example of the Hasse diagrams of these posets. We will typically draw our posets oriented in the plane so that the first coordinate increases to the northwest and the second coordinate increases to the northeast.

\begin{figure}
    \centering
\begin{tikzpicture}[scale = 0.6]
\foreach \x in {0,...,2}
\draw (\x,\x)--(-3,3+2*\x);

\foreach \x in {0,...,3}
\draw (-\x,\x)--(2-\x,2+\x);

\foreach \x in {0,...,1}
\draw (-3,5+2*\x)--(-2-\x,6+\x);

\foreach [count=\i,
          evaluate=\i as \xmin using int(2-\y)] \y in {0,...,2} {
    \foreach \x in {\xmin,...,3} {
        \draw [fill=black] (- \x + \y, \x + \y) circle [radius=0.15cm];
    }
}

\foreach \x in {0,...,1}
\foreach \y in {\x,...,1}
\draw [fill=black] (-\x + \y-3,\x + \y+5) circle [radius=0.15cm];

\draw (0,0) [fill=black] circle [radius=0.15cm];
\draw (-1,1) [fill=black] circle [radius=0.15cm];
\draw (1,1) [fill=black] circle [radius=0.15cm];

\node at (-3.9,3) {(4,1)};
\node at (-3.9,5) {(5,2)};
\node at (-3.9,7) {(6,3)};

\begin{scope}[shift = {(7,0)}]
\foreach \x in {0,...,2}{
\draw (\x,\x)--(-3+\x,3+\x);
\draw [dashed] (-3+\x,3+\x)--(-3,3+2*\x);
};

\foreach \x in {0,...,3}
\draw (-\x,\x)--(2-\x,2+\x);

\foreach \x in {0,...,1}
\draw [dashed] (-3,5+2*\x)--(-2-\x,6+\x);

\foreach \x in {0,...,3}
\foreach \y in {0,...,2}
\draw [fill=black] (-\x + \y,\x + \y) circle [radius=0.15cm];

\foreach \x in {0,...,1}
\foreach \y in {\x,...,1}
\draw [fill=white] (-\x + \y-3,\x + \y+5) circle [radius=0.15cm];
\end{scope}

\begin{scope}[shift = {(14,0)}]
\foreach \x in {0,...,2}{
\draw (-2+2*\x,2)--(-3,3+2*\x);
\draw [dashed] (\x,\x)--(-2+2*\x,2);
};

\foreach \x in {0,...,2}{
\draw (2-2*\x,2)--(2-\x,2+\x);
\draw [dashed] (-\x,\x)--(2-2*\x,2);
};

\foreach \x in {-1,...,1}
\draw (-3,5+2*\x)--(-2-\x,6+\x);

\foreach \x in {0,...,1}
\draw [fill=white] (-\x ,\x) circle [radius=0.15cm];

\foreach \x in {2,...,3}
\draw [fill=black] (-\x ,\x) circle [radius=0.15cm];

\draw [fill=white] (1,1) circle [radius=0.15cm];

\foreach \x in {1,...,4}
\draw [fill=black] (-\x+1 ,\x+1) circle [radius=0.15cm];

\foreach \x in {0,...,5}
\draw [fill=black] (-\x+2 ,\x+2) circle [radius=0.15cm];
\end{scope}

\end{tikzpicture}
    \caption{The right trapezoid $RT_{4,3}$ and its subposets: the rectangle $R_{4,3}$, and the trapezoid $T_{4,3}$. The labeled points $(i,j)$ on the left border of the right trapezoid satisfy $i-j = r-1$.}
    \label{fig:rectangleandtrapezoid}
\end{figure}
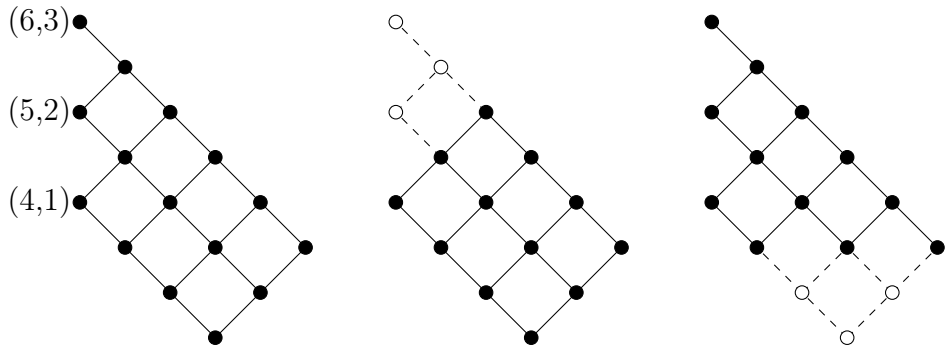

For the rectangle poset $R_{r,s}$, the number of plane partitions is given by the following formula due to MacMahon.
\begin{Th}[MacMahon \cite{macmahon}]
The number of plane partitions of $R_{r,s}$ of height at most $\ell$ is
\[N(r,s,\ell) = \prod_{i=1}^r \prod_{j=1}^s \prod_{k=1}^\ell \frac{i+j+k-1}{i+j+k-2}.\]
\end{Th}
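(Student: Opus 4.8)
The plan is to prove this classical box formula by converting plane partitions into non-intersecting lattice paths and applying the Lindström--Gessel--Viennot lemma. First I would record the standard dictionary: a plane partition of $R_{r,s} = [r]\times[s]$ of height at most $\ell$ is exactly an $r\times s$ integer array with entries in $\{0,1,\dots,\ell\}$ that is weakly increasing along rows and columns, which is the same data as a stack of unit cubes contained in an $r\times s\times \ell$ box, and hence, by projecting along the main diagonal, a rhombus tiling of the semiregular hexagon with side lengths $r,s,\ell,r,s,\ell$. The point of passing to the hexagon is that the induced family of lattice paths has \emph{fixed} endpoints: reading off one of the three families of lozenges yields $s$ (say) non-intersecting monotone lattice paths in $\ZZ^2$, the $i$-th running from a prescribed point $A_i$ to a prescribed point $B_i$, with $A_1,\dots,A_s$ and $B_1,\dots,B_s$ arranged so that only the identity permutation gives a non-crossing matching.

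Next I would apply the Lindström--Gessel--Viennot lemma. Because of the general position of the endpoints, the number of such path families equals a determinant of binomial coefficients $\det\big[\binom{a_i+b_j}{a_i}\big]_{1\le i,j\le s}$ for suitable arithmetic sequences $a_i,b_j$ depending on $r$, $\ell$, and the indexing. The remaining task, which I expect to be the only genuine work, is to evaluate this determinant in closed form and massage it into the stated product $\prod_{i,j,k}\tfrac{i+j+k-1}{i+j+k-2}$. This is a standard but delicate computation: one can pull common factors out of the rows and columns to reduce to a Vandermonde-type determinant in shifted variables, or argue by induction on $s$ using the recursion that the triple product satisfies and then telescope. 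Either way, keeping track of the shifts so that the final answer comes out manifestly symmetric in $r,s,\ell$ is the fiddly part, and I would flag this determinant evaluation as the main obstacle.

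Finally, as a cross-check and an alternative route worth a remark, I would note that the same count follows from the RSK-type correspondence identifying boxed plane partitions with pairs of semistandard Young tableaux of a common shape $\lambda\subseteq(r^s)$ (with entry bounds coming from $\ell$), so that $N(r,s,\ell)$ becomes a sum over $\lambda$ of products of two tableau counts given by the hook--content formula; this sum collapses via the dual Cauchy identity (equivalently, the Weyl dimension formula for $GL$) to the same product, and this version makes the $r\leftrightarrow s$, $r\leftrightarrow\ell$, and $s\leftrightarrow\ell$ symmetries of the formula transparent. I would present the lattice-path/determinant argument as the main proof and mention the tableau argument as a remark; in both approaches the combinatorial reductions are routine and the terminal product manipulation is where the real content lies.
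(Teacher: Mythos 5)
The paper does not prove this theorem; it is stated as a classical result and cited to MacMahon, so there is no internal proof to compare against. Your outline via rhombus tilings of the hexagon and Lindstr\"om--Gessel--Viennot is a standard and correct route, and you have set up the right objects: the fixed-endpoint non-intersecting path families and the resulting binomial determinant are all as they should be.

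The genuine gap is exactly where you flag it: the determinant evaluation is not carried out, and as written the argument terminates with ``one can pull common factors out\dots or argue by induction\dots'' That computation is the entire content of the theorem --- everything preceding it is a bijection, and everything following it is algebra --- so a proof that defers it is a proof sketch, not a proof. To close the gap you would need to actually exhibit the determinant, say $\det\bigl[\binom{r+\ell}{\ell - i + j}\bigr]_{1\le i,j\le s}$, factor each row and column to reduce to a Vandermonde determinant in the shifted parameters, and then rearrange the resulting product of factorials into the symmetric triple product $\prod_{i,j,k}\frac{i+j+k-1}{i+j+k-2}$; this last rearrangement is the fiddly bookkeeping you correctly anticipate, and it is the step a referee would insist on seeing. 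The same criticism applies to your alternative route: reducing to a sum over $\lambda \subseteq (r^s)$ of products of hook-content formulas and invoking the dual Cauchy identity is a valid plan, but again the terminal product manipulation (and, for that matter, a precise statement of which bijection between boxed plane partitions and tableau pairs you are using) is left unspecified. Either approach can be completed, and both are in the literature, but in its present form the proposal identifies the right ingredients without assembling them.
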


In 1983, Proctor used a branching rule for the inclusion of Lie algebras $\mathfrak{sp}_{2n} (\mathbb C) \hookrightarrow \mathfrak{sl}_{2n}(\mathbb C)$ to prove the following result.

\begin{Th}[Proctor \cite{proctor}]
\label{Th:ppsamenumber}
The posets $R_{r,s}$ and $T_{r,s}$ have the same number of plane partitions of height $\ell$ for all $\ell \in \mathbb{Z}_{\geq 0}$.
\end{Th}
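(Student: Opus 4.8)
The plan is to construct an explicit bijection between plane partitions of $R_{r,s}$ of height at most $\ell$ and plane partitions of $T_{r,s}$ of height at most $\ell$, for every $\ell$, which then immediately gives the result for plane partitions of each exact height $\ell$ by inclusion–exclusion (subtracting the count for height at most $\ell-1$). As outlined in the introduction, I would build this bijection as the tropicalization of a birational map $\Phi \colon \RR_+^{R_{r,s}} \dashrightarrow \RR_+^{T_{r,s}}$, so that the piecewise-linear map obtained by tropicalizing $\Phi$ restricts to a volume- and lattice-point-preserving map on the relevant dilated order polytopes. Passing through the transfer map to chain polytopes, it suffices to produce a piecewise-linear bijection $\mathcal C(R_{r,s}) \cap \frac1\ell\ZZ^{R_{r,s}} \to \mathcal C(T_{r,s}) \cap \frac1\ell\ZZ^{T_{r,s}}$ that scales correctly with $\ell$, since lattice points of $\ell\,\mathcal O(P)$ are exactly the plane partitions of $P$ of height at most $\ell$.

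The construction itself proceeds by interpolating between the rectangle and the trapezoid through the intermediate right-trapezoidal poset $RT_{r,s}$: both $R_{r,s}$ and $T_{r,s}$ sit inside $RT_{r,s}$ as saturated subposets, and one deforms a labeling of one into a labeling of the other by applying a well-chosen sequence of birational toggles (in the sense of Einstein–Propp), which move labels across the ``staircase'' boundary one diagonal at a time. The key point is that each such toggle is a birational involution whose tropicalization is continuous and piecewise-linear, hence the composite map $\Phi$ tropicalizes appropriately; and one must check that the toggles are chosen so that the image of a $\RR_+$-labeling of the rectangle genuinely satisfies the trapezoidal constraints (this is where the ``chain shifting'' behavior of rowmotion, i.e. the fact that toggling shifts certain chain-sum statistics in a controlled way, is used to verify that the chain inequalities defining $\mathcal C(T_{r,s})$ are preserved).

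The main obstacle, I expect, is precisely this last verification: proving that the composite of toggles sends $\mathcal C(R_{r,s})$ bijectively onto $\mathcal C(T_{r,s})$ rather than onto some other polytope. Toggles individually preserve neither polytope, so one cannot argue locally; instead one needs a global invariant. The natural candidate is a statistic of the form ``$\max$ over chains $C$ of $\sum_{p\in C} x_p$,'' tracked under each toggle, and the heart of the argument will be a lemma (the chain shifting lemma and its trapezoidal generalization promised in Section~\ref{section:skewChainShifting}) saying that toggling along a diagonal permutes these chain statistics in a predictable, shape-dependent way. Once that combinatorial lemma is in hand, the bijection follows by tracking how the full set of chain inequalities for $R_{r,s}$ transforms into the full set for $T_{r,s}$ under the chosen toggle sequence; the volume-preservation and lattice-point-preservation are then automatic because each toggle is piecewise-linear with integer coefficients and unimodular Jacobian on each linearity domain. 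Finally, tropicalization (replacing $\cdot$ by $+$ and $+$ by $\max$ throughout, with the necessary care about subtraction-free expressions noted in Remark~\ref{rem:semiring}) converts $\Phi$ into the desired piecewise-linear bijection, completing the proof of Theorem~\ref{Th:ppsamenumber}.
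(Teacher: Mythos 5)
Your high-level outline matches the strategy the paper actually carries out in Sections~\ref{section:skewChainShifting}--\ref{sec:polytopes}: build a birational map by moving across diagonals inside $RT_{r,s}$ via toggle-type maps, control it with a chain shifting lemma, tropicalize, and conjugate by the transfer map $\psi$ to land in the order polytope. You also correctly identify the main obstacle.

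However, there is a genuine gap precisely at that obstacle, and your proposed invariant would not resolve it. You propose to track the chain polytope statistic $\max_C \sum_{p\in C} x_p$ through the deformation, checking that each intermediate labeling satisfies ``the chain inequalities.'' But the intermediate posets $I_k$ (Definition~\ref{Def:intermediatePosets}) interpolating between $T_{r,s}=I_s$ and $R_{r,s}=I_1$ are neither rectangles nor trapezoids, and the intermediate maps $\themap_k$ do \emph{not} preserve the ordinary chain polytope $\mathcal C(I_k)$. The correct invariant is the \emph{polygonal chain polytope} $\widetilde{\mathcal C}(I_k)$, cut out only by the subset of maximal chains that either contain the leftmost minimal element $(k,1)$ or touch the left border $L$ of $RT_{r,s}$ (Definition~\ref{Def:intermediatePosets} and the definition of $\mathscr P_k$). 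This polytope strictly contains $\mathcal C(I_k)$ for $1<k<s$, and Proposition~\ref{prop:intermediate} shows $w_{\mathscr P_{k+1}}(x)=w_{\mathscr P_k}(\themap_k(x))$, not the analogous statement for all maximal chains. Without identifying this restricted class of chains there is no way to close the induction: the chain shifting lemma alone tells you how individual chain weights move, but the image of a chain under $\aleph$ may cross the left border, so the set of chains whose total weight is conserved must be closed under that crossing, and the polygonal chains are exactly engineered to have this property (the case split $\mathscr P_k = \mathscr P_k^{(1)} \sqcup \mathscr P_k^{(2)}$ in the proof of Proposition~\ref{prop:intermediate}). A secondary, smaller gap: you describe the deformation as a sequence of individual Einstein--Propp toggles on one fixed poset, whereas the paper's $\themap_k$ is antichain rowmotion $\tilde\rho_k^{-1}$ on a sub-right-trapezoid $M_k$ combined with coordinate shifts outside $M_k$; the shifts are essential for the polygonal chains to line up between $I_{k+1}$ and $I_k$, and they cannot be reproduced by toggles inside a single ambient poset. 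Until you pin down what plays the role of ``the chain inequalities'' at the intermediate stages, the proposal does not yield a proof.
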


Since Proctor's proof is not bijective, this raises the following natural question.

\begin{Prob}
\label{Prob:pps}
Find a bijective proof that $R_{r,s}$ and $T_{r,s}$ have the same number of plane partitions of height $\ell$ for all $\ell \in \mathbb Z_{\geq 0}$.
\end{Prob}

Understanding the relationship between the plane partitions of the rectangle and trapezoid has been the subject of a number of papers over the past 40 years. Partial results giving bijections for low heights were given by Stembridge \cite{stembridge}, Reiner \cite{reiner}, and Elizalde \cite{elizalde} before a bijective proof for all heights was found in 2016 by Hamaker, Patrias, Pechenik, and Williams \cite{hamakerpatriaspechenikwilliams} in terms of \emph{$K$-theoretic jeu de taquin}. 

\subsubsection{$K$-theoretic jeu de taquin}
We give a brief description of the bijection between plane partitions of $R_{r,s}$ and $T_{r,s}$ given in \cite{hamakerpatriaspechenikwilliams}. This description differs slightly from the one given in \cite{hamakerpatriaspechenikwilliams} but is equivalent under taking the inverse and poset duality; we describe it in this form to be more similar to the bijection we construct later.
(This subsection is provided for informative purposes only and is not related to the remainder of the paper.)

A \emph{strictly increasing labeling} of $P$ is a plane partition $x$ for which $x_p < x_q$ whenever $p < q$. For a graded poset $P$, there is a bijection from plane partitions of $P$ to strictly increasing nonnegative labelings of $P$ obtained by adding the rank of $p$ to $x_p$ for all $p \in P$. Therefore it suffices to give a bijection between strictly increasing nonnegative labelings of $R_{r,s}$ and $T_{r,s}$ of the same height.

The main tool in the bijection is a sliding procedure called \emph{$K$-theoretic jeu de taquin} ($K$-jdt). See Figure~\ref{fig:kjdt} for a depiction of possible slides. Black dots denote unlabeled nodes into which we wish to perform a $K$-jdt slide. We slide the smallest number covering an unlabeled node into that node. Unlike classical jeu de taquin, whenever we have a tie, both numbers slide into the new position, and slides are performed simultaneously at each black dot. The process is then iterated until no more slides are possible.

To perform the bijection from strictly increasing labelings of $T_{r,s}$ to $R_{r,s}$, we embed $T_{r,s}$ in $RT_{r,s}$ and perform a sequence of $K$-jdt slides to fill in the labels of the elements $(i,j) \in R_{r,s} \setminus T_{r,s}$ in decreasing order of $2i+j$. See Figure~\ref{fig:kjdtBijection} for a simple example of this map. It is shown in \cite{hamakerpatriaspechenikwilliams} that the result will be a strictly increasing labeling of $R_{r,s}$ and that this is a bijection.

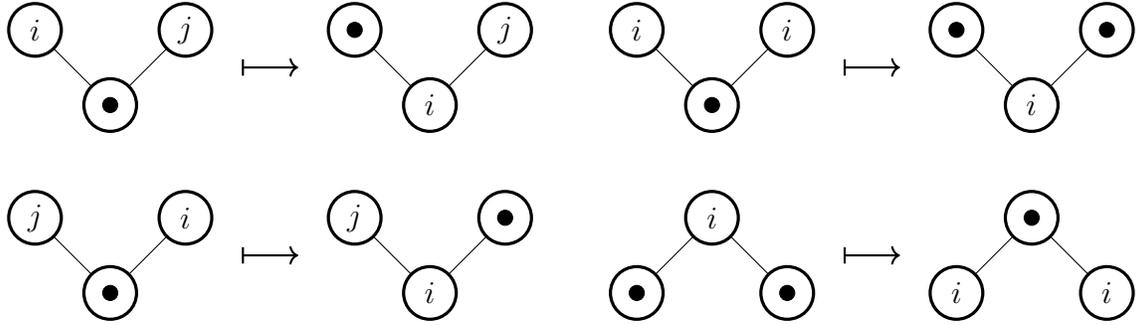
\begin{figure}
    \centering
\begin{tikzpicture}

\begin{scope}[shift={(0,0)}]
\draw (0,0)--(-1,1);
\draw (0,0)--(1,1);

\draw [fill=white, very thick] (-1,1) circle [radius=0.35cm];
\draw [fill=white, very thick] (0,0) circle [radius=0.35cm];
\draw [fill=white, very thick] (1,1) circle [radius=0.35cm];

\node at (-1,1) {$i$};
\node at (1,1) {$j$};
\draw [fill=black] (0,0) circle [radius=0.1cm];

\draw [|->,thick] (1.75,0.5)--(2.5,0.5);

\begin{scope}[shift={(4.25,0)}]
\draw (0,0)--(-1,1);
\draw (0,0)--(1,1);

\draw [fill=white, very thick] (-1,1) circle [radius=0.35cm];
\draw [fill=white, very thick] (0,0) circle [radius=0.35cm];
\draw [fill=white, very thick] (1,1) circle [radius=0.35cm];

\node at (0,0) {$i$};
\node at (1,1) {$j$};
\draw [fill=black] (-1,1) circle [radius=0.1cm];
\end{scope}
\end{scope}

\begin{scope}[shift={(8,0)}]
\draw (0,0)--(-1,1);
\draw (0,0)--(1,1);

\draw [fill=white, very thick] (-1,1) circle [radius=0.35cm];
\draw [fill=white, very thick] (0,0) circle [radius=0.35cm];
\draw [fill=white, very thick] (1,1) circle [radius=0.35cm];

\node at (-1,1) {$i$};
\node at (1,1) {$i$};
\draw [fill=black] (0,0) circle [radius=0.1cm];

\draw [|->,thick] (1.75,0.5)--(2.5,0.5);

\begin{scope}[shift={(4.25,0)}]
\draw (0,0)--(-1,1);
\draw (0,0)--(1,1);

\draw [fill=white, very thick] (-1,1) circle [radius=0.35cm];
\draw [fill=white, very thick] (0,0) circle [radius=0.35cm];
\draw [fill=white, very thick] (1,1) circle [radius=0.35cm];

\node at (0,0) {$i$};
\draw [fill=black] (-1,1) circle [radius=0.1cm];
\draw [fill=black] (1,1) circle [radius=0.1cm];
\end{scope}
\end{scope}

\begin{scope}[shift={(0,-2.5)}]
\draw (0,0)--(-1,1);
\draw (0,0)--(1,1);

\draw [fill=white, very thick] (-1,1) circle [radius=0.35cm];
\draw [fill=white, very thick] (0,0) circle [radius=0.35cm];
\draw [fill=white, very thick] (1,1) circle [radius=0.35cm];

\node at (-1,1) {$j$};
\node at (1,1) {$i$};
\draw [fill=black] (0,0) circle [radius=0.1cm];

\draw [|->,thick] (1.75,0.5)--(2.5,0.5);

\begin{scope}[shift={(4.25,0)}]
\draw (0,0)--(-1,1);
\draw (0,0)--(1,1);

\draw [fill=white, very thick] (-1,1) circle [radius=0.35cm];
\draw [fill=white, very thick] (0,0) circle [radius=0.35cm];
\draw [fill=white, very thick] (1,1) circle [radius=0.35cm];

\node at (0,0) {$i$};
\node at (-1,1) {$j$};
\draw [fill=black] (1,1) circle [radius=0.1cm];
\end{scope}
\end{scope}

\begin{scope}[shift={(8,-2.5)}]
\draw (0,1)--(-1,0);
\draw (0,1)--(1,0);

\draw [fill=white, very thick] (-1,0) circle [radius=0.35cm];
\draw [fill=white, very thick] (0,1) circle [radius=0.35cm];
\draw [fill=white, very thick] (1,0) circle [radius=0.35cm];

\node at (0,1) {$i$};
\draw [fill=black] (-1,0) circle [radius=0.1cm];
\draw [fill=black] (1,0) circle [radius=0.1cm];

\draw [|->,thick] (1.75,0.5)--(2.5,0.5);

\begin{scope}[shift={(4.25,0)}]
\draw (0,1)--(-1,0);
\draw (0,1)--(1,0);

\draw [fill=white, very thick] (-1,0) circle [radius=0.35cm];
\draw [fill=white, very thick] (0,1) circle [radius=0.35cm];
\draw [fill=white, very thick] (1,0) circle [radius=0.35cm];

\node at (-1,0) {$i$};
\node at (1,0) {$i$};
\draw [fill=black] (0,1) circle [radius=0.1cm];
\end{scope}
\end{scope}
\end{tikzpicture}
    \caption{K-theoretic jeu de taquin slides. If $i$ is the smallest label covering an unlabeled node (so $i < j$), then $i$ slides to the unlabeled node.}
    \label{fig:kjdt}
\end{figure}


\begin{Th}[\cite{hamakerpatriaspechenikwilliams}]
    The $K$-theoretic jeu de taquin moves in $RT_{r,s}$ described above bijectively rectify strictly increasing labelings of $T_{r,s}$ to strictly increasing labelings of $R_{r,s}$. In particular, $R_{r,s}$ and $T_{r,s}$ have the same number of plane partitions of each height.
\end{Th}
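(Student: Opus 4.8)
The plan is to deduce the statement from the general theory of $K$-theoretic jeu de taquin for increasing tableaux, of which the rectangle--trapezoid case is a special instance, following the argument of Hamaker--Patrias--Pechenik--Williams \cite{hamakerpatriaspechenikwilliams}. First I would make the reduction already sketched above: adding ranks converts the problem into exhibiting a height-preserving bijection between strictly increasing labelings of $T_{r,s}$, embedded in $RT_{r,s}$, and strictly increasing labelings of $R_{r,s}$, where the candidate map is the prescribed sequence of $K$-jdt slides filling the cells of $R_{r,s}\setminus T_{r,s}$ in decreasing order of $2i+j$. Height preservation is then immediate: a $K$-jdt slide only moves already-present labels into empty cells (duplicating a label when there is a tie) and never creates a new value, so the set of labels used --- and in particular its maximum --- is unchanged by each slide.

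It therefore remains to show the map is a well-defined bijection, and the two structural facts I would establish first are \emph{confluence} and \emph{reversibility} of $K$-jdt. Confluence says that the labeling obtained by performing slides until none remain is independent of the order in which empty cells are chosen; reversibility says that there are outward (reverse) $K$-jdt slides, and that forward rectification from an inner region and reverse rectification from the rectified shape are mutually inverse. These are the increasing-tableau analogues of the classical Sch\"utzenberger facts; the cleanest route is to pass to reading words, show that each slide alters a reading word only within its $K$-Knuth equivalence class (equivalently, to track the effect through Hecke insertion), and identify rectification with the unique superstandard representative of that class --- confluence then follows, and reversibility from bookkeeping the local moves. I expect this to be the main obstacle: it is the technically deepest ingredient and is precisely what makes the statement genuinely $K$-theoretic rather than a corollary of ordinary jeu de taquin.

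With confluence and reversibility in hand, the remaining task is to identify the shapes: the rectification of any strictly increasing labeling of $T_{r,s}\subset RT_{r,s}$ should be supported on exactly $R_{r,s}$, and dually reverse-rectifying along the prescribed order should carry a strictly increasing labeling of $R_{r,s}$ to one supported on exactly $T_{r,s}$. Here I would use the explicit staircase geometry: $RT_{r,s}\setminus T_{r,s}$ and $RT_{r,s}\setminus R_{r,s}$ are triangular regions, and the ordering by $2i+j$ is arranged so that the holes travel monotonically, allowing a cell-by-cell argument that the boundary is pushed out to (resp.\ pulled in from) the rectangle. A useful consistency check is that $|T_{r,s}| = |R_{r,s}| = rs$, so the process fills exactly $\binom{s}{2}$ cells and empties $\binom{s}{2}$ cells, which constrains the final shape precisely.

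Finally I would assemble the pieces: forward rectification $T_{r,s} \to R_{r,s}$ is well-defined by confluence and the shape identification, is inverted by reverse rectification by reversibility, and preserves height by the first paragraph; hence $R_{r,s}$ and $T_{r,s}$ have the same number of strictly increasing labelings of each height, and therefore, by the reduction, the same number of plane partitions of each height.
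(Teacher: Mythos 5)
The paper does not actually prove this statement: Theorem 2.5 is imported from \cite{hamakerpatriaspechenikwilliams}, and the surrounding subsection is explicitly flagged as ``provided for informative purposes only and is not related to the remainder of the paper.'' So there is no in-paper proof to compare against; I can only assess your sketch as a reconstruction of the HPPW argument.

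Several of your steps are sound: the reduction from plane partitions to strictly increasing labelings by adding ranks, the observation that slides never create new values (so height is preserved), and the recognition that a shape-identification step is needed. The serious problem is your proposed route to confluence. You suggest passing to reading words, noting that each slide stays inside a $K$-Knuth equivalence class, and then ``identifying rectification with the unique superstandard representative of that class'' to conclude confluence. This is exactly how confluence is proved for ordinary jeu de taquin, but it is precisely the step that \emph{fails} in the $K$-theoretic setting: a $K$-Knuth class need not contain a unique increasing tableau of straight shape, and $K$-jdt rectification of increasing tableaux is not confluent in general --- this is the central subtlety pointed out by Thomas and Yong and developed further by Buch and Samuel. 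Accordingly, the HPPW argument does not rest on generic confluence; it fixes a rectification order and then invokes $K$-promotion/$K$-evacuation together with the theory of unique rectification targets (or the minuscule structure of the relevant posets) to show that the specific sequence of slides is a bijection. Your outline correctly names the hardest ingredient but prescribes for it a method that cannot work at this level of generality; that piece would need to be replaced by the URT/minuscule machinery rather than a canonical-representative argument for $K$-Knuth classes.
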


\begin{figure}
    \begin{tikzpicture}[scale=.7, transform shape]
        \begin{scope}
            \draw (0,2)--(1,3) (-1,3)--(0,4) (-2,4)--(-1,5) (0,2)--(-2,4) (1,3)--(-2,6);
            \foreach \i/\j/\l in {1/1/\bullet,1/2/1,2/1/0,2/2/2,3/1/2,3/2/3,4/2/4}
                \node[draw=black,fill=white,circle,inner sep=0pt,minimum size=14pt] (\i\j) at (\j-\i,\i+\j){$\l$};
        \end{scope}
        \begin{scope}[shift={(4.5,0)}]
            \draw (0,2)--(1,3) (-1,3)--(0,4) (-2,4)--(-1,5) (0,2)--(-2,4) (1,3)--(-2,6);
            \foreach \i/\j/\l in {1/1/0,1/2/1,2/1/\bullet,2/2/2,3/1/2,3/2/3,4/2/4}
                \node[draw=black,fill=white,circle,inner sep=0pt,minimum size=14pt] (\i\j) at (\j-\i,\i+\j){$\l$};
        \end{scope}
        \begin{scope}[shift={(9,0)}]
            \draw (0,2)--(1,3) (-1,3)--(0,4) (-2,4)--(-1,5) (0,2)--(-2,4) (1,3)--(-2,6);
            \foreach \i/\j/\l in {1/1/0,1/2/1,2/1/2,2/2/\bullet,3/1/\bullet,3/2/3,4/2/4}
                \node[draw=black,fill=white,circle,inner sep=0pt,minimum size=14pt] (\i\j) at (\j-\i,\i+\j){$\l$};
        \end{scope}
        \begin{scope}[shift={(13.5,0)}]
            \draw (0,2)--(1,3) (-1,3)--(0,4) (-2,4)--(-1,5) (0,2)--(-2,4) (1,3)--(-2,6);
            \foreach \i/\j/\l in {1/1/0,1/2/1,2/1/2,2/2/3,3/1/3,3/2/\bullet,4/2/4}
                \node[draw=black,fill=white,circle,inner sep=0pt,minimum size=14pt] (\i\j) at (\j-\i,\i+\j){$\l$};
        \end{scope}
        \begin{scope}[shift={(18,0)}]
            \draw (0,2)--(1,3) (-1,3)--(0,4) (-2,4)--(-1,5) (0,2)--(-2,4) (1,3)--(-2,6);
            \foreach \i/\j/\l in {1/1/0,1/2/1,2/1/2,2/2/3,3/1/3,3/2/4,4/2/\bullet}
                \node[draw=black,fill=white,circle,inner sep=0pt,minimum size=14pt] (\i\j) at (\j-\i,\i+\j){$\l$};
        \end{scope}
        \node at (1.5,4){$\to$};
        \node at (6,4){$\to$};
        \node at (10.5,4){$\to$};
        \node at (15,4){$\to$};
    \end{tikzpicture}
    \caption{An example of the $K$-jeu de taquin bijection \cite{hamakerpatriaspechenikwilliams} from strictly increasing labelings of $T_{3,2}$ to strictly increasing labelings of $R_{3,2}$.}
    \label{fig:kjdtBijection}
\end{figure}
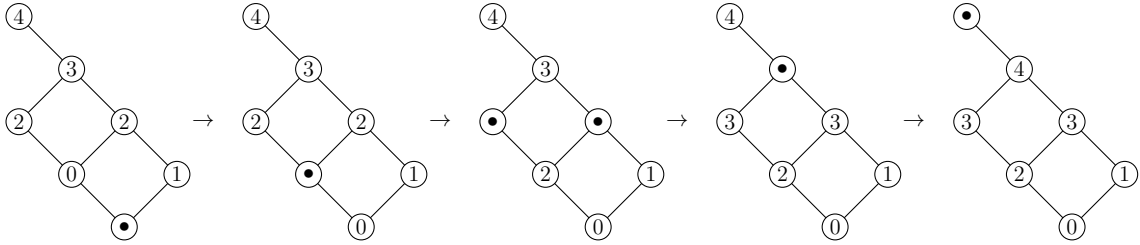

It should be noted that by dividing the labels by $k$, $K$-jdt gives a bijection between rational points in the order polytopes $\mathcal O(T_{r,s})$ and $\mathcal O(R_{r,s})$ with denominator dividing $k$. However, it is not hard to see that this map cannot be extended to a continuous map on the real points of the order polytopes in a natural way (compatible with all $k$). For instance, for the two slides on the left of Figure~\ref{fig:kjdt}, the outcomes are very different (and will generally continue to diverge on future slides) even if $i$ and $j$ are close in value.

\subsection{Rowmotion} \label{sec:rowmotion}
In this section we define birational toggles and rowmotion. Note that we define these as maps on $\RR_+$-labelings of $P$, but one can also define them as rational maps on $\RR$-labelings of $P$.  (See Remark~\ref{rem:semiring}.) Throughout, we will identify the labelings in $\RR_+^P$ with the corresponding labelings in $\RR_+^{\widehat P}$, where the labels at $\hat 0$ and $\hat 1$ are both $1$. (Although rowmotion and related notions are sometimes defined with different labels at $\hat 0$ and $\hat 1$, this does not have any significant effect on the generality of results when $P$ is graded since one can renormalize as discussed in \cite{einsteinpropp1}.) For any subset $S \subset P$ and $x \in \RR_+^P$, we will define the \emph{weight} of $S$ in $x$ to be the product $\prod_{p \in S} x_p$.

\begin{Def}
For any $p \in P$, the \emph{toggle} $t_p \colon \RR_+^{P} \to \RR_+^{P}$ is the map that changes the $p$-coordinate of $x \in \RR_+^{P}$ by
\[x_p \mapsto \left(\sum\limits_{q \gtrdot p} \frac{1}{x_q} \right)^{-1} \left( \sum\limits_{q \lessdot p} x_q \right) \frac{1}{x_p}\]
while keeping all other coordinates fixed.
\end{Def}

\begin{Def}
The \emph{rowmotion map} $\rho \colon \RR_+^P \to \RR_+^P$ is the composition
\[\rho = t_{L^{-1}(1)} \circ t_{L^{-1}(2)} \circ \cdots \circ t_{L^{-1}(n)}\] for any linear extension (order-preserving bijection) $L \colon P \to [n]$.
\end{Def}

It is easy to verify that toggles $t_p$ and $t_q$ commute if and only if $p$ and $q$ are not adjacent in the Hasse diagram of $P$. From this, one can deduce that $\rho$ does not depend on the choice of linear extension.

There is an alternate definition of rowmotion in terms of the following transfer map. 

\begin{Def}
The \emph{transfer map} $\psi^{-1} \colon \RR_+^P \to \RR_+^P$ is defined coordinatewise by, for $p \in P$,
\[x_p \mapsto \frac{x_p}{\sum_{q \lessdot p} x_q}.\]
Its inverse $\phibar$ acts by
\[x_p \mapsto \sum_{\hat 0 \lessdot q_1 \lessdot \dots \lessdot q_n = p} \prod_{i=1}^n x_{q_i}.\]
Thus $\phibar(x)_p$ is the total weight of all maximal chains in the interval $[\hat 0, p]$ in $x$.
\end{Def}

If one works over the tropical semiring (and sets $x_{\hat 0} = 0$ and $x_{\hat 1}=1$), then the transfer map defines a piecewise-linear, continuous bijection from the order polytope of $P$ to the chain polytope of $P$, as shown by Stanley \cite{stanley2}.

One can similarly define a dual transfer map.

\begin{Def}
The \emph{dual transfer map} ${\psi^*}^{-1} \colon \RR_+^P \to \RR_+^P$ is defined coordinatewise by, for $p \in P$,
\[x_p \mapsto \frac{x_p}{\sum_{q \gtrdot p} x_q}.\]
Its inverse $\phibar^*$ acts by
\[x_p \mapsto \sum_{\hat 1 \gtrdot q_1 \gtrdot \dots \gtrdot q_n = p} \prod_{i=1}^n x_{q_i}.\]
Thus $\phibar^*(x)_p$ is the total weight of all maximal chains in the interval $[p,\hat 1]$ in $x$.
\end{Def}

The following lemma is originally due to Einstein and Propp in a slightly different form (see \cite{einsteinpropp2,josephroby2}).


\begin{Lemma} \label{lemma:dualtransfer}
    Let $x \in \RR_+^P$, $z=\psi^{-1} \circ \rho^{-1} \circ \psi(x)$, and $p \in P$. Then $\psi(x)_p\psi^*(z)_p = 1$.
\end{Lemma}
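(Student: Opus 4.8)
The plan is to reduce the statement to the single‑step identity $\psi^* \circ \psi^{-1} = \iota \circ \rho$, where $\iota$ denotes coordinatewise reciprocal, and then to prove that identity by a short downward induction on $\widehat P$. Concretely, it suffices to show that for every $w \in \RR_+^P$ and every $p \in P$ one has $\rho(w)_p \cdot \psi^*(\psi^{-1}(w))_p = 1$. Applying this with $w = \rho^{-1}(\psi(x))$ gives $\rho(w) = \psi(x)$ and $\psi^{-1}(w) = z$, hence $\psi(x)_p \cdot \psi^*(z)_p = \rho(w)_p\cdot\psi^*(\psi^{-1}(w))_p = 1$, which is exactly the lemma.

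First I would record the recursive description of birational rowmotion. Choosing any linear extension and toggling from the top down, at the moment element $p$ is toggled every element above $p$ already carries its final value $\rho(w)_q$, every element below $p$ still carries its original value $w_q$, and incomparable elements do not appear in the toggle at $p$; hence
\[
\rho(w)_p = \Bigl(\sum_{q \gtrdot p} \frac{1}{\rho(w)_q}\Bigr)^{-1}\Bigl(\sum_{q \lessdot p} w_q\Bigr)\frac{1}{w_p},
\]
using the convention $\rho(w)_{\hat 1} = 1$. Writing $c = \psi^{-1}(w)$, so that $\sum_{q \lessdot p} w_q = w_p/c_p$, this rearranges to $\rho(w)_p^{-1} = c_p \sum_{q \gtrdot p} \rho(w)_q^{-1}$.

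Next I would observe that the defining formula for $\psi^*$ telescopes in exactly the same way: grouping the maximal chains from $p$ to $\hat 1$ by their first step $p \lessdot q$ gives $\psi^*(c)_p = c_p \sum_{q \gtrdot p} \psi^*(c)_q$, again with the convention $\psi^*(c)_{\hat 1} = 1$. Thus $p \mapsto \rho(w)_p^{-1}$ and $p \mapsto \psi^*(c)_p$ satisfy the same recursion with the same boundary value at $\hat 1$, so downward induction on $\widehat P$ forces them to agree on all of $P$. This proves $\rho(w)_p \cdot \psi^*(\psi^{-1}(w))_p = 1$, and the lemma follows by the substitution above.

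The only delicate point is the first step: verifying that when $\rho$ is computed by top‑down toggling, the upper covers of $p$ really have been updated while the lower covers have not. This is immediate from the definition of a linear extension, so the argument is essentially formal. (Indeed $\psi^* \circ \psi^{-1} = \iota \circ \rho$ is, up to reformulation, the ``slightly different form'' of the lemma due to Einstein--Propp, cf.\ Joseph--Roby, so one could alternatively just cite it.)
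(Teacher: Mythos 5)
Your proof is correct and takes essentially the same approach as the paper: compute the toggle at $p$ during a top-down application of $\rho$ (so upper covers carry final values and lower covers carry old values), rewrite $\sum_{q\lessdot p} w_q$ via $\psi^{-1}$, observe that $\rho(w)_p^{-1}$ and $\psi^*(\psi^{-1}(w))_p$ satisfy the same recursion $f_p = (\psi^{-1}w)_p\sum_{q\gtrdot p} f_q$ with $f_{\hat 1}=1$, and conclude by downward induction. The only cosmetic difference is that you phrase it as the map identity $\psi^*\circ\psi^{-1}=\iota\circ\rho$ applied to $w=\rho^{-1}\psi(x)$, while the paper works directly with the labelings $\psi(x)$ and $\psi(z)$.
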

\begin{proof}
    Assume by induction that the claim holds for all elements of $P$ above $p$. Since $\psi(x) = \rho(\psi(z))$, the value of $\psi(x)_p$ is determined when toggling $p$ while performing rowmotion on $\psi(z)$. At this point, the labels above $p$ agree with $\psi(x)$, while the labels below $p$ agree with $\psi(z)$. Therefore we find that
    \begin{align*}
      \psi(x)_p &= \left(\sum_{q \gtrdot p} \frac{1}{\psi(x)_q}\right)^{-1} \cdot \left(\sum_{q \lessdot p} \psi(z)_q\right)\cdot \psi(z)_p^{-1}\\
      &= \left(\sum_{q \gtrdot p} \psi^*(z)_q\right)^{-1} \cdot z_p^{-1}\\
      &= \psi^*(z)_p^{-1},
    \end{align*}
    where we used the induction hypothesis for $q \gtrdot p$ and the fact that $\psi(z)_p = z_p \cdot \sum_{q \lessdot p} \psi(z)_q$ in the first step, and the analogous expression for $\psi^*(z)_p$ in the second.
\end{proof}

\begin{Ex}
    Let $P = RT_{3,2}$, and let $x \in \RR_+^P$ be labeled as in the leftmost diagram of Figure~\ref{fig:r32}. Then we can compute $y=\psi(x)$, $\rho^{-1}(y)$, and $z = \psi^{-1}(\rho^{-1}(y))$ as shown. Note that the values of $\psi^*(z)_p$ agree with the values of $y_p^{-1} = \psi(x)_p^{-1}$. For instance, when $p = (2,2)$, \[\psi^*(z)_{22} = \frac{1}{a(bd+be+ce)fg} \cdot g \cdot \frac{(bd+be+ce)f}{(b+c)e} = \frac{1}{a(b+c)e} = y_{22}^{-1}.\]
    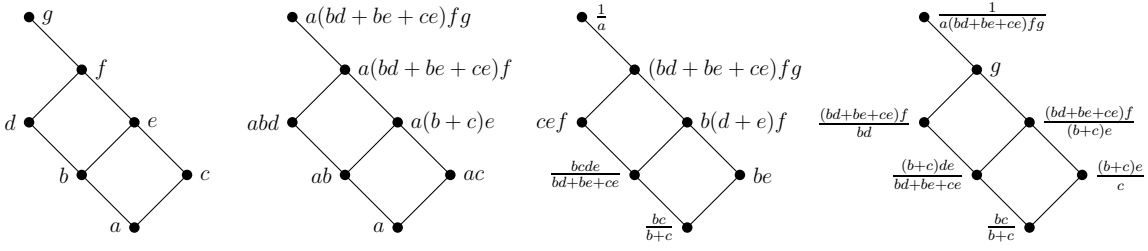
\begin{figure}
        \centering
        \begin{tikzpicture}[scale=.7, transform shape]
            \begin{scope}[shift={(-.5,0)}]
                \draw (0,0)--(-2,2) (1,1)--(-2,4) (0,0)--(1,1) (-1,1)--(0,2) (-2,2)--(-1,3);
                \node[circle, fill=black, inner sep=2pt, label=left:$a$] at (0,0){};
                \node[circle, fill=black, inner sep=2pt, label=left:$b$] at (-1,1){};
                \node[circle, fill=black, inner sep=2pt, label=left:$d$] at (-2,2){};
                \node[circle, fill=black, inner sep=2pt, label=right:$c$] at (1,1){};
                \node[circle, fill=black, inner sep=2pt, label=right:$e$] at (0,2){};
                \node[circle, fill=black, inner sep=2pt, label=right:$f$] at (-1,3){};
                \node[circle, fill=black, inner sep=2pt, label=right:$g$] at (-2,4){};
            \end{scope}
            \begin{scope}[shift={(4.5,0)}]
                \draw (0,0)--(-2,2) (1,1)--(-2,4) (0,0)--(1,1) (-1,1)--(0,2) (-2,2)--(-1,3);
                \node[circle, fill=black, inner sep=2pt, label=left:$a$] at (0,0){};
                \node[circle, fill=black, inner sep=2pt, label=left:$ab$] at (-1,1){};
                \node[circle, fill=black, inner sep=2pt, label=left:$abd$] at (-2,2){};
                \node[circle, fill=black, inner sep=2pt, label=right:$ac$] at (1,1){};
                \node[circle, fill=black, inner sep=2pt, label=right:$a(b+c)e$] at (0,2){};
                \node[circle, fill=black, inner sep=2pt, label=right:$a(bd+be+ce)f$] at (-1,3){};
                \node[circle, fill=black, inner sep=2pt, label=right:$a(bd+be+ce)fg$] at (-2,4){};
            \end{scope}
            \begin{scope}[shift={(10,0)}]
                \draw (0,0)--(-2,2) (1,1)--(-2,4) (0,0)--(1,1) (-1,1)--(0,2) (-2,2)--(-1,3);
                \node[circle, fill=black, inner sep=2pt, label=left:$\frac{bc}{b+c}$] at (0,0){};
                \node[circle, fill=black, inner sep=2pt, label=left:$\frac{bcde}{bd+be+ce}$] at (-1,1){};
                \node[circle, fill=black, inner sep=2pt, label=left:$cef$] at (-2,2){};
                \node[circle, fill=black, inner sep=2pt, label=right:$be$] at (1,1){};
                \node[circle, fill=black, inner sep=2pt, label=right:$b(d+e)f$] at (0,2){};
                \node[circle, fill=black, inner sep=2pt, label=right:$(bd+be+ce)fg$] at (-1,3){};
                \node[circle, fill=black, inner sep=2pt, label=right:$\frac{1}{a}$] at (-2,4){};
            \end{scope}
            \begin{scope}[shift={(16.5,0)}]
                \draw (0,0)--(-2,2) (1,1)--(-2,4) (0,0)--(1,1) (-1,1)--(0,2) (-2,2)--(-1,3);
                \node[circle, fill=black, inner sep=2pt, label=left:$\frac{bc}{b+c}$] at (0,0){};
                \node[circle, fill=black, inner sep=2pt, label=left:$\frac{(b+c)de}{bd+be+ce}$] at (-1,1){};
                \node[circle, fill=black, inner sep=2pt, label=left:$\frac{(bd+be+ce)f}{bd}$] at (-2,2){};
                \node[circle, fill=black, inner sep=2pt, label=right:$\frac{(b+c)e}{c}$] at (1,1){};
                \node[circle, fill=black, inner sep=2pt, label=right:$\frac{(bd+be+ce)f}{(b+c)e}$] at (0,2){};
                \node[circle, fill=black, inner sep=2pt, label=right:$g$] at (-1,3){};
                \node[circle, fill=black, inner sep=2pt, label=right:$\frac{1}{a(bd+be+ce)fg}$] at (-2,4){};
            \end{scope}
        \end{tikzpicture}
        \caption{Labelings of $RT_{3,2}$. From left to right: $x$, $y=\psi(x)$, $\rho^{-1}(y)$, and $z =\tilde\rho^{-1}(x) = \psi^{-1}(\rho^{-1}(y))$.}
        \label{fig:r32}
    \end{figure}
\end{Ex}

\subsubsection{Antichain rowmotion}

Lemma~\ref{lemma:dualtransfer} allows us to give an alternate formulation of rowmotion via its conjugation under the transfer map. (This map was called \emph{birational antichain rowmotion} or \emph{barmotion} by Joseph and Roby \cite{josephroby2}.)

\begin{Def}
For any $p \in P$, the \emph{(antichain) toggle} $\tau_p \colon \RR_+^{P} \to \RR_+^{P}$ is the map that changes the $p$-coordinate of $x \in \RR_+^{P}$ by
\[x_p \mapsto \left(\sum_{C} \prod_{q \in C} x_q\right)^{-1},\]
where $C$ ranges over all maximal chains of $P$ containing $p$, while keeping all other coordinates fixed.
\end{Def}

\begin{Def}
The \emph{(antichain) rowmotion map} $\tilde\rho \colon \RR_+^P \to \RR_+^P$ is the composition
\[\tilde\rho = \tau_{L^{-1}(n)} \circ \cdots \circ \tau_{L^{-1}(2)}\circ \tau_{L^{-1}(1)}\] for any linear extension (order-preserving bijection) $L \colon P \to [n]$.
\end{Def}
It is easy to verify that toggles $\tau_p$ and $\tau_q$ commute if and only if $p$ and $q$ are incomparable in $P$, which implies that $\tilde \rho$ does not depend on the choice of linear extension. Note that the $\tau_p$ here are applied from bottom to top as opposed to the $t_p$ in the definition of $\rho$, which were applied from top to bottom.

In fact, $\tilde\rho$ and $\rho$ are conjugate to one another, as shown in the following proposition due to Joseph and Roby \cite{josephroby2}.
\begin{Prop} \label{prop:rhotilde}
The equality $\tilde \rho = \psi^{-1} \circ \rho \circ \phibar$ holds.
\end{Prop}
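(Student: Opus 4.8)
The plan is to prove the asserted identity $\tilde\rho=\psi^{-1}\circ\rho\circ\psi$ (here $\phibar=\psi$) by showing that \emph{both} $\tilde\rho$ and $\psi^{-1}\circ\rho\circ\psi$ equal the single map $\psi^{-1}\circ\iota\circ\psi^*$, where $\iota\colon\RR_+^P\to\RR_+^P$ is the coordinatewise reciprocal, $\iota(x)_p=x_p^{-1}$, and $\psi^*$ is the inverse of the dual transfer map. Throughout I identify $\RR_+^P$ with $\RR_+^{\widehat P}$ via $x_{\hat 0}=x_{\hat 1}=1$; since those two labels are $1$, a maximal chain of $P$ through $p$ has the same weight as the corresponding maximal chain of $\widehat P$ through $p$, so the chains occurring in the definition of $\tau_p$ may be taken to be maximal chains of $\widehat P$.

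For $\psi^{-1}\circ\rho\circ\psi$ I would just reread Lemma~\ref{lemma:dualtransfer}: it says that for $z=\psi^{-1}\circ\rho^{-1}\circ\psi(x)$ one has $\psi(x)_p\,\psi^*(z)_p=1$ for all $p$, i.e.\ $\psi^*(z)=\iota(\psi(x))$, i.e.\ $z={\psi^*}^{-1}\circ\iota\circ\psi(x)$. As this holds for every $x$, it is an equality of maps $\psi^{-1}\circ\rho^{-1}\circ\psi={\psi^*}^{-1}\circ\iota\circ\psi$; inverting both sides (each composed factor is a bijection and $\iota^{-1}=\iota$) gives $\psi^{-1}\circ\rho\circ\psi=\psi^{-1}\circ\iota\circ\psi^*$.

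It then remains to show $\psi(\tilde\rho(x))_p\,\psi^*(x)_p=1$ for all $x$ and $p$, which says exactly that $\tilde\rho=\psi^{-1}\circ\iota\circ\psi^*$. Fix a linear extension $L$, set $k=L(p)$, and let $x'$ be the labeling obtained from $x$ after performing the first $k-1$ toggles $\tau_{L^{-1}(1)},\dots,\tau_{L^{-1}(k-1)}$ of $\tilde\rho$, so that $\tilde\rho(x)_p=\left(\sum_C w_{x'}(C)\right)^{-1}$, the sum being over maximal chains $C$ of $\widehat P$ through $p$ and $w_y(S)=\prod_{q\in S}y_q$. The key point — and the reason no induction is needed — is that an antichain toggle changes only a single coordinate, so a coordinate never changes after its own toggle has fired; hence in $x'$ we have $x'_q=\tilde\rho(x)_q$ for $q<p$ (as then $L(q)<k$), $x'_q=x_q$ for $q>p$ (as then $L(q)>k$), and $x'_p=x_p$, while coordinates incomparable to $p$ are irrelevant since they lie on no maximal chain through $p$. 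Now write each $C$ as $C=C^-\cup C^+$ with $C^-$ a maximal chain of $[\hat 0,p]$ and $C^+$ a maximal chain of $[p,\hat 1]$, so $C^-\cap C^+=\{p\}$ and, as $C$ runs over the maximal chains through $p$, the pair $(C^-,C^+)$ runs over all pairs of maximal chains of these two intervals. Keeping track of the fact that $p$ contributes $x'_p=x_p$ to $w_{x'}(C)$ but the product $\tilde\rho(x)_p\cdot x_p$ to $w_{\tilde\rho(x)}(C^-)\,w_x(C^+)$ (all other factors matching), one gets $w_{x'}(C)=w_{\tilde\rho(x)}(C^-)\,w_x(C^+)/\tilde\rho(x)_p$. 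Summing over $C$ and using $\sum_{C^-}w_{\tilde\rho(x)}(C^-)=\psi(\tilde\rho(x))_p$ and $\sum_{C^+}w_x(C^+)=\psi^*(x)_p$, the relation $\tilde\rho(x)_p=\left(\sum_C w_{x'}(C)\right)^{-1}$ becomes
\[\tilde\rho(x)_p=\frac{\tilde\rho(x)_p}{\psi(\tilde\rho(x))_p\,\psi^*(x)_p},\]
and cancelling $\tilde\rho(x)_p$ yields $\psi(\tilde\rho(x))_p\,\psi^*(x)_p=1$.

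I expect the only point requiring care to be the bookkeeping in the third paragraph: verifying that $x'$ really takes the asserted values on every coordinate influencing $\tau_p$, and that the weights split as claimed. Both are immediate consequences of $L$ being a linear extension and of antichain toggles being single-coordinate maps, so I do not anticipate a real obstacle. Every expression above is subtraction-free, so by Remark~\ref{rem:semiring} the argument holds over an arbitrary semifield as well.
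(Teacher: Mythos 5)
Your proof is correct and follows essentially the same route as the paper's: both arguments analyze the moment when the antichain toggle $\tau_p$ fires during the sweep defining $\tilde\rho$, split each maximal chain through $p$ into its lower and upper halves to get the identity $\psi(\tilde\rho(x))_p\,\psi^*(x)_p=1$, and then invoke Lemma~\ref{lemma:dualtransfer} to conclude. Your packaging via the map $\iota$ and the reformulation of both sides as $\psi^{-1}\circ\iota\circ\psi^*$ is just a slightly more explicit rendering of the paper's final step ``Comparing with Lemma~\ref{lemma:dualtransfer} then gives the result.''
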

\begin{proof}
    Suppose $\tilde \rho(z) = x$. During the application of $\tilde \rho$ to $z$, when applying $\tau_p$ to find $x_p$ for some $p \in P$, the labels below $p$ agree with $x$ while the labels above $p$ agree with $z$. Each maximal chain in $\widehat P$ through $p$ can be written as a union of a maximal chain from $[\hat 0, p]$ and one from $[p, \hat 1]$ (that overlap at $p$), so before applying $\tau_p$, the weight of the maximal chains through $p$ is
    $\frac{\psi(x)_p \psi^*(z)_p}{x_p}$. But this equals $x_p^{-1}$ by the definition of $\tau_p$, so we find that $\psi(x)_p \psi^*(z)_p = 1$. Comparing with Lemma~\ref{lemma:dualtransfer} then gives the result. 
\end{proof}

    
    

One important property of $\tilde\rho$ is the following identity.

\begin{Cor}\label{cor:edgeweight}
    Let $x \in \RR_+^P$, $y = \phibar(x)$, and $z = \tilde \rho^{-1}(x)$. Then for $p \in P$,
    \begin{align*}
        x_p^{-1} &= \sum_{q \lessdot p} \frac{y_q}{y_p},\\
        z_p^{-1} &= \sum_{q \gtrdot p} \frac{y_p}{y_q}.
    \end{align*}
\end{Cor}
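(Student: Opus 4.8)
The plan is to read all labelings as labelings of $\widehat P$ with the value $1$ at $\hat 0$ and $\hat 1$ (so that $q$ ranges over $\widehat P$ in the displayed sums), and then to derive the two identities directly from the definitions of the transfer maps, invoking Proposition~\ref{prop:rhotilde} and Lemma~\ref{lemma:dualtransfer} only for the second one. For the first identity, observe that $y = \phibar(x)$ is equivalent to $x = \psi^{-1}(y)$, and the transfer map $\psi^{-1}$ acts coordinatewise by $y_p \mapsto y_p/\sum_{q\lessdot p}y_q$; hence $x_p = y_p/\sum_{q\lessdot p} y_q$, and taking reciprocals gives $x_p^{-1} = \sum_{q\lessdot p} y_q/y_p$. (Equivalently, splitting each maximal chain of $[\hat 0,p]$ at its second-highest element $q\lessdot p$ shows directly that $y_p = x_p\sum_{q\lessdot p}y_q$.)

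For the second identity, I would first rewrite $z$: by Proposition~\ref{prop:rhotilde}, $\tilde\rho = \psi^{-1}\circ\rho\circ\phibar$, so
\[z = \tilde\rho^{-1}(x) = \psi^{-1}\bigl(\rho^{-1}(\phibar(x))\bigr) = \psi^{-1}(\rho^{-1}(y)).\]
Thus $x$ and $z$ are exactly in the situation of Lemma~\ref{lemma:dualtransfer}, which gives $\phibar(x)_p\,\psi^*(z)_p = 1$, i.e. $\psi^*(z)_p = y_p^{-1}$ for every $p$. Since $\psi^*$ and ${\psi^*}^{-1}$ are mutual inverses, $z = {\psi^*}^{-1}(\psi^*(z))$, and the dual transfer map ${\psi^*}^{-1}$ acts coordinatewise by $w_p\mapsto w_p/\sum_{q\gtrdot p}w_q$, so
\[z_p = \frac{\psi^*(z)_p}{\sum_{q\gtrdot p}\psi^*(z)_q} = \frac{y_p^{-1}}{\sum_{q\gtrdot p}y_q^{-1}}.\]
Taking reciprocals yields $z_p^{-1} = \sum_{q\gtrdot p} y_q^{-1}/y_p^{-1} = \sum_{q\gtrdot p} y_p/y_q$, as desired.

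I expect the only mildly delicate point to be the bookkeeping at the augmented elements: one must confirm that the conventions used in Lemma~\ref{lemma:dualtransfer} and in the definitions of $\phibar$, $\psi^{-1}$, $\psi^*$, ${\psi^*}^{-1}$ are consistent with assigning the value $1$ at $\hat 0$ and $\hat 1$, so that, for instance, the term $q = \hat 1$ in the second sum contributes $y_p$ when $p$ is maximal in $P$; this is routine. As an alternative to citing Lemma~\ref{lemma:dualtransfer}, one can prove the second identity from scratch by running $\tilde\rho^{-1}$ via the antichain toggles applied from top to bottom: when $\tau_p$ is applied, the coordinates strictly above $p$ already agree with $z$ while $p$ and all coordinates below it still agree with $x$, and factoring each maximal chain of $\widehat P$ through $p$ into its portion in $[\hat 0,p]$ (of total weight $\phibar(x)_p = y_p$) and its portion in $[p,\hat 1]$ (whose elements strictly above $p$ contribute total $z$-weight $\psi^*(z)_p/z_p$) shows that the value assigned to $p$ is $\bigl(\psi^*(z)_p\,y_p/z_p\bigr)^{-1}$, which must equal $z_p$, again forcing $\psi^*(z)_p = y_p^{-1}$; the displayed computation then finishes the proof exactly as above.
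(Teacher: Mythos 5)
Your proof is correct and follows essentially the same route as the paper: the first identity comes directly from unfolding $x = \psi^{-1}(y)$, and the second uses Proposition~\ref{prop:rhotilde} to put $x$ and $z$ in the setting of Lemma~\ref{lemma:dualtransfer}, yielding $\psi^*(z)_p = y_p^{-1}$, and then unfolding $z = {\psi^*}^{-1}(\psi^*(z))$. The alternative from-scratch argument you sketch at the end is also sound, but the main derivation is the one the paper gives.
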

\begin{proof}
The first equation follows by applying the definition of $\psi^{-1}$ to $x = \psi^{-1}(y)$. For the second, by Proposition~\ref{prop:rhotilde} and Lemma~\ref{lemma:dualtransfer} we have
    $\phibar^*(z)_p = \frac{1}{y_p}$.
    Therefore the equation $z = {\psi^*}^{-1}(\phibar^*(z))$ gives
    \[z_p = \frac{\frac{1}{y_p}}{\sum_{q \gtrdot p} \frac{1}{y_q}} = \frac{1}{\sum_{q \gtrdot p} \frac{y_p}{y_q}}.\]
    Inverting gives the desired result.
\end{proof}

\begin{Ex}
    Referring again to Figure~\ref{fig:r32}, one can compute $z$ directly from $y$ using Corollary~\ref{cor:edgeweight}. For example, 
    \[z_{21}^{-1} = \frac{y_{21}}{y_{31}}+\frac{y_{21}}{y_{22}} = \frac{ab}{abd} + \frac{ab}{a(b+c)e} = \frac{bd+be+ce}{(b+c)de}.\]
\end{Ex}

We will also need the following easy fact about $\tilde\rho^{-1}$ for later.
\begin{Prop} \label{prop:arbitrarya}
    Suppose $P$ has a unique minimum element $a$. Let $x \in \RR_+^P$ and $z = \tilde \rho^{-1}(x)$. Then for any nonmaximal $p \in P$, $z_p$ does not depend on $x_a$.

    Similarly, if $P$ has a unique maximum element $b$, $z \in \RR_+^P$, and $x = \tilde \rho(z)$, then for any nonminimal $p \in P$, $x_p$ does not depend on $z_b$.
\end{Prop}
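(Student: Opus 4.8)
The plan is to deduce both parts from Corollary~\ref{cor:edgeweight}, which already expresses $z_p^{-1}$ as a sum of ratios $y_p/y_q$ (over covers $q\gtrdot p$ in $\widehat P$) of the values of $y=\phibar(x)$, together with the standing convention $y_{\hat 0}=y_{\hat 1}=1$. First I would record what a unique minimum gives: if $a$ is the unique minimal element of $P$, then $a$ is the unique atom of $\widehat P$, so every maximal chain $\hat 0\lessdot q_1\lessdot\dots\lessdot q_k=p$ of an interval $[\hat 0,p]$ with $p\in P$ begins with $q_1=a$. Hence the chain description of $\phibar$ gives $y_p=\phibar(x)_p=x_a\,\widetilde y_p$ for every $p\in P$, where $\widetilde y_p:=y_p/x_a$ is a subtraction-free expression not involving the variable $x_a$ (it is the total weight of the maximal chains of $[a,p]$ with the bottom factor $x_a$ deleted, so $\widetilde y_a=1$).

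Now let $p\in P$ be nonmaximal. Since $\hat 1$ covers exactly the maximal elements of $P$, every cover $q\gtrdot p$ in $\widehat P$ lies in $P$, and $q>p$ forces $q\neq\hat 0$, so $y_p=x_a\widetilde y_p$ and $y_q=x_a\widetilde y_q$ for each such $q$. Corollary~\ref{cor:edgeweight} then gives
\[z_p^{-1}=\sum_{q\gtrdot p}\frac{y_p}{y_q}=\sum_{q\gtrdot p}\frac{\widetilde y_p}{\widetilde y_q},\]
which no longer involves $x_a$; hence $z_p$ does not depend on $x_a$. This proves the first statement, and the hypothesis is sharp: for maximal $p$ the only cover is $\hat 1$ with $y_{\hat 1}=1$, so $z_p^{-1}=y_p=x_a\widetilde y_p$ genuinely depends on $x_a$ (compare the top coordinate of $z$ in Figure~\ref{fig:r32}).

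For the second statement I would pass to the opposite poset $P^{*}$. The antichain toggle $\tau_p$ depends only on the set of maximal chains of $P$ through $p$, which is unchanged on passing to $P^{*}$, and reversing a linear extension of $P$ gives a linear extension of $P^{*}$ while reversing the order in which the toggles are composed; since each $\tau_p$ is an involution, $\tilde\rho_{P^{*}}=\tilde\rho_P^{-1}$. If $b$ is the unique maximal element of $P$, it is the unique minimal element of $P^{*}$ and $x=\tilde\rho(z)=\tilde\rho_{P^{*}}^{-1}(z)$, so applying the first statement to $P^{*}$ shows that $x_p$ is independent of $z_b$ for every $p$ that is nonmaximal in $P^{*}$, i.e.\ nonminimal in $P$.

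The only step that is not purely formal is the cancellation in the displayed identity: computing $z=\tilde\rho^{-1}(x)$ directly from the toggle definition ends with the application of $\tau_a$, which appears to introduce a factor $x_a^{-1}$, so one might naively expect $x_a$ to occur in every coordinate of $z$. Corollary~\ref{cor:edgeweight} is precisely the reformulation that makes the cancellation transparent: $x_a$ divides $y_q$ for every $q\neq\hat 0$, so it drops out of every cover-ratio $y_p/y_q$ except when the cover is $\hat 1$, which happens exactly at the maximal elements. Beyond assembling the earlier results in this order, I anticipate no real obstacle.
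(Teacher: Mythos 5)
Your proof of the first statement is correct and is essentially the paper's own argument: factor $y_p=\phibar(x)_p$ as $x_a\,\widetilde y_p$ with $\widetilde y_p$ independent of $x_a$, note that for nonmaximal $p$ every cover $q\gtrdot p$ in $\widehat P$ lies in $P$ (so $\hat 1$ with $y_{\hat 1}=1$ never appears as a denominator), and observe that $x_a$ cancels in each ratio $y_p/y_q$ in Corollary~\ref{cor:edgeweight}. Your remark that the nonmaximality hypothesis is sharp is also correct and matches the top coordinate in Figure~\ref{fig:r32}.

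For the second statement the paper instead says to argue ``similarly,'' using $\phibar^*(z)_p=1/y_p$ and the fact that every maximal chain of $[p,\hat 1]$ passes through the unique maximum $b$, so that $y_p$ is a multiple of $z_b^{-1}$ and this cancels in the expression $x_p=y_p/\sum_{q\lessdot p}y_q$ when $p$ is nonminimal. You instead reduce the second statement to the first by passing to the opposite poset, using the observation that each $\tau_p$ is an involution and depends only on the (order-agnostic) set of maximal chains through $p$, so that $\tilde\rho_{P^*}=\tilde\rho_P^{-1}$. That identity is correct (reversing a linear extension reverses the order of composition, and inverting a composition of involutions reverses it again), and the reduction is valid. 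The two approaches are equivalent in substance; yours is slightly more economical in that it literally reuses part one rather than rerunning the argument with $\phibar^*$, at the cost of having to justify the self-duality of antichain rowmotion, which the paper does not state explicitly but which follows easily from the definitions. Either route is fine.
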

\begin{proof}
    By the definition of $\phibar$, since $a$ is a minimum of $P$, $y_p=\phibar(x)_p$ is a multiple of $x_a$ for all $p \in P$, but $\frac{y_p}{x_a}$ is independent of $x_a$. This factor of $x_a$ then cancels in every term in the expression for $z_p^{-1}$ in Corollary~\ref{cor:edgeweight}. 

    The second statement is proved similarly, using the fact that $\frac{1}{y_p} = \phibar^*(z)_p$ for $p \in P$.
\end{proof}

\section{Chain Shifting in Skew Shapes}
\label{section:skewChainShifting}

In this section we prove a chain shifting lemma for rowmotion on skew shapes. This lemma is a generalization of the chain shifting lemma for rectangles proved by the current authors in \cite{johnsonliu}, which was also proved in the noncommutative setting by Grinberg and Roby \cite{grinbergroby3}. As we will see, the proof given here is simpler than those earlier proofs and relies mainly on a duality between plane trees.

\subsection{Arborescences}

Let $P$ be a poset and $x \in \RR^P_+$. We first describe how to express the weight $w_S(x)=\prod_{p \in S} x_p$ for particular subsets $S \subseteq P$ in terms of $y=\phibar(x)$. (Recall that we set $y_{\hat 0} = y_{\hat 1} = 1$.)


\begin{Def}
\label{Def:upanddowndegree}
Let $G$ be a (not necessarily induced) subgraph of the Hasse diagram of $P$. (The edges of $G$ are therefore cover relations of $P$).

The \emph{up degree} $\updeg_G(p)$ of a vertex $p \in G$ is the number of edges in $G$ that connect $p$ to an element that covers $p$ in $P$. Similarly, the \emph{down degree} $\downdeg_G(p)$ is the number of edges connecting $p$ to an element covered by $p$.
\end{Def}

\begin{Def}
\label{Def:spanningTree}
An \emph{upward arborescence of $P$} is a subgraph of $\widehat P \setminus \{\hat 1\}$ such that every element of $P$ has down degree $1$. Similarly, a \emph{downward arborescence of $P$} is a subgraph of $\widehat P \setminus \{\hat 0\}$ such that every element of $P$ has up degree $1$.

We denote the set of upward and downward arborescences of $P$ by $U_P$ and $D_P$, respectively.
\end{Def}


Define the \emph{weight} (with respect to $y$) of the edge $e$ corresponding to the cover relation $p \lessdot q$ to be $\omega_e(y) = \frac{y_p}{y_q}$ and the \emph{weight} of an arborescence $T$ to be $\omega_T(y) = \prod_{e \in E(T)} \omega_e(y)$. (Note that the notion of the weight $\omega_T(y)$ of an arborescence $T$ is different from the notion of the weight $w_S(x)$ of a subset $S$ of the poset.)

\begin{Ex}
Consider the right trapezoid $RT_{3,2}$ in Figure~\ref{fig:upwardSpanningTree} with its upward and downward arborescences shown. The weight of the first upward arborescence can be computed as
\[\frac{1}{y_{11}} \cdot \frac{y_{11}}{y_{21}} \cdot \frac{y_{11}}{y_{12}} \cdot \frac{y_{21}}{y_{31}} \cdot \frac{y_{21}}{y_{22}} \cdot \frac{y_{31}}{y_{32}} \cdot \frac{y_{32}}{y_{42}} = \frac{y_{11}y_{21}}{y_{12}y_{22}y_{42}}.\]
The weights of the other arborescences can be computed similarly. 

Let us take $y=\psi(x)$ as in Figure~\ref{fig:r32}. Then the total weight of all four upward arborescences is\[\frac{y_{11}(y_{21}+y_{12})(y_{31}+y_{22})}{y_{12}y_{22}y_{31}y_{42}}=\frac{a(ab+ac)(abd+a(b+c)e)}{ac \cdot a(b+c)e \cdot abd \cdot a(bd+be+ce)f} = \frac{1}{abcdef},\] which is $w_P(x)^{-1}$.
\end{Ex}
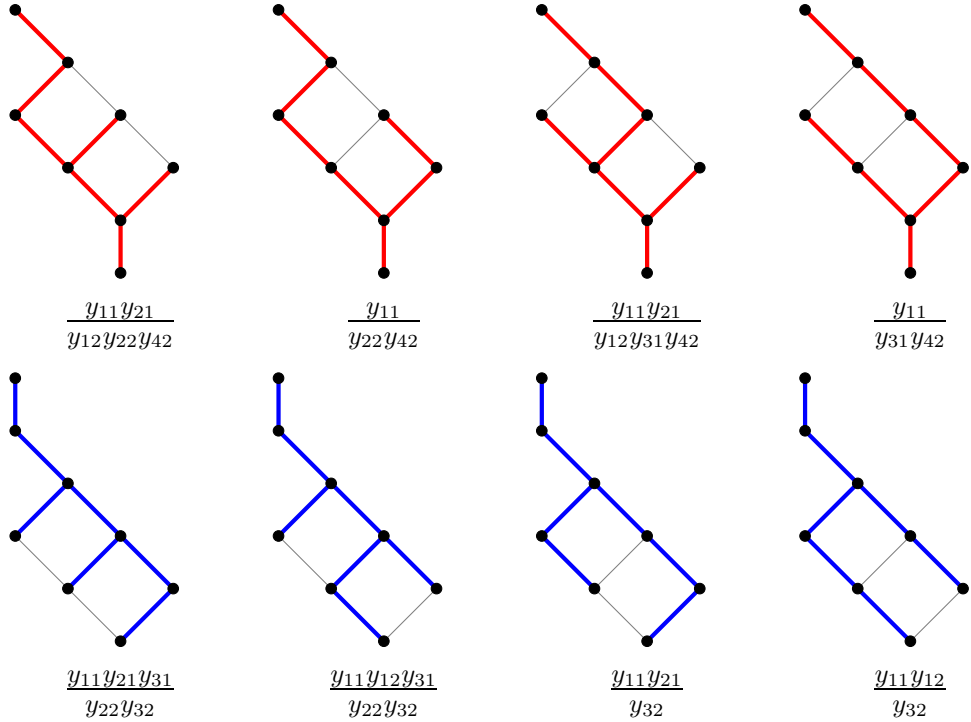
\begin{figure}
    \centering
\begin{tikzpicture}[scale = 0.7]
\begin{scope}[shift={(0,0)}]
\draw[gray] (0,0)--(1,1);
\draw[gray] (-1,1)--(0,2);
\draw[gray] (-2,2)--(-1,3);

\draw[gray] (0,0)--(-2,2);
\draw[gray] (1,1)--(-2,4);

\draw [red, ultra thick] (0,0)--(1,1);
\draw [red, ultra thick] (0,0)--(-2,2);
\draw [red, ultra thick] (-1,3)--(-2,4);

\draw [red, ultra thick] (-2,2)--(-1,3);
\draw [red, ultra thick] (-1,1)--(0,2);

\draw [red, ultra thick] (0,0)--(0,-1);

\node[wB] at (0,0) {};
\node[wB] at (1,1) {};
\node[wB] at (-1,1) {};
\node[wB] at (0,2) {};
\node[wB] at (-2,2) {};
\node[wB] at (-1,3) {};
\node[wB] at (-2,4) {};
\node[wB] at (0,-1) {};

\node at (0,-2) {\large$\frac{y_{11}y_{21}}{y_{12}y_{22}y_{42}}$};
\end{scope}

\begin{scope}[shift={(5,0)}]
\draw[gray] (0,0)--(1,1);
\draw[gray] (-1,1)--(0,2);
\draw[gray] (-2,2)--(-1,3);

\draw[gray] (0,0)--(-2,2);
\draw[gray] (1,1)--(-2,4);

\draw [red, ultra thick] (0,0)--(1,1);
\draw [red, ultra thick] (0,0)--(-2,2);
\draw [red, ultra thick] (-1,3)--(-2,4);

\draw [red, ultra thick] (-2,2)--(-1,3);
\draw [red, ultra thick] (1,1)--(0,2);

\draw [red, ultra thick] (0,0)--(0,-1);

\node[wB] at (0,0) {};
\node[wB] at (1,1) {};
\node[wB] at (-1,1) {};
\node[wB] at (0,2) {};
\node[wB] at (-2,2) {};
\node[wB] at (-1,3) {};
\node[wB] at (-2,4) {};
\node[wB] at (0,-1) {};

\node at (0,-2) {\large$\frac{y_{11}}{y_{22}y_{42}}$};
\end{scope}

\begin{scope}[shift={(10,0)}]
\draw[gray] (0,0)--(1,1);
\draw[gray] (-1,1)--(0,2);
\draw[gray] (-2,2)--(-1,3);

\draw[gray] (0,0)--(-2,2);
\draw[gray] (1,1)--(-2,4);

\draw [red, ultra thick] (0,0)--(1,1);
\draw [red, ultra thick] (0,0)--(-2,2);
\draw [red, ultra thick] (-1,3)--(-2,4);

\draw [red, ultra thick] (0,2)--(-1,3);
\draw [red, ultra thick] (-1,1)--(0,2);

\draw [red, ultra thick] (0,0)--(0,-1);

\node[wB] at (0,0) {};
\node[wB] at (1,1) {};
\node[wB] at (-1,1) {};
\node[wB] at (0,2) {};
\node[wB] at (-2,2) {};
\node[wB] at (-1,3) {};
\node[wB] at (-2,4) {};
\node[wB] at (0,-1) {};

\node at (0,-2) {\large$\frac{y_{11}y_{21}}{y_{12}y_{31}y_{42}}$};
\end{scope}

\begin{scope}[shift={(15,0)}]
\draw[gray] (0,0)--(1,1);
\draw[gray] (-1,1)--(0,2);
\draw[gray] (-2,2)--(-1,3);

\draw[gray] (0,0)--(-2,2);
\draw[gray] (1,1)--(-2,4);

\draw [red, ultra thick] (0,0)--(1,1);
\draw [red, ultra thick] (0,0)--(-2,2);
\draw [red, ultra thick] (-1,3)--(-2,4);
\draw [red, ultra thick] (0,2)--(-1,3);
\draw [red, ultra thick] (1,1)--(0,2);

\draw [red, ultra thick] (0,0)--(0,-1);

\node[wB] at (0,0) {};
\node[wB] at (1,1) {};
\node[wB] at (-1,1) {};
\node[wB] at (0,2) {};
\node[wB] at (-2,2) {};
\node[wB] at (-1,3) {};
\node[wB] at (-2,4) {};
\node[wB] at (0,-1) {};

\node at (0,-2) {\large$\frac{y_{11}}{y_{31}y_{42}}$};
\end{scope}

\begin{scope}[shift={(0,-8)}]
\draw[gray] (0,0)--(1,1);
\draw[gray] (-1,1)--(0,2);
\draw[gray] (-2,2)--(-1,3);

\draw[gray] (0,0)--(-2,2);
\draw[gray] (1,1)--(-2,4);

\draw [blue, ultra thick] (0,0)--(1,1);
\draw [blue, ultra thick] (1,1)--(-1,3);
\draw [blue, ultra thick] (-1,3)--(-2,4);
\draw [blue, ultra thick] (-2,2)--(-1,3);
\draw [blue, ultra thick] (-1,1)--(0,2);
\draw [blue, ultra thick] (-2,4)--(-2,5);

\node[wB] at (0,0) {};
\node[wB] at (1,1) {};
\node[wB] at (-1,1) {};
\node[wB] at (0,2) {};
\node[wB] at (-2,2) {};
\node[wB] at (-1,3) {};
\node[wB] at (-2,4) {};
\node[wB] at (-2,5) {};

\node at (0,-1) {\large$\frac{y_{11}y_{21}y_{31}}{y_{22}y_{32}}$};
\end{scope}

\begin{scope}[shift={(5,-8)}]
\draw[gray] (0,0)--(1,1);
\draw[gray] (-1,1)--(0,2);
\draw[gray] (-2,2)--(-1,3);

\draw[gray] (0,0)--(-2,2);
\draw[gray] (1,1)--(-2,4);

\draw [blue, ultra thick] (0,0)--(-1,1);
\draw [blue, ultra thick] (1,1)--(-1,3);
\draw [blue, ultra thick] (-1,3)--(-2,4);
\draw [blue, ultra thick] (-2,2)--(-1,3);
\draw [blue, ultra thick] (-1,1)--(0,2);

\draw [blue, ultra thick] (-2,4)--(-2,5);

\node[wB] at (0,0) {};
\node[wB] at (1,1) {};
\node[wB] at (-1,1) {};
\node[wB] at (0,2) {};
\node[wB] at (-2,2) {};
\node[wB] at (-1,3) {};
\node[wB] at (-2,4) {};
\node[wB] at (-2,5) {};

\node at (0,-1) {\large$\frac{y_{11}y_{12}y_{31}}{y_{22}y_{32}}$};
\end{scope}

\begin{scope}[shift={(10,-8)}]
\draw[gray] (0,0)--(1,1);
\draw[gray] (-1,1)--(0,2);
\draw[gray] (-2,2)--(-1,3);

\draw[gray] (0,0)--(-2,2);
\draw[gray] (1,1)--(-2,4);

\draw [blue, ultra thick] (0,0)--(1,1);
\draw [blue, ultra thick] (1,1)--(-1,3);
\draw [blue, ultra thick] (-1,3)--(-2,4);
\draw [blue, ultra thick] (-2,2)--(-1,3);
\draw [blue, ultra thick] (-1,1)--(-2,2);

\draw [blue, ultra thick] (-2,4)--(-2,5);

\node[wB] at (0,0) {};
\node[wB] at (1,1) {};
\node[wB] at (-1,1) {};
\node[wB] at (0,2) {};
\node[wB] at (-2,2) {};
\node[wB] at (-1,3) {};
\node[wB] at (-2,4) {};
\node[wB] at (-2,5) {};

\node at (0,-1) {\large$\frac{y_{11}y_{21}}{y_{32}}$};
\end{scope}

\begin{scope}[shift={(15,-8)}]
\draw[gray] (0,0)--(1,1);
\draw[gray] (-1,1)--(0,2);
\draw[gray] (-2,2)--(-1,3);

\draw[gray] (0,0)--(-2,2);
\draw[gray] (1,1)--(-2,4);

\draw [blue, ultra thick] (-2,2)--(-1,3);
\draw [blue, ultra thick] (0,0)--(-2,2);
\draw [blue, ultra thick] (-1,3)--(-2,4);
\draw [blue, ultra thick] (0,2)--(-1,3);
\draw [blue, ultra thick] (1,1)--(0,2);

\draw [blue, ultra thick] (-2,4)--(-2,5);

\node[wB] at (0,0) {};
\node[wB] at (1,1) {};
\node[wB] at (-1,1) {};
\node[wB] at (0,2) {};
\node[wB] at (-2,2) {};
\node[wB] at (-1,3) {};
\node[wB] at (-2,4) {};
\node[wB] at (-2,5) {};

\node at (0,-1) {\large$\frac{y_{11}y_{12}}{y_{32}}$};
\end{scope}

\end{tikzpicture}
    \caption{The four upward arborescences in $U_{RT_{3,2}}$ and the four downward arborescences in $D_{RT_{3,2}}$, together with their weights. 
    }
    \label{fig:upwardSpanningTree}
\end{figure}

The following proposition relates the weight of $P$ with arborescences.

\begin{Prop} \label{prop:weightofP}
    Let $P$ be a poset and $x \in \RR_+^P$, and let $y=\psi(x)$ and $z = \tilde \rho^{-1}(x)$. Then
    \begin{align*}
        \frac{1}{w_P(x)} &= \sum_{T \in U_P} \omega_T(y),\\
        \frac{1}{w_P(z)} &= \sum_{T \in D_P} \omega_T(y).
    \end{align*}
\end{Prop}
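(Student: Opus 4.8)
The plan is to prove the first identity $\frac{1}{w_P(x)} = \sum_{T \in U_P} \omega_T(y)$ by induction on $|P|$, peeling off a maximal element, and then deduce the second identity from the first by an order-reversing duality argument. Recall from Corollary~\ref{cor:edgeweight} that $x_p^{-1} = \sum_{q \lessdot p} \frac{y_q}{y_p}$ for all $p \in P$ (where $y_{\hat 0} = 1$). The key observation is that this expresses $x_p^{-1}$ as the total weight (with respect to $y$) of all single edges of the Hasse diagram going \emph{down} from $p$ — that is, the choices of ``parent'' for $p$ in an upward arborescence, exactly as required by Definition~\ref{Def:spanningTree}.

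For the base case, when $P = \emptyset$ we have $w_P(x) = 1$ (the empty product) and $U_P$ consists of the single empty arborescence of weight $1$. For the inductive step, let $m$ be a maximal element of $P$ and set $P' = P \setminus \{m\}$, with $x'$ the restriction of $x$ to $P'$. Since $m$ is maximal, removing it does not change $y_p = \psi(x)_p = \phibar(x)_p$ for any $p \in P'$ (the quantity $\phibar(x)_p$ depends only on the labels in the interval $[\hat 0, p]$, which lies in $P'$); thus $y' := \psi(x')$ agrees with $y$ on $P'$. Now every upward arborescence $T \in U_P$ restricts to an upward arborescence $T' \in U_{P'}$ by deleting $m$ and its unique down-edge $e$ (connecting $m$ to some $q \lessdot m$, with $q \in P'$ or $q = \hat 0$); conversely, each $T' \in U_{P'}$ extends to exactly $\downdeg_P(m)$-many, wait — to exactly as many elements of $U_P$ as there are elements $q$ with $q \lessdot m$ in $\widehat P$, each choice contributing the factor $\omega_e(y) = \frac{y_q}{y_m}$. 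Hence
\begin{align*}
\sum_{T \in U_P} \omega_T(y) &= \left(\sum_{T' \in U_{P'}} \omega_{T'}(y')\right) \cdot \left(\sum_{q \lessdot m} \frac{y_q}{y_m}\right) = \frac{1}{w_{P'}(x')} \cdot \frac{1}{x_m} = \frac{1}{w_P(x)},
\end{align*}
using the inductive hypothesis for the first factor and Corollary~\ref{cor:edgeweight} for the second. This completes the induction.

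For the second identity, apply the first to the dual poset $P^*$ (with order reversed). A downward arborescence of $P$ is precisely an upward arborescence of $P^*$ after swapping $\hat 0$ and $\hat 1$, and the edge weight $\frac{y_p}{y_q}$ for a cover $p \lessdot q$ in $P$ should be matched against the dual transfer quantity. Concretely, I would use Corollary~\ref{cor:edgeweight} directly: it gives $z_p^{-1} = \sum_{q \gtrdot p} \frac{y_p}{y_q}$, which is the total $y$-weight of the choices of ``parent'' (now an element \emph{above} $p$) for $p$ in a downward arborescence, exactly as in Definition~\ref{Def:spanningTree} for $D_P$. So the identical induction — peeling off a \emph{minimal} element $m'$ of $P$ this time, and noting that $\phibar^*(z)_p = 1/y_p$ for $p \in P$ (from the proof of Corollary~\ref{cor:edgeweight}, via Proposition~\ref{prop:rhotilde} and Lemma~\ref{lemma:dualtransfer}) is unchanged when a minimal element is removed — yields $\frac{1}{w_P(z)} = \sum_{T \in D_P} \omega_T(y)$.

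The main obstacle I anticipate is bookkeeping around the boundary elements $\hat 0$ and $\hat 1$: one must check that when $m$ is a maximal element of $P$ whose only cover in $\widehat P$ from above is $\hat 1$, its down-edges still range over $\{q : q \lessdot m \text{ in } \widehat P\}$ (which may include $\hat 0$ if $m$ is also minimal), and that the convention $y_{\hat 0} = y_{\hat 1} = 1$ makes the edge weights $\omega_e(y)$ come out correctly in these degenerate cases — in particular that a minimal element $p$ contributes a down-edge to $\hat 0$ with weight $\frac{y_{\hat 0}}{y_p} = \frac{1}{y_p}$, matching the $q = \hat 0$ term of Corollary~\ref{cor:edgeweight}. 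Once the conventions are pinned down, the argument is a routine telescoping induction.
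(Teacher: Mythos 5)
Your proof is correct and rests on the same key ingredient as the paper's: Corollary~\ref{cor:edgeweight}, which identifies $x_p^{-1}$ (respectively $z_p^{-1}$) with the total $y$-weight of the down-edge (resp.\ up-edge) choices at $p$. The paper's proof is shorter because it skips the induction entirely: it simply writes
\[
\frac{1}{w_P(x)} = \prod_{p \in P} x_p^{-1} = \prod_{p \in P} \sum_{q \lessdot p} \frac{y_q}{y_p}
\]
and expands the product by distributivity, observing that each term in the expansion is exactly $\omega_T(y)$ for one upward arborescence $T$ (one down-edge chosen at each $p$); the second equation is ``similar.'' Your version packages the same distributivity as a one-element-at-a-time induction, which then forces you to verify the (true, but otherwise unnecessary) stability facts that $\psi(x)_p$ is unchanged on $P \setminus \{m\}$ when $m$ is maximal, and dually that $\psi^*(z)_p$ is unchanged when a minimal element is removed. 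So the induction is sound but adds bookkeeping the direct expansion avoids; if you state the argument as ``expand $\prod_p x_p^{-1}$ over all ways of choosing one summand per factor, and note these choices are exactly $U_P$,'' the bulk of your inductive step collapses into one sentence, and the second identity follows verbatim from the expression $z_p^{-1} = \sum_{q \gtrdot p} y_p/y_q$ with no separate duality argument needed.
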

\begin{proof}
    For the first equation, by Corollary~\ref{cor:edgeweight},
    \begin{align*}
        \frac{1}{w_P(x)} = \prod_{p \in P} x_p^{-1} = \prod_{p \in P} \sum_{q \lessdot p} \frac{y_q}{y_p}. 
    \end{align*}
    Each upward arborescence contains exactly one edge downward from each element of $P$, so the right hand side is the total weight of all upward arborescences in $U_P$, which gives the first equation. The proof of the second equation is similar.
\end{proof}

For any $y \in \RR_+^P$, we can use $\omega_T(y)$ to define probability measures on $U_P$ and $D_P$: for any subsets $U \subset U_P$ and $D \subset D_P$, define
\begin{align*}
    \mu_y(U) &= \frac{\sum_{T \in U} \omega_T(y)}{\sum_{T \in U_P} \omega_T(y)}, &
    \mu_y(D) &= \frac{\sum_{T \in D} \omega_T(y)}{\sum_{T \in D_P} \omega_T(y)}.
\end{align*}
We will show that these are closely related to the weights of chains in $P$.

\subsubsection{Weights of chains}


For any saturated chain $C \subset P$, we say that an arborescence \emph{contains} $C$ if it contains all of the edges representing cover relations in $C$. Let $U_P(C)$ and $D_P(C)$ denote the sets of all upward and downward arborescences that contain $C$. The following proposition relates the weight of $C$ (as a subset of $P$) to the weights of the arborescences containing $C$.
 \begin{Prop} \label{prop:chainarborescence}
     Let $P$ be a poset, $x \in \RR_+^P$, $y = \phibar(x)$, and $z = \tilde\rho^{-1}(x)$. Let $C$ be a saturated chain in $P$ with minimum $a$ and maximum $b$. Then
     \begin{align*}
     w_C(x) &= \left(\sum_{a' \lessdot a} \frac{y_{a'}}{y_b}\right)^{-1} \cdot \mu_y(U_P(C)),\tag{$*$}\label{eq:wcx}\\
     w_C(z) &= \left(\sum_{b' \gtrdot b} \frac{y_{a}}{y_{b'}}\right)^{-1} \cdot \mu_y(D_P(C)).\tag{$**$}\label{eq:wcz}
     \end{align*}
 \end{Prop}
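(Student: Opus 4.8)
**The plan is to prove both equations simultaneously by a single combinatorial manipulation, reducing everything to Corollary~\ref{cor:edgeweight} and a careful accounting of which arborescences contain a fixed chain.**

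First I would set up notation. Write $C = (a = p_0 \lessdot p_1 \lessdot \dots \lessdot p_k = b)$. For the first equation, I would start from Proposition~\ref{prop:chainarborescence}'s building block, Corollary~\ref{cor:edgeweight}: $x_p^{-1} = \sum_{q \lessdot p} \frac{y_q}{y_p}$, so $x_p = \left(\sum_{q \lessdot p} \frac{y_q}{y_p}\right)^{-1}$. Hence
\[
w_C(x) = \prod_{p \in C} x_p = \prod_{i=0}^k \left(\sum_{q \lessdot p_i} \frac{y_q}{y_{p_i}}\right)^{-1}.
\]
The idea is that the product over the $k+1$ elements of $C$, after inverting, is a sum over ways to choose one downward edge from each $p_i$; among these choices, the ``canonical'' one is to pick the edge $p_{i-1} \lessdot p_i$ already in $C$ for $i = 1, \dots, k$, leaving only $p_0 = a$ free. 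To exploit this I would rewrite the telescoping product: choosing the edge $p_{i-1}\lessdot p_i$ for each $i\ge 1$ contributes $\prod_{i=1}^k \frac{y_{p_{i-1}}}{y_{p_i}} = \frac{y_a}{y_b}$. The key algebraic step is then to observe that
\[
\left(\sum_{T \in U_P} \omega_T(y)\right) \cdot w_C(x) = \left(\sum_{a' \lessdot a}\frac{y_{a'}}{y_b}\right)^{-1} \cdot \left(\sum_{T \in U_P(C)} \omega_T(y)\right),
\]
after which dividing by $\sum_{T\in U_P}\omega_T(y)$ and recalling the definition of $\mu_y$ yields (\ref{eq:wcx}). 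To establish this identity, I would expand the right side: an upward arborescence containing $C$ is determined by its chosen downward edges at the elements of $P \setminus \{p_1, \dots, p_k\}$ (the edges at $p_1,\dots,p_k$ being forced to lie along $C$), so $\sum_{T\in U_P(C)}\omega_T(y) = \frac{y_a}{y_b}\prod_{p \in P\setminus C}\left(\sum_{q\lessdot p}\frac{y_q}{y_p}\right) \cdot \left(\sum_{q\lessdot a}\frac{y_q}{y_a}\right)$. Wait—one must be careful: $a = p_0$ is in $C$ but its downward edge is \emph{not} forced, so $a$ is among the ``free'' vertices. Comparing this to $\sum_{T\in U_P}\omega_T(y) = \prod_{p\in P}\left(\sum_{q\lessdot p}\frac{y_q}{y_p}\right)$ and to $w_C(x) = \prod_{i=0}^k\left(\sum_{q\lessdot p_i}\frac{y_q}{y_{p_i}}\right)^{-1}$, the factors at all $p\notin C$ cancel, the factors at $p_1,\dots,p_k$ match the $\frac{y_a}{y_b}$ telescoping term, and the factor at $a$ matches $\left(\sum_{a'\lessdot a}\frac{y_{a'}}{y_a}\right)$; after putting back the $y$ normalization one arrives at the stated form with $\left(\sum_{a'\lessdot a}\frac{y_{a'}}{y_b}\right)^{-1}$ (the $y_a$ versus $y_b$ in the denominator being absorbed by the telescoping). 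I would verify this bookkeeping carefully, since this is exactly where sign/index errors creep in.

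For the second equation (\ref{eq:wcz}), I would run the dual argument: use the second identity of Corollary~\ref{cor:edgeweight}, $z_p^{-1} = \sum_{q\gtrdot p}\frac{y_p}{y_q}$, so $w_C(z) = \prod_{p\in C}\left(\sum_{q\gtrdot p}\frac{y_p}{y_q}\right)^{-1}$, and repeat the analysis with downward arborescences, which have forced up-degree $1$. Now it is the top element $b$ whose upward edge is free and $p_0,\dots,p_{k-1}$ whose upward edges are forced along $C$, giving the telescoping factor $\prod_{i=1}^k\frac{y_{p_{i-1}}}{y_{p_i}}$ again (reading the edge $p_{i-1}\lessdot p_i$ from above contributes $\frac{y_{p_{i-1}}}{y_{p_i}}$). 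This symmetry with the first part means the second equation follows by essentially the same computation with ``up'' and ``down'' swapped, so I would present it briefly.

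\textbf{The main obstacle} I anticipate is purely bookkeeping: tracking exactly which vertices have forced edges versus free edges when the chain $C$ passes through them, and making sure the endpoint $y$-values ($y_a$, $y_b$, and the $y$'s at $a'\lessdot a$ or $b'\gtrdot b$) land in the right places after the telescoping product is simplified. There is no deep idea beyond Corollary~\ref{cor:edgeweight} and the product-over-vertices description of arborescence weights, but the normalization constants must be handled with care—in particular distinguishing the roles of $a$ and $b$, since $a\in C$ is a ``free'' vertex in the upward case while $b$ is free in the downward case. A useful sanity check before writing the final version would be to test both formulas on the $RT_{3,2}$ example in Figure~\ref{fig:upwardSpanningTree} against the explicit labelings in Figure~\ref{fig:r32}.
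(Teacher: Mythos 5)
Your proof is correct and follows essentially the same route as the paper: compute $\sum_{T \in U_P(C)} \omega_T(y)$ as the product over the free vertices $(P \setminus C) \cup \{a\}$ using the forced edges along $C$, then divide by $\sum_{T \in U_P} \omega_T(y) = w_P(x)^{-1}$ (Proposition~\ref{prop:weightofP}) and match factors via Corollary~\ref{cor:edgeweight}; the dual argument gives \eqref{eq:wcz}. Your bookkeeping (treating $a$ as a free vertex, and the telescoping product $\prod_i \frac{y_{p_{i-1}}}{y_{p_i}} = \frac{y_a}{y_b}$) is exactly what the paper does.
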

\begin{proof}
    The product of the weights of the edges in $C$ is $\frac{y_{a}}{y_{b}}$. The elements of $U_P(C)$ are obtained from $C$ by adding one downward edge from each element of $(P \setminus C) \cup \{a\}$. It follows from Corollary~\ref{cor:edgeweight} that
\begin{align*}
\sum_{T \in U_P(C)} \omega_T(y) &= \left(\frac{y_{a}}{y_{b}} \cdot \sum_{a' \lessdot a} \frac{y_{a'}}{y_{a}} \right) \cdot \prod_{p \in P \setminus C} \sum_{q \lessdot p} \frac{y_q}{y_p}\\
&=\sum_{a' \lessdot a} \frac{y_{a'}}{y_{b}} \cdot \prod_{p \in P \setminus C} x_p^{-1}\\
&=\sum_{a' \lessdot a} \frac{y_{a'}}{y_{b}} \cdot \frac{w_{C}(x)}{w_P(x)}.
\end{align*}
Subsitituting for $w_P(x)$ using Proposition~\ref{prop:weightofP} and
rearranging gives the result.
The other equation is proved similarly.
\end{proof}

\begin{Ex}
    Consider again the four upward arborescences in $P=RT_{3,2}$ as depicted in Figure~\ref{fig:upwardSpanningTree}, and let $x$ and $y = \psi(x)$ be as shown in Figure~\ref{fig:r32}. 
    
    Suppose $C$ is the chain $(2,1) \lessdot (3,1) \lessdot (3,2)$. Only the first two arborescences shown contain $C$, and their weights are $\frac{y_{31}}{y_{22}}$ times the weights of the last two arborescences. Thus the fraction of upward arborescence weight covered by the two containing $C$ is
    \[\mu_y(U_P(C)) = \frac{y_{31}}{y_{31}+y_{22}}.\]
    Plugging in the coordinates from Figure~\ref{fig:r32} gives
    \[\frac{y_{31}}{y_{31}+y_{22}} = \frac{abd}{abd+a(b+c)e} = bdf \cdot \frac{a}{a(bd+be+ce)f} = w_C(x) \cdot \frac{y_{11}}{y_{32}}\]
    as required by Proposition~\ref{prop:chainarborescence}.
\end{Ex}

Given a collection $\mathscr C$ of (saturated) chains, define
\[U_P(\mathscr C) = \bigsqcup_{C \in \mathscr C} U_P(C) \quad \text{and} \quad D_P(\mathscr C) = \bigsqcup_{C \in \mathscr C} D_P(C).\]
We will typically be interested in finding the total weight \[w_{\mathscr C}(x) = \sum_{C \in \mathscr C} w_C(x).\] 
Note that if $a = \min(C)$ and $b=\max(C)$ are fixed for all $C \in \mathscr C$, then it is easy to sum \eqref{eq:wcx} (and similarly \eqref{eq:wcz}) over all $C \in \mathscr C$ since the first factor on the right hand side will be fixed. Thus the result will be the same equation \eqref{eq:wcx} but with $C$ replaced by $\mathscr C$. In addition, each arborescence will contain at most one $C \in \mathscr C$, so there would be no need to worry about multiplicities in $U_P(\mathscr C)$.

The most important consequence is the following corollary, which we will use in Section~\ref{sec:chainshifting}.
\begin{Cor} \label{cor:bijectionimpliesweight}
    Let $P$ be a poset, $x \in \RR_+^P$, and $z = \tilde \rho^{-1}(x)$. Let $m,m',M,M' \in P$ such that $m'$ is the unique element covered by $m$ and $M$ is the unique element covering $M'$.
    
    Suppose $\mathscr C$ and $\mathscr C'$ are collections of saturated chains such that $(\min(C),\max(C)) = (m,M)$ for all $C \in \mathscr C$, and $(\min(C'), \max(C')) = (m',M')$ for all $C' \in \mathscr C'$. Then $\mu_y(U_P(\mathscr C)) = \mu_y(D_P(\mathscr C'))$ implies $w_{\mathscr C}(x) = w_{\mathscr C'}(z)$.
\end{Cor}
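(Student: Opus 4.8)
The plan is to combine Proposition~\ref{prop:chainarborescence} (in its summed-over-$\mathscr C$ form) with a careful bookkeeping of the two ``boundary factors'' appearing in \eqref{eq:wcx} and \eqref{eq:wcz}. First I would apply \eqref{eq:wcx} to each $C \in \mathscr C$; since every such chain has the same minimum $m$ and maximum $M$, the factor $\bigl(\sum_{a' \lessdot m} \frac{y_{a'}}{y_M}\bigr)^{-1}$ is constant, so summing yields
\[
w_{\mathscr C}(x) = \left(\sum_{a' \lessdot m} \frac{y_{a'}}{y_M}\right)^{-1} \cdot \mu_y(U_P(\mathscr C)).
\]
Dually, applying \eqref{eq:wcz} to each $C' \in \mathscr C'$ (all with minimum $m'$ and maximum $M'$) gives
\[
w_{\mathscr C'}(z) = \left(\sum_{b' \gtrdot M'} \frac{y_{m'}}{y_{b'}}\right)^{-1} \cdot \mu_y(D_P(\mathscr C')).
\]
By hypothesis the two probability-measure factors $\mu_y(U_P(\mathscr C))$ and $\mu_y(D_P(\mathscr C'))$ are equal, so it remains only to show that the two boundary prefactors agree, i.e.
\[
\sum_{a' \lessdot m} \frac{y_{a'}}{y_M} = \sum_{b' \gtrdot M'} \frac{y_{m'}}{y_{b'}}.
\]

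Here is where the special hypotheses on $m,m',M,M'$ come in: $m'$ is the \emph{unique} element covered by $m$, and $M$ is the \emph{unique} element covering $M'$. The idea is to rewrite each side using Corollary~\ref{cor:edgeweight}, which says $x_p^{-1} = \sum_{q \lessdot p} \frac{y_q}{y_p}$ and $z_p^{-1} = \sum_{q \gtrdot p} \frac{y_p}{y_q}$. Taking $p = m$ in the first identity and noting $m$ covers only $m'$, I get $x_m^{-1} = \frac{y_{m'}}{y_m}$, hence $y_{m'} = x_m y_m$, which incidentally forces $\sum_{a' \lessdot m}\frac{y_{a'}}{y_m}$ to have a single term as well if $m$ has a unique lower cover — but more usefully it lets me relate $y_{m'}$ back to $y_m$. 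Similarly, taking $p = M'$ in the second identity and using that $M'$ is covered only by $M$, I get $z_{M'}^{-1} = \frac{y_{M'}}{y_M}$, so $y_{M'} = z_{M'} y_M$. The remaining task is to connect $\sum_{a' \lessdot m}\frac{y_{a'}}{y_M}$ and $\sum_{b' \gtrdot M'}\frac{y_{m'}}{y_{b'}}$ through these relations; I expect this to reduce, after clearing the common denominators $y_M$ and multiplying through appropriately, to the manifestly symmetric statement that the total weight of the ``doubly-augmented'' chain structures at the two ends coincide — equivalently, to a direct bijection (or weight-preserving identity) between upward arborescences containing a fixed chain from $m$ to $M$ and downward arborescences containing a fixed chain from $m'$ to $M'$, obtained by ``pivoting'' the unique extra edge at each end.

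The main obstacle I anticipate is precisely this last identity between boundary prefactors: unlike the bulk of the argument, which is a formal consequence of the two already-established propositions, matching $\sum_{a' \lessdot m} \frac{y_{a'}}{y_M}$ with $\sum_{b' \gtrdot M'} \frac{y_{m'}}{y_{b'}}$ seems to require genuinely using the uniqueness of the cover relations $m' \lessdot m$ and $M' \lessdot M$, and it is not a priori obvious why these two sums — one indexed by lower covers of $m$, the other by upper covers of $M'$ — should be equal for arbitrary $y$. I would resolve this either by a short direct computation substituting $y_{m'} = x_m y_m$ and $y_{M'} = z_{M'} y_M$ and checking the resulting expressions against Proposition~\ref{prop:weightofP}, or, if that is awkward, by observing that $\mu_y$ being a genuine probability measure (total mass $1$ on $U_P$ and on $D_P$) together with Proposition~\ref{prop:weightofP} pins down both prefactors in terms of $w_P(x)$ and $w_P(z)$, and then invoking the relation between $w_P(x)$ and $w_P(z)$ that follows from $z = \tilde\rho^{-1}(x)$ and the structure of $\tilde\rho$. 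One of these two routes should close the gap cleanly; the rest is routine.
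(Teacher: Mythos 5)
Your plan begins exactly as the paper's proof does: summing \eqref{eq:wcx} over $\mathscr C$ and \eqref{eq:wcz} over $\mathscr C'$ gives the two displayed formulas, after which it suffices to compare the boundary prefactors. But you then misdiagnose that comparison as ``the main obstacle,'' when in fact the hypotheses make it immediate. Because $m'$ is the \emph{unique} element covered by $m$, the sum $\sum_{a' \lessdot m} y_{a'}/y_M$ has exactly one term, namely $y_{m'}/y_M$; and because $M$ is the \emph{unique} element covering $M'$, the sum $\sum_{b' \gtrdot M'} y_{m'}/y_{b'}$ also has exactly one term, again $y_{m'}/y_M$. Both prefactors therefore equal $\left(y_{m'}/y_M\right)^{-1}$, and the corollary follows with no further work --- which is precisely the paper's one-line proof.

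You even note in passing that uniqueness ``forces $\sum_{a' \lessdot m} y_{a'}/y_m$ to have a single term,'' but you set that observation aside in favor of deriving $y_{m'} = x_m y_m$ and $y_{M'} = z_{M'} y_M$ from Corollary~\ref{cor:edgeweight}. Neither relation is needed, and the second is misdirected: $y_{M'}$ does not even appear in the relevant prefactor, which involves $y_M$ (the unique cover of $M'$), not $y_{M'}$. The alternative route you sketch --- using total mass one of $\mu_y$ and Proposition~\ref{prop:weightofP} --- likewise does not by itself pin down the prefactors. The hypotheses on $m,m',M,M'$ are calibrated precisely so that the two prefactor sums collapse to the same monomial $y_{m'}/y_M$; once you observe this, you are done.
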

\begin{proof}
    Summing \eqref{eq:wcx} and \eqref{eq:wcz} over all elements of $\mathscr C$ and $\mathscr C'$, respectively, gives
    \[w_{\mathscr C}(x) = \left(\frac{y_{m'}}{y_{M}}\right)^{-1} \cdot \mu_y(U_P(\mathscr C)) \quad \text{and} \quad w_{\mathscr C'}(z) = \left(\frac{y_{m'}}{y_{M}}\right)^{-1} \cdot \mu_y(D_P(\mathscr C')).\qedhere\]
\end{proof}

\subsection{Skew shapes}

For the rest of this section, we will mainly be concerned with a particular type of poset called a \emph{skew shape}.

\begin{Def}
    A \emph{skew shape poset} $S$ is a saturated subposet of a rectangular poset $R_{r,s}$ containing $(1,1)$ and $(r,s)$ such that if $(i-1,j),(i,j-1) \in S$, then $(i,j) \in S$.
\end{Def}

Since $S$ is a saturated subposet containing the minimum and maximum of $R_{r,s}$, the leftmost elements at each rank form a maximal chain, as do the rightmost elements. The final condition guarantees that all elements in between are also contained in $S$. See Figure~\ref{fig:cornerPoints} for an example. (A skew shape can equivalently be described via the boxes of a connected skew Young diagram $\lambda/\mu$, or as the distributive lattice of order ideals of width 2 posets.) Note that the right trapezoid $RT_{r,s}$ is an example of a skew shape.

\begin{figure}
    \centering
\begin{tikzpicture}[scale = 0.5]
\draw (0,0)--(-2,2);
\draw (1,1)--(-3,5);
\draw (2,2)--(-2,6);
\draw (3,3)--(-2,8);
\draw (4,4)--(-1,9);
\draw (3,7)--(0,10);

\draw (0,0)--(4,4);
\draw (-1,1)--(3,5);
\draw (-2,2)--(3,7);
\draw (-2,4)--(2,8);
\draw (-3,5)--(1,9);
\draw (-2,8)--(0,10);

\node[bigB] at (4,4) {};
\node[bigB] at (3,7) {};
\node[bigB] at (-2,2) {};
\node[bigB] at (-3,5) {};
\node[bigB] at (-2,8) {};

\node[bigW] at (2,6) {};
\node[bigW] at (-1,3) {};
\node[bigW] at (-1,7) {};
\end{tikzpicture}
    \caption{A skew shape and its corner points. Outward corner points are marked in black and inward corner points are marked in white.}
    \label{fig:cornerPoints}
\end{figure}
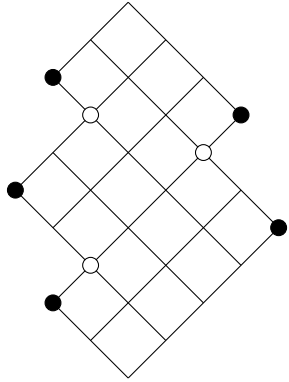

To discuss the boundary of $S$, we introduce the notion of \emph{corner points}.

\begin{Def}
Let $S$ be a skew shape and let $(i,j) \in S$. We say that $(i,j)$ is:
\begin{itemize}
    \item a \emph{left outward corner point} if $(i-1,j),(i,j+1) \in S$ but $(i+1,j),(i,j-1) \not\in S$;
    \item a \emph{right outward corner point} if $(i+1,j),(i,j-1) \in S$ but $(i-1,j),(i,j+1) \not\in S$;
    \item a \emph{left inward corner point} if $(i+1,j),(i,j-1) \in S$ but $(i+1,j-1) \not\in S$; and
    \item a \emph{right inward corner point} if $(i-1,j),(i,j+1) \in S$ but $(i-1,j+1) \not\in S$.
\end{itemize}
We denote the set of outward (resp.\ inward) corner points by $\outcorner(S)$ (resp.\ $\incorner(S)$). 
\end{Def}
See Figure~\ref{fig:cornerPoints} for an illustration.
As a warning, an element can be a corner point in more than one way. For example, in the skew shape with vertices $\{(1,1), (2,1), (2,2)\}$, the vertex $(2,1)$ is both a left outward corner point and a right inward corner point. In the case that $p$ is both a left inward and right inward corner point, then we take $\incorner(S)$ to be a multiset containing $p$ with multiplicity $2$. Finally, note that the minimum and maximum of $S$ are never corner points.

\subsection{The main bijection}

In this section, we will exhibit a bijection between $U_P$ and $D_P$ in the case that $P$ is a skew shape and show that this bijection multiplies weight by a constant factor. (Such a bijection does not exist for most posets $P$, so this is a special feature of skew shapes.) This will allow us to use Propositions~\ref{prop:weightofP} and \ref{prop:chainarborescence} to relate the chain statistics $w_{\mathscr C}(x)$ before and after rowmotion.

Throughout this section, fix a skew shape $S \subset R_{r,s}$. Note that if $q \in S$ only covers a single element $p$, then any element of $U_S$ must contain the edge $p \lessdot q$, so we call this edge \emph{forced} for any upward arborescence. Likewise, the edge $p \lessdot q$ is \emph{forced} for any downward arborescence if $q$ is the only element that covers $p \in S$.

We define a bijection $\aleph \colon U_S \to D_S$ as follows. Translate the Hasse diagram of $S$ in the plane by the vector $(-\frac12, -\frac12)$ (i.e., downward) to form a shifted diagram $\overline S$. Then any $T \in U_S$ has a corresponding translation $\overline T$. We then form $\aleph(T)$ by taking all edges of $S$ that do not intersect $\overline{T}$, together with all forced edges for downward arborescences. See Figure~\ref{fig:treeBijection1} for an example. (Aside from some special behavior along the boundary of $S$, $\aleph$ is the standard bijection between spanning trees of a planar graph and its dual graph.)

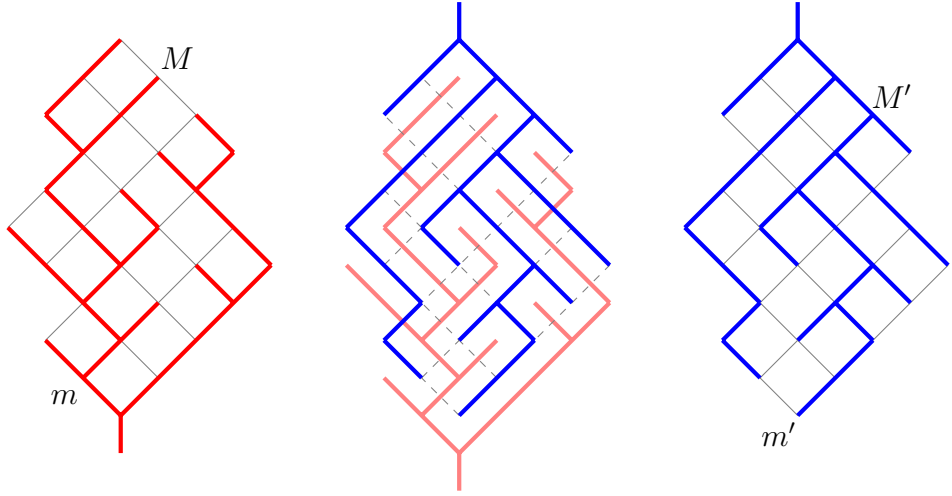
\begin{figure}
\begin{tikzpicture}[scale=0.5]
\begin{scope}
    \draw [gray] 
    (0,0)--(-2,2)
    (1,1)--(-3,5)
    (2,2)--(-2,6)
    (3,3)--(-2,8)
    (4,4)--(-1,9)
    (3,7)--(0,10)
    (0,0)--(4,4)
    (-1,1)--(3,5)
    (-2,2)--(3,7)
    (-2,4)--(2,8)
    (-3,5)--(1,9)
    (-2,8)--(0,10);
    \draw [ultra thick, red] 
    (0,-1)--(0,0)
    (0,0)--(4,4) (-1,1)--(1,3) (-1,3)--(1,5) (2,6)--(3,7) (-2,6)--(1,9) (-2,8)--(0,10)
    (0,0)--(-2,2) (0,2)--(-3,5) (0,4)--(-2,6) (3,3)--(2,4) (1,5)--(0,6) (-1,7)--(-2,8) (4,4)--(1,7) (3,7)--(2,8);
    \node at (-1.5,.5){$m$};
    \node at (1.5,9.5){$M$};
\end{scope}
\begin{scope}[shift={(9,-1)}]
    \draw [ultra thick, red!50] 
    (0,-1)--(0,0)
    (0,0)--(4,4) (-1,1)--(1,3) (-1,3)--(1,5) (2,6)--(3,7) (-2,6)--(1,9) (-2,8)--(0,10)
    (0,0)--(-2,2) (0,2)--(-3,5) (0,4)--(-2,6) (3,3)--(2,4) (1,5)--(0,6) (-1,7)--(-2,8) (4,4)--(1,7) (3,7)--(2,8);
\end{scope}
\begin{scope}[shift={(9,0)}]
    \draw [gray, dashed] 
    (0,0)--(-2,2)
    (1,1)--(-3,5)
    (2,2)--(-2,6)
    (3,3)--(-2,8)
    (4,4)--(-1,9)
    (3,7)--(0,10)
    (0,0)--(4,4)
    (-1,1)--(3,5)
    (-2,2)--(3,7)
    (-2,4)--(2,8)
    (-3,5)--(1,9)
    (-2,8)--(0,10);
    \draw [ultra thick, blue]
    (0,0)--(2,2) (0,2)--(2,4) (-2,2)--(-1,3) (-1,5)--(2,8) (-3,5)--(1,9) (-2,8)--(0,10)--(0,11)
    (-1,1)--(-2,2) (-1,3)--(-3,5) (2,2)--(1,3) (0,4)--(-1,5) (3,3)--(0,6) (4,4)--(1,7) (3,7)--(0,10);
\end{scope}
\begin{scope}[shift={(18,0)}]
    \draw [gray] 
    (0,0)--(-2,2)
    (1,1)--(-3,5)
    (2,2)--(-2,6)
    (3,3)--(-2,8)
    (4,4)--(-1,9)
    (3,7)--(0,10)
    (0,0)--(4,4)
    (-1,1)--(3,5)
    (-2,2)--(3,7)
    (-2,4)--(2,8)
    (-3,5)--(1,9)
    (-2,8)--(0,10);
    \draw [ultra thick, blue]
    (0,0)--(2,2) (0,2)--(2,4) (-2,2)--(-1,3) (-1,5)--(2,8) (-3,5)--(1,9) (-2,8)--(0,10)--(0,11)
    (-1,1)--(-2,2) (-1,3)--(-3,5) (2,2)--(1,3) (0,4)--(-1,5) (3,3)--(0,6) (4,4)--(1,7) (3,7)--(0,10);
    \node at (-0.5,-.5){$m'$};
    \node at (2.5,8.5){$M'$};
\end{scope}
\end{tikzpicture}
\caption{An upward arborescence $T$ (in red) and its image $\aleph(T)$ (in blue). After shifting $T$ down to $\overline T$, we form $\aleph(T)$ by taking all edges that do not intersect $\overline T$, together with all forced edges. Note that the only intersecting edges occur near the inner corners of $S$.}
\label{fig:treeBijection1}
\end{figure}

As an abuse of language, we say edges $e \in T$ and $e' \in \aleph(T)$ \emph{intersect} if the shifted edge $\overline e \in \overline T$ intersects $e'$. If $e$ is, say, the covering relation $(i,j) \lessdot (i,j+1)$, then it only has the potential to intersect $e'$ if $e'$ is the edge $(i-1,j) \lessdot (i,j)$. Moreover, these edges are either both forced (if $(i,j)$ is a right inward corner) or neither is forced. Therefore, $e \in T$ and $e' \in \aleph(T)$ can only intersect if they are both forced edges incident to an inward corner of $S$.

\begin{Prop} \label{prop:alephbijection}
    The map $\aleph \colon U_S \to D_S$ is a well-defined bijection.
\end{Prop}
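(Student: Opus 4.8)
The plan is to prove that $\aleph$ maps $U_S$ into $D_S$, then produce an explicit inverse map and verify it is inverse to $\aleph$, which simultaneously gives well-definedness of the inverse and bijectivity.

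\textbf{Step 1: $\aleph(T)$ is a downward arborescence.} Fix $T \in U_S$. I must show every element $p \in S$ has up degree exactly $1$ in $\aleph(T)$. The idea is a counting/parity argument using the planar duality. Consider the Hasse diagram of $S$ drawn in the plane; the bounded faces of $S$ together with one unbounded face form the ``dual'' picture. Each interior square face of $S$ has four boundary edges; since $\overline T$ (the downward shift of $T$) is a tree containing every vertex of $P$ and the edge down from $m$ to $\hat 0$, it meets each bounded square face in a set of edges that, by the standard spanning-tree/cotree dichotomy, leaves exactly the complementary edges to $\aleph(T)$. The cleanest route is: first check that $\aleph(T)$ is a spanning subgraph of $\widehat S \setminus \{\hat 0\}$ with exactly $|S|$ edges (one fewer than the number of vertices, since $\widehat S \setminus \{\hat 0\}$ has $|S|+1$ vertices and we claim a tree), by counting edges of $S$, edges of $\overline T$, forced edges, and their intersections. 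Then show $\aleph(T)$ is acyclic or connected (one suffices given the edge count), again using planarity: a cycle in $\aleph(T)$ would enclose a region, and since $\overline T$ is connected and spanning it would have to cross into that region, forcing an intersection with $\aleph(T)$ that we excluded except at forced inward-corner edges — and forced edges near inward corners, by the analysis just before the proposition, come in matched pairs that do not create such crossings. Once $\aleph(T)$ is a spanning tree of $\widehat S \setminus \{\hat 0\}$, the up-degree-$1$ condition at each $p \in S$ follows because $\hat 1$ is the unique maximum: walking upward from any $p$ in the tree reaches $\hat 1$ along a unique path, so $p$ has at least one up-edge; and the total count $\sum_p \updeg_{\aleph(T)}(p) = |S|$ (each tree edge is the up-edge of its lower endpoint, and every edge of $\widehat S \setminus \{\hat 0\}$ has a well-defined lower endpoint in $S$) forces every up-degree to be exactly $1$.

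\textbf{Step 2: construct the inverse.} Define $\aleph' \colon D_S \to U_S$ symmetrically: given $T' \in D_S$, shift it \emph{up} by $(\frac12,\frac12)$ to get $\overline{T'}$, take all edges of $S$ not meeting $\overline{T'}$, and adjoin all forced edges for upward arborescences. By the same argument as Step 1 (with ``up'' and ``down'', ``min'' and ``max'' swapped), $\aleph'$ lands in $U_S$.

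\textbf{Step 3: $\aleph' \circ \aleph = \mathrm{id}$ and $\aleph \circ \aleph' = \mathrm{id}$.} Here the key geometric fact is that an edge $e$ of $S$ and an edge $e'$ of $S$ ``cross'' (meaning: the down-shift of $e$ meets $e'$, equivalently the up-shift of $e'$ meets $e$) defines a symmetric pairing on the non-forced edges of $S$ — precisely, $(i,j) \lessdot (i,j+1)$ is paired with $(i-1,j)\lessdot(i,j)$, and $(i,j)\lessdot(i+1,j)$ is paired with $(i,j-1)\lessdot(i,j)$ (the four edges around each interior square get matched in two crossing pairs along the two diagonals of the square). Given this involutive crossing relation, an edge $e$ lies in $\aleph(T)$ iff $e$ is forced-for-$D$ or $e$ is non-forced and its partner edge is not in $T$; dualizing once more, $e$ lies in $\aleph'(\aleph(T))$ iff $e$ is forced-for-$U$ or ($e$ non-forced and its partner is not in $\aleph(T)$), i.e. iff ($e$ non-forced and its partner is in $T$). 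A short case check — distinguishing forced and non-forced edges, and handling the boundary/inward-corner edges where both $T$ and $\aleph(T)$ may contain forced edges incident to the same inward corner — then shows $\aleph'(\aleph(T)) = T$. The reverse composition is identical by symmetry.

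The main obstacle is Step 1: making the planar-duality heuristic into a clean argument at the boundary. In the interior, $\aleph$ is literally the classical spanning-tree/dual-tree bijection, but along the left and right boundary chains of $S$, and especially at inward corner points (where $\incorner(S)$ may even be a multiset), the forced edges on the two sides must be shown to interact correctly so that $\aleph(T)$ has exactly one up-edge at each vertex and no cycles. I expect the efficient way to handle this is to fix the edge count once and for all (a bookkeeping identity relating $|E(S)|$, the number of forced edges for $U$ and for $D$, and the number of crossing pairs), and then prove just acyclicity of $\aleph(T)$ by the crossing argument, since (spanning) $+$ (correct edge count) $+$ (acyclic) $\Rightarrow$ tree $\Rightarrow$ every up-degree is $1$.
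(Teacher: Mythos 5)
Your approach is genuinely different from the paper's, and it has a real gap. The paper gives a short, completely local argument: to show $\aleph(T) \in D_S$ one only needs $\updeg_{\aleph(T)}(a) = 1$ at every $a \in S$. If $a$ has a unique cover, the forced edge handles it. If $a$ has two covers $b,c$, then the skew-shape axiom guarantees the common cover $d$ exists in $S$, and $d$ covers exactly $b$ and $c$; the nonintersecting condition identifies ``$a\lessdot b\in\aleph(T)$'' with ``$b\lessdot d\notin T$'' (and likewise for $c$), so $\downdeg_T(d)=1$ forces $\updeg_{\aleph(T)}(a)=1$. Bijectivity is then a one-line symmetry observation. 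Your proposal instead tries to establish that $\aleph(T)$ is a spanning tree of $\widehat S\setminus\{\hat 0\}$ by edge counting and acyclicity, then derive the arborescence condition from the tree property.

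The gap is in the last inference of your Step~1. Being a spanning tree of $\widehat S\setminus\{\hat 0\}$ does \emph{not} imply that every $p\in S$ has an up-edge: the unique tree path from $p$ to $\hat 1$ need not be monotone in the poset order; it can descend before ascending, leaving $p$ with up-degree $0$ and some other vertex with up-degree $2$. Your sentence ``walking upward from any $p$ in the tree reaches $\hat 1$ along a unique path, so $p$ has at least one up-edge'' assumes precisely the arborescence property you are trying to prove. To close this you would have to argue directly, from the construction of $\aleph(T)$, that every $p\in S$ has $\updeg_{\aleph(T)}(p)\ge 1$ — and the natural way to do that is the paper's diamond argument, at which point the detour through edge counts and acyclicity adds nothing. (You also flag that the edge count and boundary/acyclicity analysis in Step~1 are left undone; combined with the inference gap above, Step~1 is not a proof.) Steps~2 and~3 are in the right spirit — $\aleph^{-1}$ is the same construction with the shift reversed, which is what the paper invokes via a half-turn of the plane — but they inherit the incompleteness of Step~1.

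A concrete suggestion if you want to salvage the planar-duality framing: skip the spanning-tree detour and verify $\updeg_{\aleph(T)}(a)=1$ directly. The crucial structural fact you are missing is that in a skew shape, whenever $a$ is covered by two elements $b,c$, those two elements have a common cover $d\in S$ (by the defining closure property of skew shapes), and $d$ covers nothing else. This is exactly what makes the local diamond argument work and what renders the crossing relation an involution on non-forced edges, so it is also what you would need to make Step~3's case check rigorous.
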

\begin{proof}
    We need to check that for any $T \in U_S$, we have $\aleph(T) \in D_S$, that is, for any $a \in S$, there is exactly one edge $a \lessdot b$ in $\aleph(T)$.
    
    If $a$ is only covered by one element, then this is clear since all forced edges are in $\aleph(T)$, so suppose $a$ is covered by two elements $b$ and $c$. Then $b$ and $c$ are both covered by a unique element $d\in S$. By the nonintersecting condition,
    $\aleph(T)$ contains the edge $a \lessdot b$ if and only if $T$ does not contain the edge $b \lessdot d$, and similarly with the edges $a \lessdot c$ and $c \lessdot d$. Since $T \in U_S$, it contains exactly one of $b \lessdot d$ and $c \lessdot d$, so $\aleph(T)$ will contain exactly one of $a \lessdot b$ and $a \lessdot c$, as desired.

    To see that $\aleph$ is a bijection, one can define $\aleph^{-1}$ in an analogous manner to $\aleph$ by rotating the plane by a half turn.
\end{proof}

We now show how $\aleph$ affects the weight of an arborescence. 

\begin{Lemma}
\label{lemma:weightChange}
Let $T \in U_S$ and $y \in \mathbb{R}^{S}_+$. Then there exists a Laurent monomial $y^{\alpha(S)}$ depending only on $S$ such that $\omega_{\aleph(T)}(y) = \omega_T(y)\cdot  y^{\alpha(S)}$ for all $T \in U_S$.
Specifically,
\[\frac{ \omega_{\aleph(T)}(y) }{ \omega_T(y) }  = y_{11}^{2-\updeg_S(1,1)}y_{rs}^{2-\downdeg_S(r,s)} \cdot 
\frac{\prod_{p \in \outcorner(S)} y_p }{  \prod_{p \in \incorner(S)} y_p }.\]
\end{Lemma}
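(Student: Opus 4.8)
The plan is to compute the ratio $\omega_{\aleph(T)}(y)/\omega_T(y)$ by tracking, for each vertex $p \in \widehat S$, the net change in the exponent of $y_p$ when we pass from $T$ to $\aleph(T)$. Recall that an edge $e$ corresponding to a cover $p \lessdot q$ contributes $y_p/y_q$ to the weight, so the exponent of $y_p$ in $\omega_T(y)$ equals $\downdeg_T(p) - \updeg_T(p)$, i.e.\ the number of $T$-edges going down from $p$ minus the number going up. Thus
\[
\frac{\omega_{\aleph(T)}(y)}{\omega_T(y)} = \prod_{p \in \widehat S} y_p^{\,(\downdeg_{\aleph(T)}(p)-\updeg_{\aleph(T)}(p)) - (\downdeg_T(p)-\updeg_T(p))},
\]
and I want to show the exponent of each $y_p$ depends only on $S$, not on $T$, and then identify it with the claimed value.

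The key structural input is the (near-)planar-duality description of $\aleph$: an edge $e'$ incident to a vertex $p$ lies in $\aleph(T)$ exactly when the shifted copy $\bar e$ of a certain companion edge $e$ is \emph{not} in $\bar T$, unless $e'$ is forced (in which case it is always in $\aleph(T)$) or $e$ is forced (in which case $e'$ is never in $\aleph(T)$), as spelled out in the discussion preceding Proposition~\ref{prop:alephbijection}. So first I would fix an interior vertex $p=(i,j)$ with $\hat 0 \ne p \ne \hat 1$, and consider the (at most four) edges of the Hasse diagram of $S$ incident to $p$: the two ``upward'' edges to $(i-1,j)$ and $(i,j+1)$ and the two ``downward'' edges to $(i+1,j)$ and $(i,j-1)$ (those of these that exist in $S$). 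Using the definitions of $U_S$ and $D_S$, for $T \in U_S$ we always have $\downdeg_T(p)=1$ and $\updeg_T(p) \in \{0,1,2\}$, while for $\aleph(T) \in D_S$ we always have $\updeg_{\aleph(T)}(p)=1$ and $\downdeg_{\aleph(T)}(p)\in\{0,1,2\}$. The heart of the computation is: $\updeg_{\aleph(T)}(p)$ counts the upward edges at $p$ in $\aleph(T)$, and by the duality rule this is determined (up to forced-edge corrections) by which companion downward edges lie in $T$; and similarly $\downdeg_T(p)$ relates to the downward companion edges. I would show case-by-case, according to which of the four edges exist in $S$ near $p$ — equivalently, according to whether $p$ is an outward corner, an inward corner, a non-corner boundary point, or interior — that the quantity $(\downdeg_{\aleph(T)}(p)-\updeg_{\aleph(T)}(p)) - (\downdeg_T(p)-\updeg_T(p))$ is independent of $T$: it equals $+1$ when $p \in \outcorner(S)$, $-1$ for each time $p \in \incorner(S)$ (so $-2$ for a double inward corner), and $0$ otherwise. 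This is precisely the kind of local bookkeeping where one matches, for each of the $2$ or $3$ admissible local configurations of $T$ near $p$, the resulting local configuration of $\aleph(T)$ and checks the exponent is constant.

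It remains to handle the two special vertices $\hat 0 = (1,1)$-adjacent bottom and $\hat 1 = (r,s)$-adjacent top; more precisely, the vertices $(1,1)$ and $(r,s)$ of $S$ themselves and the auxiliary endpoints $\hat 0,\hat 1$. Since $y_{\hat 0}=y_{\hat 1}=1$, edges incident to $\hat 0$ or $\hat 1$ contribute nothing to the weight ratio, so I only need the exponents of $y_{11}$ and $y_{rs}$. At $p=(1,1)$: every $T\in U_S$ contains the forced edge $\hat 0 \lessdot (1,1)$, contributing $\downdeg_T(1,1)$ a fixed $+1$ from that forced edge plus whatever genuine $S$-downward edges... wait, $(1,1)$ is the minimum of $S$ so it has no downward edges in $S$; thus $\downdeg_T(1,1)=1$ (the forced edge to $\hat 0$) and $\updeg_T(1,1)=\updeg_S(1,1)\in\{1,2\}$ since $T$ must include all upward edges at the minimum? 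No — $T\in U_S$ only constrains down degrees, so in fact $\updeg_T(1,1)$ can vary. I would redo this carefully: the contribution to the $y_{11}$-exponent is $\downdeg_T(1,1)-\updeg_T(1,1)$ from $T$ (with the forced edge to $\hat 0$ counted, but $y_{\hat 0}=1$ so only the count at $(1,1)$ matters) versus $\downdeg_{\aleph(T)}(1,1)-\updeg_{\aleph(T)}(1,1)$, and the duality rule plus the ``together with all forced edges'' clause pins down that the net change is $2-\updeg_S(1,1)$. Symmetrically at $(r,s)$ one gets $2-\downdeg_S(r,s)$. Assembling the per-vertex exponents gives the stated formula, and in particular $y^{\alpha(S)}$ is a Laurent monomial depending only on $S$, which is the assertion.

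The main obstacle I anticipate is the local case analysis at corner points, especially making the forced-edge conventions and the inward-corner multiplicity bookkeeping line up exactly: near a left (or right) inward corner $p$, one of the companion edges $e$ or $e'$ is forced, so the ``default'' duality rule is overridden, and this is exactly the source of the $-1$ per inward corner; getting the sign and multiplicity right for a vertex that is simultaneously a left and right inward corner (the multiplicity-$2$ case flagged in the text) requires care. A secondary subtlety is the boundary of $S$ away from corners, where $p$ has only two or three incident edges and one must check the exponent change is genuinely $0$ rather than accidentally picking up a boundary term; here the fact that the ``missing'' edges are missing in both $T$ and $\aleph(T)$ consistently is what saves the computation. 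Once the local contributions are verified, the global product formula follows immediately by multiplying over all $p$.
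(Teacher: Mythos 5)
Your overall plan is the same as the paper's: track the exponent of each $y_p$ in the ratio $\omega_{\aleph(T)}(y)/\omega_T(y)$ via up/down degrees, use $\downdeg_T(p)=1$ and $\updeg_{\aleph(T)}(p)=1$ for $p\in S$ to reduce to the quantity $\downdeg_{\aleph(T)}(p)+\updeg_T(p)$, do a local case analysis driven by the duality rule with corrections at corner points, and handle $(1,1)$ and $(r,s)$ separately using the forced edges to $\hat 0,\hat 1$. This is exactly the structure of the paper's proof.

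There is, however, a sign error at the very first step that propagates into a self-contradiction. Since an edge $e$ corresponding to the cover $p\lessdot q$ has $\omega_e(y)=y_p/y_q$, the vertex $p$ (the \emph{lower} endpoint) picks up $y_p$ in the numerator; i.e., each \emph{upward} edge at $v$ contributes $y_v^{+1}$ and each \emph{downward} edge contributes $y_v^{-1}$. So the exponent of $y_p$ in $\omega_T(y)$ is $\updeg_T(p)-\downdeg_T(p)$, not $\downdeg_T(p)-\updeg_T(p)$ as you state, and the exponent in the ratio is
$\alpha_p = (\updeg_{\aleph(T)}(p)-\downdeg_{\aleph(T)}(p))-(\updeg_T(p)-\downdeg_T(p)) = 2-\downdeg_{\aleph(T)}(p)-\updeg_T(p)$.
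With your (flipped) sign, the quantity you write down is $-\alpha_p=\downdeg_{\aleph(T)}(p)+\updeg_T(p)-2$, and the local analysis (which correctly establishes that $\downdeg_{\aleph(T)}(p)+\updeg_T(p)$ is $2$, $1$, or $3$ according as $p$ is generic, an outward corner, or an inward corner) would give you $0$, $-1$, $+1$ --- the \emph{negative} of what you then assert ($+1$ for outward, $-1$ for inward). The same sign flip occurs in your treatment of $(1,1)$: with your convention the bookkeeping yields $\updeg_S(1,1)-2$, not the claimed $2-\updeg_S(1,1)$. So as written, carrying out your plan would produce the reciprocal of the stated formula. Once the initial sign is corrected the rest of your outline --- the $2$-or-$3$ local configurations, the forced-edge override at inward corners, the multiplicity-$2$ double inward corner, and the special endpoints --- lines up with the paper's argument and would go through.
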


\begin{proof}
Note that $\omega_T(y)$ contains a factor of $y_p$ for each edge $p \lessdot q$ in $T$ and a factor of $y_p^{-1}$ for each edge $q \lessdot p$ in $T$. Therefore we can write the exponent of $y_p$ in $\omega_{\aleph(T)}(y)/\omega_T(y)$ as
\begin{align*}
\alpha_{p} &= (\updeg_{\aleph(T)}(p) - \downdeg_{\aleph(T)}(p))-(\updeg_T(p) - \downdeg_T(p))\\
&= 2-\downdeg_{\aleph(T)}(p)-\updeg_T(p).
\end{align*}

If $p = (1,1)$, then $\downdeg_{\aleph(T)}(1,1)=0$ while $\updeg_T(1,1)=\updeg_S(1,1)$ since the upward edges from $(1,1)$ are forced in $T$. Thus $\alpha_{11} = 2-\updeg_S(1,1)$. Similarly for the maximum element $(r,s)$, we have $\alpha_{rs} = 2-\downdeg_S(r,s)$.

For any other element $p = (i,j)$, we can consider the contributions to $2-\alpha_p= \downdeg_{\aleph(T)}(p)+\updeg_{T}(p)$ from the possible edges to the right, namely $(i,j) \lessdot (i,j+1)$ and $(i-1,j) \lessdot (i,j)$.
\begin{itemize}
    \item If $p$ is a right outward corner point, then clearly there is no contribution.
    \item If $p$ is a right inward corner point, then both edges are forced in their respective arborescences, so the contribution is $2$.
    \item If only one of the two edges, say $(i,j) \lessdot (i,j+1)$, exists in $S$, then it is a forced edge for $T$, so the contribution is $1$ (and similarly if the other edge is the only one present).
    \item If both edges exist in $S$ but $p$ is not a right outward corner point, then neither edge is forced. Then by the construction of $\aleph$, $(i,j) \lessdot (i,j+1)$ is an edge of $T$ if and only if $(i-1,j) \lessdot (i,j)$ is not an edge of $\aleph(T)$. It follows that the contribution is $1$ in this case as well.
\end{itemize}
Since the contributions from the possible edges to the left of $p$ can be obtained symmetrically, we find that $2-\alpha_p$ is usually $2$, but we must subtract $1$ if $p$ is an outward corner point and add $1$ if $p$ is an inward corner point (these adjustments cancel out if $p$ is both an inward and an outward corner point). The result follows easily.
\end{proof}

In other words, $\aleph$ scales the weight of every arborescence by the same amount. It follows that it must preserve the measure $\mu_y$.

\begin{Cor} \label{cor:measurepreserving}
   The bijection $\aleph \colon U_S \to D_S$ is measure-preserving: $\mu_y(U) = \mu_y(\aleph(U))$ for all $y \in \RR_+^S$ and subsets $U \subset U_S$.
\end{Cor}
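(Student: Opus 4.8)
The plan is to obtain this as an essentially immediate consequence of Lemma~\ref{lemma:weightChange} together with the bijectivity of $\aleph$ established in Proposition~\ref{prop:alephbijection}. The whole point is that $\aleph$ rescales the weight of \emph{every} arborescence by the \emph{same} Laurent monomial $y^{\alpha(S)}$, so any ratio of sums of such weights is left unchanged.

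Concretely, fix $y \in \RR_+^S$ and a subset $U \subset U_S$. Since $\aleph$ is a bijection from $U_S$ onto $D_S$, I would reindex the sum over $\aleph(U)$ by the sum over $U$ and apply Lemma~\ref{lemma:weightChange} termwise:
\[\sum_{T \in \aleph(U)} \omega_T(y) = \sum_{T \in U} \omega_{\aleph(T)}(y) = y^{\alpha(S)} \sum_{T \in U} \omega_T(y).\]
Specializing to $U = U_S$ gives $\sum_{T \in D_S} \omega_T(y) = y^{\alpha(S)} \sum_{T \in U_S} \omega_T(y)$, which in particular records that the total weights of $D_S$ and $U_S$ differ by the factor $y^{\alpha(S)}$ (so this factor is nonzero and may be cancelled).

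Substituting both identities into the definition of $\mu_y$ then yields
\[\mu_y(\aleph(U)) = \frac{\sum_{T \in \aleph(U)} \omega_T(y)}{\sum_{T \in D_S} \omega_T(y)} = \frac{y^{\alpha(S)} \sum_{T \in U} \omega_T(y)}{y^{\alpha(S)} \sum_{T \in U_S} \omega_T(y)} = \frac{\sum_{T \in U} \omega_T(y)}{\sum_{T \in U_S} \omega_T(y)} = \mu_y(U),\]
as desired. There is really no obstacle here: all of the content lies in Lemma~\ref{lemma:weightChange}, and this corollary simply records the standard fact that a uniform multiplicative rescaling of all the terms defining a normalized weight measure leaves that measure unchanged. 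The only points requiring care are that $\aleph$ is genuinely a bijection, so that $\aleph(U_S) = D_S$ and the reindexing is valid, and that $y^{\alpha(S)}$ is independent of $T$, so that it factors out of the sums — both of which are already in hand.
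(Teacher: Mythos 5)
Your proof is correct and is essentially the same argument as in the paper: Lemma~\ref{lemma:weightChange} shows both numerator and denominator of $\mu_y(\aleph(U))$ are obtained from those of $\mu_y(U)$ by multiplying by the common factor $y^{\alpha(S)}$, which cancels. You have simply spelled out the reindexing step more explicitly than the paper does.
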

\begin{proof}
    By Lemma~\ref{lemma:weightChange}, both the numerator and denominator of $\mu_y(\aleph(U))$ are obtained from those of $\mu_y(U)$ by scaling by $y^{\alpha(S)}$, so these factors cancel out.
\end{proof}

\begin{Ex}
    Consider again the arborescences for $RT_{3,2}$ in Figure~\ref{fig:upwardSpanningTree}. The bijection $\aleph$ sends each upward arborescence to the downward arborescence directly below it. In each case, $\aleph$ multiplies the weight by $y_{42} \cdot \frac{y_{12}y_{31}}{y_{32}}$, as predicted by Lemma~\ref{lemma:weightChange}. As a consequence, each upward arborescence occupies the same fraction of weight as the corresponding downward arborescence.
\end{Ex}

\subsection{Chain shifting} \label{sec:chainshifting}

We are now ready to prove a chain shifting lemma for skew shapes by combining Corollaries~\ref{cor:bijectionimpliesweight} and \ref{cor:measurepreserving}.

The following lemma summarizes the properties we need of $\aleph$. Given a saturated chain $C \subset S$, we say that $p \in S$ lies \emph{southeast} (resp. \emph{southwest}) of $C$ if it lies to the east (resp. west) of an element of the downward shift $\overline C$. We say that $p$ \emph{lies beyond} $C$ if it does not lie southeast or southwest of $C$, that is, when $\overline C$ does not contain an element at the same height as $p$. 

\begin{Lemma} \label{lemma:chaincross}
    Let $T \in U_S$ and $C$ a saturated chain contained in $T$. Let $p \in S$, and let $C'$ be the unique chain upwards from $p$ to $\hat 1$ in $\aleph(T) \in D_S$.

    If $p$ lies southeast of $C$, and $C$ contains no (forced) edge northeast from a right inward corner point, then every element of $C'$ lies to the southeast of or beyond $C$. In particular, if $C$ contains the rightmost element at any rank above $p$, then $C'$ must contain its southeast neighbor.

The analogous statement obtained by reflecting across the vertical axis also holds.
\end{Lemma}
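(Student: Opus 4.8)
The statement is essentially a topological/planar fact about how the dual tree $\aleph(T)$ relates to a chain $C$ in $T$, so the plan is to argue it combinatorially by tracking, rank by rank, where $C'$ sits relative to $\overline C$. The key observation is the defining property of $\aleph$: an edge $e'$ of $\aleph(T)$ and an edge $e$ of $T$ (shifted to $\overline e$) are never crossing, except possibly at forced edges incident to an inward corner. So I would first reformulate ``$p$ lies southeast of $C$'' as: at the rank of $p$, the element of $\overline C$ at that height lies weakly west of $p$ (with the natural convention for the half-integer shift). Then I would proceed by upward induction on the rank, showing that if some element $c'$ of $C'$ lies southeast of $C$ (i.e. weakly east of the element of $\overline C$ at its height), then the next element of $C'$ (the unique cover of $c'$ in $\aleph(T)$) also lies southeast of or beyond $C$.

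**Main steps.** (1) Set up coordinates and translate ``southeast/southwest/beyond'' into inequalities between the $i$- or $(i-j)$-coordinate of an element of $C'$ and that of the element of $\overline C$ at the same height; handle the edge cases where $\overline C$ has ``run out'' at the top (this is the ``beyond'' case). (2) Inductive step: suppose $c' \in C'$ is southeast of $C$. Let $c'\lessdot d'$ be the edge of $C'$ (which is an edge of $\aleph(T)$). By the non-crossing property of $\aleph$, this edge cannot cross the portion of $\overline C$ at the relevant heights; the only way for $d'$ to fall southwest of $C$ would be for the edge $c'\lessdot d'$ to cross $\overline C$, and the hypothesis that $C$ contains no forced edge northeast from a right inward corner point rules out precisely the exceptional crossing allowed by $\aleph$. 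Conclude $d'$ is southeast of or beyond $C$. (3) For the ``in particular'' clause: if $C$ contains the rightmost element $q$ of its rank (above $p$), then $\overline q$ is the easternmost shifted point at that height, so the element of $C'$ at the height just above $\overline q$ must — being southeast of or beyond $C$, and there being nothing strictly southeast of the rightmost element — be exactly the southeast neighbor of $q$; here one uses that $C\subset T$ means the upward edge of $C$ out of $q$ is an edge of $T$, hence by the $\aleph$-construction the corresponding leftward edge at $q$'s southeast neighbor is \emph{not} in $\aleph(T)$, forcing $C'$ to take the other available upward step. (4) The reflected statement follows by applying the already-established version to the image of $S$ under reflection across the vertical axis, since $\aleph$ is symmetric under this reflection (one uses $\aleph^{-1}$ realized by the half-turn, as in the proof of Proposition~\ref{prop:alephbijection}).

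**Main obstacle.** The genuinely delicate point is step (2), keeping the case analysis at inward corners under control — specifically, verifying that a crossing between an edge of $C'$ and $\overline C$ can only happen at a forced edge northeast of a right inward corner point, which is exactly what the hypothesis excludes. This requires carefully revisiting the local picture near each type of corner (as in the proof of Lemma~\ref{lemma:weightChange}) and checking that in every other local configuration the edge of $\aleph(T)$ leaving $c'$ stays weakly east of $\overline C$. A secondary nuisance is bookkeeping around the top of the chain: once $\overline C$ no longer has a representative at the height of some $c'$, ``southeast of'' degenerates to ``beyond,'' and one must make sure the inductive invariant ``southeast of or beyond'' is stable under passing through that transition — it is, because beyond is an absorbing state: if $c'$ is beyond $C$ then so is everything above it. I would write the argument so that the invariant carried up the chain is precisely ``every element of $C'$ seen so far lies southeast of or beyond $C$,'' which makes both the corner analysis and the top-of-chain degeneration routine once the base case (the element $p$ itself, which is southeast of $C$ by hypothesis) is in place.
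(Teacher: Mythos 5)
Your proposal is correct and takes essentially the same approach as the paper: the paper's proof is a one-sentence observation that $C'$ can only cross from the southeast to the southwest side of $\overline C$ along an edge of $\aleph(T)$ that intersects a (necessarily forced) edge of $C$ northeast from a right inward corner, which the hypothesis excludes. Your rank-by-rank induction with the invariant ``southeast of or beyond'' and the stability of ``beyond'' is just a careful unpacking of that same non-crossing argument, together with the natural treatment of the ``in particular'' clause and the reflection symmetry.
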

\begin{proof}
    The only way that $C'$ can go from an element southeast of $C$ to an element southwest of $C$ is along an edge of $\aleph(T)$ to the northwest that intersects an edge of $C \subset T$ to the northeast. But this can only occur if the edge in $C$ is a forced edge of the given type.
\end{proof}

If $p = (i,j)$, denote by $se(p)$ the set of elements of $S$ of the form $(i-1-k,j+k)$ for $k \geq 0$. (These are the elements that lie east of the downward shift $\overline p$.) For instance, $se(p) \neq 0$ if and only if $p$ has a southeast neighbor.

Similarly denote by $sw(p)$ the set of elements $(i-1-k,j+k)$ with $k < 0$, and define $nw(p) = \{q \mid p \in se(q)\}$ and $ne(p) = \{q \mid p \in sw(q)\}$. 

\subsubsection{Simple chain shifting} \label{sec:simplechainshifting}

Our first form of chain shifting is a generalization of the chain shifting lemma for rectangles proven by the current authors in \cite{johnsonliu} (and in the noncommutative setting by Grinberg and Roby \cite{grinbergroby3}) to skew shapes $S$.

Given elements $p < q$ in $S$, let $\mathscr C_{p,q}$ denote the set of all saturated chains from $p$ to $q$, and let $\mathscr C^{se}_{p,q} \subset \mathscr C_{p,q}$ denote the subset consisting of those chains $C$ for which $se(r) \neq \varnothing$ for all $r \in C$. (Also define the analogous notation for the directions $sw$, $ne$, and $nw$.)

Recall that if $\mathscr C$ is a collection of chains, then $U_S(\mathscr C) = \bigsqcup_{C \in \mathscr C} U_S(C)$ is the set of all arborescences containing some chain in $\mathscr C$, and $D_S(\mathscr C)$ is defined similarly.

\begin{Lemma} \label{lemma:skewchainshifting}
Let $m' \lessdot m$ and $M' \lessdot M$ be elements of $S$ such that $sw(m) = ne(M') = \varnothing$.
\begin{enumerate}[(a)]
\item The bijection $\aleph$ restricts to a bijection from $U_S(\mathscr C^{se}_{m,M})$ to $D_S(\mathscr C^{nw}_{m',M'})$.
\item Let $x \in \RR_+^S$ and $z = \tilde\rho^{-1}(x)$. Then $w_{\mathscr C^{se}_{m,M}}(x) = w_{\mathscr C^{nw}_{m',M'}}(z)$.
\end{enumerate}
\end{Lemma}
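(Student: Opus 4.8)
The plan is to prove part (a) first and then deduce part (b) directly from Corollary~\ref{cor:bijectionimpliesweight}. For part (a), I want to show that $\aleph$ maps an arborescence containing some chain in $\mathscr C^{se}_{m,M}$ to one containing some chain in $\mathscr C^{nw}_{m',M'}$, and that this restriction is a bijection. The key geometric input is Lemma~\ref{lemma:chaincross}: if $T \in U_S$ contains a chain $C$ from $m$ to $M$ all of whose elements have a southeast neighbor, then $C$ contains no forced edge northeast from a right inward corner point (such an edge would force a ``dead end'' with no southeast neighbor at the corner, contradicting $se(r)\neq\varnothing$ along $C$). Hence Lemma~\ref{lemma:chaincross} applies, and the chain $C'$ of $\aleph(T)$ upward from $m'$ stays southeast of or beyond $C$; I need to argue it actually runs all the way from $m'$ to $M'$ and lies in $\mathscr C^{nw}_{m',M'}$. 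The hypotheses $sw(m)=\varnothing$ and $ne(M')=\varnothing$ are exactly what pin down the endpoints: $sw(m)=\varnothing$ says $m$ is on the southwest (left) border, so $m'=m-(1,0)$ or similar is well-defined and $C'$ must start there; $ne(M')=\varnothing$ says $M'$ is on the northeast border, forcing $C'$ to terminate at $M'$ before reaching $\hat 1$. Then I check the $nw$ condition on $C'$: each element $q$ of $C'$ lies southeast of or beyond $C$, which (together with $C$ consisting of elements each having a southeast neighbor) should give that $q$ itself has $nw(q)\neq\varnothing$, i.e.\ $C'\in\mathscr C^{nw}_{m',M'}$.

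\textbf{Bijectivity.} For injectivity and surjectivity of the restriction, the cleanest route is to observe that $\aleph^{-1}$ is constructed symmetrically (rotating the plane by a half-turn, as in the proof of Proposition~\ref{prop:alephbijection}), and to run the identical argument with the roles of $U_S$ and $D_S$, and of $se/nw$, reversed. That is: if $\tilde T \in D_S$ contains a chain $C'$ from $m'$ to $M'$ with every element having a northwest neighbor, then $\aleph^{-1}(\tilde T)$ contains a chain from $m$ to $M$ with every element having a southeast neighbor. Since $\aleph$ is already known to be a bijection $U_S \to D_S$, showing that it maps $U_S(\mathscr C^{se}_{m,M})$ \emph{into} $D_S(\mathscr C^{nw}_{m',M'})$ and that $\aleph^{-1}$ maps $D_S(\mathscr C^{nw}_{m',M'})$ \emph{into} $U_S(\mathscr C^{se}_{m,M})$ suffices to conclude it restricts to a bijection between these two sets.

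\textbf{Deducing (b).} Once (a) is established, $\aleph$ restricts to a bijection $U_S(\mathscr C^{se}_{m,M}) \to D_S(\mathscr C^{nw}_{m',M'})$, so by Corollary~\ref{cor:measurepreserving} we get $\mu_y(U_S(\mathscr C^{se}_{m,M})) = \mu_y(D_S(\mathscr C^{nw}_{m',M'}))$. Now apply Corollary~\ref{cor:bijectionimpliesweight} with $\mathscr C = \mathscr C^{se}_{m,M}$ and $\mathscr C' = \mathscr C^{nw}_{m',M'}$: the hypotheses there require that $m'$ be the unique element covered by $m$ and $M$ the unique element covering $M'$, which follow from $sw(m)=\varnothing$ and $ne(M')=\varnothing$ respectively (being on the border, $m$ covers exactly one element and $M'$ is covered by exactly one element). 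We also need all chains in $\mathscr C^{se}_{m,M}$ to share endpoints $(m,M)$ and all chains in $\mathscr C^{nw}_{m',M'}$ to share endpoints $(m',M')$, which holds by definition. This yields $w_{\mathscr C^{se}_{m,M}}(x) = w_{\mathscr C^{nw}_{m',M'}}(z)$, as desired.

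\textbf{Main obstacle.} The delicate point is part (a): carefully verifying that the chain $C'$ produced by $\aleph(T)$ really does start at $m'$, end at $M'$, and satisfy the $nw$-condition at every element—i.e.\ turning the ``southeast of or beyond $C$'' conclusion of Lemma~\ref{lemma:chaincross} into membership in $\mathscr C^{nw}_{m',M'}$. I expect this to require a short but somewhat fiddly case analysis along the boundary of $S$ (distinguishing whether $C'$ runs beyond $C$ versus strictly southeast of it, and checking the behavior at inward corners), together with a clean use of the border hypotheses $sw(m)=ne(M')=\varnothing$ to control the endpoints. The rest—invoking Corollaries~\ref{cor:measurepreserving} and \ref{cor:bijectionimpliesweight}—is essentially bookkeeping.
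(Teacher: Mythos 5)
Your proposal follows the same route as the paper's proof: it invokes Lemma~\ref{lemma:chaincross} after observing that chains in $\mathscr C^{se}_{m,M}$ cannot contain a forced edge northeast from a right inward corner, uses the border hypotheses $sw(m)=ne(M')=\varnothing$ to pin down the endpoints of the image chain, argues symmetrically via $\aleph^{-1}$ for the reverse inclusion, and then deduces (b) from Corollaries~\ref{cor:measurepreserving} and \ref{cor:bijectionimpliesweight}. One small correction to your parenthetical justification of the no-forced-edge claim: the element with empty $se$ is the upper endpoint $(i,j+1)$ of the forced edge, not the right inward corner $(i,j)$ itself (since $(i-1,j+1)\notin S$ and elements of $S$ at a fixed rank form a contiguous interval, everything east of $(i-1,j+1)$ at that rank is also absent), but this does not affect the soundness of the approach.
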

\begin{proof}
Note that the conditions imply that $m'$ is the only element covered by $m$ (with $m'$ to the southeast) and $M$ is the only element covering $M'$ (with $M'$ to the southeast). Choose any $T \in U_S(\mathscr C^{se}_{m,M})$, and let $C$ be the chain in $T$ from $m$ to $M$. By definition of $\mathscr C^{se}_{m,M}$, $C$ does not contain any (forced) edge northeast from a right inward corner point. Hence by Lemma~\ref{lemma:chaincross}, $\aleph(T) \in D_S$ contains a saturated chain $C'$ from $m'$ to $M'$ that lies entirely southeast of $C$. Thus $C' \in \mathscr C^{nw}_{m',M'}$ and hence $\aleph(T) \in D_S(\mathscr C^{nw}_{m',M'})$. 
A similar argument shows that $\aleph^{-1}$ sends $D_S(\mathscr C^{nw}_{m',M'})$ to $U_S(\mathscr C^{se}_{m,M})$, completing part (a). Part (b) then follows from part (a), Corollary~\ref{cor:measurepreserving}, and Corollary~\ref{cor:bijectionimpliesweight}.
\end{proof}

\begin{Ex}
    As an illustration of Lemma~\ref{lemma:skewchainshifting}, consider Figure~\ref{fig:treeBijection1} with the elements $m$, $M$, $m'$, and $M'$ indicated. The upward arborescence $T$ depicted contains a chain $C \in \mathscr C^{se}_{m,M}$. When applying $\aleph$ as in the center picture, we find that the chain in $\aleph(T)$ upward from $m'$ must remain to the right of $\overline{C}$ and therefore pass through $M'$. Thus $\aleph(T)$ contains a chain $C' \in \mathscr C^{nw}_{m',M'}$ as in Lemma~\ref{lemma:skewchainshifting}.
\end{Ex}

\begin{Ex}
    Consider again $S=RT_{32}$ with arborescences shown in Figure~\ref{fig:upwardSpanningTree}. If we let $m = (2,1)$, $M = (3,2)$, $m'=(1,1)$, and $M' = (2,2)$, then the conditions of Lemma~\ref{lemma:skewchainshifting} are satisfied. Only the first three upward arborescences contain a chain from $m$ to $M$ (which are all in $\mathscr C^{se}_{m,M}$), and $\aleph$ sends these to the first three downward arborescences, which are the only ones that contain a chain in $\mathscr C^{nw}_{m',M'}$.

    We can verify that Lemma~\ref{lemma:skewchainshifting}(b) holds in this case using the labels in Figure~\ref{fig:r32}:
    \begin{align*}
        z_{11}z_{21}z_{22}+z_{11}z_{12}z_{22} &= \frac{bc}{b+c} \left( \frac{(b+c)de}{bd+be+ce}+\frac{(b+c)e}{c}\right) \frac{(bd+be+ce)f}{(b+c)e}\\
        &= \frac{bcdf}{b+c} + \frac{b(bd+be+ce)f}{b+c}\\
        &= b(d+e)f = x_{21}x_{31}x_{32}+x_{21}x_{22}x_{32}.
    \end{align*}
    This equality holds even though neither term on the left is equal to a term on the right. (This is because $\aleph$ does not restrict to a bijection between $U_S(C)$ and $D_S(C')$ for any single chains $C \in \mathscr C^{se}_{m,M}$ and $C'\in \mathscr C^{nw}_{m',M'}$).
\end{Ex}

As this result illustrates, the bijection $\aleph$ is a powerful tool for relating weights of subsets of $P$ with respect to $x$ and $z$. The general strategy is simple: relate the quantities of interest to the weights of certain subsets of $U_P$ and $D_P$, then show that these subsets are in bijection via $\aleph$. In this way, one can easily prove many previously established results about rowmotion on rectangles as well as further generalizations. We give a variety of examples of this in the rest of this section.

\subsubsection{Chains in trapezoids} \label{sec:chainsintrapezoids}

A particular case that will be important for us concerns chains in the right trapezoid $S=RT_{r,s}$.

\begin{Def}
    The \emph{left border} of the right trapezoid $RT_{r,s}$ is the set of elements $L = \{(\ell+r-1,\ell) \mid 1 \leq \ell \leq s\} \subset RT_{r,s}$.
\end{Def}
In other words, $L$ is the set of all left outer corners of $RT_{r,s}$ together with the maximum element.

For $p,q \in RT_{r,s}$, let $\mathscr C^L_{p,q}$ (resp.\ $\mathscr C^{\overline L}_{p,q}$) be the subset of $\mathscr C_{p,q}$ consisting of chains that intersect $L$ (resp.\ do not intersect $L$).

\begin{Lemma} \label{Lemma:rtchainshifting}
    Let $S=RT_{r,s}$, and let $m' \lessdot m$ and $M' \lessdot M$ be elements of $S$ such that $se(m) = ne(M') = \varnothing$.
    \begin{enumerate}[(a)]
        \item The bijection $\aleph$ restricts to a bijection from $U_S(\mathscr C^L_{m,M})$ to $D_S(\mathscr C^L_{m',M'})$.
        \item Let $x \in \RR_+^S$ and $z = \tilde \rho^{-1}(x)$. Then $w_{\mathscr C^L_{m,M}}(x) = w_{\mathscr C^L_{m',M'}}(z)$.
    \end{enumerate}
\end{Lemma}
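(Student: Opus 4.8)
The plan is to mimic the proof of Lemma~\ref{lemma:skewchainshifting}, with the key difference that the relevant chain collections are now indexed not by a "southeast/northwest" condition but by whether a chain meets the left border $L$. As before, part (b) will follow immediately from part (a) together with Corollary~\ref{cor:measurepreserving} and Corollary~\ref{cor:bijectionimpliesweight}, once we check the hypotheses of the latter: the conditions $se(m)=ne(M')=\varnothing$ force $m$ to cover only $m'$ (with $m'$ to the southwest) and $M$ to cover only $M'$ (with $M'$ to the southwest), so $m',m$ and $M',M$ stand in the required relation and $(\min,\max)$ is constant across each family. So the whole content is part (a): showing $\aleph$ carries $U_S(\mathscr C^L_{m,M})$ onto $D_S(\mathscr C^L_{m',M'})$.

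First I would fix $T\in U_S(\mathscr C^L_{m,M})$ and let $C$ be the (unique) chain from $m$ to $M$ inside $T$; by hypothesis $C$ meets $L$, say at the left outer corner $(\ell+r-1,\ell)$ (or at the maximum). Since $se(m)=\varnothing$, the element $m'$ covered by $m$ lies to the southwest of $m$, and likewise $M'$ lies southwest of $M$; this is the mirror-image situation of Lemma~\ref{lemma:skewchainshifting}, so I would invoke the reflected version of Lemma~\ref{lemma:chaincross}: the chain $C'$ in $\aleph(T)$ running upward from $m'$ lies entirely to the southwest of (or beyond) $C$, provided $C$ contains no forced edge northwest from a left inward corner point. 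Here is where the border condition does its work: a chain from $m$ to $M$ that touches $L$ must, after touching a left outer corner, proceed along $L$ — there is no room for it to step away to the northwest and then return, because $L$ consists precisely of the leftmost elements at each rank of $RT_{r,s}$ in the relevant range. So such a $C$ runs along the left boundary and has no chance to contain a "northwest-from-a-left-inward-corner" forced edge (indeed $RT_{r,s}$ has no left inward corners above the point where $C$ meets $L$, since the left boundary of a right trapezoid is a straight staircase). Hence $C'$ stays southwest of $C$; in particular, since $C$ contains the leftmost element at every rank from that point up to $M$, the mirrored conclusion of Lemma~\ref{lemma:chaincross} forces $C'$ to hug the boundary one step to the southwest — but wait, that would place $C'$ off the trapezoid. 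The correct reading is: $C'$ is forced to coincide with the leftmost chain, hence $C'$ itself meets $L$, so $C'\in\mathscr C^L_{m',M'}$ and $\aleph(T)\in D_S(\mathscr C^L_{m',M'})$.

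Let me restate that last point carefully, since it is the crux. The chain $C$ from $m$ to $M$ that intersects $L$ must contain the rightmost... no: it contains the \emph{leftmost} element at every rank between the rank where it joins $L$ and $\mathrm{rank}(M)$. Reflecting Lemma~\ref{lemma:chaincross} across the vertical axis, "southeast" becomes "southwest" and "right inward corner" becomes "left inward corner," and the final sentence reads: if $C$ contains the leftmost element at a rank above $p=m'$, then $C'$ must contain its southwest neighbor. Since $m'$ lies southwest of $m$ and $C$ joins $L$, an inductive walk up $C'$ shows $C'$ tracks the southwest side of $C$; because $C$ is on the left boundary $L$, "southwest neighbor of a boundary element" is again a boundary element — more precisely, $C'$ is pinned to pass through each left outer corner on or below $M'$ and then through $M'$ itself, so $C'$ intersects $L$. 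Conversely $\aleph^{-1}$ (the half-turn construction) sends $D_S(\mathscr C^L_{m',M'})$ back into $U_S(\mathscr C^L_{m,M})$ by the same argument.

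**Main obstacle.** The delicate step is verifying that a chain intersecting $L$ really is confined to the left staircase above its first contact with $L$, and that consequently the "no forced edge from an inward corner" hypothesis of Lemma~\ref{lemma:chaincross} is automatically satisfied — i.e. pinning down exactly which corner points of $RT_{r,s}$ lie along $L$ and checking none of them is a left inward corner in the obstructing position. This is a small combinatorial verification about the geometry of the right trapezoid (its left boundary is a maximal chain consisting of left outer corners and the top element, with no inward corners), but it is what makes the border condition $\mathscr C^L$ behave exactly like the $\mathscr C^{se}/\mathscr C^{nw}$ condition in Lemma~\ref{lemma:skewchainshifting}. Once that geometric fact is in hand, part (a) is a direct application of (the reflection of) Lemma~\ref{lemma:chaincross}, and part (b) is immediate.
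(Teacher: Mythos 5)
Your proof has a genuine error at its foundation. You claim that the conditions $se(m)=ne(M')=\varnothing$ ``force $m$ to cover only $m'$ (with $m'$ to the southwest) and $M$ to cover only $M'$ (with $M'$ to the southwest),'' and this symmetry drives your entire argument (that $C'$ stays southwest of $C$ throughout, as a ``mirror'' of Lemma~\ref{lemma:skewchainshifting}). But the two conditions are \emph{not} mirror images of each other. Unwinding the definitions: $se(m)=\varnothing$ does make $m'$ the southwest neighbor of $m$, but $ne(M')=\varnothing$ means $M'$ has no \emph{northeast} neighbor, so the unique element $M$ covering $M'$ is its \emph{northwest} neighbor — equivalently, $M'$ lies \emph{southeast} of $M$, not southwest. (The paper's own example with $S=RT_{3,2}$, $m=(1,2)$, $M=(4,2)$ gives $m'=(1,1)$ southwest of $m$ but $M'=(3,2)$ southeast of $M$.) So the chain $C'$ in $\aleph(T)$ starts southwest of $C$ but must end southeast of $C$; it cannot stay on one side.

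This also invalidates the geometric claim you lean on, that a chain touching $L$ ``has no chance to contain a northwest-from-a-left-inward-corner forced edge.'' It does: if $c\in L$ is a left outer corner, the unique element it covers is the left inner corner $c'$ directly southeast, and the edge $c'\lessdot c$ is a forced edge northwest from a left inward corner — and it is in $C$. That edge is precisely where the mirrored Lemma~\ref{lemma:chaincross} \emph{fails}, and precisely where $C'$ crosses from the southwest side of $C$ to the southeast side. The actual proof lets $c$ be the lowest element of $C\cap L$, with $c'$ the inner corner covered by $c$ and $c''$ the next left outer corner below; then $C'$ runs southwest of $C$ from $m'$ up to $c''$, crosses via the forced edge $c''\lessdot c'$, and runs southeast of $C$ up to $M'$ — which also identifies $c''$ as the highest element of $C'\cap L$, giving the pairing needed for the bijection. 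Your picture of $C'$ ``coinciding with the leftmost chain'' all the way up and ending at an $M'$ to the southwest of $M$ does not match the geometry. The reduction of (b) to (a) via Corollaries~\ref{cor:measurepreserving} and~\ref{cor:bijectionimpliesweight} is fine once stated with the correct cover directions; the gap is entirely in part (a).
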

\begin{proof}
    Choose any chain $C \in \mathscr C^L_{m,M}$, and let $c$ be the lowest element of $C \cap L$. Let $c'$ be the left inner corner covered by $c$, and let $c''$ be the element of $L$ covered by $c'$.
    By Lemma~\ref{lemma:chaincross}, $\aleph(T)$ contains a chain $C'$ that starts at $m'$, remains southwest of $C$ until it reaches $c''$, at which point it crosses over (using a forced edge) to $c'$, and then remains southeast of $C$ until reaching $M'$. It follows that $\aleph(T) \in D_S(\mathscr C^L_{m',M'})$, and the highest element of $C' \cap L$ is $c''$.
    A similar argument applied to $\aleph^{-1}$ shows that $\aleph$ gives a bijection from $U_S(\mathscr C^L_{m,M})$ to $D_S(\mathscr C^L_{m',M'})$, as desired.

    Part (b) then follows from part (a) by using Corollaries~\ref{cor:bijectionimpliesweight} and \ref{cor:measurepreserving}.
\end{proof}
See Figure~\ref{fig:rtchainshifting} for an illustration. We can alternatively visualize this by breaking the chains in $\mathscr C$ immediately before they first intersect $L$ and then shifting the lower portions to the southwest and the upper portions to the southeast. In particular, this gives an alternate proof of Lemma~\ref{Lemma:rtchainshifting}(b): for $c \in L$, define $c'$ and $c''$ as in the proof of Lemma~\ref{Lemma:rtchainshifting}. Then using Lemma~\ref{lemma:skewchainshifting},
\[w_{\mathscr C^L_{m,M}}(x) = \sum_c w_{\mathscr C^{sw}_{m,c'}}(x)w_{\mathscr C^{se}_{c,M}}(x) = \sum_c w_{\mathscr C^{ne}_{m',c''}}(z) w_{\mathscr C^{nw}_{c',M'}}(z) = w_{\mathscr C^L_{m',M'}}(z).\]
This also makes it clear that $\mathscr C^L_{m,M}$ and $\mathscr C^L_{m',M'}$ contain the same number of chains.

\begin{Ex}
    A simple example of Lemma~\ref{Lemma:rtchainshifting} can be seen in $S=RT_{3,2}$ using the arborescences in Figure~\ref{fig:upwardSpanningTree}. If $m = (1,2)$ and $M=(4,2)$, then there is a unique chain in $\mathscr C^L_{m,M}$, and the only upward arborescence containing this chain is the fourth one. Since $m' = (1,1)$ and $M' = (3,2)$, there is also a unique chain in $\mathscr C^L_{m',M'}$ (it must pass through $(3,1) \in L$), and the only downward arborescence containing this chain is again the fourth one, the image under $\aleph$ of the upward arborescence found above. Algebraically, we can check using the labelings given in Figure~\ref{fig:r32} that Lemma~\ref{Lemma:rtchainshifting}(b) holds here:
    \[z_{11}z_{21}z_{31}z_{32} = \frac{bc}{b+c} \cdot \frac{(b+c)de}{bd+be+ce} \cdot \frac{(bd+be+ce)f}{bd} \cdot g = cefg = x_{12}x_{22}x_{32}x_{42}.\]
\end{Ex}

\begin{Rem}Though we will not need it for our purposes, it is straightforward to apply this technique more generally to write down similar chain shifting lemmas for chains that touch the boundary of a general skew shape $S$, possibly in several places.
\end{Rem}

\begin{figure}
    \centering
\begin{tikzpicture}[scale = 0.35]
\begin{scope}[shift = {(0,0)}]
\draw (-3,3)--(-6,6)--(-5,7)--(-6,8)--(-5,9)--(-6,10)--(-5,11)--(-6,12)--(-5,13)--(-6,14)--(-5,15)--(-6,16)--(-5,17)--(-6,18)--(-5,19)--(-6,20)--(-5,21)--(-6,22);
\draw (-3,3)--(5,11)--(-6,22);
\draw[ultra thick] (-6,14)--(-5,13);
\node[label=left:$c$] at (-6,14) {};
\node[label={[label distance=-8pt]above right:$c'$}] at (-5,13) {};
\node[label={[label distance=-4pt]left:$c''$}] at (-6,12) {};

\filldraw [ultra thick, red, fill=red!40] (-1,5)--(-5,9)--(-4,10)--(-5,11)--(-4,12)--(-5,13)--(1,7)--cycle;
\node[label={[label distance=-6pt]below right:$m$}] at (-1,5) {};

\filldraw [ultra thick, blue, fill=blue!40] (-6,14)--(-5,15)--(-6,16)--(-5,17)--(-6,18)--(-4,20)--(-2,18)--cycle;
\node[label={[label distance=-6pt]above right:$M$}] at (-4,20) {};
\end{scope}

\begin{scope}[shift = {(17,0)}]
\draw (-3,3)--(-6,6)--(-5,7)--(-6,8)--(-5,9)--(-6,10)--(-5,11)--(-6,12)--(-5,13)--(-6,14)--(-5,15)--(-6,16)--(-5,17)--(-6,18)--(-5,19)--(-6,20)--(-5,21)--(-6,22);
\draw (-3,3)--(5,11)--(-6,22);
\draw[ultra thick] (-6,12)--(-5,13);
\node[label=left:$c$] at (-6,14) {};
\node[label={[label distance=-4pt]left:$c''$}] at (-6,12) {};
\node[label={[label distance=-10pt]below right:$c'$}] at (-5,13) {};

\begin{scope}[shift={(-1,-1)}]
\filldraw [ultra thick, red, fill=red!40] (-1,5)--(-5,9)--(-4,10)--(-5,11)--(-4,12)--(-5,13)--(1,7)--cycle;
\node[label={[label distance=-10pt]below right:$m'$}] at (-1,5) {};
\end{scope}

\begin{scope}[shift={(1,-1)}]
\filldraw [ultra thick, blue, fill=blue!40] (-6,14)--(-5,15)--(-6,16)--(-5,17)--(-6,18)--(-4,20)--(-2,18)--cycle;
\node[label={[label distance=-6pt]above right:$M'$}] at (-4,20) {};
\end{scope}
\end{scope}
\end{tikzpicture}
    \caption{Chain shifting in the right trapezoid. The total weight of all chains in the highlighted regions on the left (passing through $c'$ and $c$) with respect to $x$ equals the total weight of all chains in the highlighted regions on the right (passing through $c''$ and $c'$) with respect to $z = \tilde\rho^{-1}(x)$.}
    \label{fig:rtchainshifting}
\end{figure}
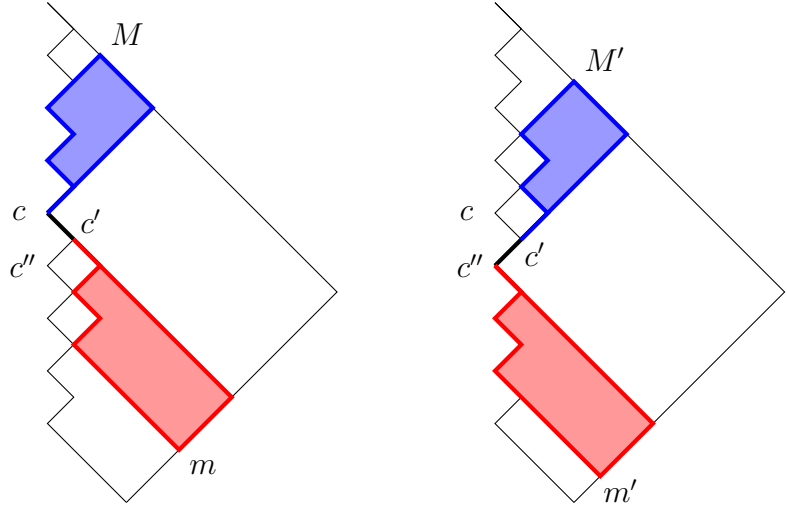

\medskip

The applications in the remainder of this section will not be needed until Section~\ref{sec:rowmotionequivariance}.

\subsubsection{Partial chains} \label{sec:partialchains}

We can also use $\aleph$ to study chains with an endpoint that does not lie on the boundary of $S$. 
The following result generalizes Lemma~\ref{lemma:skewchainshifting} when $p = M$.
\begin{Lemma} \label{lemma:partialchainshifting}
    Let $m' \lessdot m<p$ be elements of $S$ such that $sw(m) = \varnothing$.
    \begin{enumerate}[(a)]
        \item The bijection $\aleph$ restricts to a bijection from $U_S(\mathscr C^{se}_{m,p})$ to $\bigcup_{u \in se(p)} D_S(\mathscr C^{nw}_{m',u})$.
        \item Let $x \in \RR_+^S$, $y = \psi(x)$, and $z = \tilde\rho^{-1}(x)$. Then
    \[w_{\mathscr C^{se}_{m,p}}(x) = \sum_{u \in se(p)} \sum_{u' \gtrdot u} \frac{y_p}{y_{u'}} \cdot w_{\mathscr C^{nw}_{m',u}}(z).\]
    \end{enumerate}
\end{Lemma}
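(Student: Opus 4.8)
The plan is to mimic the structure of the proof of Lemma~\ref{lemma:skewchainshifting}, but with the upper endpoint $p$ now lying in the interior of $S$ rather than on its boundary. For part (a), I would start with an arborescence $T \in U_S(\mathscr C^{se}_{m,p})$ and let $C$ be the chain in $T$ from $m$ to $p$; since $sw(m) = \varnothing$, the element $m'$ is the unique element covered by $m$, and it lies to the southeast of $m$. The key input is Lemma~\ref{lemma:chaincross}: because $C \in \mathscr C^{se}_{m,p}$, every element of $C$ has a southeast neighbor, so $C$ contains no forced edge northeast from a right inward corner point; hence the chain $C'$ in $\aleph(T)$ upward from $m'$ lies entirely southeast of or beyond $C$. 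The only new feature is that $C$ now ends at the interior point $p$ rather than continuing to $\hat 1$, so instead of being forced to pass through a fixed element $M'$, the chain $C'$ only needs to pass to the southeast of $p$ at $p$'s rank; i.e.\ it passes through some $u \in se(p)$ and then proceeds freely upward to $\hat 1$. This places $\aleph(T)$ in $D_S(\mathscr C^{nw}_{m',u})$ for that $u \in se(p)$. Conversely, given $u \in se(p)$ and $T' \in D_S(\mathscr C^{nw}_{m',u})$ with downward chain $C'$ from $u$ to $m'$, applying $\aleph^{-1}$ (rotating the picture by a half turn) and the reflected form of Lemma~\ref{lemma:chaincross} produces an upward arborescence whose chain upward from $m$ stays northwest of $C'$ and therefore passes through $p$, landing in $U_S(\mathscr C^{se}_{m,p})$. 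Since these maps are restrictions of the mutually inverse bijections $\aleph$ and $\aleph^{-1}$, this establishes the claimed bijection; I should also note that the sets $D_S(\mathscr C^{nw}_{m',u})$ for distinct $u \in se(p)$ are disjoint (a downward arborescence has a unique chain through $m'$, which meets $p$'s rank in exactly one element), so the union on the right is disjoint and there is no multiplicity issue.

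For part (b), the idea is to run the weight computation from Proposition~\ref{prop:chainarborescence} but applied to the partial chains and then repackaged. Concretely, for each $u \in se(p)$, each chain $C \in \mathscr C^{se}_{m,p}$ can be extended by appending a saturated chain from $p$ up to $\hat 1$; but rather than fix a single extension, I would use the identity $x_p^{-1} = \sum_{q \lessdot p} y_q/y_p$ from Corollary~\ref{cor:edgeweight} applied at $p$ to expand $w_{\mathscr C^{se}_{m,p}}(x)$ in terms of arborescence weights. More directly: apply $\aleph$ and Corollary~\ref{cor:measurepreserving} to the bijection of part (a). On the $U_S$ side, summing \eqref{eq:wcx} over $\mathscr C^{se}_{m,p}$ (with fixed lower endpoint $m$, whose unique lower cover is $m'$) gives $w_{\mathscr C^{se}_{m,p}}(x)$ up to the factor $(y_{m'}/y_p)^{-1}$; on the $D_S$ side, for each $u \in se(p)$, summing \eqref{eq:wcz} over $\mathscr C^{nw}_{m',u}$ gives $w_{\mathscr C^{nw}_{m',u}}(z)$ up to the factor $(\sum_{u' \gtrdot u} y_u/y_{u'})^{-1}$, noting the relevant lower cover of $m'$-side chain is... here I need to be careful about which endpoint parities play the role of $m',M'$ in Corollary~\ref{cor:bijectionimpliesweight}. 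The bookkeeping is: $\mu_y(U_S(\mathscr C^{se}_{m,p})) = \sum_{u \in se(p)} \mu_y(D_S(\mathscr C^{nw}_{m',u}))$ by part (a) and Corollary~\ref{cor:measurepreserving}; then substitute the formulas relating these measures to the chain weights, multiply through by the appropriate edge-weight ratios, and collect terms to land on the stated formula $w_{\mathscr C^{se}_{m,p}}(x) = \sum_{u \in se(p)} \sum_{u' \gtrdot u} \frac{y_p}{y_{u'}} \cdot w_{\mathscr C^{nw}_{m',u}}(z)$.

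The main obstacle I anticipate is precisely this last edge-weight bookkeeping in part (b): since $p$ is not the global maximum, the arborescences on the two sides have differently-shaped "tails" above $p$ (on the $U_S$ side one extra downward edge leaving $m$; on the $D_S$ side the chain passes through $u$ with an extra upward edge from $u$), so the normalization factors do not cancel as cleanly as in Lemma~\ref{lemma:skewchainshifting}, and this is exactly where the factor $\sum_{u' \gtrdot u} \frac{y_p}{y_{u'}}$ comes from. I would handle this by carefully tracking, for a chain $C \in \mathscr C^{se}_{m,p}$ versus its image data, exactly which edges are present in $U_S(C)$ but not $D_S(\aleph(C) \text{-chain})$ using the description in the proof of Proposition~\ref{prop:chainarborescence} (the elements of $U_S(C)$ are obtained from $C$ by adding one downward edge from each element of $(S \setminus C) \cup \{m\}$, and similarly for $D_S$), and then matching the resulting products. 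An alternative, possibly cleaner route is to deduce part (b) purely formally from part (a) together with Lemma~\ref{lemma:skewchainshifting}(b) by decomposing each chain in $\mathscr C^{se}_{m,p}$ at $p$ and summing over extensions, analogous to the decomposition argument used after Lemma~\ref{Lemma:rtchainshifting}; this trades the arborescence bookkeeping for a telescoping sum over saturated chains from $p$ to $\hat 1$, whose total weight is $y_p^{-1}\phi^*(z)_p^{-1}$-type data controlled by Corollary~\ref{cor:edgeweight}.
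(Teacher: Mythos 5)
Your plan mirrors the paper's proof: Lemma~\ref{lemma:chaincross} for part (a) (with the converse relying, as you implicitly note, on $m'$ being the unique lower cover of $m$, i.e.\ $nw(m')=\{m\}$), and for part (b) the measure identity $\mu_y(U_S(\mathscr C^{se}_{m,p}))=\sum_{u\in se(p)}\mu_y(D_S(\mathscr C^{nw}_{m',u}))$ coming from part (a) and Corollary~\ref{cor:measurepreserving}, unpacked via Proposition~\ref{prop:chainarborescence}. The bookkeeping slip you flag resolves immediately: in \eqref{eq:wcz} the numerator is $y_a$ with $a=\min(C')=m'$, so $\mu_y(D_S(\mathscr C^{nw}_{m',u}))=\sum_{u'\gtrdot u}\frac{y_{m'}}{y_{u'}}\,w_{\mathscr C^{nw}_{m',u}}(z)$ (not $y_u/y_{u'}$); equating the sum of these over $u\in se(p)$ with $\mu_y(U_S(\mathscr C^{se}_{m,p}))=\frac{y_{m'}}{y_p}\,w_{\mathscr C^{se}_{m,p}}(x)$ and cancelling $y_{m'}$ gives exactly the stated formula.
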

\begin{proof}
    Choose any $T \in U_S(\mathscr C^{se}_{m,p})$ and let $C$ be the chain from $m$ to $p$ in $T$. By Lemma~\ref{lemma:chaincross}, the chain upward from $m'$ in $\aleph(T)$ must contain some $u \in se(p)$, so $\aleph(T) \in D_S(\mathscr C^{nw}_{m',u})$. Similarly $\aleph^{-1}$ sends each $D_S(\mathscr C^{nw}_{m',u})$ into $U_S(\mathscr C^{se}_{m,p})$ since $nw(m') = \{m\}$.

    For part (b), part (a) and Corollary~\ref{cor:measurepreserving} imply $\mu_y(U_S(\mathscr C^{se}_{m,p})) = \sum_{u \in se(p)}\mu_y(D_S(\mathscr C^{nw}_{m',u}))$. By Proposition~\ref{prop:chainarborescence},
        \begin{align*}
            \mu_y(U_S(\mathscr C^{se}_{m,p})) &= \frac{y_{m'}}{y_p} \cdot w_{\mathscr C^{se}_{m,p}}(x),\\
            \mu_y(D_S(\mathscr C^{nw}_{m',u})) &= \sum_{u' \gtrdot u} \frac{y_{m'}}{y_{u'}} \cdot w_{\mathscr C^{nw}_{m',u}}(z).
        \end{align*}
        Summing the second equation over all $u \in se(p)$, equating with the first equation and rearranging gives the result.
\end{proof}

One can also apply a dual argument to instead consider chains whose maximum lies on the boundary of $S$.

\begin{Lemma} \label{lemma:partialchainshifting2}
    Let $p<M' \lessdot M$ be elements of $S$ such that $ne(M') = \varnothing$.
    \begin{enumerate}[(a)]
        \item The bijection $\aleph$ restricts to a bijection from $\bigcup_{v \in nw(p)}U_S(\mathscr C^{se}_{v,M})$ to $D_S(\mathscr C^{nw}_{p,M'})$.
        \item Let $x \in \RR_+^S$, $y = \psi(x)$, and $z = \tilde\rho^{-1}(x)$. Then
    \[w_{\mathscr C^{nw}_{p,M'}}(z) = \sum_{v \in nw(p)} \sum_{v' \lessdot v} \frac{y_{v'}}{y_{p}} \cdot w_{\mathscr C^{se}_{v,M}}(x).\]
    \end{enumerate}
\end{Lemma}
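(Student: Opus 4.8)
The plan is to run the dual of the argument for Lemma~\ref{lemma:partialchainshifting}, interchanging the roles of minima and maxima (equivalently, applying poset duality, which exchanges the directions $se\leftrightarrow nw$ and $ne\leftrightarrow sw$ and conjugates $\tilde\rho^{-1}$ appropriately). The hypotheses $p<M'\lessdot M$ and $ne(M')=\varnothing$ mirror $m'\lessdot m<p$ and $sw(m)=\varnothing$ from that lemma; in particular, $ne(M')=\varnothing$ forces $M$ to be the unique element covering $M'$ and, by the contiguity of $S$ along each rank, the rightmost element of $S$ at its rank, so that $M'$ is the southeast neighbor of $M$. As in the earlier proofs, part~(b) will follow from part~(a) together with Corollary~\ref{cor:measurepreserving} and Proposition~\ref{prop:chainarborescence}.

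For part~(a), I would take $T\in U_S(\mathscr C^{se}_{v,M})$ with $v\in nw(p)$ and let $C$ be a chain in $\mathscr C^{se}_{v,M}$ contained in $T$. Since $v\in nw(p)$, the element $p$ lies east of $\overline v\in\overline C$, hence southeast of $C$; and since $C\in\mathscr C^{se}_{v,M}$, it contains no forced edge northeast from a right inward corner (the top vertex $(i,j+1)$ of such an edge has $se((i,j+1))=\varnothing$ by contiguity of $S$, so $(i,j+1)\notin C$). Lemma~\ref{lemma:chaincross} then says the chain $C'$ upward from $p$ to $\hat 1$ in $\aleph(T)$ stays southeast of or beyond $C$, and because $C$ contains the rightmost element $M$ at a rank above $p$, $C'$ must pass through its southeast neighbor $M'$; from $M'$ it then continues (forced) through $M$. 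The segment of $C'$ from $p$ to $M'$ is thus a saturated chain lying southeast of $C$, whence $nw(r)\neq\varnothing$ for each of its vertices, so $\aleph(T)\in D_S(\mathscr C^{nw}_{p,M'})$. For the reverse inclusion I would run the same reasoning through $\aleph^{-1}$ (which is $\aleph$ after a half-turn of the plane): given $T'\in D_S(\mathscr C^{nw}_{p,M'})$, the chain from $M$ down to $\hat 0$ in $\aleph^{-1}(T')$ meets $nw(p)$ in a unique element $v$ and reaches it along a chain in $\mathscr C^{se}_{v,M}$. Since the sets $U_S(\mathscr C^{se}_{v,M})$ over $v\in nw(p)$ are pairwise disjoint (the downward chain from $M$ in an upward arborescence meets the antichain $nw(p)$ at most once) and a downward arborescence determines at most one chain from $p$ to $M'$, $\aleph$ restricts to the asserted bijection between the two disjoint unions.

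For part~(b), I would apply Corollary~\ref{cor:measurepreserving} to the bijection of part~(a) to obtain $\sum_{v\in nw(p)}\mu_y(U_S(\mathscr C^{se}_{v,M}))=\mu_y(D_S(\mathscr C^{nw}_{p,M'}))$, and then rewrite each term using Proposition~\ref{prop:chainarborescence}. The collection $\mathscr C^{se}_{v,M}$ has minimum $v$ and maximum $M$, so $\mu_y(U_S(\mathscr C^{se}_{v,M}))=\bigl(\sum_{v'\lessdot v}\tfrac{y_{v'}}{y_M}\bigr)\,w_{\mathscr C^{se}_{v,M}}(x)$; the collection $\mathscr C^{nw}_{p,M'}$ has minimum $p$ and maximum $M'$ whose only cover in $S$ is $M$, so $\mu_y(D_S(\mathscr C^{nw}_{p,M'}))=\tfrac{y_p}{y_M}\,w_{\mathscr C^{nw}_{p,M'}}(z)$. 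Substituting these and multiplying through by $y_M/y_p$ yields $w_{\mathscr C^{nw}_{p,M'}}(z)=\sum_{v\in nw(p)}\sum_{v'\lessdot v}\tfrac{y_{v'}}{y_p}\,w_{\mathscr C^{se}_{v,M}}(x)$, the claimed identity.

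The step I expect to require the most care is the reverse inclusion in part~(a): it relies on the half-turn-reflected form of Lemma~\ref{lemma:chaincross}, plus a check that the chain produced from $v$ up to $M$ in $\aleph^{-1}(T')$ genuinely has its minimum in $nw(p)$ (rather than strictly above or below $p$'s rank) and satisfies the $se$-condition at every vertex. The remaining ingredients---the forced-edge bookkeeping near right inward corners, the disjointness of the two unions, and the algebra in part~(b)---are routine applications of machinery already established.
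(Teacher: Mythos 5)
Your proof is correct and follows exactly the route the paper intends: the paper's own proof of this lemma is just the one-line remark that it is ``analogous to the proof of Lemma~\ref{lemma:partialchainshifting}'' (or equivalently obtained by applying the earlier chain-shifting argument on the dual poset, switching the roles of $x$ and $z$). Your write-up carries out that dual argument in full detail, correctly deducing from $ne(M')=\varnothing$ that $M$ is rightmost at its rank with $se(M)=\{M'\}$, applying the half-turn form of Lemma~\ref{lemma:chaincross} for the reverse inclusion, and completing part~(b) via Proposition~\ref{prop:chainarborescence} and Corollary~\ref{cor:measurepreserving} just as in Lemma~\ref{lemma:partialchainshifting}.
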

\begin{proof}
    Analogous to the proof of Lemma~\ref{lemma:partialchainshifting}. (Alternatively, apply Lemma~\ref{lemma:skewchainshifting} on the dual poset to $S$, switching the roles of $x$ and $z$.)
\end{proof}

\subsubsection{Partial chains in trapezoids} \label{sec:partialchainsintrapezoids}

We can combine the techniques of Sections~\ref{sec:chainsintrapezoids} and \ref{sec:partialchains} to obtain the following results about partial chains in trapezoids, which we will use in Section~\ref{sec:rowmotionequivariance}. As in Section~\ref{sec:chainsintrapezoids}, let $S = RT_{r,s}$ be a right trapezoid with left border $L$.

\begin{Lemma} \label{lemma:trapezoidshift}
    Let $S = RT_{r,s}$, and either let $m' \lessdot m < p$ be elements of $S$ such that $se(m)=\varnothing$, or let $n' \lessdot n < p$ such that $sw(n) = \varnothing$.
    \begin{enumerate}[(a)]
        \item The bijection $\aleph$ restricts to a bijection:
        \begin{enumerate}[(i)]
            \item from $U_S(\mathscr C^L_{m,p})$ to $\bigcup_{u \in se(p)} D_S(\mathscr C^L_{m',u})$;
            \item from $U_S(\mathscr C^{\overline L}_{m,p})$ to $\bigcup_{u \in sw(p)} D_S(\mathscr C_{m',u})$;
            \item from $U_S(\mathscr C_{n,p})$ to $\bigcup_{u \in se(p)} D_S(\mathscr C^{\overline L}_{n', u})$.
        \end{enumerate}
        \item Let $x \in \RR_+^S$, $y = \psi(x)$, and $z = \tilde\rho^{-1}(x)$. Then
        \begin{align*}
        w_{\mathscr C^{L}_{m,p}}(x) &= \sum_{u \in se(p)}\sum_{u' \gtrdot u} \frac{y_p}{y_{u'}} w_{\mathscr C^{L}_{m',u}}(z),\\
        w_{\mathscr C^{\overline{L}}_{m,p}}(x) &= \sum_{u \in sw(p)}\sum_{u' \gtrdot u} \frac{y_p}{y_{u'}} w_{\mathscr C_{m',u}}(z),\\
        w_{\mathscr C_{n,p}}(x) &= \sum_{u \in se(p)}\sum_{u' \gtrdot u} \frac{y_p}{y_{u'}} w_{\mathscr C^{\overline{L}}_{n',u}}(z).
    \end{align*}
    \end{enumerate}

\end{Lemma}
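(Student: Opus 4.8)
The plan is to combine the two techniques already established: the trapezoid chain shifting of Lemma~\ref{Lemma:rtchainshifting} (which tracks how chains crossing the left border $L$ behave under $\aleph$) and the partial chain shifting of Lemma~\ref{lemma:partialchainshifting} (which handles a chain with a free upper endpoint $p$ not on the boundary). In each of the three cases (i)--(iii), I would take an upward arborescence $T$ in the indicated set, let $C$ be the distinguished chain in $T$ from $m$ (or $n$) up to $p$, and apply Lemma~\ref{lemma:chaincross} to the chain $C'$ running upward from $m'$ (or $n'$) in $\aleph(T)$. The point is that $C'$ must stay southeast or southwest of $\overline C$ except where it crosses $L$ using a forced edge at a left inner corner, exactly as in Lemma~\ref{Lemma:rtchainshifting}, but now the upper end of $C$ is the interior element $p$, so $C'$ terminates at some neighbor of $p$ rather than at a fixed maximum; that neighbor lies in $se(p)$ or $sw(p)$ depending on which side of $\overline C$ the tail of $C'$ is on.

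More precisely: for (i), since $se(m) = \varnothing$, $C$ starts at $m$ going southwest, so it must cross $L$ (every chain from $m$ eventually reaches the right side), hence $C \in \mathscr C^L_{m,p}$; after the crossing $C$ stays southeast of $L$, so $C'$ ends up southeast of $\overline C$ and terminates at some $u \in se(p)$, and because it too crossed $L$ via the corresponding forced edge, $C' \in \mathscr C^L_{m',u}$. For (ii), a chain $C \in \mathscr C^{\overline L}_{m,p}$ never touches $L$, so $C'$ never needs to cross over and stays southwest of $\overline C$ throughout, terminating at some $u \in sw(p)$ with $C' \in \mathscr C_{m',u}$ (no border condition survives, since $C'$ could touch $L$ on its own). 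For (iii), $sw(n) = \varnothing$ forces $C$ to start going southeast, so $C'$ stays southeast of $\overline C$, terminates at some $u \in se(p)$, and never crosses $L$, giving $C' \in \mathscr C^{\overline L}_{n',u}$. In each case the inverse direction is checked the same way using $\aleph^{-1}$ (rotating by a half-turn), together with the hypotheses $nw(m') = \{m\}$ resp.\ $nw(n') = \{n\}$, which guarantee the chain upward from $m'$/$n'$ re-enters the correct source set; this establishes part (a).

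Part (b) is then a formal consequence of part (a) exactly as in Lemma~\ref{lemma:partialchainshifting}(b). Apply Corollary~\ref{cor:measurepreserving} to the bijection in (a) to equate $\mu_y$ of the source set with $\mu_y$ of the (disjoint) union on the target side, then expand both sides via Proposition~\ref{prop:chainarborescence}: the upper endpoint of each source chain is the fixed interior element $p$, contributing the factor $\left(\sum_{p' \gtrdot p} y_{\min}/y_{p'}\right)^{-1}$-type term, while each target chain ends at $u$ (with $u' \gtrdot u$ ranging over its covers) contributing $\sum_{u' \gtrdot u} y_{m'}/y_{u'}$; the common factor $y_{m'}$ (resp.\ $y_{n'}$) in the denominators of $\mu_y$ cancels, leaving the stated identities. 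The main obstacle I anticipate is purely bookkeeping: verifying in case (ii) that no spurious $\mathscr C^L$ or $\mathscr C^{\overline L}$ constraint is imposed on the target chains $\mathscr C_{m',u}$ (i.e.\ that $\aleph$ really does surject onto all of $D_S(\mathscr C_{m',u})$ for $u \in sw(p)$, not just the border-avoiding ones), and symmetrically in (iii) that the $\overline L$ constraint genuinely does hold and is the only one; this requires care with the forced edges near the left inner corners and with chains that run along $L$ for several steps, but the geometric picture from Figure~\ref{fig:rtchainshifting} and Lemma~\ref{lemma:chaincross} makes each case routine once set up correctly.
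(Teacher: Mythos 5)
Your overall strategy matches the paper: for part (a), combine the trapezoid chain shifting of Lemma~\ref{Lemma:rtchainshifting} with the partial chain shifting of Lemma~\ref{lemma:partialchainshifting} via the geometric tracking from Lemma~\ref{lemma:chaincross}, and for part (b), push the bijection through Corollary~\ref{cor:measurepreserving} and Proposition~\ref{prop:chainarborescence} exactly as in Lemma~\ref{lemma:partialchainshifting}(b). The paper's proof is essentially this, stated very tersely.

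However, your geometric reasoning contains several errors that need repair, even though they do not change the final conclusions. First, in case (i) you assert ``since $se(m) = \varnothing$, ... it must cross $L$ ... hence $C \in \mathscr C^L_{m,p}$.'' This is false: a chain from $m$ to the interior point $p$ need not meet $L$ at all (indeed those are precisely the chains handled in (ii)), and moreover $C \in \mathscr C^L_{m,p}$ is a hypothesis of (i), not a consequence of $se(m)=\varnothing$. Second, your orientation of $L$ is reversed: $L$ is the \emph{left} border ($i-j = r-1$), whereas $m$ with $se(m) = \varnothing$ has $i=1$ and sits on the opposite, northeast side, so ``every chain from $m$ eventually reaches the right side'' is both incorrect and beside the point. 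Third, the paper's clean route for (ii) and (iii) is to note that in $S = RT_{r,s}$ the directional conditions of Lemma~\ref{lemma:partialchainshifting} degenerate: there are no right inward corners, $nw(q) = \varnothing$ exactly when $q \in L$, and the bottom edges force $\mathscr C^{se}_{n,p} = \mathscr C_{n,p}$, $\mathscr C^{nw}_{n',u} = \mathscr C^{\overline L}_{n',u}$, $\mathscr C^{sw}_{m,p} = \mathscr C^{\overline L}_{m,p}$, $\mathscr C^{ne}_{m',u} = \mathscr C_{m',u}$ --- after which (ii) and (iii) are literally Lemma~\ref{lemma:partialchainshifting} (reflected for (ii)) with no new geometry. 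You re-derive (ii) and (iii) from scratch rather than invoking these identifications, which is workable in principle but introduced the directional confusion above. I'd recommend recording those identities explicitly: they make (ii) and (iii) immediate and isolate the only genuinely new case, (i), where the $L$-crossing bookkeeping of Lemma~\ref{Lemma:rtchainshifting} must be spliced into the partial-chain argument.
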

\begin{proof}
    For part (a), (i) follows by combining the arguments in Lemmas~\ref{Lemma:rtchainshifting} and \ref{lemma:partialchainshifting}, while (ii) and (iii) follow immediately from Lemma~\ref{lemma:partialchainshifting} (reflected over the vertical axis for (ii)). Part (b) then follows from part (a) as in Lemma~\ref{lemma:partialchainshifting}.
\end{proof}

We likewise have the following dual result.

\begin{Lemma} \label{lemma:trapezoidshiftdual}
    Let $S = RT_{r,s}$, and let $p < M' \lessdot M$ be elements of $S$ such that $ne(M') = \varnothing$.
    \begin{enumerate}[(a)]
        \item The bijection $\aleph$ restricts to a bijection:
        \begin{enumerate}[(i)]
            \item from $\bigcup_{v \in ne(p)} U_S(\mathscr C^L_{v,M})$ to $D_S(\mathscr C^L_{p,M'})$;
            \item from $\bigcup_{v \in nw(p)} U_S(\mathscr C_{v,M})$ to $D_S(\mathscr C^{\overline L}_{p,M'})$.
        \end{enumerate}
        \item Let $x \in \RR_+^S$, $y = \psi(x)$, and $z = \tilde\rho^{-1}(x)$. Then
        \begin{align*}
        w_{\mathscr C^{L}_{p,M'}}(z) &= \sum_{v \in ne(p)}\sum_{v' \lessdot v} \frac{y_{v'}}{y_{p}} w_{\mathscr C^{L}_{v,M}}(x),\\
        w_{\mathscr C^{\overline L}_{p,M}}(z) &= \sum_{v \in nw(p)}\sum_{v' \lessdot v} \frac{y_{v'}}{y_{p}} w_{\mathscr C_{v,M}}(x).
    \end{align*}
    \end{enumerate}
\end{Lemma}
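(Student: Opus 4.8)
The plan is to derive part~(b) from part~(a) exactly as in Lemma~\ref{lemma:partialchainshifting2}, and to prove part~(a) by re-running the crossing analysis behind Lemmas~\ref{Lemma:rtchainshifting} and \ref{lemma:partialchainshifting2}, now with the free endpoint at the bottom of each chain rather than the top. Two facts special to $S=RT_{r,s}$ make this cleaner. First, an element $(i,j)\in RT_{r,s}$ has a northwest neighbor $(i+1,j)\in S$ if and only if $(i,j)\notin L$, so $\mathscr C^{\overline L}_{p,q}=\mathscr C^{nw}_{p,q}$ for all $p,q$. Second, the only elements of $S$ with no southeast neighbor are those in the first column $\{(1,j)\mid 1\le j\le s\}$; since every $v\in nw(p)$ has first coordinate at least $2$ and saturated chains never decrease the first coordinate, a chain from such a $v$ up to $M$ avoids the first column, so $\mathscr C_{v,M}=\mathscr C^{se}_{v,M}$ for all $v\in nw(p)$. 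Also note that $ne(M')=\varnothing$ forces $M$ to be the unique element covering $M'$ (equivalently, $M'$ is the southeast neighbor of $M$).

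With these identifications, part~(a)(ii) is precisely Lemma~\ref{lemma:partialchainshifting2}(a), and the second identity in part~(b) is Lemma~\ref{lemma:partialchainshifting2}(b) rewritten. For part~(a)(i), fix $v\in ne(p)$ and $T\in U_S(\mathscr C^L_{v,M})$, and let $C$ be the unique chain from $v$ to $M$ in $T$. Since $v\in ne(p)$, the vertex $p$ lies southwest of $C$, so by the reflected form of Lemma~\ref{lemma:chaincross} the chain $C'$ upward from $p$ in $\aleph(T)$ stays southwest of (or beyond) $C$ until it uses a forced edge at an inner corner. Taking $c$ to be the lowest element of $C\cap L$ and defining $c'$, $c''$ (the left inner corner covered by $c$, and the element of $L$ covered by $c'$) as in the proof of Lemma~\ref{Lemma:rtchainshifting}, that argument shows $C'$ crosses $L$ from $c''$ to $c'$ and thereafter stays southeast of $C$; and since $M'$ is the southeast neighbor of $M=\max C$, the free-endpoint argument of Lemma~\ref{lemma:partialchainshifting2} forces $C'$ to end at $M'$. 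Hence $C'\in\mathscr C^L_{p,M'}$, so $\aleph(T)\in D_S(\mathscr C^L_{p,M'})$; the symmetric argument for $\aleph^{-1}$ gives the reverse inclusion. (Equivalently, break each chain at its lowest point on $L$ and apply Lemma~\ref{lemma:trapezoidshift}(a) to the two pieces, as in the alternate proof of Lemma~\ref{Lemma:rtchainshifting}(b).)

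For the first identity of part~(b), part~(a)(i) and Corollary~\ref{cor:measurepreserving} give
\[\sum_{v\in ne(p)}\mu_y\bigl(U_S(\mathscr C^L_{v,M})\bigr)=\mu_y\bigl(D_S(\mathscr C^L_{p,M'})\bigr),\]
where the left side is a true sum since the elements of $ne(p)$ all lie at a common rank and so an upward arborescence's descending path from $M$ meets at most one of them. All chains in $\mathscr C^L_{v,M}$ have minimum $v$ and maximum $M$, and all chains in $\mathscr C^L_{p,M'}$ have minimum $p$ and maximum $M'$, so summing Proposition~\ref{prop:chainarborescence} over each family turns this into
\[\sum_{v\in ne(p)}\Bigl(\sum_{v'\lessdot v}\frac{y_{v'}}{y_M}\Bigr)w_{\mathscr C^L_{v,M}}(x)=\Bigl(\sum_{q\gtrdot M'}\frac{y_p}{y_q}\Bigr)w_{\mathscr C^L_{p,M'}}(z)=\frac{y_p}{y_M}\,w_{\mathscr C^L_{p,M'}}(z),\]
using that $M$ is the unique cover of $M'$. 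Multiplying through by $y_M/y_p$ gives the claimed identity, and the second identity follows in the same way from part~(a)(ii).

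The main obstacle is part~(a)(i): one has to check that the dual chain $C'$ meets $L$ at exactly the mirrored place --- its highest point $c''$ on $L$ matching the lowest point $c$ of $C$ --- while still terminating at $M'$, which forces the boundary-following mechanism of Lemma~\ref{Lemma:rtchainshifting} and the free-lower-endpoint mechanism of Lemma~\ref{lemma:partialchainshifting2} to be used in tandem. Everything after that is routine bookkeeping with Corollaries~\ref{cor:measurepreserving} and \ref{cor:bijectionimpliesweight}.
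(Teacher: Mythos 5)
Your proof is correct and spells out exactly the dualization the paper leaves implicit (the paper's entire proof is ``Analogous to the proof of Lemma~\ref{lemma:trapezoidshift}''): part~(a)(ii) is Lemma~\ref{lemma:partialchainshifting2}(a) once you observe $\mathscr C^{se}_{v,M}=\mathscr C_{v,M}$ for $v\in nw(p)$ and $\mathscr C^{nw}_{p,M'}=\mathscr C^{\overline L}_{p,M'}$, and part~(a)(i) is the dual combination of the $L$-crossing mechanism in Lemma~\ref{Lemma:rtchainshifting} with the free-endpoint argument of Lemma~\ref{lemma:partialchainshifting2}, with part~(b) extracted via Proposition~\ref{prop:chainarborescence} and Corollary~\ref{cor:measurepreserving} as you describe. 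One small remark: as your derivation shows, the second display in part~(b) should read $w_{\mathscr C^{\overline L}_{p,M'}}(z)$ rather than $w_{\mathscr C^{\overline L}_{p,M}}(z)$ (this is a typo in the statement; your argument proves the corrected version, which is also the one used in Proposition~\ref{prop:partialshiftdowndual}).
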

\begin{proof}
    Analogous to the proof of Lemma~\ref{lemma:trapezoidshift}.
\end{proof}

\medskip

The remainder of this section will not be needed for the main results of this paper.

\subsubsection{Nonintersecting chains}

We can generalize Proposition~\ref{prop:chainarborescence} to apply not just to chains but also to disjoint unions of chains.

\begin{Prop} \label{prop:kchains}
    Let $P$ be a poset, $x \in \RR_+^P$, $y = \psi(x)$, and $z = \tilde\rho^{-1}(x)$. Let $C = C_1 \sqcup C_2 \sqcup \dots \sqcup C_k$ be a disjoint union of saturated chains $C_i$ with minima $a_i$ and maxima $b_i$. Then
        \begin{align*}
    w_C(x) &= \prod_{i=1}^k\left(\sum_{a' \lessdot a_i} \frac{y_{a'}}{y_{b_i}}\right)^{-1} \cdot \mu_y(U_P(C)),\\
    w_C(z) &= \prod_{i=1}^k\left(\sum_{b' \gtrdot b_i} \frac{y_{a_i}}{y_{b'}}\right)^{-1} \cdot \mu_y(D_P(C)).
    \end{align*}
\end{Prop}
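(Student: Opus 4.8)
The plan is to reduce the statement about disjoint unions of chains to the single-chain case already established in Proposition~\ref{prop:chainarborescence}, by carrying out essentially the same counting argument but keeping track of $k$ separate chains simultaneously. Write $C = C_1 \sqcup \dots \sqcup C_k$ where $C_i$ runs from $a_i$ to $b_i$. Since the $C_i$ are disjoint saturated chains, the product of all edge weights over $C$ is $\prod_{i=1}^k \frac{y_{a_i}}{y_{b_i}}$. An upward arborescence containing $C$ is obtained by choosing, for each element of $(P \setminus C) \cup \{a_1, \dots, a_k\}$, one downward edge — note the elements $a_i$ still need a downward edge chosen (to something below $a_i$), whereas the other elements of each $C_i$ already have their downward edge prescribed by $C_i$ itself.

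Following the template of Proposition~\ref{prop:chainarborescence}, I would compute
\[
\sum_{T \in U_P(C)} \omega_T(y) = \left( \prod_{i=1}^k \frac{y_{a_i}}{y_{b_i}} \cdot \sum_{a' \lessdot a_i} \frac{y_{a'}}{y_{a_i}} \right) \cdot \prod_{p \in P \setminus C} \sum_{q \lessdot p} \frac{y_q}{y_p} = \prod_{i=1}^k \left(\sum_{a' \lessdot a_i} \frac{y_{a'}}{y_{b_i}}\right) \cdot \prod_{p \in P \setminus C} x_p^{-1},
\]
where the last step uses Corollary~\ref{cor:edgeweight} to identify $\sum_{q \lessdot p} \frac{y_q}{y_p} = x_p^{-1}$. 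Now $\prod_{p \in P \setminus C} x_p^{-1} = \frac{w_C(x)}{w_P(x)}$, and substituting $\frac{1}{w_P(x)} = \sum_{T \in U_P} \omega_T(y)$ from Proposition~\ref{prop:weightofP} and dividing through gives exactly the first displayed identity after recognizing the left-hand side divided by $\sum_{T \in U_P}\omega_T(y)$ as $\mu_y(U_P(C))$. The second identity follows by the analogous argument applied to downward arborescences: an element of $D_P(C)$ is built by choosing one upward edge for each element of $(P \setminus C) \cup \{b_1, \dots, b_k\}$, the edge weights over $C$ again contribute $\prod_i \frac{y_{a_i}}{y_{b_i}}$, and one uses the second formula of Corollary~\ref{cor:edgeweight} together with Proposition~\ref{prop:weightofP} for $w_P(z)$.

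The only genuinely new point over the single-chain case is the combinatorial bookkeeping: one must check that an upward arborescence contains $C$ if and only if it arises from the construction above — i.e., that the edges of $C$ together with a choice of one downward edge at each element not already having one produces a valid arborescence, with no double-counting. This is where some care is needed, since one must verify that the prescribed edges of the $C_i$ do not conflict with the freely chosen downward edges and that every arborescence containing $C$ decomposes uniquely in this way. But because the $C_i$ are pairwise disjoint and each is a saturated chain, each element of $C$ other than the minima $a_i$ has its unique downward edge (within $C$) already fixed, and the remaining elements (those in $P \setminus C$ together with the $a_i$) receive an independent choice; this is exactly the same bookkeeping as in Proposition~\ref{prop:chainarborescence}, just with $k$ minima instead of one, so no real obstacle arises. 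I expect the proof to be short: "The proof is analogous to that of Proposition~\ref{prop:chainarborescence}, building arborescences containing $C$ by choosing a downward (resp. upward) edge at each element of $P \setminus C$ and at each $a_i$ (resp. $b_i$)."
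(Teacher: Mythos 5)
Your proposal is correct and takes essentially the same approach as the paper: the paper's proof likewise notes that the argument of Proposition~\ref{prop:chainarborescence} carries over verbatim, with $\frac{y_a}{y_b}$ replaced by $\prod_i \frac{y_{a_i}}{y_{b_i}}$ and a free downward (resp.\ upward) edge chosen at each $a_i$ (resp.\ $b_i$). The bookkeeping point you raise is harmless for the reason you identify (and because any choice of one downward edge per element of $P$ automatically yields an upward arborescence, so no cycle/conflict issue can arise).
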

\begin{proof}
    Analogous to the proof of Proposition~\ref{prop:chainarborescence}: the only difference is that the weight of the edges in $C$ is $\prod_i \frac{y_{a_i}}{y_{b_i}}$ instead of $\frac{y_a}{y_b}$, and we must add downward edges from each $a_i$, which have weight $\prod_i \sum_{a' \lessdot a_i} \frac{y_{a'}}{y_{a_i}}$ instead of just $\sum_{a' \lessdot a} \frac{y_{a'}}{y_a}$.
\end{proof}

Let us call a disjoint union $C = C_1 \sqcup \cdots \sqcup C_k$ of $k$ (nonintersecting) saturated chains a \emph{$k$-chain}. If $\mathscr C$ is a collection of $k$-chains, then we can define $w_{\mathscr C}(x)$, $U_P(\mathscr C)$, and $D_P(\mathscr C)$ as before. We can then prove chain shifting results using $\aleph$ as before to relate these sets.

As one example, we can easily prove a $k$-chain version of Lemma~\ref{lemma:skewchainshifting}. (While this result can also be proved from Lemma~\ref{lemma:skewchainshifting} by using the Lindstr\"om-Gessel-Viennot Lemma, the proof that we give here has the advantage that it does not require subtraction.)

\begin{Lemma}
    For $1 \leq i \leq k$, let $m'_i \lessdot m_i$ and $M'_i \lessdot M_i$ be elements of $S$ such that $sw(m_i) = ne(M'_i) = \varnothing$. Let $\mathscr C$ be the set of $k$-chains $C= C_1 \sqcup \cdots \sqcup C_k$ such that $C_i \in \mathscr C^{se}_{m_i,M_i}$, and let $\mathscr C'$ be the set of $k$-chains $C' = C'_1 \sqcup \cdots \sqcup C'_k$ such that $C'_i \in \mathscr C^{nw}_{m'_i,M'_i}$.
    \begin{enumerate}[(a)]
    \item The bijection $\aleph$ restricts to a bijection from $U_S(\mathscr C)$ to $D_S(\mathscr C')$.
    \item Let $x \in \RR^S_+$ and $z = \tilde\rho^{-1}(x)$. Then $w_{\mathscr C}(x) = w_{\mathscr C'}(z)$.
    \end{enumerate}
\end{Lemma}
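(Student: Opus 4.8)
The plan is to mimic the proof of Lemma~\ref{lemma:skewchainshifting}, replacing single chains by $k$-tuples of nonintersecting chains throughout and using Proposition~\ref{prop:kchains} in place of Proposition~\ref{prop:chainarborescence}. For part (a), take any $T \in U_S(\mathscr C)$, so $T$ contains a $k$-chain $C = C_1 \sqcup \cdots \sqcup C_k$ with $C_i \in \mathscr C^{se}_{m_i, M_i}$. The conditions $sw(m_i) = ne(M'_i) = \varnothing$ guarantee, exactly as in Lemma~\ref{lemma:skewchainshifting}, that $m'_i$ is the unique element covered by $m_i$ (lying to its southeast) and $M_i$ is the unique element covering $M'_i$ (lying to its southeast). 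Now apply Lemma~\ref{lemma:chaincross} to each $C_i$ separately: since each $C_i$ contains no forced edge northeast from a right inward corner point, the chain $C'_i$ upward from $m'_i$ in $\aleph(T)$ stays southeast of or beyond $C_i$, hence (since $C_i$ contains the rightmost element at every rank it passes through, by the $\mathscr C^{se}$ condition) $C'_i$ passes through $M'_i$, so $C'_i \in \mathscr C^{nw}_{m'_i, M'_i}$.

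The one genuinely new point — and the step I expect to be the main obstacle — is verifying that the chains $C'_1, \dots, C'_k$ produced this way are pairwise nonintersecting, so that $C'_1 \sqcup \cdots \sqcup C'_k$ is a legitimate $k$-chain and $\aleph(T) \in D_S(\mathscr C')$. I would argue this directly from the geometry of $\aleph$: in $\aleph(T) \in D_S$ every vertex of $S$ has up-degree exactly $1$, so the chains upward from $m'_1, \dots, m'_k$ in $\aleph(T)$ can only fail to be disjoint by \emph{merging} — once two such chains share a vertex they coincide above that vertex. But if $C'_i$ and $C'_j$ merged at some vertex $v$, then above $v$ they would be the same chain, so one of them (say $C'_j$, if $M'_i \le M'_j$ is impossible because the $M'$'s are distinct maxima of distinct $C_i$) could not terminate at its prescribed maximum $M'_j$ — more carefully, since $C'_i$ ends at $M'_i$ and $C'_j$ ends at $M'_j$ with $M'_i \ne M'_j$, they cannot agree above $v$, a contradiction. (A cleaner way to see disjointness: the $C_i$ are disjoint and linearly ordered left-to-right at each rank by the skew-shape structure; $\aleph$ sends each $C_i$ to a region lying weakly southeast of $C_i$ and weakly northwest of $C_{i+1}$, so the images land in disjoint horizontal strips at each rank.) I would make this precise using the ``southeast of / southwest of / beyond'' vocabulary from before Lemma~\ref{lemma:chaincross}: $C'_i$ lies weakly southeast of $\overline{C_i}$ and, by applying Lemma~\ref{lemma:chaincross} in its reflected form relative to $C_{i+1}$ (whose left side bounds the region), weakly southwest of $\overline{C_{i+1}}$, forcing the $C'_i$ into disjoint columns.

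For the reverse direction, $\aleph^{-1}$ is defined by rotating the plane a half-turn (Proposition~\ref{prop:alephbijection}), and the symmetric argument — using $nw(m'_i) = \{m_i\}$ and $se(M_i)$ considerations, together with the same disjointness reasoning applied to $\aleph^{-1}$ — shows $\aleph^{-1}$ sends $D_S(\mathscr C')$ into $U_S(\mathscr C)$. Since $\aleph$ is already a bijection $U_S \to D_S$, this proves it restricts to a bijection $U_S(\mathscr C) \to D_S(\mathscr C')$, giving part (a).

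For part (b), I would first observe that, because each $C \in \mathscr C$ has the same tuple of minima $(m_1,\dots,m_k)$ and maxima $(M_1,\dots,M_k)$, and each upward arborescence contains at most one $k$-chain from $\mathscr C$ (the chain from $m_i$ to $M_i$ in $T$ is determined once it exists), summing the first formula of Proposition~\ref{prop:kchains} over all $C \in \mathscr C$ gives
\[
w_{\mathscr C}(x) = \prod_{i=1}^k\left(\sum_{a' \lessdot m_i} \frac{y_{a'}}{y_{M_i}}\right)^{-1} \mu_y(U_S(\mathscr C)) = \prod_{i=1}^k \frac{y_{M_i}}{y_{m'_i}} \cdot \mu_y(U_S(\mathscr C)),
\]
using $sw(m_i) = \varnothing$ so that $m'_i$ is the unique element below $m_i$; similarly
\[
w_{\mathscr C'}(z) = \prod_{i=1}^k\left(\sum_{b' \gtrdot M'_i} \frac{y_{m'_i}}{y_{b'}}\right)^{-1} \mu_y(D_S(\mathscr C')) = \prod_{i=1}^k \frac{y_{M_i}}{y_{m'_i}} \cdot \mu_y(D_S(\mathscr C')),
\]
using $ne(M'_i) = \varnothing$ so that $M_i$ is the unique element above $M'_i$. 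By part (a) and Corollary~\ref{cor:measurepreserving}, $\mu_y(U_S(\mathscr C)) = \mu_y(D_S(\mathscr C'))$, and the prefactors $\prod_i y_{M_i}/y_{m'_i}$ match, so $w_{\mathscr C}(x) = w_{\mathscr C'}(z)$. This is entirely routine once (a) is in hand; all the difficulty is concentrated in the disjointness verification in part (a).
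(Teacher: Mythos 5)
Your overall plan matches the paper's: produce the $C'_i$ via Lemma~\ref{lemma:chaincross} (the paper cites Lemma~\ref{lemma:skewchainshifting}, which is the same thing packaged), then check disjointness, then derive (b) from Proposition~\ref{prop:kchains} and Corollary~\ref{cor:measurepreserving}. Part (b) is correct and in fact a bit more explicit than the paper's one-line reference. The problem is that both of your disjointness arguments in part (a) have gaps.

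Your first (``merging'') argument is simply wrong. In a downward arborescence the unique upward paths from $m'_i$ and $m'_j$ can indeed meet at a vertex $v$ and then coincide above $v$, with one path terminating at $M'_i$ and the other continuing on to $M'_j$; there is no contradiction in one being a prefix of the other. Note also that $M'_i$ and $M'_j$ are forced to lie on the right boundary of $S$ (since $ne(M'_i) = \varnothing$), so they are automatically comparable; ``they cannot agree above $v$'' is false, and nothing in the merging picture alone rules out $C'_i \subset C'_j$.

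Your second (``bounding'') argument is closer, but the citation of the reflected Lemma~\ref{lemma:chaincross} applied to $C_{i+1}$ does not go through as stated. The reflected lemma requires $C_{i+1}$ to contain no forced edge northwest out of a \emph{left} inward corner, and the hypothesis $C_{i+1} \in \mathscr C^{se}_{m_{i+1},M_{i+1}}$ does not give you that: the $se$ condition only controls right-inward-corner forced edges in $C_{i+1}$. The thing that actually forbids $C'_i$ from crossing $C_{i+1}$ at a left inward corner is the $\mathscr C^{nw}$ property of $C'_i$ itself (the tail $(a,b-1)$ of the forced downward edge has no NW neighbor because $(a+1,b-1)\notin S$), not a property of $C_{i+1}$ — so the one-sided Lemma~\ref{lemma:chaincross} is not the right tool here. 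The paper sidesteps this by using $C_i$ (not $C_{i+1}$) as the separating chain: $C'_i$ lies southeast of $C_i$ by construction, and $C'_{i+1}$ cannot cross $C_i$ because $C_i \in \mathscr C^{se}$ rules out the right-inward-corner crossings while $C'_{i+1} \in \mathscr C^{nw}$ rules out the left-inward-corner crossings; hence $C_i$ separates $C'_i$ from $C'_{i+1}$. If you want to keep your version with $C_{i+1}$ as the barrier, it can be repaired, but you must argue the forced-edge noncrossing directly for the pair $(C'_i, C_{i+1})$ using both the $\mathscr C^{se}$ property of $C_{i+1}$ and the $\mathscr C^{nw}$ property of $C'_i$, rather than invoking Lemma~\ref{lemma:chaincross} whose hypothesis you have not checked.
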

\begin{proof}
    We may assume without loss of generality that $m_1 < m_2 < \cdots < m_k$ (along the leftmost chain in $S$). Suppose $T \in U_S(\mathscr C)$ contains the $k$-chain $C = C_1 \sqcup \cdots \sqcup C_k$. By Lemma~\ref{lemma:skewchainshifting}, $\aleph(T)$ must contain a chain $C'_i$ from $m_i'$ to $M_i'$ for all $i$. These chains must be disjoint: $C'_i$ does not intersect $C_i$ by construction, and $C'_{i+1} \in \mathscr C^{nw}_{m'_i,M'_i}$ cannot intersect $C_i$ since it contains no forced edge southwest from a left inward corner. Thus $C'_i$ and $C'_{i+1}$ are separated by $C_i$ and are therefore disjoint. It follows that $\aleph(T) \in D_S(\mathscr C')$. A similar argument shows that $\aleph^{-1}$ sends $D_S(\mathscr C')$ to $U_S(\mathscr C)$. Part (b) then follows from Corollaries~\ref{cor:measurepreserving} and \ref{cor:bijectionimpliesweight}.
\end{proof}

As another example, we show how one can apply this technique to derive a special case of the iterated rowmotion formula for rectangles from \cite{musikerroby}. 

\begin{Prop}
    Let $R = R_{r,s}$, and choose $p = (i,j) \in R$ with $1 \leq i < r$ and $1 \leq j < s$. Let $\mathscr C$ be the collection of $2$-chains $C_1 \sqcup C_2$, where $C_1$ is a chain from $(2,1)$ to $(i+1,j)$, and $C_2$ is a chain from $(1,2)$ to $(i,j+1)$.
    \begin{enumerate}[(a)]
        \item The bijection $\aleph$ restricts to a bijection from $U_R(\mathscr C)$ to $D_R(\mathscr C_{(1,1),p})$.
        \item Let $x \in \RR^R_+$ and $y = \psi(x)$. Then \[\rho^{-1}(y)_p = x_{11}x_{i+1,j+1} \frac{w_{\mathscr C}(x)}{w_{\mathscr C'}(x)},\]
        where $\mathscr C'$ is the set of chains from $(1,1)$ to $(i+1,j+1)$.
    \end{enumerate}
\end{Prop}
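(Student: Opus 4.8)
The plan is to prove part (a) using the chain-crossing properties of $\aleph$, then deduce part (b) from part (a) together with Corollary~\ref{cor:measurepreserving} and Corollary~\ref{cor:edgeweight}. For part (a), start with an upward arborescence $T \in U_R(\mathscr C)$ containing the $2$-chain $C_1 \sqcup C_2$, where $C_1$ runs from $(2,1)$ to $(i+1,j)$ and $C_2$ runs from $(1,2)$ to $(i,j+1)$. In the rectangle $R$ there are no inward corner points, so no chain contains a forced edge of the troublesome type, and Lemma~\ref{lemma:chaincross} applies cleanly. The key geometric observation is that $(2,1)$ is the southeast neighbor of $(1,1)$ and $(1,2)$ is its northeast neighbor, and similarly $(i+1,j)$ and $(i,j+1)$ are the southeast and northeast neighbors of $p=(i,j)$ and also of $(i+1,j+1)$ — actually the relevant fact is that $(i+1,j)\in se((i+1,j+1))$ wait, more precisely the chain $C'$ upward from $(1,1)$ in $\aleph(T)$ is hemmed in between the downward shifts $\overline{C_1}$ and $\overline{C_2}$: by Lemma~\ref{lemma:chaincross} applied to $C_2$ (after reflection), $C'$ lies southeast of $C_2$, and applied to $C_1$, $C'$ lies southwest of $C_1$. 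Since $C_1$ and $C_2$ end at the two upper neighbors of $p$, the chain $C'$ is forced to pass through $p$ and then continue up to $\hat 1$; thus $\aleph(T) \in D_R(\mathscr C_{(1,1),p})$. Conversely, $\aleph^{-1}$ of a downward arborescence containing a chain from $(1,1)$ through $p$ up to $\hat 1$ produces, by the same squeezing argument run backward, an upward arborescence whose chains from $(2,1)$ and $(1,2)$ are forced to terminate at $(i+1,j)$ and $(i,j+1)$ respectively, giving a member of $U_R(\mathscr C)$.

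For part (b), combine part (a) with Corollary~\ref{cor:measurepreserving} to get $\mu_y(U_R(\mathscr C)) = \mu_y(D_R(\mathscr C_{(1,1),p}))$. Now I would relate each side to the relevant weights. Applying (the $2$-chain analogue of) Proposition~\ref{prop:chainarborescence} — or just Proposition~\ref{prop:kchains} — to the $2$-chains in $\mathscr C$ with minima $(2,1),(1,2)$ and maxima $(i+1,j),(i,j+1)$ expresses $\mu_y(U_R(\mathscr C))$ as a constant (depending on $y$ at the neighbors of $(1,1)$ and at the maxima of the two chains) times $w_{\mathscr C}(x)$. On the other side, $\mathscr C_{(1,1),p}$ consists of chains from $\hat 0$'s cover $(1,1)$ up to $p$; but here the chains go all the way to $\hat 1$, so I should instead view the arborescence in $D_R$ as containing a chain from $(1,1)$ to $p$ whose continuation to $\hat 1$ is then free, i.e. use the "partial chain" form as in Lemma~\ref{lemma:partialchainshifting2}: $\mu_y(D_R(\mathscr C_{(1,1),p}))$ equals a $y$-factor times $w_{\mathscr C_{(1,1),p}}(z)$ summed appropriately. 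The point, however, is that I do not actually want $w(z)$; I want $\rho^{-1}(y)_p$. Here I would invoke the interpretation of $\rho^{-1}$ via $\tilde\rho^{-1}$ and $\psi$: since $\rho^{-1}\circ\psi = \psi\circ\tilde\rho^{-1}$ (Proposition~\ref{prop:rhotilde}), writing $z = \tilde\rho^{-1}(x)$ we have $\rho^{-1}(y)_p = \psi(z)_p$, and $\psi(z)_p$ is the total weight of all maximal chains in $[\hat 0, p]$ in $z$, i.e. $\psi(z)_p = z_{(1,1)}\cdot w_{\mathscr C_{(1,1),p}}(z)$ where $\mathscr C_{(1,1),p}$ ranges over chains from $(1,1)$ to $p$. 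So part (b) reduces to showing $w_{\mathscr C_{(1,1),p}}(z) = \frac{w_{\mathscr C}(x)}{w_{\mathscr C'}(x)}\cdot \frac{x_{11}x_{i+1,j+1}}{z_{(1,1)}}$, which after expanding $z_{(1,1)}^{-1} = \sum_{q\lessdot (1,1)} y_q/y_{(1,1)}$ — but $(1,1)$ is the minimum so its only lower cover in $\widehat R$ is $\hat 0$, giving $z_{(1,1)} = x_{(1,1)}$ — becomes an identity among $\mu_y$-values that follows by matching the constant $y$-prefactors on both sides of $\mu_y(U_R(\mathscr C)) = \mu_y(D_R(\mathscr C_{(1,1),p}))$, after also rewriting $w_{\mathscr C'}(x)$ via Proposition~\ref{prop:chainarborescence} applied to chains from $(1,1)$ to $(i+1,j+1)$.

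The main obstacle I expect is bookkeeping the various $y$-prefactors so that they collapse to exactly $x_{11}x_{i+1,j+1}$, and in particular handling the "open top" of the chains in $D_R$ correctly — the chains in $\mathscr C_{(1,1),p}$ stop at $p$, but the arborescences continue upward from $p$ to $\hat 1$ freely, so one must be careful that summing over those continuations contributes exactly the factor $\sum_{q\gtrdot p} y_p/y_q$ predicted by Corollary~\ref{cor:edgeweight}, which then combines with the denominator $w_{\mathscr C'}(x)$-prefactor. A secondary subtlety is verifying the disjointness of $C_1$ and $C_2$ is automatic from the squeezing (it is, since any intersection would force $C'$ to be pinched to a point strictly below $p$, contradicting that $C'$ reaches both upper neighbors of $p$), and symmetrically that $\aleph^{-1}$ genuinely lands in $U_R(\mathscr C)$ rather than a larger family. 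Once the geometry of part (a) is pinned down, part (b) is a matter of carefully applying Propositions~\ref{prop:chainarborescence} and \ref{prop:kchains} and cancelling, with Proposition~\ref{prop:rhotilde} providing the bridge back to $\rho^{-1}$.
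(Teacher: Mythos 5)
Part (a) matches the paper's proof essentially verbatim: squeeze the chain from $(1,1)$ in $\aleph(T)$ between $\overline{C_1}$ and $\overline{C_2}$ via Lemma~\ref{lemma:chaincross}, forcing it through $p$, and reverse via the same argument for $\aleph^{-1}$. (You've swapped the compass labels — the chain from $(1,1)$ lies \emph{southeast} of $C_1$ and \emph{southwest} of $C_2$, not the other way around, since $(2,1)$ lies to the northwest of $(1,1)$ in this paper's convention — but the geometric squeezing is correct and the conclusion is unaffected.)

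Part (b) follows the same strategy as the paper (apply Proposition~\ref{prop:kchains} to each side of $\mu_y(U_R(\mathscr C)) = \mu_y(D_R(\mathscr C_{(1,1),p}))$, then use Proposition~\ref{prop:rhotilde} to turn $w_{\mathscr C_{(1,1),p}}(z)$ into $\rho^{-1}(y)_p$), but your bookkeeping has two concrete bugs. First, $\psi(z)_p = w_{\mathscr C_{(1,1),p}}(z)$ exactly — there is no extra factor of $z_{(1,1)}$, since every chain in $\mathscr C_{(1,1),p}$ already contributes $z_{(1,1)}$ as part of its weight $w_C(z)=\prod_{q\in C} z_q$. Second, the claim $z_{(1,1)} = x_{(1,1)}$ is false: by Corollary~\ref{cor:edgeweight}, $x_{(1,1)}^{-1}$ sums $y_q/y_{(1,1)}$ over \emph{downward} covers (giving $x_{(1,1)}=y_{(1,1)}$), whereas $z_{(1,1)}^{-1}$ sums $y_{(1,1)}/y_q$ over \emph{upward} covers $(2,1),(1,2)$; you have conflated the two formulas. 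Neither error is fatal because the correct route avoids $z_{(1,1)}$ entirely. Finally, you propose rewriting $w_{\mathscr C'}(x)$ via Proposition~\ref{prop:chainarborescence}, but the faster observation is that $w_{\mathscr C'}(x) = \psi(x)_{(i+1,j+1)} = y_{i+1,j+1}$ directly, and then $y_{i+1,j+1} = x_{i+1,j+1}(y_{i+1,j}+y_{i,j+1})$ by the definition of the transfer map together with $y_{11}=x_{11}$ makes the prefactor collapse to $x_{11}x_{i+1,j+1}/w_{\mathscr C'}(x)$ in one line. With these corrections the proof closes exactly as in the paper.
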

\begin{proof}
    Let $T \in U_R(\mathscr C)$ contain the $2$-chain $C_1 \sqcup C_2$ as in the statement of the proposition. By Lemma~\ref{lemma:chaincross}, the chain upward from $(1,1)$ in $\aleph(T)$ must lie southeast of $C_1$ and southwest of $C_2$, so it must pass through $p$. Thus $\aleph(T) \in D_R(\mathscr C_{(1,1),p})$. Similarly, if $T' \in D_R(\mathscr C_{(1,1),p})$ contains the chain $C'$ from $(1,1)$ to $p$, then the chains downward from $(i+1,j)$ and $(i,j+1)$ in $\aleph^{-1}(T)$ must lie northwest and northeast of $C'$ and so they must pass through $(2,1)$ and $(1,2)$, respectively, and be disjoint. Part (a) follows.

    For part (b), summing Proposition~\ref{prop:kchains} over $C \in \mathscr C$ and $C \in \mathscr C_{(1,1),p}$ gives
    \begin{align*}
    \mu_y(U_R(\mathscr C)) &= \frac{y_{11}}{y_{i+1,j}}\cdot \frac{y_{11}}{y_{i,j+1}} w_{\mathscr C}(x),\\
    \mu_y(D_R(\mathscr C_{(1,1),p}))&= \left(\frac{y_{11}}{y_{i+1,j}} + \frac{y_{11}}{y_{i,j+1}}\right) w_{\mathscr C_{(1,1),p}}(z),
    \end{align*}
    where $z = \tilde\rho^{-1}(x)$.
    By part (a) and Corollary~\ref{cor:measurepreserving}, these two quantities are equal. Equating them and rearranging gives
    \[w_{\mathscr C_{(1,1),p}}(z) = \frac{y_{11}}{y_{i+1,j}+y_{i,j+1}}w_{\mathscr C}(x).\]
    But the left hand side equals $\psi(z)_p = \rho^{-1}(y)_p$, and the fraction on the right hand side is \[\frac{y_{11}}{y_{i+1,j}+y_{i,j+1}} = \frac{x_{11}}{y_{i+1,j+1}/x_{i+1,j+1}} = \frac{x_{11}x_{i+1,j+1}}{w_{\mathscr C'}(x)}.\] Substituting these into the previous equation gives the result.
\end{proof}

Since the expression for $\rho^{-1}(y)_p$ obtained is written in terms of weights of chains with respect to $x$, it is not difficult to iterate this procedure to obtain the other cases of the iterated rowmotion formula from \cite{musikerroby}. (Since this is not relevant for our current work, we leave the details to the reader.)

\section{A map between the rectangle and trapezoid}
\label{section:equivariantMap}

In this section, we use the chain shifting lemmas (Lemmas~\ref{lemma:skewchainshifting} and \ref{Lemma:rtchainshifting}) to define a birational map between labelings of the rectangle $R_{r,s}$ and trapezoid $T_{r,s}$. In the tropical setting, this map will become a continuous, piecewise-linear, volume-preserving map between the chain polytopes of these two posets, which will in turn give a bijection between the plane partitions of $R_{r,s}$ and $T_{r,s}$ of height $\ell$. We will express this map as a composition of rowmotion-like maps between labelings of certain intermediate posets.

\subsection{Intermediate posets and maps}
The intermediate posets that we consider will all be induced subposets of the right trapezoid $RT_{r,s}$.

\begin{Def}
\label{Def:intermediatePosets}
    Let $k \leq s \leq r$ be positive integers. The $k$th \emph{intermediate poset} $I_k = I_{r,s,k}$ is the induced subposet of $RT_{r,s}$ on $T_{r,k} \cup [(k,k+1), (r+k-1, s)]$.
\end{Def}

See Figure~\ref{fig:intermediatePosets} for examples. 
Note that the leftmost minimal element of $I_k$ is $(k,1)$. 
One can easily verify that $I_1 = R_{r,s}$, $I_s = T_{r,s}$, and $|I_k| = rs$ for all $k$.

\begin{figure}
    \centering
\begin{tikzpicture}[scale = 0.5]
\draw [dashed] (0,0)--(3,3);
\draw [dashed] (-1,1)--(1,3);
\draw (1,3)--(2,4);
\draw [dashed] (-2,2)--(-1,3);
\draw (-1,3)--(1,5);
\draw (-3,3)--(0,6);
\draw (-3,5)--(-1,7);
\draw (-3,7)--(-2,8);

\draw [dashed] (0,0)--(-3,3);
\draw [dashed] (1,1)--(-1,3);
\draw (-1,3)--(-3,5);
\draw [dashed] (2,2)--(1,3);
\draw (1,3)--(-3,7);
\draw (3,3)--(-3,9);

\node[wW] at (0,0) {};
\node[wW] at (1,1) {};
\node[wW] at (2,2) {};
\node[wW] at (-1,1) {};
\node[wW] at (0,2) {};
\node[wW] at (-2,2) {};

\node[wB] at (3,3) {};
\node[wB] at (1,3) {};
\node[wB] at (2,4) {};
\node[wB] at (-1,3) {};
\node[wB] at (0,4) {};
\node[wB] at (1,5) {};
\node[wB] at (-3,3) {};
\node[wB] at (-2,4) {};
\node[wB] at (-1,5) {};
\node[wB] at (0,6) {};
\node[wB] at (-3,5) {};
\node[wB] at (-2,6) {};
\node[wB] at (-1,7) {};
\node[wB] at (-3,7) {};
\node[wB] at (-2,8) {};
\node[wB] at (-3,9) {};

\begin{scope}[shift={(7.5,0)}]
\draw [dashed] (0,0)--(3,3);
\draw [dashed] (-1,1)--(0,2);
\draw (0,2)--(1,3);
\draw [dashed] (1,3)--(2,4);
\draw (-2,2)--(1,5);
\draw (-3,3)--(0,6);
\draw (-3,5)--(-1,7);
\draw (-3,7)--(-2,8);

\draw [dashed] (0,0)--(-2,2);
\draw (-2,2)--(-3,3);
\draw [dashed] (1,1)--(0,2);
\draw (0,2)--(-3,5);
\draw (2,2)--(-3,7);
\draw [dashed] (3,3)--(1,5);
\draw (1,5)--(-2,8);
\draw [dashed] (-2,8)--(-3,9);

\node[wW] at (0,0) {};
\node[wW] at (1,1) {};
\node[wW] at (3,3) {};
\node[wW] at (-1,1) {};
\node[wW] at (2,4) {};
\node[wW] at (-3,9) {};

\node[wB] at (2,2) {};
\node[wB] at (0,2) {};
\node[wB] at (1,3) {};
\node[wB] at (-2,2) {};
\node[wB] at (-1,3) {};
\node[wB] at (0,4) {};
\node[wB] at (1,5) {};
\node[wB] at (-3,3) {};
\node[wB] at (-2,4) {};
\node[wB] at (-1,5) {};
\node[wB] at (0,6) {};
\node[wB] at (-3,5) {};
\node[wB] at (-2,6) {};
\node[wB] at (-1,7) {};
\node[wB] at (-3,7) {};
\node[wB] at (-2,8) {};
\end{scope}

\begin{scope}[shift={(15,0)}]
\draw [dashed] (0,0)--(3,3);
\draw (-1,1)--(2,4);
\draw (-2,2)--(1,5);
\draw (-3,3)--(0,6);
\draw (-3,5)--(-1,7);
\draw [dashed] (-3,7)--(-2,8);

\draw [dashed] (0,0)--(-1,1);
\draw (-1,1)--(-3,3);
\draw (1,1)--(-3,5);
\draw [dashed] (2,2)--(1,3);
\draw (1,3)--(-2,6);
\draw [dashed] (-2,6)--(-3,7);
\draw [dashed] (3,3)--(2,4);
\draw (2,4)--(-1,7);
\draw [dashed] (-1,7)--(-3,9);

\node[wW] at (0,0) {};
\node[wW] at (2,2) {};
\node[wW] at (3,3) {};
\node[wW] at (-3,7) {};
\node[wW] at (-2,8) {};
\node[wW] at (-3,9) {};

\node[wB] at (1,1) {};
\node[wB] at (-1,1) {};
\node[wB] at (0,2) {};
\node[wB] at (1,3) {};
\node[wB] at (2,4) {};
\node[wB] at (-2,2) {};
\node[wB] at (-1,3) {};
\node[wB] at (0,4) {};
\node[wB] at (1,5) {};
\node[wB] at (-3,3) {};
\node[wB] at (-2,4) {};
\node[wB] at (-1,5) {};
\node[wB] at (0,6) {};
\node[wB] at (-3,5) {};
\node[wB] at (-2,6) {};
\node[wB] at (-1,7) {};
\end{scope}

\begin{scope}[shift={(22.5,0)}]
\draw (0,0)--(3,3);
\draw (-1,1)--(2,4);
\draw (-2,2)--(1,5);
\draw (-3,3)--(0,6);
\draw [dashed] (-3,5)--(-1,7);
\draw [dashed] (-3,7)--(-2,8);

\draw (0,0)--(-3,3);
\draw (1,1)--(-2,4);
\draw [dashed] (-2,4)--(-3,5);
\draw (2,2)--(-1,5);
\draw [dashed] (-1,5)--(-3,7);
\draw (3,3)--(0,6);
\draw [dashed] (0,6)--(-3,9);

\node[wW] at (-3,5) {};
\node[wW] at (-2,6) {};
\node[wW] at (-1,7) {};
\node[wW] at (-3,7) {};
\node[wW] at (-2,8) {};
\node[wW] at (-3,9) {};

\node[wB] at (0,0) {};
\node[wB] at (1,1) {};
\node[wB] at (2,2) {};
\node[wB] at (3,3) {};
\node[wB] at (-1,1) {};
\node[wB] at (0,2) {};
\node[wB] at (1,3) {};
\node[wB] at (2,4) {};
\node[wB] at (-2,2) {};
\node[wB] at (-1,3) {};
\node[wB] at (0,4) {};
\node[wB] at (1,5) {};
\node[wB] at (-3,3) {};
\node[wB] at (-2,4) {};
\node[wB] at (-1,5) {};
\node[wB] at (0,6) {};
\end{scope}
\end{tikzpicture}
    \caption{The four intermediate posets $I_4, \dots, I_1$ lying inside of $RT_{4,4}$.}
    \label{fig:intermediatePosets}
\end{figure}
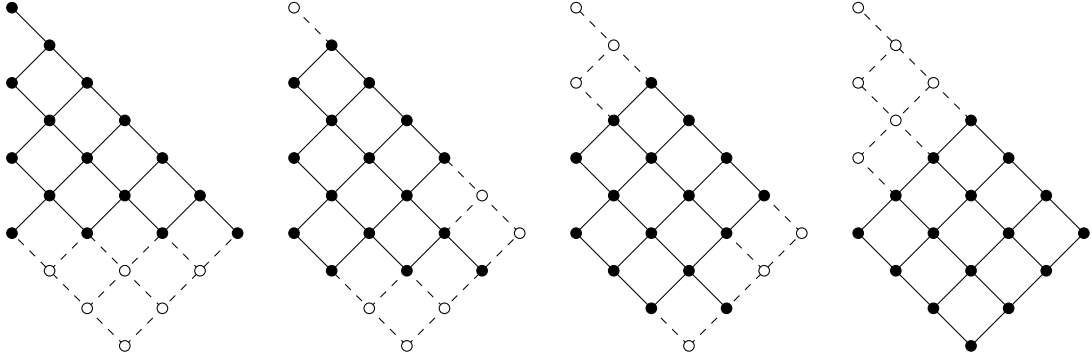

We now define maps $\themap_k \colon \RR_+^{I_{k+1}} \to \RR_+^{I_{k}}$ as follows. Consider the interval 
\[M_k = [(k,1),(r+k,k+1)] \subset RT_{r,s},\]
which is isomorphic to the smaller right trapezoid $RT_{r-k+1,k+1}$.
For any $x \in \RR_+^{I_{k+1}}$, let $\bar x \in \RR_+^{M_k}$ be the labeling obtained by restricting $x$ to $M_k \setminus \{(k,1)\} \subset I_{k+1}$ and setting $\bar x_{k,1}$ to be an arbitrary number $a \in \RR_+$ (say, $1$). Finally, let $\tilde\rho_k \colon \RR_+^{M_k} \to \RR_+^{M_k}$ be the antichain rowmotion map on $M_k$. (Recall that one can compute $\tilde\rho_k^{-1}$ with relative ease using Corollary~\ref{cor:edgeweight}.) Then we define $\themap_k(x) \in \RR_+^{I_{k}}$ by
\[\themap_k(x)_{ij} = \begin{cases} \tilde\rho_k^{-1}(\bar x)_{ij}&\text{if }(i,j) \in I_k \cap M_k = M_k \setminus \{(r+k,k+1)\}, \\ 
x_{i+1,j}&\text{if }(i,j) \in I_{k} \setminus M_k\text{ and }j > k+1, \\
x_{i,j+1}&\text{if }(i,j) \in I_{k} \setminus M_k\text{ and }i < k.\end{cases}\]

We say that the \emph{labels below $M_k$} (with $i < k$) are shifted southwest and the \emph{labels above $M_k$} (with $j > k+1$) are shifted southeast. By Proposition~\ref{prop:arbitrarya}, $\themap_k(x)$ does not depend on the choice of $a$. Similarly the inverse map $\themap_k^{-1}$ is also well-defined (apply $\tilde\rho_k$ on $M_k$ with an arbitrary label at $(r+k,k+1)$ and shift the labels outside $M_k$ appropriately). 

\begin{Ex}
Let $r=s=4$ and $k=2$, and consider $\themap_2\colon \mathbb{R}^{I_3} \to \mathbb{R}^{I_2}$ as shown in Figure~\ref{fig:psiMapExample}. Here $M_2 = [(2,1),(6,3)]$ is shown in red. Outside of $M_2$, the coordinates of the labeling shift parallel to the sides. Inside of $M_2$ we apply antichain rowmotion $\tilde \rho_2^{-1}$ (ignoring the labels outside of $M_2$).

See also Figure~\ref{fig:schematic} below for a schematic version of this diagram.
\end{Ex}

\begin{figure}
    \centering
\begin{tikzpicture}[scale = 0.55]

\begin{scope}[shift={(0,0)}]
\draw [dashed] (0,0)--(3,3);
\draw [dashed, red, very thick] (-1,1)--(0,2);
\draw [red, very thick] (0,2)--(1,3);
\draw [dashed] (1,3)--(2,4);
\draw [red, very thick] (-2,2)--(0,4);
\draw (0,4)--(1,5);
\draw [red, very thick] (-3,3)--(-1,5);
\draw (-1,5)--(0,6);
\draw [red, very thick] (-3,5)--(-2,6);
\draw (-2,6)--(-1,7);
\draw (-3,7)--(-2,8);

\draw [dashed] (0,0)--(-1,1);
\draw [dashed, red, very thick] (-1,1)--(-2,2);
\draw [red, very thick] (-2,2)--(-3,3);
\draw [dashed] (1,1)--(0,2);
\draw [red, very thick] (0,2)--(-3,5);
\draw (2,2)--(1,3);
\draw [red, very thick] (1,3)--(-3,7);
\draw [dashed] (3,3)--(1,5);
\draw (1,5)--(-2,8);
\draw [dashed] (-2,8)--(-3,9);

\node[wW] at (0,0) {};
\node[wW] at (1,1) {};
\node[wW] at (3,3) {};
\node[wW] at (2,4) {};
\node[wW] at (-3,9) {};

\node[wB] at (2,2) {};
\node[wB] at (1,5) {};
\node[wB] at (0,6) {};
\node[wB] at (-1,7) {};
\node[wB] at (-2,8) {};

\node[wR] at (-1,1) {};
\node[wR] at (0,2) {};
\node[wR] at (1,3) {};
\node[wR] at (-2,2) {};
\node[wR] at (-1,3) {};
\node[wR] at (0,4) {};
\node[wR] at (-3,3) {};
\node[wR] at (-2,4) {};
\node[wR] at (-1,5) {};
\node[wR] at (-3,5) {};
\node[wR] at (-2,6) {};
\node[wR] at (-3,7) {};

\node at (2.8,2) {$x_{13}$};
\node at (1.8,5) {$x_{34}$};
\node at (0.8,6) {$x_{44}$};
\node at (-0.2,7) {$x_{54}$};
\node at (-1.2,8) {$x_{64}$};
\end{scope}

\draw [-Stealth, ultra thick] (4.5,4.5)--(6.5,4.5);

\begin{scope}[shift={(11,0)}]
\draw [dashed] (0,0)--(3,3);
\draw [red, very thick] (-1,1)--(1,3);
\draw (1,3)--(2,4);
\draw [red, very thick] (-2,2)--(0,4);
\draw (0,4)--(1,5);
\draw [red, very thick] (-3,3)--(-1,5);
\draw (-1,5)--(0,6);
\draw [red, very thick] (-3,5)--(-2,6);
\draw (-2,6)--(-1,7);
\draw [dashed] (-3,7)--(-2,8);

\draw [dashed] (0,0)--(-1,1);
\draw [red, very thick] (-1,1)--(-3,3);
\draw (1,1)--(0,2);
\draw [red, very thick] (0,2)--(-3,5);
\draw [dashed] (2,2)--(1,3);
\draw [red, very thick] (1,3)--(-2,6);
\draw [dashed, red, very thick] (-2,6)--(-3,7);
\draw [dashed] (3,3)--(2,4);
\draw (2,4)--(-1,7);
\draw [dashed] (-1,7)--(-3,9);

\node[wW] at (0,0) {};
\node[wW] at (2,2) {};
\node[wW] at (3,3) {};
\node[wW] at (-3,7) {};
\node[wW] at (-2,8) {};
\node[wW] at (-3,9) {};

\node[wB] at (1,1) {};
\node[wB] at (2,4) {};
\node[wB] at (1,5) {};
\node[wB] at (0,6) {};
\node[wB] at (-1,7) {};

\node[wR] at (-1,1) {};
\node[wR] at (0,2) {};
\node[wR] at (1,3) {};
\node[wR] at (-2,2) {};
\node[wR] at (-1,3) {};
\node[wR] at (0,4) {};
\node[wR] at (-3,3) {};
\node[wR] at (-2,4) {};
\node[wR] at (-1,5) {};
\node[wR] at (-3,5) {};
\node[wR] at (-2,6) {};
\node[wR] at (-3,7) {};

\node at (1.8,1) {$x_{13}$};
\node at (2.8,4) {$x_{34}$};
\node at (1.8,5) {$x_{44}$};
\node at (0.8,6) {$x_{54}$};
\node at (-0.2,7) {$x_{64}$};
\end{scope}
\end{tikzpicture}
    \caption{A right trapezoid poset $M_2$ (red) inside of $RT_{4,4}$. When applying $\themap_2$, $\tilde\rho^{-1}$ is applied inside $M_2$ (ignoring all other coordinates) and then coordinates outside of $M_2$ shift parallel to the sides of $M_2$.}
    \label{fig:psiMapExample}
\end{figure}
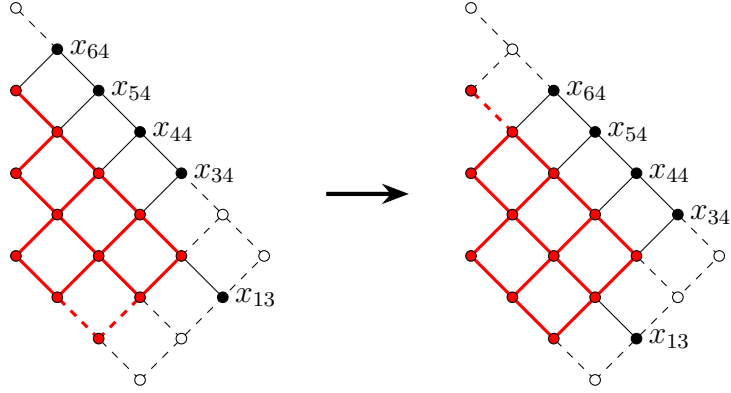

By composing the maps $\themap_k$, we arrive at a birational map \[\themap = \themap_{1} \circ \themap_{2} \circ \dots \circ \themap_{s-1}\]
from $\RR_+^{T_{r,s}}$ to $\RR_+^{R_{r,s}}$.

\begin{Ex}
    Figure~\ref{fig:zeta3} shows the result of applying $\themap = \themap_1 \circ \themap_2$ to a labeling $x \in \RR_+^T$ when $T = T_{3,3}$. Note that $x_{13} = c = \themap_2(x)_{12}$ as this label lies below $M_2$. Similarly $\themap_2(x)_{j+1,3} = \themap(x)_{j3}$ for $j=1,2,3$ as these labels lie above $M_1$.
\end{Ex}

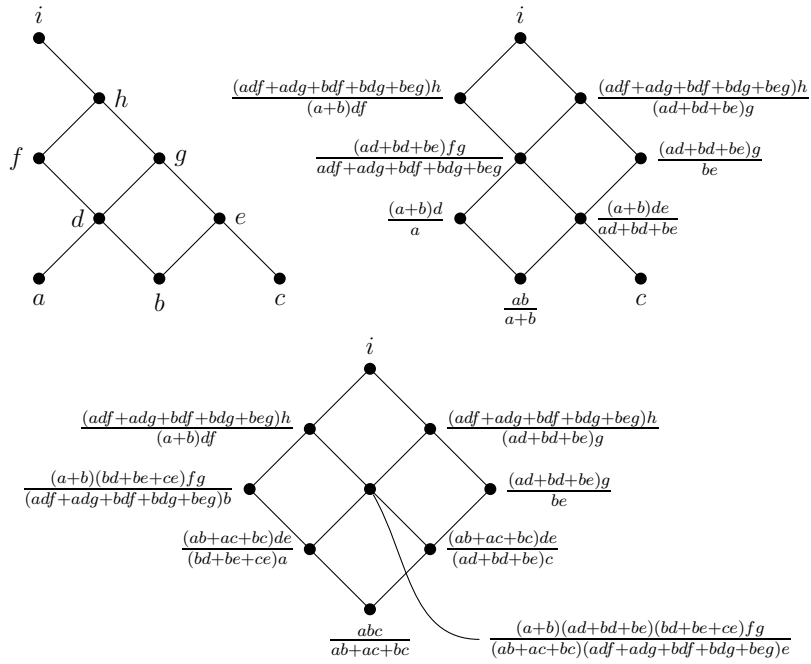
\begin{figure}
    \begin{tikzpicture}[scale=.8, transform shape]
    \begin{scope}
        \draw (2,0)--(3,1) (0,0)--(2,2) (0,2)--(1,3) (2,0)--(0,2) (4,0)--(0,4);
        \node[v,label=below:$a$] at (0,0){};
        \node[v,label=below:$b$] at (2,0){};
        \node[v,label=below:$c$] at (4,0){};
        \node[v,label=left:$d$] at (1,1){};
        \node[v,label=right:$e$] at (3,1){};
        \node[v,label=left:$f$] at (0,2){};
        \node[v,label=right:$g$] at (2,2){};
        \node[v,label=right:$h$] at (1,3){};
        \node[v,label=above:$i$] at (0,4){};
    \end{scope}
    \begin{scope}[shift={(7,1)}]
        \draw (1,-1)--(3,1) (0,0)--(2,2) (0,2)--(1,3) (1,-1)--(0,0) (3,-1)--(0,2) (3,1)--(1,3);
        \node[v,label=below:$\frac{ab}{a+b}$] at (1,-1){};
        \node[v,label=below:$c$] at (3,-1){};
        \node[v,label=left:$\frac{(a+b)d}{a}$] at (0,0){};
        \node[v,label=right:$\frac{(a+b)de}{ad+bd+be}$] at (2,0){};
        \node[v,label=left:$\frac{(ad+bd+be)fg}{adf+adg+bdf+bdg+beg}$] at (1,1){};
        \node[v,label=right:$\frac{(ad+bd+be)g}{be}$] at (3,1){};
        \node[v,label=left:$\frac{(adf+adg+bdf+bdg+beg)h}{(a+b)df}$] at (0,2){};
        \node[v,label=right:$\frac{(adf+adg+bdf+bdg+beg)h}{(ad+bd+be)g}$] at (2,2){};
        \node[v,label=above:$i$] at (1,3){};
    \end{scope}
    \begin{scope}[shift={(3.5,-3.5)}]
        \draw (2,-2)--(4,0) (1,-1)--(3,1) (0,0)--(2,2) (2,-2)--(0,0) (3,-1)--(1,1) (4,0)--(2,2);
        \node[v,label=below:$\frac{abc}{ab+ac+bc}$] at (2,-2){};
        \node[v,label=left:$\frac{(ab+ac+bc)de}{(bd+be+ce)a}$] at (1,-1){};
        \node[v,label=right:$\frac{(ab+ac+bc)de}{(ad+bd+be)c}$] at (3,-1){};
        \node[v,label=left:$\frac{(a+b)(bd+be+ce)fg}{(adf+adg+bdf+bdg+beg)b}$] at (0,0){};
        \node[v,label=left:$\frac{(adf+adg+bdf+bdg+beg)h}{(a+b)df}$] at (1,1){};
        \node[v,label=right:$\frac{(ad+bd+be)g}{be}$] at (4,0){};
        \node[v] (x) at (2,0){};
        \node[v,label=right:$\frac{(adf+adg+bdf+bdg+beg)h}{(ad+bd+be)g}$] at (3,1){};
        \node[v,label=above:$i$] at (2,2){};
        \node (l) at (6.5,-2.5){$\frac{(a+b)(ad+bd+be)(bd+be+ce)fg}{(ab+ac+bc)(adf+adg+bdf+bdg+beg)e}$};
        \draw (x) to [out = -60, in=180] (l);
    \end{scope}
    \end{tikzpicture}
    \caption{Applying $\themap_2$ and then $\themap_1$ to a labeling $x$ of $T_{3,3}$, resulting in the labeling $\themap(x)$ of $R_{3,3}$.}
    \label{fig:zeta3}
\end{figure}

The key property of $\themap$ that we will need to prove is that $\themap$ preserves the total weight of all maximal chains, which we will derive from the chain shifting lemma. However, this property does not hold for the intermediate maps $\themap_k$, so we will need to restrict to a certain special class of chains in the intermediate posets called polygonal chains.

\subsection{Polygonal chains}
We define a special collection of maximal chains within each intermediate poset which are in bijection with chains in the trapezoid and rectangle.

\begin{Def}
    A maximal chain $C \subset I_k \subset RT_{r,s}$ is \emph{polygonal} if $C$ intersects $L$ (the left border of $RT_{r,s}$) or if $(k,1) \in C$.
\end{Def}

Note that all chains in the trapezoid and rectangle are polygonal. Indeed, in the trapezoid $I_s$, all maximal chains contain the maximum element, which lies in $L$. Similarly, in the rectangle $I_1$, all maximal chains contain the minimum element $(1,1)$.

\begin{Ex}
The polygonal chains in $I_2 \subseteq RT_{3,3}$ are shown in Figure~\ref{fig:polygonalChains}. Note that $I_2$ has two other maximal chains that are not polygonal, as they start at $(1,2)$ and do not intersect $L$.
\end{Ex}

\begin{figure}
    \centering
\begin{tikzpicture}[scale = 0.65]

\foreach \x in {0,4,8,12,16,20}{
\draw (-1+\x,1)--(-2+\x,2);
\draw (1+\x,1)--(-2+\x,4);
\draw (1+\x,3)--(-1+\x,5);

\draw (-1+\x,1)--(1+\x,3);
\draw (-2+\x,2)--(0+\x,4);
\draw (-2+\x,4)--(-1+\x,5);

\draw [fill=black] (\x+1,1) circle [radius=0.13cm];

\draw [fill=black] (\x-1,1) circle [radius=0.13cm];
\draw [fill=black] (\x,2) circle [radius=0.13cm];
\draw [fill=black] (\x+1,3) circle [radius=0.13cm];

\draw [fill=black] (\x-2,2) circle [radius=0.13cm];
\draw [fill=black] (\x-1,3) circle [radius=0.13cm];
\draw [fill=black] (\x,4) circle [radius=0.13cm];

\draw [fill=black] (\x-2,4) circle [radius=0.13cm];
\draw [fill=black] (\x-1,5) circle [radius=0.13cm];
}

\draw [red, very thick] (-1,1)--(-2,2)--(-1,3)--(-2,4)--(-1,5);

\draw [fill=red] (-1,1) circle [radius=0.13cm];
\draw [fill=red] (-2,2) circle [radius=0.13cm];
\draw [fill=red] (-1,3) circle [radius=0.13cm];
\draw [fill=red] (-2,4) circle [radius=0.13cm];
\draw [fill=red] (-1,5) circle [radius=0.13cm];

\begin{scope}[shift={(4,0)}]
\draw [red, very thick] (-1,1)--(-2,2)--(-1,3)--(0,4)--(-1,5);

\draw [fill=red] (-1,1) circle [radius=0.13cm];
\draw [fill=red] (-2,2) circle [radius=0.13cm];
\draw [fill=red] (-1,3) circle [radius=0.13cm];
\draw [fill=red] (0,4) circle [radius=0.13cm];
\draw [fill=red] (-1,5) circle [radius=0.13cm];
\end{scope}

\begin{scope}[shift={(8,0)}]
\draw [red, very thick] (-1,1)--(0,2)--(-1,3)--(-2,4)--(-1,5);

\draw [fill=red] (-1,1) circle [radius=0.13cm];
\draw [fill=red] (0,2) circle [radius=0.13cm];
\draw [fill=red] (-1,3) circle [radius=0.13cm];
\draw [fill=red] (-2,4) circle [radius=0.13cm];
\draw [fill=red] (-1,5) circle [radius=0.13cm];
\end{scope}

\begin{scope}[shift={(12,0)}]
\draw [red, very thick] (-1,1)--(0,2)--(-1,3)--(-0,4)--(-1,5);

\draw [fill=red] (-1,1) circle [radius=0.13cm];
\draw [fill=red] (0,2) circle [radius=0.13cm];
\draw [fill=red] (-1,3) circle [radius=0.13cm];
\draw [fill=red] (0,4) circle [radius=0.13cm];
\draw [fill=red] (-1,5) circle [radius=0.13cm];
\end{scope}

\begin{scope}[shift={(16,0)}]
\draw [red, very thick] (-1,1)--(0,2)--(1,3)--(-0,4)--(-1,5);

\draw [fill=red] (-1,1) circle [radius=0.13cm];
\draw [fill=red] (0,2) circle [radius=0.13cm];
\draw [fill=red] (1,3) circle [radius=0.13cm];
\draw [fill=red] (0,4) circle [radius=0.13cm];
\draw [fill=red] (-1,5) circle [radius=0.13cm];
\end{scope}

\begin{scope}[shift={(20,0)}]
\draw [red, very thick] (1,1)--(0,2)--(-1,3)--(-2,4)--(-1,5);

\draw [fill=red] (1,1) circle [radius=0.13cm];
\draw [fill=red] (0,2) circle [radius=0.13cm];
\draw [fill=red] (-1,3) circle [radius=0.13cm];
\draw [fill=red] (-2,4) circle [radius=0.13cm];
\draw [fill=red] (-1,5) circle [radius=0.13cm];
\end{scope}

\end{tikzpicture}
    \caption{The six polygonal chains in $I_2 \subseteq RT_{3,3}$.}
    \label{fig:polygonalChains}
\end{figure}
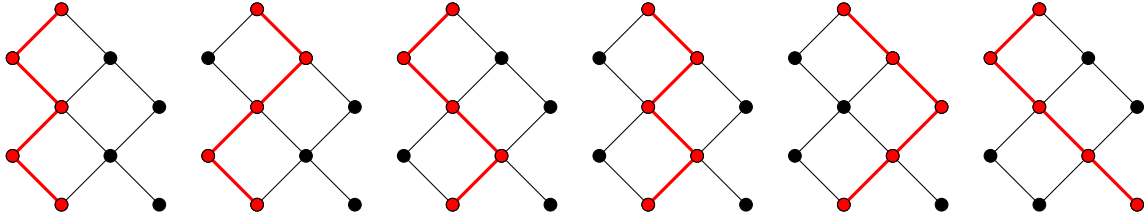

The following proposition relates the weights of polygonal chains under the maps $\themap_k$. Since the only complicated part of $\themap_k$ occurs inside $M_k$, it will follow directly from the chain shifting results for skew shapes and right trapezoids (Lemmas~\ref{lemma:skewchainshifting} and \ref{Lemma:rtchainshifting}).

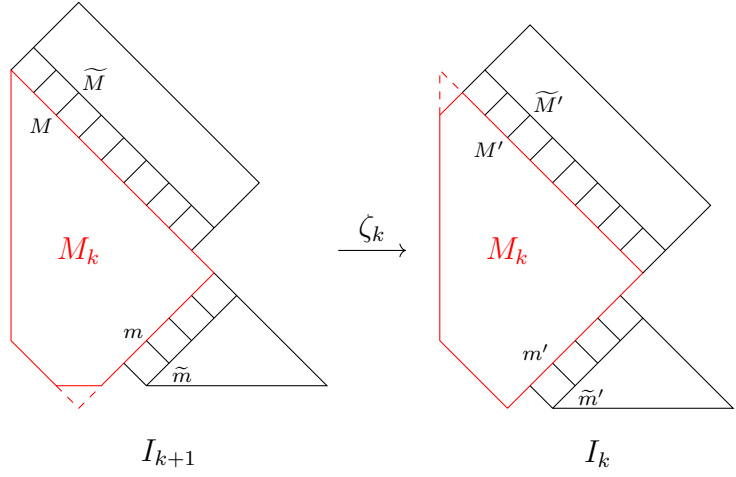
\begin{figure}
    \centering
    \begin{tikzpicture}[scale=.3]
    \begin{scope}
        \draw[red] (-1,1)--(-3,3)--(-3,15)--(6,6)--(1,1)--(-1,1);
        \draw[red,dashed] (-1,1)--(0,0)--(1,1);
        \node[red] at (0,7) {$M_k$};
        \draw (3,1)--(7,5)--(11,1)--(3,1); 
        \draw (6,8)--(-2,16)--(0,18)--(8,10)--(6,8); 
        \draw (3,1)--(2,2) (4,2)--(3,3) (5,3)--(4,4) (6,4)--(5,5) (7,5)--(6,6);
        \foreach \x in {1, ..., 9}
            \draw (6-\x,6+\x)--(7-\x,7+\x);
        \node[label={[label distance=-10pt]above left:$\scriptstyle m$}] at (3,3){};
        \node[label={[label distance=-10pt]below right:$\scriptstyle{\widetilde m}$}] at (4,2){};
        \node[label={[label distance=-10pt]below left:$\scriptstyle M$}] at (-1,13){};
        \node[label={[label distance=-10pt]above right:$\scriptstyle {\widetilde M}$}] at (0,14){};
        \node at (4,-2){$I_{k+1}$};
    \end{scope}
    \draw[->] (11.5,7)--(14.5,7);
    \node at (13,8){$\zeta_k$};
    \begin{scope}[shift={(19,0)}]
        \draw[red] (0,0)--(-3,3)--(-3,13)--(-2,14)--(6,6)--(0,0);
        \draw[red,dashed] (-3,13)--(-3,15)--(-2,14);
        \node[red] at (0,7) {$M_k$};
        \draw (2,0)--(6,4)--(10,0)--(2,0); 
        \draw (7,7)--(-1,15)--(1,17)--(9,9)--(7,7); 
        \draw (2,0)--(1,1) (3,1)--(2,2) (4,2)--(3,3) (5,3)--(4,4) (6,4)--(5,5);
        \foreach \x in {0, ..., 8}
            \draw (6-\x,6+\x)--(7-\x,7+\x);
        \node[label={[label distance=-10pt]above left:$\scriptstyle {m'}$}] at (2,2){};
        \node[label={[label distance=-10pt]below right:$\scriptstyle{\widetilde m'}$}] at (3,1){};
        \node[label={[label distance=-10pt]below left:$\scriptstyle {M'}$}] at (0,12){};
        \node[label={[label distance=-10pt]above right:$\scriptstyle {\widetilde M'}$}] at (1,13){};
        \node at (4,-2){$I_{k}$};
    \end{scope}
    \end{tikzpicture}
    \caption{Schematic of the map $\themap_k$. Polygonal chains in $I_{k+1}$ that intersect $M_k$ in a chain from $m$ to $M$ get shifted to polygonal chains in $I_k$ that intersect $M_k$ in a chain from $m'$ to $M'$.}
    \label{fig:schematic}
\end{figure}

\begin{Prop} \label{prop:intermediate}
    Let $\mathscr P_{k}$ be the collection of polygonal chains in $I_k$. For $x \in \RR_+^{I_{k+1}}$, let $z = \themap_k(x)$. Then $w_{\mathscr P_{k+1}}(x) = w_{\mathscr P_{k}}(z)$.
\end{Prop}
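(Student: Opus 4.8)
The plan is to split every polygonal chain along the subposet $M_k$ (where $\themap_k$ acts by $\tilde\rho_k^{-1}$) from the parts outside $M_k$ (where $\themap_k$ acts by two coordinate shifts), and then reduce the statement to the chain shifting lemmas applied to $M_k$. First I would record the geometry: $I_{k+1}$ is the disjoint union of $M_k$, the ``lower'' staircase region $L_{k+1}$ of elements with first coordinate $<k$, and the ``upper'' rectangular region $U_{k+1}$ of elements with second coordinate $>k+1$; likewise $I_k=(M_k\setminus\{(r+k,k+1)\})\sqcup L_k\sqcup U_k$. From the formula for $\themap_k$ (using Proposition~\ref{prop:arbitrarya} for well-definedness on $M_k$), the map acts by $\tilde\rho_k^{-1}$ on $M_k$, by the weight-preserving order isomorphism $(i,j)\mapsto(i,j+1)$ from $L_k$ to $L_{k+1}$, and by the weight-preserving order isomorphism $(i,j)\mapsto(i+1,j)$ from $U_k$ to $U_{k+1}$.

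Next I would classify polygonal chains. A maximal chain of $I_{k+1}$ passes through $L_{k+1}$, then $M_k$, then $U_{k+1}$ (some pieces possibly empty), so a polygonal chain $C$ meets $M_k$ in a nonempty saturated subchain $C_{\mathrm{mid}}$ from an entry point $m$ to an exit point $M$, joined along single cover relations to $C_{\mathrm{low}}\subset L_{k+1}$ and $C_{\mathrm{up}}\subset U_{k+1}$. I would check that $m\in\{(k+1,1)\}\cup\{(k,j):2\le j\le k+1\}$; that $M=(i,k+1)$ with $i\ge k+1$ (degenerating to $M=(r+k,k+1)$, $C_{\mathrm{up}}=\varnothing$ when $s=k+1$); that $C_{\mathrm{low}}=\varnothing$ exactly when $m\in\{(k+1,1),(k,2)\}$; and, crucially (since $L\cap I_{k+1}\subseteq M_k$), that $C$ is polygonal if and only if $m=(k+1,1)$ or $C_{\mathrm{mid}}$ meets the left border $L\cap M_k$ of $M_k$. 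The analogous statements hold in $I_k$, where the entry points are $\{(k,1)\}\cup\{(k,j):2\le j\le k\}$. I would also note that the southeast neighbor $M'=(i-1,k+1)$ of $M$ lies in $M_k$ with $ne(M')=\varnothing$ (computed in $M_k$), and that the shifts carry $C_{\mathrm{low}},C_{\mathrm{up}}$ to chains $C'_{\mathrm{low}}\subset L_k$, $C'_{\mathrm{up}}\subset U_k$ of equal weight attached to $M_k$ at $m'\in\{(k,1),(k,j-1)\}$ and at $M'$.

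Then, fixing the outer data $(m,M,C_{\mathrm{low}},C_{\mathrm{up}})$, I would sum $w_C(x)$ over all polygonal chains of $I_{k+1}$ realizing it; this sum factors as $w_{C_{\mathrm{low}}}(x)\,w_{C_{\mathrm{up}}}(x)\,w_{\mathscr C}(x)$, where $\mathscr C=\mathscr C^{se}_{(k+1,1),M}$—which equals the set of \emph{all} chains in $M_k$ from $(k+1,1)$ to $M$, each having the $se$-property—when $m=(k+1,1)$, and $\mathscr C=\mathscr C^{L}_{(k,j),M}$ when $m=(k,j)$ with $j\ge2$. Applying Lemma~\ref{lemma:skewchainshifting} to $M_k$ in the first case (its hypotheses $sw((k+1,1))=ne(M')=\varnothing$, computed in $M_k$, hold) and Lemma~\ref{Lemma:rtchainshifting} to $M_k\cong RT_{r-k+1,k+1}$ in the second (its hypotheses $se((k,j))=ne(M')=\varnothing$ hold), and observing that chains in $\mathscr C$ avoid $(k,1)$ while chains in the image avoid $(r+k,k+1)$ (so the weights may be evaluated at $\bar x$ and at $z$ restricted to $M_k$), I would obtain $w_{\mathscr C}(x)=w_{\mathscr C'}(z)$ with $\mathscr C'=\mathscr C^{nw}_{(k,1),M'}=\mathscr C^{\overline L}_{(k,1),M'}$ in the first case and $\mathscr C'=\mathscr C^{L}_{(k,j-1),M'}$ in the second. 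Recombining with the shifted outer pieces identifies this contribution to $w_{\mathscr P_{k+1}}(x)$ with a contribution to $w_{\mathscr P_k}(z)$. Finally, summing over all outer data and over $m$, and using that $\mathscr C^{\overline L}_{(k,1),M'}\sqcup\mathscr C^{L}_{(k,1),M'}$ is the set of all chains from $(k,1)$ to $M'$, the contributions with $m\in\{(k+1,1),(k,2)\}$ together produce all polygonal chains of $I_k$ through $(k,1)$, and those with $m=(k,j)$, $j\ge3$, produce those through $(k,j-1)$; this yields $w_{\mathscr P_{k+1}}(x)=w_{\mathscr P_k}(z)$.

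The step I expect to be the main obstacle is the bookkeeping in the second and third paragraphs: pinning down the classification of polygonal chains by their entry point into $M_k$ and the fact that polygonality is detected by $m=(k+1,1)$ or by $C_{\mathrm{mid}}$ touching $L\cap M_k$, and then verifying that the two shifts, $\tilde\rho_k^{-1}$, and the two chain shifting lemmas splice together consistently at the attachment points—in particular that $m=(k+1,1)$ and $m=(k,2)$ both map to $m'=(k,1)$ with their chain families $\mathscr C^{\overline L}$ and $\mathscr C^{L}$ combining to all chains from $(k,1)$—and that the degenerate cases ($C_{\mathrm{low}}$ or $C_{\mathrm{up}}$ empty, and $s=k+1$) behave correctly.
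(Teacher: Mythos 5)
Your proposal is correct and takes essentially the same approach as the paper's proof: split the polygonal chains of $I_{k+1}$ according to the entry point $m$ of their $M_k$-portion (equivalently, through $(k+1,1)$ versus not), factor each chain into its $M_k$-portion and the two outer shifted pieces, and apply Lemma~\ref{lemma:skewchainshifting} for the $m=(k+1,1)$ case and Lemma~\ref{Lemma:rtchainshifting} for the $m=(k,j)$, $j\geq 2$ cases. Your classification by entry point and the observation that $\mathscr C^{nw}_{(k,1),M'}=\mathscr C^{\overline L}_{(k,1),M'}$ recombine with $\mathscr C^{L}_{(k,1),M'}$ to give all chains from $(k,1)$ is a slightly more explicit bookkeeping of the same decomposition the paper organizes as $\mathscr P^{(1)}\sqcup\mathscr P^{(2)}$.
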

\begin{proof}
    Let $\mathscr P_{k+1} = \mathscr P_{k+1}^{(1)} \sqcup \mathscr P_{k+1}^{(2)}$, where $\mathscr P_{k+1}^{(1)}$ consists of the chains that contain $(k+1,1)$ and $\mathscr P_{k+1}^{(2)}$ contains the ones that do not. Similarly, write $\mathscr P_{k} = \mathscr P_{k}^{(1)} \sqcup \mathscr P_{k}^{(2)}$, where $\mathscr P_{k}^{(1)}$ consists of the chains that do not intersect $L$ and $\mathscr P_{k}^{(2)}$ contains the ones that do. (Note that all chains in $\mathscr P_{k+1}^{(2)}$ intersect $L$, while all chains in $\mathscr P_{k}^{(1)}$ contain $(k,1)$.)
    We will show that $w_{\mathscr P_{k+1}^{(i)}}(x) = w_{\mathscr P_{k}^{(i)}}(z)$ for $i=1,2$.

    For any chain $C \in \mathscr P_{k+1}$, $C \cap M_k$ is a chain with minimum $m$ and maximum $M$ such that either $m=(k+1,1)$ or $m = (k,j+1)$ for some $1\leq j \leq k$, and $M = (i+1,k+1)$ for some $k\leq i< r+k$.  

    Suppose first that $m = (k,j+1)$ and $M=(i+1,k+1)$. Let $m' = (k,j)$ and $M' = (i,k+1)$. Also let $\widetilde m$ and $\widetilde m'$ be the southeast neighbors of $m$ and $m'$, and let $\widetilde M$ and $\widetilde M'$ be the northeast neighbors of $M$ and $M'$ (if they exist). See Figure~\ref{fig:schematic}.
    
    Every chain in $\mathscr P_{k+1}^{(2)}$ is the union of a chain in some $\mathscr C^L_{m,M}$, a chain down from $\widetilde m$, and a chain up from $\widetilde M$, while every chain in $\mathscr P_{k}^{(2)}$ is the union of a chain in some $\mathscr C^L_{m',M'}$, a chain down from $\widetilde m'$, and a chain up from $\widetilde M'$. By Lemma~\ref{Lemma:rtchainshifting} and the fact that $\themap_k$ shifts labels outside $M_k$,
    \begin{align*}
        w_{\mathscr P_{k+1}^{(2)}}(x) &= \sum_{m,M} w_{\mathscr C^L_{m,M}}(x) \cdot \psi_{I_{k+1}}(x)_{\widetilde m}\psi_{I_{k+1}}^*(x)_{\widetilde M}\\
        &= \sum_{m,M} w_{\mathscr C^{L}_{m',M'}}(z)  \cdot \psi_{I_k}(z)_{\widetilde m'}\psi_{I_k}^*(z)_{\widetilde M'}= w_{\mathscr P_k^{(2)}}(z).
    \end{align*}    
    (By convention we set the value of $\psi$ or $\psi^*$ to be $1$ if the label does not exist.)

    A similar argument using Lemma~\ref{lemma:skewchainshifting} shows that $w_{\mathscr P_{k+1}^{(1)}}(x) = w_{\mathscr P_k^{(1)}}(z)$.
\end{proof}

As a remark, note that since in Lemmas~\ref{lemma:skewchainshifting} and \ref{Lemma:rtchainshifting}, the two collections of chains always have the same size, the technique in the above proof also shows that the intermediate posets $I_k$ all contain the same number of polygonal chains.

It is now simple to deduce the following theorem.

\begin{Th}\label{thm:main}
    Let $\mathscr C$ and $\mathscr C'$ be the sets of all maximal chains in $T_{r,s}$ and $R_{r,s}$, respectively. Then for all $x \in \RR_+^{T_{r,s}}$, $w_{\mathscr C}(x) = w_{\mathscr C'}(\themap(x))$.
\end{Th}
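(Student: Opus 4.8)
The plan is to derive Theorem~\ref{thm:main} as an immediate telescoping consequence of Proposition~\ref{prop:intermediate}. The first step is to identify the sets of maximal chains at the two ends of the tower of intermediate posets with the corresponding sets of \emph{polygonal} chains. Since $I_s = T_{r,s}$ and every maximal chain of $T_{r,s}$ contains the maximum element of $RT_{r,s}$, which lies on the left border $L$, every maximal chain of $T_{r,s}$ is polygonal; hence $\mathscr C = \mathscr P_s$. Symmetrically, $I_1 = R_{r,s}$ and every maximal chain of $R_{r,s}$ contains $(1,1)$, which is precisely the element $(k,1)$ for $k=1$, so every maximal chain of $R_{r,s}$ is polygonal and $\mathscr C' = \mathscr P_1$. (Both facts were already recorded just after the definition of polygonal chain.)

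The second step is to iterate Proposition~\ref{prop:intermediate}. Setting $x^{(s)} = x$ and $x^{(k)} = \themap_k(x^{(k+1)})$ for $k = s-1, \dots, 1$, the proposition yields $w_{\mathscr P_{k+1}}(x^{(k+1)}) = w_{\mathscr P_k}(x^{(k)})$ for each such $k$, so that
\[ w_{\mathscr P_s}(x^{(s)}) = w_{\mathscr P_{s-1}}(x^{(s-1)}) = \cdots = w_{\mathscr P_1}(x^{(1)}). \]
Because $\themap = \themap_1 \circ \themap_2 \circ \cdots \circ \themap_{s-1}$ by definition, we have $x^{(1)} = \themap(x)$, and substituting the identifications $\mathscr C = \mathscr P_s$ and $\mathscr C' = \mathscr P_1$ from the first step gives $w_{\mathscr C}(x) = w_{\mathscr C'}(\themap(x))$, as desired.

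I do not expect any genuine obstacle here: all of the substance — the chain-shifting lemmas and the compatibility of each $\themap_k$ with the total weight of polygonal chains — is already packaged in Proposition~\ref{prop:intermediate}. The one point that needs a moment's care, and the one I would emphasize, is the boundary behavior at $k = s$ and $k = 1$: one must verify that the definition of "polygonal chain" in a general intermediate poset $I_k$ specializes to "all maximal chains" exactly at the two endpoints $T_{r,s}$ and $R_{r,s}$, so that the telescoping closes up between the full chain sets of these two posets rather than between proper subsets of them.
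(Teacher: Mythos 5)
Your proof is correct and takes the same route as the paper, which simply invokes Proposition~\ref{prop:intermediate} applied to the composition $\themap = \themap_1 \circ \cdots \circ \themap_{s-1}$. You have merely unpacked the two ingredients the paper leaves implicit (the identifications $\mathscr C = \mathscr P_s$ and $\mathscr C' = \mathscr P_1$, already recorded after the definition of polygonal chains, and the telescoping), and both are verified correctly.
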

\begin{proof}
Apply Proposition~\ref{prop:intermediate} to $\themap = \themap_{1} \circ \themap_{2} \circ \dots \circ \themap_{s-1}$.
\end{proof}

\subsection{Polytopes and plane partitions} \label{sec:polytopes}
We now examine the consequences of Proposition~\ref{prop:intermediate} and Theorem~\ref{thm:main} in the piecewise-linear case.

In this section, we will take all maps to be their piecewise-linear counterparts. In particular, the definition of $\themap_k$ inside $M_k$ utilizes the map $\tilde \rho^{-1}$ on $M_k$. By tropicalizing Corollary~\ref{cor:edgeweight}, we can compute $z = \tilde\rho^{-1}(x)$ as
\[
    z_p = - \max_{q \gtrdot p} \{y_p-y_q\} = \min_{q \gtrdot p} \{y_q\}-y_p, \label{eq:yz} \tag{$\dagger$}
\]
where
\[y_p = \phibar(x)_p = \max_{\hat 0 \lessdot q_1 \lessdot \dots \lessdot q_n = p} \sum_i x_{q_i}. \label{eq:xy} \tag{$\ddagger$}\]

Note that if $x$ has nonnegative coordinates, then so does $z$ (except at a maximal element), from which it follows that $\zeta_k$ also preserves nonnegativity. (This nonnegativity property also follows from the fact that piecewise-linear toggles map the order polytope to itself.) It is also clear that $\themap_k$ is \emph{lattice-preserving} in that it sends lattice points to lattice points.

\begin{Ex}
    An example calculation of $\themap$ when $r=5$ and $s=4$ is given in Figure~\ref{fig:plexample}. Each $\zeta_k$ can be computed by applying \eqref{eq:xy} and then \eqref{eq:yz} on $M_k$. (The minimum element of $M_k$ is arbitrarily given the label $0$, and the label of the maximum element of $M_k$ is discarded at the end). The entries outside of $M_k$ are shifted downward appropriately.
\end{Ex}

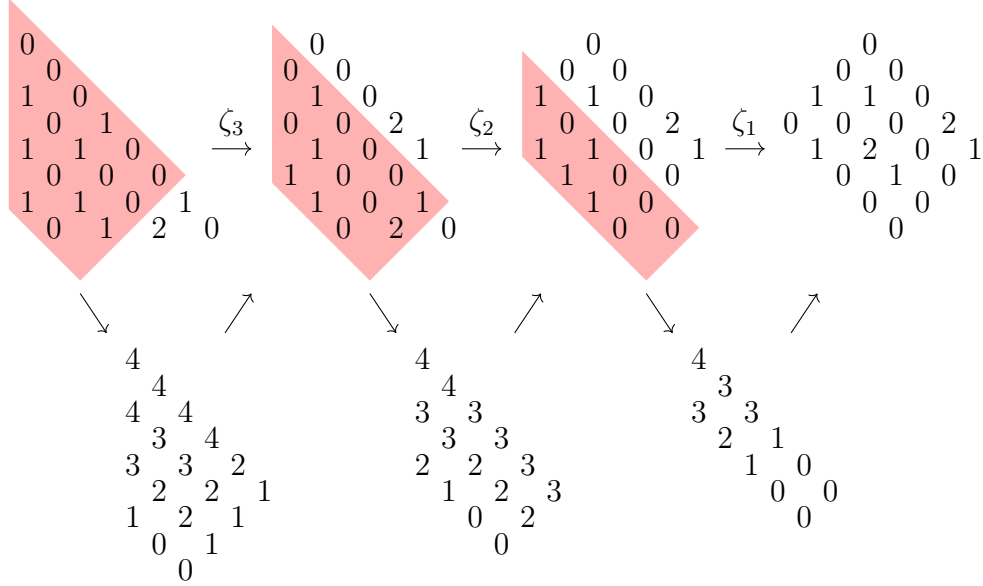
\begin{figure}
\centering
\begin{tikzpicture}[scale=.35]
    \tikzstyle{s}=[line width = .5cm, color=red!30]
    \begin{scope}
    \filldraw[s](-2,4)--(-4,6)--(-4,12)--(1,7)--cycle;
    \foreach \i/\j/\l in {4/1/0,5/1/1,3/2/1,4/2/1,5/2/0,6/2/1,2/3/2,3/3/0,4/3/0,5/3/1,6/3/0,7/3/1,1/4/0,2/4/1,3/4/0,4/4/0,5/4/1,6/4/0,7/4/0,8/4/0}
        \node at (\j-\i,\i+\j){$\l$};
    \end{scope}
    \draw[->] (-2,2.5)--(-1,1);
    \draw[->] (3.5,1)--(4.5,2.5);
    \begin{scope}[shift={(4,-12)}]
    \foreach \i/\j/\l in {3/1/0,4/1/0,5/1/1,3/2/1,4/2/2,5/2/2,6/2/3,3/3/1,4/3/2,5/3/3,6/3/3,7/3/4,3/4/1,4/4/2,5/4/4,6/4/4,7/4/4,8/4/4}
        \node at (\j-\i,\i+\j){$\l$};
    \end{scope}
    \draw[->](3,8)--(4.5,8);
    \node at (3.75,9){$\themap_3$};
    \begin{scope}[shift={(10,1)}]
    \filldraw[s](-1,3)--(-4,6)--(-4,10)--(1,5)--cycle;
    \foreach \i/\j/\l in {3/1/0,4/1/1,5/1/1,2/2/2,3/2/0,4/2/0,5/2/1,6/2/0,1/3/0,2/3/1,3/3/0,4/3/0,5/3/0,6/3/1,7/3/0,3/4/1,4/4/2,5/4/0,6/4/0,7/4/0}
        \node at (\j-\i,\i+\j){$\l$};
    \end{scope}
    \draw[->] (9,2.5)--(10,1);
    \draw[->] (14.5,1)--(15.5,2.5);
    \begin{scope}[shift={(15,-10)}]
    \foreach \i/\j/\l in {2/1/0,3/1/0,4/1/1,5/1/2,2/2/2,3/2/2,4/2/2,5/2/3,6/2/3,2/3/3,3/3/3,4/3/3,5/3/3,6/3/4,7/3/4}
        \node at (\j-\i,\i+\j){$\l$};
    \end{scope}
    \draw[->](12.5,8)--(14,8);
    \node at (13.25,9){$\themap_2$};
    \begin{scope}[shift={(19.5,2)}]
    \filldraw[s](0,2)--(-4,6)--(-4,8)--(1,3)--cycle;
    \foreach \i/\j/\l in {2/1/0,3/1/1,4/1/1,5/1/1,1/2/0,2/2/0,3/2/0,4/2/1,5/2/0,6/2/1,2/3/0,3/3/0,4/3/0,5/3/1,6/3/0,2/4/1,3/4/2,4/4/0,5/4/0,6/4/0}
        \node at (\j-\i,\i+\j){$\l$};
    \end{scope}
    \draw[->] (19.5,2.5)--(20.5,1);
    \draw[->] (25,1)--(26,2.5);
    \begin{scope}[shift={(25.5,-8)}]
    \foreach \i/\j/\l in {1/1/0,2/1/0,3/1/1,4/1/2,5/1/3,1/2/0,2/2/0,3/2/1,4/2/3,5/2/3,6/2/4}
        \node at (\j-\i,\i+\j){$\l$};
    \end{scope}
    \draw[->](22.5,8)--(24,8);
    \node at (23.25,9){$\themap_1$};
    \begin{scope}[shift={(29,3)}]
    \foreach \i/\j/\l in {1/1/0,2/1/0,3/1/0,4/1/1,5/1/0,1/2/0,2/2/1,3/2/2,4/2/0,5/2/1,1/3/0,2/3/0,3/3/0,4/3/1,5/3/0,1/4/1,2/4/2,3/4/0,4/4/0,5/4/0}
        \node at (\j-\i,\i+\j){$\l$};
    \end{scope}
\end{tikzpicture}
\caption{Example calculation of $\themap$ for $r=5$ and $s=4$. The top row shows the labelings of the intermediate posets obtained from $x$ when applying $\themap_3$, $\themap_2$, and $\themap_1$ with entries in $M_k$ highlighted. The vertical maps show applications of \eqref{eq:xy} and \eqref{eq:yz} on $M_k$.}
\label{fig:plexample}
\end{figure}

Just as we defined the chain polytopes of the rectangle and trapezoid, we can define polygonal chain polytopes for our intermediate posets $I_k \subseteq RT_{r,s}$.

\begin{Def}
    The \emph{polygonal chain polytope} $\widetilde{\mathcal C}(I_k) \subseteq \mathbb{R}^{I_k}$ is the set of all $\RR$-labelings $x = (x_p)_{p \in I_k}$ such that $x_p \geq 0$ for all $p \in I_k$, and $\sum_{p \in C} x_p \leq 1$ for all polygonal chains $C \subset I_k$.
\end{Def}

When $I_k$ is either $R_{r,s}$ or $T_{r,s}$, the polygonal chain polytope coincides with the chain polytope, but in general $\widetilde{\mathcal C}(I_k)$ will be larger than the chain polytope of $I_k$.

\begin{Ex}
Consider the poset $I_2 \subseteq RT_{3,3}$ pictured in Figure~\ref{fig:polygonalChains}. The polygonal chain polytope $\widetilde {\mathcal C}(I_2)$ is defined by the inequalities $x_p \geq 0$ for all $p$, and
\begin{align*}
    &x_{21}+x_{31}+x_{32}+x_{42}+x_{43} \leq 1, &x_{21}+x_{31}+x_{32}+x_{33}+x_{43} \leq 1, \\
    &x_{21}+x_{22}+x_{32}+x_{42}+x_{43} \leq 1, &x_{21}+x_{22}+x_{32}+x_{33}+x_{43} \leq 1, \\
    &x_{21}+x_{22}+x_{23}+x_{33}+x_{43} \leq 1, &x_{12}+x_{22}+x_{32}+x_{42}+x_{43} \leq 1.
\end{align*}
\end{Ex}

Although $\widetilde{\mathcal C}(I_k)$ is a lattice polytope when $k = 1$ or $k = s$ (when it is an ordinary chain polytope), this is not true in general (for instance, when $r=s=4$ and $k=3$). Nevertheless, for fixed $r$ and $s$, these polytopes all have the same volume and Ehrhart polynomial by Theorem~\ref{thm:polytope} below. In particular, if $\widetilde{\mathcal C}(I_k)$ is not a lattice polytope, then it exhibits \emph{period collapse} of its Ehrhart quasi-polynomial---see, for instance, \cite{haasemcallister,mcallisterwoods} for some discussion of this phenomenon.

\begin{Th} \label{thm:polytope}
    The map $\themap_k \colon \RR^{I_{k+1}} \to \RR^{I_{k}}$ defines a continuous, piecewise-linear, and lattice-preserving bijection from $\ell \cdot \widetilde {\mathcal C}(I_{k+1})$ to $\ell \cdot \widetilde {\mathcal C}(I_{k})$ for all $\ell \in \ZZ_{\geq 0}$.

    Hence, for fixed $r$ and $s$, the rational polytopes $\widetilde{\mathcal C}(I_k)$ exhibit Ehrhart quasi-polynomial period collapse and share the same Ehrhart polynomial for all $k$.
\end{Th}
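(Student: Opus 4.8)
The plan is to obtain the first (geometric) claim as a direct tropical shadow of Proposition~\ref{prop:intermediate}, and then read off the Ehrhart statements as a formal consequence. To set up, I would first record that the piecewise-linear map $\themap_k$ and its inverse are continuous, piecewise-linear, and lattice-preserving: inside $M_k$ the map is assembled from the operations $\max$ and $+$ via \eqref{eq:xy} and \eqref{eq:yz} (with the integer label $0$ placed at the minimum of $M_k$, which by Proposition~\ref{prop:arbitrarya} does not affect the output), and outside $M_k$ it is simply a permutation of coordinates; the same reasoning applies to $\themap_k^{-1}$. I would also note that $\themap_k$ preserves nonnegativity in \emph{both} directions: the shifts obviously do, and inside $M_k$ the piecewise-linear maps $\tilde\rho_k^{\pm1}$ carry the chain polytope of $M_k$ to itself (being conjugate via the transfer map to $\rho^{\pm1}$ on the order polytope, which preserves the order polytope), so $x_p \ge 0$ for all $p$ if and only if $\themap_k(x)_p \ge 0$ for all $p$.

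The heart of the argument is to tropicalize the identity $w_{\mathscr P_{k+1}}(x) = w_{\mathscr P_k}(\themap_k(x))$ of Proposition~\ref{prop:intermediate}. Both sides are subtraction-free rational expressions in the coordinates of $x$, so by the principle in Remark~\ref{rem:semiring} the equality descends to the tropical semiring; since the piecewise-linear $\themap_k$ of this section is exactly the tropicalization of the birational $\themap_k$, this yields
\[
    \max_{C \in \mathscr P_{k+1}} \sum_{p \in C} x_p \;=\; \max_{C' \in \mathscr P_{k}} \sum_{p \in C'} \themap_k(x)_p
\]
for every $x \in \RR^{I_{k+1}}$. Combined with the two-sided nonnegativity statement, this shows that for $x$ with $x_p \ge 0$ one has $x \in \ell \cdot \widetilde{\mathcal C}(I_{k+1})$ --- i.e.\ $\max_{C \in \mathscr P_{k+1}} \sum_{p \in C} x_p \le \ell$ --- if and only if $\themap_k(x) \in \ell \cdot \widetilde{\mathcal C}(I_k)$. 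Hence $\themap_k$ restricts to a bijection from $\ell \cdot \widetilde{\mathcal C}(I_{k+1})$ onto $\ell \cdot \widetilde{\mathcal C}(I_k)$ with inverse the restriction of $\themap_k^{-1}$, and both are continuous, piecewise-linear, and lattice-preserving; this is the first claim. (Since $\themap_k$ commutes with scaling by positive reals, the general $\ell$ follows from $\ell = 1$, if one prefers to argue that way.)

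For the Ehrhart consequences, because $\themap_k$ and $\themap_k^{-1}$ are lattice-preserving, the bijection from $\ell \cdot \widetilde{\mathcal C}(I_{k+1})$ to $\ell \cdot \widetilde{\mathcal C}(I_k)$ restricts to a bijection of lattice points for every $\ell \in \ZZ_{\ge 0}$, so $\widetilde{\mathcal C}(I_{k+1})$ and $\widetilde{\mathcal C}(I_k)$ have the same Ehrhart quasi-polynomial. Iterating over $k$, all of the $\widetilde{\mathcal C}(I_k)$ share a single Ehrhart quasi-polynomial, which agrees with that of $\widetilde{\mathcal C}(I_1) = \mathcal C(R_{r,s})$; as the latter is a genuine lattice polytope, its Ehrhart quasi-polynomial is an honest polynomial, so every $\widetilde{\mathcal C}(I_k)$ has the same Ehrhart polynomial, and any one of them that is not a lattice polytope exhibits period collapse. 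I expect the only delicate point to be the bookkeeping in the tropicalization step: verifying that the piecewise-linear $\themap_k$ really is the tropical shadow of the birational $\themap_k$ used in Proposition~\ref{prop:intermediate} (the arbitrary label being harmless by Proposition~\ref{prop:arbitrarya}), and that nonnegativity is preserved in both directions so that the polytope-membership conditions match up exactly.
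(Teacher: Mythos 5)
Your proposal is correct and follows the same route as the paper's own (terse) proof: tropicalize the identity of Proposition~\ref{prop:intermediate} to transfer the polygonal-chain inequality defining $\ell\cdot\widetilde{\mathcal C}(I_k)$, combine with the nonnegativity- and lattice-preservation of $\themap_k^{\pm1}$ noted in the discussion preceding the theorem, and read off the Ehrhart statements by iterating down to $\widetilde{\mathcal C}(I_1)=\mathcal C(R_{r,s})$. You spell out a few points (two-sided nonnegativity, the final reduction to a genuine lattice polytope at $k=1$) somewhat more explicitly than the paper, but the argument is essentially the same.
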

\begin{proof}
    As seen above, $\themap_k$ is a continuous, piecewise-linear, lattice-preserving map that preserves nonnegativity. The tropicalized version of Proposition~\ref{prop:intermediate} states that for any $x \in \RR^{I_{k+1}}$ and $z = \themap_k(x)$,
    \[\max_{C \in \mathscr P_{k+1}} \sum_{p \in C} x_p = \max_{C \in \mathscr P_{k}} \sum_{p \in C} z_p.\]
    In particular, this quantity is at most $\ell$ if and only if $x \in \ell \cdot \widetilde {\mathcal C}(I_{k+1})$ by the left hand side, but also if and only if $z \in \ell \cdot \widetilde {\mathcal C}(I_{k})$ by the right hand side. The result follows.
\end{proof}

The following corollary is immediate.

\begin{Cor}
    The continuous, piecewise-linear map $\psi \circ \themap \circ \psi^{-1} \colon \RR^{T_{r,s}} \to \RR^{R_{r,s}}$ defines a bijection between plane partitions of $R_{r,s}$ and $T_{r,s}$ of height $\ell$ for all $\ell \in \ZZ_{\geq 0}$.
\end{Cor}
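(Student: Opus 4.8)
The plan is to bootstrap from the polytope statement Theorem~\ref{thm:polytope} by composing the intermediate maps and then conjugating by Stanley's transfer map. First I would compose the maps $\themap_k$ for $k = s-1, s-2, \dots, 1$. Since $I_s = T_{r,s}$ and $I_1 = R_{r,s}$, and for each of these two posets the polygonal chain polytope coincides with the ordinary chain polytope, iterating Theorem~\ref{thm:polytope} shows that $\themap = \themap_1 \circ \themap_2 \circ \dots \circ \themap_{s-1}$ is a continuous, piecewise-linear, lattice-preserving bijection from $\ell \cdot \mathcal{C}(T_{r,s})$ to $\ell \cdot \mathcal{C}(R_{r,s})$ for every $\ell \in \ZZ_{\geq 0}$; a composition of continuous, piecewise-linear, lattice-preserving bijections has all three properties, and in particular restricts to a bijection between the lattice points of $\ell \cdot \mathcal{C}(T_{r,s})$ and those of $\ell \cdot \mathcal{C}(R_{r,s})$.

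Next I would invoke Stanley's transfer map \cite{stanley2}: over the tropical semiring, $\psi^{-1}$ is a continuous, piecewise-linear bijection from $\mathcal{O}(P)$ to $\mathcal{C}(P)$ with inverse $\psi$, and both $\psi^{-1}$ and $\psi$ send integer labelings to integer labelings (the tropical formula $x_p \mapsto x_p - \max_{q \lessdot p} x_q$ and its inverse preserve integrality), so after dilation by $\ell$ the transfer map restricts to a bijection between the lattice points of $\ell \cdot \mathcal{O}(P)$ and those of $\ell \cdot \mathcal{C}(P)$. Attaching the transfer maps for $T_{r,s}$ and $R_{r,s}$ to the two ends of $\themap$, I conclude that the single fixed map $\psi \circ \themap \circ \psi^{-1}$ is a continuous, piecewise-linear bijection from $\ell \cdot \mathcal{O}(T_{r,s})$ to $\ell \cdot \mathcal{O}(R_{r,s})$ that carries lattice points to lattice points, for every $\ell$.

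Finally I would translate back to plane partitions. As recorded after Definition~\ref{Def:planePartition}, the lattice points of $\ell \cdot \mathcal{O}(P)$ are exactly the plane partitions of $P$ of height at most $\ell$, so $\psi \circ \themap \circ \psi^{-1}$ restricts to a bijection between the plane partitions of $T_{r,s}$ and of $R_{r,s}$ of height at most $\ell$. A plane partition has height exactly $\ell$ precisely when it has height at most $\ell$ but not at most $\ell - 1$; since $\psi \circ \themap \circ \psi^{-1}$ is one map that simultaneously restricts to such a bijection for every value of $\ell$, it carries this set difference bijectively and hence restricts to a bijection between plane partitions of $T_{r,s}$ and $R_{r,s}$ of height exactly $\ell$. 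There is no substantial obstacle here beyond Theorem~\ref{thm:polytope} itself; the only points needing a word of care are the (classical) lattice-preservation of the transfer map and the fact that the map does not depend on $\ell$, which is what makes the passage from ``height at most $\ell$'' to ``height exactly $\ell$'' automatic.
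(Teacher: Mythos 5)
Your proposal is correct and follows essentially the same route as the paper: compose the intermediate maps via Theorem~\ref{thm:polytope}, then conjugate by Stanley's lattice-preserving transfer map. You additionally spell out the passage from ``height at most $\ell$'' to ``height exactly $\ell$'' (since the map is independent of $\ell$), a step the paper leaves implicit, but the overall argument is the same.
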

\begin{proof}
    By Theorem~\ref{thm:polytope}, the composition $\themap = \themap_1 \circ \cdots \circ \themap_k$ defines a continuous, piecewise-linear bijection from the lattice points in $\ell \cdot \mathcal C(T_{r,s})$ to the lattice points in $\ell \cdot \mathcal C(R_{r,s})$. But the inverse transfer map $\psi$ is a continuous, piecewise-linear bijection from the lattice points in $\ell \cdot \mathcal C(P)$ to the lattice points in $\ell \cdot \mathcal O(P)$ (which are plane partitions of $P$ of height at most $\ell$) for any poset $P$, as shown by Stanley \cite{stanley2}.
\end{proof}

\begin{Ex}
    Applying $\psi$ to both $x \in \RR^{T_{r,s}}$ and $\themap(x) \in \RR^{R_{r,s}}$ gives the plane partitions in Figure~\ref{fig:planepartition}, which are related by the bijection $\psi \circ \themap \circ \psi^{-1}$. As required, both plane partitions have the same height.
\end{Ex}

\begin{figure}
    \centering
    \begin{tikzpicture}[scale=.4]
        \begin{scope}
        \foreach \i/\j/\l in {4/1/0,5/1/1,3/2/1,4/2/2,5/2/2,6/2/3,2/3/2,3/3/2,4/3/2,5/3/3,6/3/3,7/3/4,1/4/0,2/4/3,3/4/3,4/4/3,5/4/4,6/4/4,7/4/4,8/4/4}
        \node at (\j-\i,\i+\j){$\l$};
        \end{scope}
        \draw[->] (3.5,8)--(8.5,8);
        \node at (6,8.8){$\psi \circ \themap \circ \psi^{-1}$};
        \begin{scope}[shift={(14,3)}]
        \foreach \i/\j/\l in {1/1/0,2/1/0,3/1/0,4/1/1,5/1/1,1/2/0,2/2/1,3/2/3,4/2/3,5/2/4,1/3/0,2/3/1,3/3/3,4/3/4,5/3/4,1/4/1,2/4/3,3/4/3,4/4/4,5/4/4}
        \node at (\j-\i,\i+\j){$\l$};
        \end{scope}
    \end{tikzpicture}
    \caption{Example of the bijection $\psi \circ \themap \circ \psi^{-1}$ obtained by applying $\psi$ to the labelings in Figure~\ref{fig:plexample}. Note that both plane partitions have the same height.}
    \label{fig:planepartition}
\end{figure}
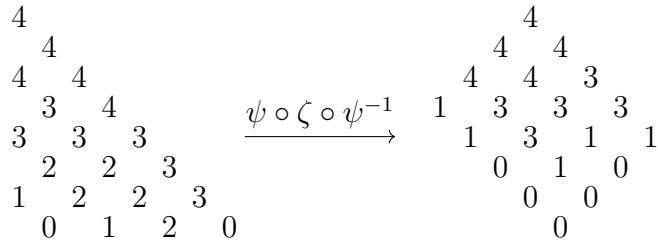

\section{Rowmotion equivariance} \label{sec:rowmotionequivariance}

In this section, we will show that the map $\themap$ defined in the previous section is equivariant with respect to the action of rowmotion $\tilde \rho$ (or, equivalently, that $\psi \circ \themap \circ \psi^{-1}$ is equivariant with respect to $\rho$). To do this, we will define a modified version of rowmotion on the intermediate posets $I_k$ that is respected by the maps $\themap_k$.

\subsection{Polygonal chain rowmotion}

As in the previous section, let $k \leq s \leq r$ and consider the intermediate posets $I_k \subset RT_{r,s}$. Let $\mathscr P_k$ denote the set of (maximal) polygonal chains in $I_k$, and let $\mathscr P_k(p)$ denote the subset of those chains that contain $p$.

\begin{Def}
\label{Def:polygonalChainToggle}
    For any $p \in I_k$, the \emph{(birational) polygonal toggle} $\tau'_p\colon \mathbb{R}_+^{I_k} \to \mathbb{R}_+^{I_k}$ is the map that changes the $p$-coordinate of $x \in \mathbb{R}_+^{I_k}$ by
    \[x_p \mapsto \left(\sum_{C \in \mathscr P_k(p)} \prod_{q \in C} x_q\right)^{-1} = w_{\mathscr P_k(p)}(x)^{-1},\] while keeping all other coordinates fixed.
    
    The \emph{(birational) polygonal rowmotion} map $\spicyrho_k:\mathbb{R}_+^{I_k} \to \mathbb{R}_+^{I_k}$ is the composition
    \[\spicyrho_k = \tau'_{L^{-1}(|I_k|)} \circ \dots \circ \tau'_{L^{-1}(1)},\]
    for any linear extension $L$ of $I_k$.
\end{Def}

As with ordinary toggles, it is easy to verify that the polygonal toggles $\tau'_p$ and $\tau'_q$ commute if $p$ and $q$ are incomparable (or more generally, if no polygonal chain contains both $p$ and $q$). It follows that $\spicyrho_k$ is well-defined.

Note that Definition~\ref{Def:polygonalChainToggle} differs from the definition of the usual antichain toggle only in that the sum is taken only over polygonal chains rather than all maximal chains. Therefore, in the case when $k=1$ (when $I_k = R_{r,s}$) or $k=s$ (when $I_k = T_{r,s}$), we have $\spicyrho_k = \tilde \rho_k$ since $\mathscr P_k$ is just the set of all maximal chains.

It is important to note that unlike ordinary rowmotion, there does not appear to be a nice ``order'' version of $\spicyrho$ (as $\rho$ is to $\tilde\rho$). The reason for this is that the inverse transfer and dual transfer maps $\phibar$ and $\phibar^*$ are particularly well suited for working with the set of all maximal chains. Indeed, these two maps enumerate the ``bottom'' and ``top'' parts of maximal chains through $p$, and any bottom part can be combined with any top part to yield a maximal chain. However, the same is not true for polygonal chains: the bottom part of one polygonal chain may not be compatible with the top part of another. Nevertheless, we will see that not all hope is lost because interchangeable parts of chains will satisfy an appropriate chain shifting lemma when considered as a group.

Our main result for this section will be to prove the following theorem. 

\begin{Th} \label{thm:equivariance}
    The maps $\themap_k \colon \RR_+^{I_{k+1}} \to \RR_+^{I_{k}}$ are equivariant with respect to the action of polygonal rowmotion:
    \[\themap_k \circ \spicyrho_{k+1} = \spicyrho_{k} \circ \themap_k.\]
\end{Th}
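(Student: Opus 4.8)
The plan is to conjugate everything through the (inverse) transfer map so that polygonal rowmotion becomes an honest composition of birational toggles, and then verify that $\themap_k$ intertwines these toggles one at a time. More precisely, write $\spicyrho_k = \tau'_{p_{|I_k|}} \circ \cdots \circ \tau'_{p_1}$ for a linear extension $p_1, \dots, p_{|I_k|}$ of $I_k$, and choose the linear extension of $I_{k+1}$ so that it is ``compatible'' with the shift maps defining $\themap_k$ (i.e. so that the elements inside $M_k$ are enumerated consistently with how $\tilde\rho_k^{-1}$ toggles them, and the elements outside are enumerated in an order matching the southwest/southeast shifts). I would then prove the stronger statement that $\themap_k$ intertwines the \emph{partial} compositions: if $x^{(j)}$ denotes the labeling of $I_{k+1}$ after applying the first $j$ polygonal toggles (in the $\spicyrho_{k+1}$ order) and $z^{(j)} = \themap_k(x^{(j)})$, then $z^{(j)}$ is obtained from $z^{(0)} = \themap_k(x)$ by applying the corresponding first $j$ polygonal toggles of $\spicyrho_k$. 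Theorem~\ref{thm:equivariance} is the case $j = |I_k| = |I_{k+1}|$.

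To carry out the inductive step, fix $p \in I_{k+1}$ and the corresponding element $p'' \in I_k$ (either $p$ shifted, or the image under the $\tilde\rho_k^{-1}$ bookkeeping). When we apply $\tau'_p$ during $\spicyrho_{k+1}$, the coordinate $x_p$ is replaced by $w_{\mathscr P_{k+1}(p)}(x)^{-1}$, where the weight is computed in the ``mixed'' labeling whose coordinates below $p$ have already been toggled and whose coordinates above $p$ have not. The key point is that $\mathscr P_{k+1}(p)$ — the polygonal chains through $p$ — splits into groups according to where the chain enters and exits the window $M_k$ (namely the data $(m, M)$ of Proposition~\ref{prop:intermediate}), and each such group is exactly of the form to which the chain-shifting lemmas of Section~\ref{sec:partialchainsintrapezoids} apply: depending on whether $p$ lies below $M_k$, inside $M_k$, or above $M_k$, we invoke the appropriate case of Lemma~\ref{lemma:trapezoidshift} or Lemma~\ref{lemma:trapezoidshiftdual} (for the ``partial chain from $p$ to the boundary of $M_k$'' pieces) together with Lemma~\ref{lemma:skewchainshifting} and Lemma~\ref{Lemma:rtchainshifting} (for the ``full chain across $M_k$'' pieces), exactly as in the proof of Proposition~\ref{prop:intermediate} but now localized at $p$. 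This shows $w_{\mathscr P_{k+1}(p)}(x) = w_{\mathscr P_{k}(p'')}(z)$ for the mixed labelings, which is precisely the statement that the two toggles agree, so $\themap_k$ commutes past the single toggle $\tau'_p$.

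There are two subtleties to handle carefully. First, the shift maps outside $M_k$ reindex the poset, so one must check that the chosen linear extensions really do match up under $\themap_k$; this is a combinatorial bookkeeping argument (the southwest/southeast shifts are order-preserving onto their images and the $M_k$-window toggles compose in the $\tilde\rho_k^{-1}$ order), and I would dispatch it by using the fact, already exploited in Section~\ref{section:equivariantMap}, that $\tilde\rho_k^{-1}$ is itself a composition of antichain toggles on $M_k$ in a bottom-to-top order. Second — and this is where I expect the real work to lie — one must verify that the grouping of $\mathscr P_{k+1}(p)$ into boundary-data classes is genuinely compatible with the ``mixed labeling'' in which lower coordinates are already toggled: the chain-shifting lemmas are stated for a fixed labeling $x$ and $z = \tilde\rho^{-1}(x)$, so I need the observation (analogous to the argument in the proof of Proposition~\ref{prop:rhotilde} and Lemma~\ref{lemma:dualtransfer}) that when applying $\tau'_p$, the relevant portions of the polygonal chains below $p$ see only already-toggled (i.e. $z$-side) labels while the portions above see only untoggled ($x$-side) labels, and that these two halves are precisely the domains on which the ``partial'' chain-shifting identities operate. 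Making this bookkeeping airtight — matching ``below/above $p$ in $I_{k+1}$'' with ``below/above $p''$ in $I_k$'' through the shift and the $M_k$-rowmotion — is the main obstacle; once it is set up correctly, the chain-shifting lemmas of Section~\ref{section:skewChainShifting} do all the algebraic heavy lifting and the result follows. As noted in Remark~\ref{rem:semiring}, since all the chain-shifting identities are subtraction-free, the resulting equality of subtraction-free rational expressions holds over every field and hence over semirings as well.
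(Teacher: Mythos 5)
There is a genuine gap: the claim you label as ``the stronger statement''---that $\themap_k(x^{(j)})$ is obtained from $\themap_k(x)$ by applying the first $j$ polygonal toggles of $\spicyrho_k$, equivalently that $\themap_k$ commutes with each individual toggle $\tau'_p$---is false. The map $\themap_k$ applies $\tilde\rho_k^{-1}$ inside the window $M_k$, and $\tilde\rho_k^{-1}$ is a global operation: if $p \in M_k$ and you change $x_p$ alone (which is what $\tau'_p$ does), then $\tilde\rho_k^{-1}(\bar x)$ and hence $\themap_k(\tau'_p(x))$ changes at \emph{many} coordinates of $I_k$, not just at the single corresponding element $p''$. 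Concretely, by Corollary~\ref{cor:edgeweight}, the coordinate $z_q=\tilde\rho_k^{-1}(\bar x)_q$ involves $\psi(\bar x)_{q'}$ for all $q'\gtrdot q$, and these transfer-map values sum over chains that can pass through $p$ whenever $p\leq q'$, so many $q<p$ and $q$ incomparable to $p$ are affected. Thus the weight equality $w_{\mathscr P_{k+1}(p)}(x) = w_{\mathscr P_k(p'')}(z)$ you aim for, even if established, would not imply that ``$\themap_k$ commutes past the single toggle $\tau'_p$''---the conclusion you draw from it is a non sequitur because the two sides also disagree off of $p''$. Your opening sentence about conjugating through the transfer map so that $\spicyrho$ becomes an honest composition of order-type toggles is also directly contradicted by the paper's remark in this section that there is no nice order version of $\spicyrho$, precisely because the bottom half of one polygonal chain cannot be paired with the top half of another.

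The paper avoids this obstruction by a different inductive scheme: it never tries to commute individual toggles, but instead compares $z'=\themap_k(\spicyrho(x))$ and $z''=\spicyrho(\themap_k(x))$ as already-computed labelings and proves $z'_p=z''_p$ by induction on $p$ in the \emph{target} poset $I_k$. The key technical inputs are Proposition~\ref{prop:specialrho} (a polygonal analogue of Lemma~\ref{lemma:dualtransfer}, built on a splitting of the transfer map into the ``partial transfer maps'' $\psi_1,\psi_2,\psi_1^*,\psi_2^*$) and, for $p$ inside $M_k$, the paired identities of Propositions~\ref{prop:x-z} and \ref{prop:x-zdual}, which express $x_p^{-1}-{z'_p}^{-1}$ and ${z''_p}^{-1}-x_p^{-1}$ in matching forms so that the induction hypothesis forces $z'_p=z''_p$. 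Those two propositions are where the chain-shifting lemmas of Section~\ref{sec:partialchainsintrapezoids} actually enter, but they are applied to the full $\spicyrho(x)$ and $\themap_k(x)$, not to any ``mixed'' labeling from a partial toggle composition. To repair your proposal you would need to abandon the toggle-by-toggle intertwining and replace it with a coordinate-by-coordinate comparison in $I_k$ supported by a polygonal version of the $\psi,\psi^*$ machinery.
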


The following corollaries will then be immediate.
\begin{Cor}
    Let $T = T_{r,s}$ and $R=R_{r,s}$ be the rectangle and trapezoid poset. Then the map $\themap \colon \RR_+^{T} \to \RR_+^{R}$ is equivariant with respect to birational (antichain) rowmotion:
    \[\themap \circ \tilde\rho_T = \tilde\rho_R \circ \themap.\]
    In particular, birational rowmotion on the trapezoid ($\tilde \rho_T$ or $\rho_T$) has order $r+s$.
\end{Cor}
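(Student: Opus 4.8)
The plan is to obtain the claimed equivariance by telescoping the intermediate equivariances of Theorem~\ref{thm:equivariance} over all $k$, and then to read off the order from the known behavior of birational rowmotion on the rectangle. Recall that $\themap = \themap_1 \circ \themap_2 \circ \cdots \circ \themap_{s-1}$, that $I_1 = R_{r,s}$ and $I_s = T_{r,s}$, and that (as noted immediately after Definition~\ref{Def:polygonalChainToggle}) polygonal rowmotion agrees with ordinary birational antichain rowmotion at the two extremes, i.e.\ $\spicyrho_1 = \tilde\rho_R$ and $\spicyrho_s = \tilde\rho_T$.

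First I would write
\[\themap \circ \tilde\rho_T = \themap_1 \circ \cdots \circ \themap_{s-1} \circ \spicyrho_s,\]
and then apply Theorem~\ref{thm:equivariance} with $k = s-1$ to rewrite $\themap_{s-1} \circ \spicyrho_s$ as $\spicyrho_{s-1} \circ \themap_{s-1}$. Repeating this with $k = s-2, s-3, \dots, 1$ pushes the polygonal rowmotion map leftward past each $\themap_k$ in turn, yielding
\[\themap \circ \tilde\rho_T = \spicyrho_1 \circ \themap_1 \circ \cdots \circ \themap_{s-1} = \tilde\rho_R \circ \themap,\]
which is exactly the desired equivariance.

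For the statement on orders, I would note that each $\themap_k$ is a birational bijection with a well-defined inverse $\themap_k^{-1}$ (as discussed when $\themap_k$ was introduced), so $\themap$ is invertible and the identity above gives $\tilde\rho_T = \themap^{-1} \circ \tilde\rho_R \circ \themap$. Since conjugate maps have the same order, it suffices to show that $\tilde\rho_R$ has order exactly $r+s$. But by Proposition~\ref{prop:rhotilde} we have $\tilde\rho_R = \psi^{-1} \circ \rho_R \circ \psi$, so $\tilde\rho_R$ is conjugate to ordinary birational rowmotion $\rho_R$ on the rectangle, which has order exactly $r+s$ by Grinberg--Roby \cite{grinbergroby2}; hence so does $\tilde\rho_R$, and therefore so does $\tilde\rho_T$ (and likewise $\rho_T = \psi \circ \tilde\rho_T \circ \psi^{-1}$). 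Tropicalizing and then restricting to lattice points gives the corresponding finiteness for piecewise-linear and combinatorial rowmotion on $T_{r,s}$ as well.

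Since the substantive work is already carried out in Theorem~\ref{thm:equivariance}, there is no real obstacle here: the only points demanding attention are the boundary identifications $\spicyrho_1 = \tilde\rho_R$ and $\spicyrho_s = \tilde\rho_T$, the invertibility of $\themap$, and the routine bookkeeping of the conjugations used to transfer the order from $\rho_R$ to $\rho_T$.
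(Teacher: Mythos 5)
Your proof is correct and follows essentially the same approach as the paper: the telescoping of Theorem~\ref{thm:equivariance} over $k=s-1, s-2, \dots, 1$ is exactly the paper's observation that equivariance of the individual $\themap_k$ passes to their composition, with the boundary identifications $\spicyrho_1 = \tilde\rho_R$ and $\spicyrho_s = \tilde\rho_T$; and the order statement is obtained by the same conjugation argument reducing to Grinberg--Roby's result for the rectangle.
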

\begin{proof}
   Since the maps $\themap_k$ are equivariant with respect to polygonal rowmotion by Theorem~\ref{thm:equivariance}, so is their composition $\themap$. The first part follows since $\spicyrho = \tilde\rho$ on both $T$ and $R$. The second claim follows since birational rowmotion on the rectangle has order $r+s$ as shown by Grinberg and Roby \cite{grinbergroby2}.
\end{proof}

\begin{Cor}
    Polygonal rowmotion $\spicyrho_k$ has order $r+s$ on $I_k = I_{r,s,k}$.
\end{Cor}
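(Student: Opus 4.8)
The plan is to deduce this immediately from Theorem~\ref{thm:equivariance} together with the known periodicity of rowmotion on the rectangle. First I would observe that each $\themap_k \colon \RR_+^{I_{k+1}} \to \RR_+^{I_k}$ is a birational isomorphism (its inverse $\themap_k^{-1}$ was constructed explicitly), so the equivariance relation $\themap_k \circ \spicyrho_{k+1} = \spicyrho_k \circ \themap_k$ can be rewritten as
\[\spicyrho_{k+1} = \themap_k^{-1} \circ \spicyrho_k \circ \themap_k,\]
exhibiting $\spicyrho_{k+1}$ and $\spicyrho_k$ as conjugate birational maps. Iterating this for $k = 1, 2, \dots, s-1$ shows that $\spicyrho_k$ is conjugate to $\spicyrho_1$ for every $k \leq s$.

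Next I would identify $\spicyrho_1$. Since $I_1 = R_{r,s}$ and every maximal chain of the rectangle is polygonal, the polygonal toggles $\tau'_p$ coincide with the ordinary antichain toggles $\tau_p$, so $\spicyrho_1 = \tilde\rho_{R_{r,s}}$. By Proposition~\ref{prop:rhotilde}, $\tilde\rho_{R_{r,s}} = \psi^{-1} \circ \rho_{R_{r,s}} \circ \psi$ is conjugate to ordinary birational rowmotion $\rho_{R_{r,s}}$, which has order exactly $r+s$ by the theorem of Grinberg and Roby \cite{grinbergroby2}. Since the order of a birational map is a conjugacy invariant—if $\phi = g^{-1} \circ \sigma \circ g$ then $\phi^m = g^{-1} \circ \sigma^m \circ g$, so $\phi^m = \mathrm{id}$ if and only if $\sigma^m = \mathrm{id}$—it follows that $\spicyrho_k$ has order $r+s$ for all $k$.

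Every step here is a purely formal manipulation, so I do not anticipate any real obstacle; the statement is an immediate consequence of Theorem~\ref{thm:equivariance}, and in fact this corollary generalizes the preceding one (which is the case $k = s$). The only minor point to be careful about is that the conjugating maps are genuine birational isomorphisms, so that all compositions take place on a common dense open set where the relevant maps and their inverses are defined; this is guaranteed by the construction of $\themap_k$ and $\themap_k^{-1}$, and over the semifield $\RR_+$ it is also consistent with the subtraction-free considerations noted in Remark~\ref{rem:semiring}.
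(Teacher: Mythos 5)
Your argument is correct and is precisely the paper's intended proof, merely spelled out in full: the paper's one-line proof of this corollary invokes Theorem~\ref{thm:equivariance} and the Grinberg--Roby periodicity on $R_{r,s}$ in exactly the way you unpack via the conjugation $\spicyrho_{k+1} = \themap_k^{-1} \circ \spicyrho_k \circ \themap_k$ and the identification $\spicyrho_1 = \tilde\rho_{R_{r,s}}$. No gap; your observation that this corollary subsumes the preceding one (the case $k=s$) is also correct.
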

\begin{proof}
    Similarly follows from Theorem~\ref{thm:equivariance} since birational rowmotion on the rectangle $R_{r,s}$ has order $r+s$.
\end{proof}

\subsection{Partial transfer maps}

For any element $p \in I_k$ and $x \in \RR_+^{I_k}$, $\phibar(x)_p$ is the total weight (with respect to $x$) of $\mathscr C_k(p)$, the set of all saturated chains from a minimal element of $I_k$ to $p$. In order to work with polygonal chains, we will need to split these chains into two types. Specifically, we partition $\mathscr C_k(p)$ into sets
\begin{align*}
    \mathscr C_k^{(1)}(p) &= \{C \in \mathscr C_k(p) \mid (k,1) \in C \text{ or } C \cap L \neq \varnothing\},\\
    \mathscr C_k^{(2)}(p) &= \{C \in \mathscr C_k(p) \mid (k,1) \notin C \text{ and } C \cap L = \varnothing\},
\end{align*}
and let $\phibar_i(x)_p$ denote the total weight of $\mathscr C_k^{(i)}(p)$ (with respect to $x$) for $i=1,2$. If $p$ is nonminimal, then 
$\phibar_i(x)_p = x_p\sum_{q \lessdot p} \phibar_i(x)_q$ if $p \notin L$, while $\phibar_1(x)_p = x_p(\phibar_1(x)_q+\phibar_2(x)_q)$ and $\phibar_2(x)_p = 0$ if $q \lessdot p \in L$.

Similarly, $\phibar^*(x)_p$ is the total weight of $\mathscr C^*_k(p)$, the set of all saturated chains from $p$ to the maximum element of $I_k$. We partition $\mathscr C^*_k(p)$ into sets
\begin{align*}
    \mathscr C_k^{*,(1)}(p) &= \{C \in \mathscr C^*_k(p) \mid C \cap L \neq \varnothing\},\\
    \mathscr C_k^{*,(2)}(p) &= \{C \in \mathscr C^*_k(p) \mid C \cap L = \varnothing\},
\end{align*}
and let $\phibar^*_i(x)_p$ denote the total weight of $\mathscr C_k^{*,(i)}(p)$ for $i=1,2$. If $p$ is nonmaximal, then 
$\phibar_i^*(x)_p = x_p\sum_{q \gtrdot p} \phibar_i^*(x)_q$ if $p \notin L$, while $\phibar_1^*(x)_p = x_p(\phibar_1^*(x)_q+\phibar_2^*(x)_q)$ and $\phibar_2^*(x)_p = 0$ if $q \gtrdot p \in L$.

Observe that if $C$ is any polygonal chain containing $p$, then $C$ can be written uniquely as a union of a chain in $\mathscr C_k^{(i)}(p)$ and a chain in $\mathscr C_k^{*,(j)}(p)$ for some pair $(i,j) \neq (2,2)$. (Combining chains in $\mathscr C_k^{(2)}(p)$ and $\mathscr C_k^{*,(2)}(p)$ yields a non-polygonal maximal chain.)

The following proposition relates these partial transfer maps to polygonal rowmotion. The proof is analogous to that of Proposition~\ref{prop:rhotilde}.

\begin{Prop} \label{prop:specialrho}
    For any $p \in I_k$ and $x \in \RR_+^{I_k}$,
    \[
    \sum_{(i,j) \neq (2,2)} \phibar^*_i(x)_p \phibar_j(\tilde\varrho(x))_p = 1.
    \]
\end{Prop}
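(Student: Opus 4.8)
The plan is to mimic the proof of Proposition~\ref{prop:rhotilde} almost verbatim, replacing "maximal chains through $p$" by "polygonal chains through $p$" and using the bookkeeping provided by the partial transfer maps $\phibar_i$ and $\phibar^*_i$. Fix $p \in I_k$ and write $x' = \spicyrho_k(x)$; I want to show $\sum_{(i,j)\neq(2,2)} \phibar^*_i(x)_p\,\phibar_j(x')_p = 1$. First I would run an induction: the statement will be proved for $p$ assuming it holds for all $q \gtrdot p$ in $I_k$ (the base case being $p$ maximal, where the only nonzero $\phibar^*_i$ term is $\phibar^*_1(x)_p = x_p$ and $\phibar_1(x')_p$ accounts for all polygonal chains up to $p$, which are all maximal chains — here the identity should reduce directly to the definition of $\tau'_p$). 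For the inductive step, I would trace through the application of $\spicyrho_k$ to $x$: at the moment $\tau'_p$ is applied, the labels strictly above $p$ have already been updated to agree with $x'$, while the labels at and below $p$ still agree with $x$. So before toggling, the label at $p$ is $x_p$ and the total weight of all polygonal chains through $p$ (with the current mixed labeling) is
\[
\frac{1}{x_p}\sum_{(i,j)\neq(2,2)} \Bigl(\text{wt of chains in }\mathscr C_k^{(i)}(p)\text{ w.r.t. }x\Bigr)\Bigl(\text{wt of chains in }\mathscr C_k^{*,(j)}(p)\text{ w.r.t. the updated labeling}\Bigr).
\]
The bottom parts evaluate to $\phibar_i(x)_p$, and I need the top parts to evaluate to the appropriate $\phibar^*_j$ of the updated labeling.

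The key subtlety — and this is where the real content lies — is that the top part must be computed with respect to the labeling that agrees with $x'$ above $p$, not with $x$. By the definition of $\tau'_p$, after toggling $p$ the new label satisfies $x'_p = \bigl(w_{\mathscr P_k(p)}(\text{current labeling})\bigr)^{-1}$, so I get the identity
\[
x'_p \cdot x_p^{-1} \sum_{(i,j)\neq(2,2)} \phibar_i(x)_p \cdot \phibar^*_j(x')_p \big|_{\text{top}} = 1.
\]
To finish, I would observe that $\phibar^*_j$ is built recursively from the labels above $p$ (via the recursions recorded just before the proposition: $\phibar_j^*(x')_p = x'_p\sum_{q\gtrdot p}\phibar_j^*(x')_q$ off $L$, with the $(1)$-branch absorbing the $(2)$-branch across a covering into $L$), and these labels above $p$ already equal $x'$ at the moment $\tau'_p$ is applied; so the "top" weight with the mixed labeling is literally $\phibar^*_j(x')_p$ — except that $x'_p$ itself is the quantity we are solving for, so more carefully the "top-minus-$p$" sum $\sum_{q\gtrdot p}\phibar_j^*(x')_q$ is what appears, and I recover $\phibar_j^*(x')_p$ only after multiplying by $x'_p$. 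Matching this against the displayed equation and canceling $x'_p$ gives exactly $\sum_{(i,j)\neq(2,2)}\phibar_i(x)_p\,\phibar^*_j(x')_p / x_p = x_p^{-1} \cdot 1$...

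Let me restate the endgame cleanly: after the above, I have $\sum_{(i,j)\neq(2,2)} \phibar_i(x)_p\,\widehat{\phibar^*_j}(x')_p = 1$, where $\widehat{\phibar^*_j}$ denotes the partial "co-transfer" that omits the factor $x'_p$ (i.e. $\widehat{\phibar^*_j}(x')_p = \phibar^*_j(x')_p/x'_p$ when $p$ is nonmaximal). This is precisely the mixed-labeling top weight. But this is still not quite the claimed statement, which pairs $\phibar^*_i(x)$ with $\phibar_j(x')$. To bridge the gap I would invoke exactly the argument of Proposition~\ref{prop:rhotilde}: the claimed identity is the "$\tilde\rho$-side" companion of what I just derived, and the two are matched by the same conjugation-under-transfer computation used there (Lemma~\ref{lemma:dualtransfer} plays the analogous role), now carried out branch-by-branch in the $(i,j)$ decomposition rather than globally. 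Concretely, running $\spicyrho_k$ in the opposite order (bottom-up vs. top-down) and using that $\phibar_j$ is built from labels below $p$ gives the symmetric identity $\sum \phibar^*_i(x)_p\,\widehat{\phibar_j}(x')_p = 1$, and combining the two symmetric forms — together with the definition of $\tau'_p$ which sets $x'_p$ to be the reciprocal of the full polygonal-chain weight at $p$ — forces $\sum_{(i,j)\neq(2,2)}\phibar^*_i(x)_p\,\phibar_j(x')_p = 1$.

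The main obstacle I anticipate is the combinatorics of the $(i,j)\neq(2,2)$ restriction interacting with the recursion across the left border $L$: one must check that the "$(2)$-branch dies and gets absorbed into the $(1)$-branch" rule for $\phibar_1,\phibar_2$ (and dually for $\phibar^*_1,\phibar^*_2$) is exactly compatible with the rule that a bottom $\mathscr C_k^{(2)}(p)$-chain may be completed by a top $\mathscr C_k^{*,(1)}(p)$-chain but not by a top $\mathscr C_k^{*,(2)}(p)$-chain. I would verify this by a short case analysis on whether $p \in L$, whether $p$'s unique predecessor below (if any) lies in $L$, and similarly above — essentially the same four-case check already appearing in the text preceding the proposition — and confirm that in each case the product $\phibar^*_i(x)_p\,\phibar_j(\spicyrho_k(x))_p$ enumerates precisely the polygonal chains through $p$ of the corresponding type with no double-counting and no omissions. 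Everything else is a routine transcription of the proof of Proposition~\ref{prop:rhotilde}.
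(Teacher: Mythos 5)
There is a genuine error in the setup: the polygonal toggles in $\spicyrho_k$ are applied from \emph{bottom to top}, not top to bottom. In Definition~\ref{Def:polygonalChainToggle}, $\spicyrho_k = \tau'_{L^{-1}(|I_k|)} \circ \cdots \circ \tau'_{L^{-1}(1)}$, so $\tau'_{L^{-1}(1)}$ (a minimal element) fires first. Thus at the moment $\tau'_p$ is applied, the labels \emph{below} $p$ have already been updated to $\spicyrho_k(x)$, while the labels at and \emph{above} $p$ still equal $x$ — exactly the opposite of what you write. With your (reversed) convention, the mixed-labeling chain weight at $p$ comes out as $\sum_{(i,j)\neq(2,2)} \phibar_i(x)_p \phibar^*_j(x')_p / x_p$, which after applying the toggle definition would yield $\sum \phibar_i(x)_p \phibar^*_j(x')_p = 1$; but this is a different (and in general false) identity from the one claimed, since $\phibar$ should attach to the \emph{output} and $\phibar^*$ to the \emph{input}. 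You notice this mismatch at the end, but the attempted repair via ``the same conjugation-under-transfer computation'' of Lemma~\ref{lemma:dualtransfer} does not go through: as the text just before Definition~\ref{Def:polygonalChainToggle} warns, the transfer/dual-transfer machinery specifically fails for polygonal chains because not every bottom part of a polygonal chain through $p$ may be glued to every top part, so there is no ``order'' version of $\spicyrho$ and no Lemma~\ref{lemma:dualtransfer} analogue to appeal to.

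Once the toggle direction is corrected, the proof becomes a short direct computation — no induction and no second symmetric identity are needed. Before $\tau'_p$ is applied, each polygonal chain through $p$ splits (by the $(i,j)\neq(2,2)$ decomposition) into a bottom part weighted by the already-toggled labels and a top part weighted by the still-original labels, so the total mixed weight is $\sum_{(i,j)\neq(2,2)} \phibar^*_i(x)_p \, \phibar_j(\spicyrho_k(x))_p \, (\spicyrho_k(x)_p)^{-1}$; the toggle definition sets this equal to $(\spicyrho_k(x)_p)^{-1}$, and canceling gives the claim immediately. Your inductive scaffolding (with base case at the maximum) is neither needed nor well-posed here — it is the structure of the proof of Lemma~\ref{lemma:dualtransfer} rather than of Proposition~\ref{prop:rhotilde}, and the latter is the correct template.
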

\begin{proof}
    In the computation of $\tilde\varrho(x)$, the polygonal toggles in $\tilde\varrho$ occur from bottom to top. Thus immediately before the toggle $\tau'_p$ is performed, all of the elements less than $p$ have been toggled and none of the elements at $p$ or above have been toggled. Then the total weight of all polygonal chains through $p$ before the toggle at $p$ is
    \[
    \sum_{(i,j) \neq (2,2)} \phibar^*_i(x)_p \cdot \phibar_j(\tilde\varrho(x))_p \cdot (\tilde\varrho(x)_p)^{-1}.
    \]
    By the definition of $\tau_p'$, this weight equals the inverse of the new coordinate at $p$ after the toggle, that is, $(\tilde\varrho(x)_p)^{-1}$. The result follows easily.
\end{proof}

\subsubsection{Chain shifting}
To study the behavior of the partial transfer maps under the action of $\themap_k$, we will need to prove appropriate chain shifting lemmas for them. Luckily, since $\themap_k$ mostly consists of the rowmotion map $\tilde \rho^{-1}$ on $M_k \subset I_{k+1}$, it is straightforward to adapt the results from Sections~\ref{sec:partialchains} and \ref{sec:partialchainsintrapezoids}. These results will be extended from $M_k$ to all of $I_k$ in the same way as the proof of Proposition~\ref{prop:intermediate}: by factoring out the parts of each chain that lie in $M_k$, we can apply a chain shifting result to this part, while the portions of the chains lying outside of $M_k$ will be shifted by the definition of $\themap_k$ outside of $M_k$.

Recall that for $x \in \RR_+^{I_{k+1}}$, we define $\bar x \in \RR_+^{M_k}$ to be the labeling obtained by restricting $x$ to the coordinates in $M_k$ and setting $\bar x_{k,1} = 1$.

\begin{Prop} \label{prop:outside1}
    Let $x \in \RR_+^{I_{k+1}}$ and $z = \themap_k(x) \in \RR_+^{I_{k}}$. Let $p \in I_k$ lie below $M_k$ and let $q \in I_{k+1}$ be its northeast neighbor. Then $\psi_1(z)_p = 0$, $\psi_2(z)_p = \psi_2(x)_q$, and $\psi^*_1(z)_p = \psi_1^*(x)_q$.
\end{Prop}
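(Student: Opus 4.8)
The plan is to establish the three equalities in turn. The first two are bookkeeping: they follow from the fact that $\themap_k$ merely shifts coordinates below $M_k$. Throughout write $p = (i,j)$, so $i < k$ and $q = (i,j+1)$; let $\sigma$ denote the map $(a,b) \mapsto (a,b+1)$. By the ``shifted southwest'' clause in the definition of $\themap_k$ we have $z_r = x_{\sigma(r)}$ for every $r \in I_k$ lying below $M_k$, and in particular $z_p = x_q$.

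Since saturated chains descend, every chain in $\mathscr C_k(p)$ uses only elements of first coordinate $\le i < k$, hence contains neither the minimal element $(k,1)$ nor any element of $L$ (all of whose first coordinates are $\ge r \ge k$). So $\mathscr C_k^{(1)}(p) = \varnothing$, giving $\psi_1(z)_p = 0$. The same observation gives $\mathscr C_k(p) = \mathscr C_k^{(2)}(p)$, with every such chain lying in the region of $I_k$ below $M_k$. The map $\sigma$ is a poset isomorphism from this region onto the region of $I_{k+1}$ below $M_k$, carrying the minimal elements of $I_k$ to those of $I_{k+1}$ and $p$ to $q$; since it also matches labels, it restricts to a weight-preserving bijection $\mathscr C_k(p) \to \mathscr C_{k+1}(q)$. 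As $q$ also lies below $M_k$, the first part applies to $q$ and gives $\mathscr C_{k+1}(q) = \mathscr C_{k+1}^{(2)}(q)$; summing weights yields $\psi_2(z)_p = \psi_2(x)_q$.

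For the third equality I would argue as in the proof of Proposition~\ref{prop:intermediate}. Each chain $C \in \mathscr C_k^{*,(1)}(p)$ --- a saturated chain in $I_k$ from $p$ to $\max(I_k)$ meeting $L$ --- factors uniquely into: a chain from $p$ to some $(k-1, b_0)$ below $M_k$; the cover $(k-1,b_0) \lessdot (k,b_0)$; a chain $C_M$ inside $M_k$ from $(k,b_0)$ to some $(a_1, k+1)$ in the top rank of $M_k$; and, when $s \ge k+2$, the cover $(a_1,k+1) \lessdot (a_1,k+2)$ followed by a chain from $(a_1,k+2)$ to $\max(I_k)$ in the rectangular region above $M_k$ (when $s = k+1$ this last piece is empty and $C_M$ ends at $\max(I_k) = (r+k-1,k+1)$). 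Since $L \cap M_k$ is exactly the left border of $M_k \cong RT_{r-k+1,k+1}$, while $L$ is disjoint from the regions below and above $M_k$, the chain $C$ meets $L$ if and only if $C_M$ does. An identical factorization applies on the $I_{k+1}$ side, and the two outer pieces are matched by shifts: the pieces below $M_k$ are matched by $\sigma$ (forcing the companion entry point to be $(k, b_0+1)$), and the pieces above $M_k$ are matched by $\sigma' \colon (a,b) \mapsto (a+1,b)$, which is a label-preserving isomorphism of the rectangular regions above $M_k$ sending $\max(I_k)$ to $\max(I_{k+1})$ (forcing the companion exit point to be $(a_1+1, k+1)$).

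The middle pieces are where $\themap_k$ does real work: there $z$ restricted to $M_k$ equals $\tilde\rho_k^{-1}(\bar x)$. I would apply Lemma~\ref{Lemma:rtchainshifting}(b) to $S = M_k$, with $x$ and $z$ replaced by their restrictions to $M_k$ and with $m = (k,b_0+1)$, $m' = (k,b_0)$, $M = (a_1+1, k+1)$, $M' = (a_1,k+1)$; one checks $m' \lessdot m$, $M' \lessdot M$, and $se(m) = ne(M') = \varnothing$ in $M_k$ (since $M_k$ has first coordinates $\ge k$ and second coordinates $\le k+1$), so the lemma gives that the total $z$-weight of chains in $M_k$ from $(k,b_0)$ to $(a_1,k+1)$ meeting $L$ equals the total $x$-weight of chains in $M_k$ from $(k,b_0+1)$ to $(a_1+1,k+1)$ meeting $L$. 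Multiplying the three matched contributions and summing over $b_0$ and $a_1$ then gives $\psi_1^*(z)_p = \psi_1^*(x)_q$. I expect the principal obstacle to be purely organizational: pinning down the four-piece factorization and verifying that the entry and exit points of companion chains shift by exactly one unit in the predicted directions, so that the hypotheses of Lemma~\ref{Lemma:rtchainshifting} hold verbatim, and checking that the degenerate case $s = k+1$ is genuinely the sub-case $a_1 = r+k-1$.
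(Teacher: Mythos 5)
Your proof is correct and takes essentially the same route as the paper: the first two claims come from the emptiness of $\mathscr C_k^{(1)}(p)$ and the definitional southwest label shift via $\sigma$, while the third comes from factoring each chain at the boundary of $M_k$ and invoking Lemma~\ref{Lemma:rtchainshifting} on the middle piece, mirroring the argument in the proof of Proposition~\ref{prop:intermediate}. You have simply spelled out the details that the paper leaves implicit by citing that earlier proof.
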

\begin{proof}
    Clearly $\psi_1(z)_p = 0$ since $\mathscr C_k^{(1)}(p)$ is empty. Since chains in $\mathscr C_k^{(2)}(p)$ are southwest shifts of chains in $\mathscr C_{k+1}^{(2)}(q)$, and the labels shift similarly, we have $\psi_2(z)_p = \psi_2(x)_q$. The final claim follows from Lemma~\ref{Lemma:rtchainshifting} as in the proof of Proposition~\ref{prop:intermediate}.
\end{proof}

We can likewise prove the following dual version.

\begin{Prop} \label{prop:outside2}
    Let $x \in \RR_+^{I_{k+1}}$ and $z = \themap_k(x) \in \RR_+^{I_{k}}$. Let $p \in I_k$ lie above $M_k$ and let $q \in I_{k+1}$ be its northwest neighbor. Then $\psi^*_1(z)_p = 0$, $\psi^*_2(z)_p = \psi^*_2(x)_q$, and $\psi_1(z)_p = \psi_1(x)_q$.
\end{Prop}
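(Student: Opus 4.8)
The plan is to mirror the proof of Proposition~\ref{prop:outside1}, interchanging throughout the roles of minimal and maximal elements of $I_k$ (equivalently, the roles of $\psi$ and $\psi^*$, and of the southwest and southeast shifts built into $\themap_k$). Thus I would prove the three claims of the statement separately, and the first two are straightforward.

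The claim $\psi^*_1(z)_p = 0$ is immediate: since $p$ lies above $M_k$, its second coordinate exceeds $k+1$, so every saturated chain from $p$ to the maximum of $I_k$ has all second coordinates $> k$ and hence meets no element of $L\cap I_k = \{(\ell+r-1,\ell)\mid 1\le\ell\le k\}$; thus $\mathscr C_k^{*,(1)}(p)=\varnothing$. For $\psi^*_2(z)_p = \psi^*_2(x)_q$, note that every chain in $\mathscr C_k^{*,(2)}(p)$ has all second coordinates $\ge k+2$, so it lies entirely in the part of $I_k$ above $M_k$, where $\themap_k$ acts by the shift $(i,j)\mapsto(i+1,j)$ and carries the maximum of $I_k$ to the maximum of $I_{k+1}$; this shift is then a weight-preserving bijection from $\mathscr C_k^{*,(2)}(p)$ onto $\mathscr C_{k+1}^{*,(2)}(q)$, and summing weights yields the equality. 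This is exactly the dual of the ``$\psi_2$'' step of Proposition~\ref{prop:outside1}.

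The substantive claim is $\psi_1(z)_p = \psi_1(x)_q$, which I would establish as the dual of the ``$\psi^*_1$'' step of Proposition~\ref{prop:outside1}, i.e.\ exactly as in the proof of Proposition~\ref{prop:intermediate}. Each chain $C\in\mathscr C_k^{(1)}(p)$ decomposes uniquely as $C = C_{\mathrm{in}}\sqcup C_{\mathrm{out}}$, where $C_{\mathrm{in}} = C\cap M_k$ runs from $(k,1)$ (possibly preceded by a portion in the part of $I_k$ below $M_k$) or from a point of the left border $L\cap M_k$ up to an exit element $e = (i_e,k+1)\in M_k$ with $k\le i_e\le r+k-1$, and $C_{\mathrm{out}}$ runs from the northeast neighbor $(i_e,k+2)$ of $e$ up to $p$. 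Since $M_k$ is an interval, hence convex, we get $C_{\mathrm{in}}\subseteq M_k$ and $C_{\mathrm{out}}$ above $M_k$, so $w_C(z) = w_{C_{\mathrm{in}}}(z)\,w_{C_{\mathrm{out}}}(z)$. Grouping $\psi_1(z)_p$ by the exit edge $e\lessdot(i_e,k+2)$ and using that $z|_{M_k}=\tilde\rho_k^{-1}(\bar x)$, I would apply the chain-shifting lemmas for the right trapezoid $M_k$ --- Lemma~\ref{Lemma:rtchainshifting}, refined to partial chains as in Lemma~\ref{lemma:trapezoidshift}, for the chains meeting $L$, and Lemma~\ref{lemma:skewchainshifting}, refined as in Lemma~\ref{lemma:partialchainshifting}, for the chains through $(k,1)$ --- to rewrite the $M_k$-weights $w_{C_{\mathrm{in}}}(z)$ in terms of $\bar x$, and then combine them with the parallel shift of the labels of $z$ outside $M_k$, precisely as in the proof of Proposition~\ref{prop:intermediate}; the resulting sum collapses to $\psi_1(x)_q$.

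The hard part will be purely organizational: in this last step one must check that the ``types'' of the $M_k$-portions of chains (the location of the exit element, and whether $C_{\mathrm{in}}$ passes through the minimum $(k,1)$ of $M_k$ or through its left border) transform correctly under $\aleph$, so that the $\bar x$-weights produced by chain shifting, reassembled with the appropriately shifted exterior portions, count exactly the chains enumerated by $\psi_1(x)_q$ in $I_{k+1}$. As an alternative route, one could instead deduce the entire proposition from Proposition~\ref{prop:outside1} by passing to the opposite posets, once it is checked that $\themap_k$ intertwines poset duality with the inversion $\tilde\rho\leftrightarrow\tilde\rho^{-1}$; I would write out whichever of the two arguments turns out to be shorter.
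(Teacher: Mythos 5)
Your approach matches the paper's, which simply declares the proof ``Analogous to the proof of Proposition~\ref{prop:outside1}.'' Your arguments for $\psi^*_1(z)_p = 0$ and $\psi^*_2(z)_p = \psi^*_2(x)_q$ are correct. For the third claim the high-level plan is also right (decompose each chain into its portion in $M_k$ and its portions outside, apply chain shifting inside $M_k$, and use the label shift outside, as in Proposition~\ref{prop:intermediate}), but two details are off. First, the entry point of $C' \in \mathscr C_k^{(1)}(p)$ into $M_k$ is always $(k,j)$ for some $1 \le j \le k$: it is $(k,1)$ if $C'$ starts there, and otherwise it is $(k,j)$ with $j \ge 2$, reached from $(k-1,j)$ below $M_k$. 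These are \emph{not} points of $L \cap M_k = \{(r,1),\dots,(r+k,k+1)\}$; you appear to have conflated the entry boundary of $M_k$ (the elements $(k,j)$) with its left border. Second, the partial-chain lemmas (Lemmas~\ref{lemma:trapezoidshift}, \ref{lemma:partialchainshifting}) are not needed here: the $M_k$-portion of every relevant chain runs from the entry boundary to the exit $(i,k+1)$, never ending at an interior element of $M_k$, so Lemmas~\ref{lemma:skewchainshifting} and \ref{Lemma:rtchainshifting} apply directly. Concretely, exactly as in Proposition~\ref{prop:intermediate}, chains in $\mathscr C_{k+1}^{(1)}(q)$ through $(k+1,1)$ shift (via Lemma~\ref{lemma:skewchainshifting} with $m=(k+1,1)$, $m'=(k,1)$) to chains in $\mathscr C_k^{(1)}(p)$ through $(k,1)$ that avoid $L$, while chains entering $M_k$ at some $(k,j+1)$ and meeting $L$ shift (via Lemma~\ref{Lemma:rtchainshifting} with $m=(k,j+1)$, $m'=(k,j)$) to chains entering at $(k,j)$ and meeting $L$.

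The poset-duality alternative is not straightforward: the partition defining $\psi_i$ carries the extra clause ``$(k,1) \in C$'' that has no counterpart in $\psi^*_i$, and $I_k$ is not self-dual, so Propositions~\ref{prop:outside1} and \ref{prop:outside2} are not exact duals of one another. The direct argument you sketched first, once the two details above are corrected, is the right one and is what the paper intends.
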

\begin{proof}
    Analogous to the proof of Proposition~\ref{prop:outside1}.
\end{proof}

For the labels inside $M_k$, the chain shifting is slightly more complicated. For $p \in M_k$, recall the definition of $se(p)$ and $sw(p)$ (defined as subsets of $M_k$) from Section~\ref{sec:partialchains}.

\begin{Prop} \label{prop:partialshiftdown}
    Let $x \in \RR_+^{I_{k+1}}$, $y = \phibar(\bar x) \in \RR_+^{M_k}$, and $z = \themap_k(x) \in \RR_+^{I_{k}}$. Then for $p \in M_k \setminus \{(k,1)\}$, 
    \begin{align*}
        \phibar_1(x)_p &= \sum_{u \in se(p)} \sum_{\substack{u' \gtrdot u\\u' \in M_k}} \frac{y_p}{y_{u'}} \cdot \phibar_1(z)_{u},\\ 
        \phibar_2(x)_p &= \sum_{u \in sw(p)} \sum_{\substack{u' \gtrdot u\\u' \in M_k}} \frac{y_p}{y_{u'}} \cdot (\phibar_1(z)_{u}+\phibar_2(z)_{u}).
    \end{align*}
\end{Prop}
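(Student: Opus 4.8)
The plan is to mirror the proof of Proposition~\ref{prop:intermediate}: decompose the partial chains counted by $\phibar_i(x)_p$ according to how they meet the right trapezoid $M_k$, apply the partial trapezoid chain shifting lemma (Lemma~\ref{lemma:trapezoidshift}) to the portion inside $M_k$, and use that $\themap_k$ transports the portion below $M_k$ rigidly. Recall $M_k=[(k,1),(r+k,k+1)]$ has minimum $(k,1)$, that $\bar x$ agrees with $x$ on $M_k\setminus\{(k,1)\}$, and that $z=\themap_k(x)$ restricts on $M_k\setminus\{(r+k,k+1)\}$ to $\tilde\rho_k^{-1}(\bar x)$. The first step is the observation that $\themap_k$ restricts to the poset isomorphism $(i,j)\mapsto(i,j+1)$ from the induced subposet of $I_k$ on $\{i<k\}$ onto that of $I_{k+1}$ on $\{i<k\}$, carrying $z$ to $x$; since every saturated chain ending at an element with first coordinate $<k$ lies entirely in this subposet, this yields
\[
  \phibar(z)_{(i,j)} = \phibar(x)_{(i,j+1)} \qquad (i<k), \tag{$\star$}
\]
where the left side is computed in $I_k$ and the right side in $I_{k+1}$.

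Next I would carry out the decomposition. Fix $p\in M_k\setminus\{(k,1)\}$. A saturated chain $C$ in $I_{k+1}$ from a minimal element to $p$ meets $M_k$ in a chain from some $m_C$ to $p$, and since $p\in M_k$ one has $m_C\in\{(k+1,1)\}\cup\{(k,j):2\le j\le k+1\}$; moreover $C$ meets $L$ only inside $M_k$, and $C$ contains $(k+1,1)$ only when $m_C=(k+1,1)$. Splitting off the part of $C$ below $M_k$---a chain from a minimal element of $I_{k+1}$ to $(k-1,j)$ when $m_C=(k,j)$ with $j\ge 3$, contributing the factor $\phibar(x)_{(k-1,j)}$, and empty (factor $1$) when $m_C\in\{(k+1,1),(k,2)\}$---gives
\[
  \phibar_1(x)_p = w_{\mathscr C_{(k+1,1),p}}(\bar x) + \sum_{j=2}^{k+1} a_j\, w_{\mathscr C^{L}_{(k,j),p}}(\bar x), \qquad
  \phibar_2(x)_p = \sum_{j=2}^{k+1} a_j\, w_{\mathscr C^{\overline L}_{(k,j),p}}(\bar x),
\]
with $a_2=1$ and $a_j=\phibar(x)_{(k-1,j)}$ for $j\ge 3$, the chain weights inside $M_k$ taken with respect to $\bar x$ since such chains avoid $(k,1)$. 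Now Lemma~\ref{lemma:trapezoidshift} applies inside $M_k$: every $(k,j)$ with $j\ge 2$ has $se((k,j))=\varnothing$ and covers only $(k,j-1)$ there, so parts (b)(i) and (b)(ii) rewrite $w_{\mathscr C^{L}_{(k,j),p}}(\bar x)$ and $w_{\mathscr C^{\overline L}_{(k,j),p}}(\bar x)$ as the asserted sums over $se(p)$ (inner term $w_{\mathscr C^{L}_{(k,j-1),u}}(z)$) and $sw(p)$ (inner term $w_{\mathscr C_{(k,j-1),u}}(z)$); and $(k+1,1)$ has $sw((k+1,1))=\varnothing$ and covers only $(k,1)$, so part (b)(iii) rewrites $w_{\mathscr C_{(k+1,1),p}}(\bar x)$ as a sum over $se(p)$ with inner term $w_{\mathscr C^{\overline L}_{(k,1),u}}(z)$.

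Substituting these and reindexing $j\mapsto j-1$, the first identity reduces to
\[
  w_{\mathscr C_{(k,1),u}}(z) + \sum_{j=2}^{k} \phibar(x)_{(k-1,j+1)}\, w_{\mathscr C^{L}_{(k,j),u}}(z) = \phibar_1(z)_u
\]
for all $u\in se(p)$, and the second to the analogous statement with $\mathscr C^{L}$ replaced by $\mathscr C$ throughout and $\phibar_1(z)_u$ by $\phibar(z)_u=\phibar_1(z)_u+\phibar_2(z)_u$, for $u\in sw(p)$. Each follows by decomposing the chains in $I_k$ ending at $u$ in the same way---a type-$1$ chain either issues from the minimum $(k,1)$ of $M_k$ (hence is automatically type $1$) and stays in $M_k$, or issues from below $M_k$, enters through $(k-1,j)\lessdot(k,j)$ for some $2\le j\le k$, and has its $M_k$-part meet $L$, the below-$M_k$ portion contributing the factor $\phibar(z)_{(k-1,j)}$---and then using $(\star)$ to identify $\phibar(z)_{(k-1,j)}$ with $\phibar(x)_{(k-1,j+1)}$. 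I expect the main obstacle to be the bookkeeping around the minimal elements: in $M_k$ the minimum $(k,1)$ is covered by both $(k+1,1)$ and $(k,2)$, but in $I_{k+1}$ both of these are minimal (with $(k,1)$ absent), whereas in $I_k$ it is $(k,1)$ that is minimal; the reindexing $j\mapsto j-1$, together with the fact that a minimal element of $I_k$ contributes no transfer factor---so that $a_2=1$ in $I_{k+1}$ is matched by the absent factor at $(k,1)$ in $I_k$---is exactly what reconciles the two sides. One must also dispose separately of the degenerate cases in which a partial chain above is trivial (for instance $p$ on the left edge $\{i=k\}$ of $M_k$, where $se(p)=\varnothing$ and both sides of the first identity vanish, or $p=(k+1,1)$), which are checked by hand.
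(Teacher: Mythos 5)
Your proposal is correct and follows essentially the same route as the paper: you decompose $\phibar_i(x)_p$ by the point at which the partial chain enters $M_k$, apply Lemma~\ref{lemma:trapezoidshift}(b) inside $M_k$, and use that $\themap_k$ shifts labels rigidly below $M_k$; the paper instead decomposes by the starting minimal element and invokes ``the argument of Proposition~\ref{prop:intermediate}'' for the below-$M_k$ portion, but the two decompositions are algebraically identical after reindexing. Your write-up is somewhat more explicit about the bookkeeping, notably in stating the observation $(\star)$ that $\themap_k$ restricts to a poset isomorphism on $\{i<k\}$ and in collecting the below-$M_k$ weight into the factors $a_j=\phibar(x)_{(k-1,j)}$; this also quietly corrects a small indexing slip in the paper's second displayed equation, where $(k+1-j,j)$ should read $(k+2-j,j-1)$.
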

\begin{proof}
    For the first equation, setting $m=(k,2)$, $n=(k+1,1)$, and $m'=n'=(k,1)$ in the first and third equation of Lemma~\ref{lemma:trapezoidshift}(b) and summing gives
    \[w_{\mathscr C_{(k+1,1),p}(x)}+w_{\mathscr C^L_{(k,2),p}}(x) = \sum_{u \in se(p)} \sum_{\substack{u' \gtrdot u\\u' \in M_k}} \frac{y_p}{y_{u'}} \cdot w_{\mathscr C_{(k,1),u}}(z).\]
    For any other minimal element $(k+2-j,j) \in I_{k+1}$ for $j > 2$, applying the argument of Proposition~\ref{prop:intermediate} using the first equation of Lemma~\ref{lemma:trapezoidshift}(b) gives
    \[w_{\mathscr C^L_{(k+2-j,j),p}}(x) = \sum_{u \in se(p)} \sum_{\substack{u' \gtrdot u\\u' \in M_k}} \frac{y_p}{y_{u'}} \cdot w_{\mathscr C^L_{(k+1-j,j),u}}(z).\]
    Summing over all minimal elements of $I_{k+1}$ gives the desired equation for $\psi_1(x)_p$.
    
    The second equation follows by similarly applying the argument of Proposition~\ref{prop:intermediate} to the second equation of Lemma~\ref{lemma:trapezoidshift}(b).
\end{proof}

We can simplify this expression if we are willing to use subtraction.

\begin{Prop} \label{prop:partialshiftup}
    Let $x \in \RR_+^{I_{k+1}}$, $y = \phibar(\bar x) \in \RR_+^{M_k}$, and $z = \themap_k(x) \in \RR_+^{I_{k}}$. Suppose $p,q,u \in M_k$ such that $p \gtrdot u$ and $q \gtrdot u$ with $p$ to the left of $q$. Then
    \begin{align*}
        \phibar_1(x)_p \cdot \frac{y_q}{y_p+y_q} - \phibar_1(x)_q \cdot \frac{y_p}{y_p+y_q} &= \phibar_1(z)_u,\\
        \phibar_2(x)_p \cdot \frac{y_q}{y_p+y_q} - \phibar_2(x)_q \cdot \frac{y_p}{y_p+y_q} &= -(\phibar_1(z)_u+\phibar_2(z)_u).
    \end{align*}
\end{Prop}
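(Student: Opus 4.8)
The plan is to deduce this from Proposition~\ref{prop:partialshiftdown} by a ``telescoping'' or ``difference of two terms'' argument, using that $se(p), se(q), sw(p), sw(q)$ differ in controlled ways when $p$ and $q$ are the two elements of $M_k$ covering a common element $u$, with $p$ to the left of $q$. First I would record the geometry: since $p$ and $q$ both cover $u$ and $p$ is to the left (northwest) of $q$, we have $se(p) = \{u\} \sqcup se'$ and $sw(q) = \{u\} \sqcup sw'$ where $se'$ (resp.\ $sw'$) is obtained from $se(q)$ (resp.\ $sw(p)$) by the appropriate shift; more precisely, walking one step southeast from $p$ lands at $u$, and from there $se(p) \setminus \{u\}$ continues along the same antidiagonal as $se(q)$. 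The upshot I want is a clean relation between $se(p)$ and $se(q)$ (they share all elements strictly southeast of $u$, and $se(p)$ has the extra element $u$), and dually for $sw$.

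Next, I would apply the first equation of Proposition~\ref{prop:partialshiftdown} to both $p$ and $q$, forming the combination $\frac{y_q}{y_p+y_q}\psi_1(x)_p - \frac{y_p}{y_p+y_q}\psi_1(x)_q$. The double sum $\sum_{u_0 \in se(p)}\sum_{u_0' \gtrdot u_0} \frac{y_p}{y_{u_0'}}\psi_1(z)_{u_0}$ has its contributions from $u_0 \in se(p) \setminus \{u\} = se(q) \setminus (\text{first element})$ matched against the corresponding contributions in the $q$-sum; after multiplying through by the weights $\frac{y_q}{y_p+y_q}$ and $\frac{y_p}{y_p+y_q}$ and subtracting, those shared terms should cancel — this is where the precise bookkeeping of which inner covers $u_0'$ lie in $M_k$ matters, and where one must track the factor $y_p/y_{u_0'}$ versus $y_q/y_{u_0'}$ carefully (they differ, which is exactly why one needs the weights $\frac{y_q}{y_p+y_q}, \frac{y_p}{y_p+y_q}$ to make the cancellation happen). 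What survives is the single term $u_0 = u$: the $q$-sum has no such term, and in the $p$-sum the inner sum over $u_0' \gtrdot u$ in $M_k$ runs over exactly $\{p, q\}$ (possibly only $p$ or only $q$ if one is not in $M_k$, but generically both), giving $\sum_{u' \in \{p,q\}\cap M_k} \frac{y_p}{y_{u'}} \psi_1(z)_u$, and multiplying by $\frac{y_q}{y_p+y_q}$ yields $\psi_1(z)_u \cdot \frac{y_q}{y_p+y_q}\left(1 + \frac{y_p}{y_q}\right) = \psi_1(z)_u$. The second equation is proved the same way using $sw$ in place of $se$, now with the extra term sitting in $sw(q)$ rather than $sw(p)$, which produces the opposite sign, and with $\psi_1(z)_u + \psi_2(z)_u$ in place of $\psi_1(z)_u$ exactly as in the second line of Proposition~\ref{prop:partialshiftdown}.

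The main obstacle I anticipate is the boundary bookkeeping: making sure the cancellation of the ``shared'' terms between the $p$-sum and the $q$-sum is exact even at the edges of $M_k$ — that is, handling the cases where $se(q)$ or $sw(p)$ is empty, where $p$ or $q$ fails to have a northeast/southeast neighbor inside $M_k$, or where $u$ lies near the left border $L$ so that the split $\psi_1$ versus $\psi_2$ behaves specially (recall $\psi_2(z)_u = 0$ when the element covered by $u$ lies on $L$). I would dispose of these by noting that Proposition~\ref{prop:partialshiftdown} already encodes all such degeneracies correctly (with the convention that sums over empty index sets vanish), so the difference argument goes through verbatim; the only genuinely new input here is the elementary identity $\frac{y_q}{y_p+y_q} + \frac{y_p}{y_p+y_q}\cdot\frac{y_p}{y_q}\cdot\frac{y_q}{y_p} = \dots$ — i.e., that the two weights $\frac{y_p}{y_{u'}}$ for $u' \in \{p,q\}$, reweighted by $\frac{y_q}{y_p+y_q}$, sum to $1$ — together with the observation that an analogous reweighting kills the overlapping terms. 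Since every expression here is a ratio of subtraction-free expressions over an arbitrary semifield, the identity then holds over semirings as well, per Remark~\ref{rem:semiring}.
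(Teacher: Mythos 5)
Your proposal is correct and is essentially the paper's proof. Both start from Proposition~\ref{prop:partialshiftdown}, use the set identities $se(p) = se(q) \cup \{u\}$ and $sw(q) = sw(p) \cup \{u\}$, and subtract the two resulting expressions so that the contributions from the shared indices cancel, leaving only the $u$-term, whose inner sum over covers in $M_k$ is exactly over $\{p,q\}$ and produces the factor $\frac{y_q}{y_p+y_q}\bigl(1 + \frac{y_p}{y_q}\bigr) = 1$. The only cosmetic difference is that the paper first divides Proposition~\ref{prop:partialshiftdown} by $y_p$ (resp.\ $y_q$), which removes the $p$-dependence from the right-hand side and makes the cancellation of shared terms literal, whereas you keep the $\frac{y_q}{y_p+y_q}$ weights in place and verify the cancellation by the identity $\frac{y_q}{y_p+y_q}\cdot\frac{y_p}{y_{u'}} = \frac{y_p}{y_p+y_q}\cdot\frac{y_q}{y_{u'}}$; the two are algebraically the same after multiplying by $\frac{y_p+y_q}{y_p y_q}$. (One small imprecision: $se(p)\setminus\{u\}$ is literally \emph{equal} to $se(q)$ as a subset of $M_k$, not a shift of it; you correct this implicitly later in the same paragraph.)
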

\begin{proof}
    By construction, $se(p) = se(q) \cup \{u\}$. Thus by Proposition~\ref{prop:partialshiftdown} 
    \[\frac{\psi_1(x)_p}{y_p} - \frac{\psi_1(x)_q}{y_q} = \left(\frac{1}{y_p}+\frac{1}{y_q}\right)\psi_1(z)_u.\]
    Rearranging gives the first desired equation. The other follows similarly using the fact that $sw(q) = sw(p) \cup \{u\}$.
\end{proof}

Using a dual argument, one can likewise show the following two results by appealing to Lemma~\ref{lemma:trapezoidshiftdual}.

\begin{Prop} \label{prop:partialshiftdowndual}
    Let $x \in \RR_+^{I_{k+1}}$, $y = \phibar(\bar x) \in \RR_+^{M_k}$, and $z = \themap_k(x) \in \RR_+^{I_{k}}$. Then for $p \in M_k \setminus \{(r+k,k+1)\}$, 
    \begin{align*}
        \phibar^*_1(z)_p &= \sum_{v \in ne(p)} \sum_{\substack{v' \lessdot v\\v' \in M_k}} \frac{y_{v'}}{y_{p}} \cdot \phibar^*_1(x)_{v},\\
        \phibar^*_2(z)_p &= \sum_{v \in nw(p)} \sum_{\substack{v' \lessdot v\\v' \in M_k}} \frac{y_{v'}}{y_{p}} \cdot (\phibar_1^*(x)_{v}+\phibar_2^*(x)_{v}).
    \end{align*}
\end{Prop}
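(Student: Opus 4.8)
The plan is to prove this statement exactly as the order-theoretic mirror image of Proposition~\ref{prop:partialshiftdown}. The dictionary is as follows: $\themap_k$ applies $\tilde\rho^{-1}$ inside $M_k$ and shifts the labels lying \emph{above} $M_k$ (those with $j>k+1$) to the southeast, whereas in Proposition~\ref{prop:partialshiftdown} the relevant labels were those lying \emph{below} $M_k$, shifted to the southwest; accordingly the top partial transfer maps $\phibar^*_i$ replace the bottom ones $\phibar_i$, chains running from $p$ up to the maximum of the intermediate poset replace chains running from a minimal element up to $p$, the northeast neighborhoods $ne(\cdot),nw(\cdot)$ replace the southeast neighborhoods $se(\cdot),sw(\cdot)$ (this replacement being inherited from the fact that Lemma~\ref{lemma:trapezoidshiftdual} is dual to Lemma~\ref{lemma:trapezoidshift}), and Lemma~\ref{lemma:trapezoidshiftdual} replaces Lemma~\ref{lemma:trapezoidshift}. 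As in Proposition~\ref{prop:partialshiftdown}, the strategy is to factor each chain through its portion inside $M_k$, apply a right-trapezoid chain-shifting lemma to that portion, and transport the remainder using the definition of $\themap_k$ outside $M_k$, exactly as in the proof of Proposition~\ref{prop:intermediate}.

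In more detail: $\phibar^*_1(z)_p$ is the total $z$-weight of the chains in $I_k$ from $p\in M_k$ to $\max(I_k)$ that meet $L$, and $\phibar^*_2(z)_p$ the total $z$-weight of those that avoid $L$. Since no element of $I_k$ lying above $M_k$ belongs to $L$, such a chain meets $L$, or fails to, already within $M_k$; hence its portion inside $M_k$ lies in $\mathscr C^L$ (resp.\ $\mathscr C^{\overline L}$) for the left border of $M_k\cong RT_{r-k+1,k+1}$, while its remaining portion is a maximal upward chain in the rectangle above $M_k$, which the southeast shift built into $\themap_k$ identifies weight for weight with the corresponding chain in $I_{k+1}$. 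One then applies Lemma~\ref{lemma:trapezoidshiftdual}(b) with $S=M_k$, taking $M$ to be the maximum of $M_k$ and $M'$ its southeast neighbor, so that $ne(M')=\varnothing$ inside $M_k$: its first equation controls the $\mathscr C^L$ portions and its second equation the $\mathscr C^{\overline L}$ portions. Summing over the covers $v'\lessdot v$ with $v'\in M_k$ along which a chain can leave $M_k$ (these are exactly the northeast steps out of column $k+1$), and combining the contributions of the $M_k$-part and of the part above $M_k$, yields the two displayed identities; the combination $\phibar^*_1(x)_v+\phibar^*_2(x)_v$ in the second one appears because the right-hand side of the second equation of Lemma~\ref{lemma:trapezoidshiftdual}(b) is stated in terms of \emph{all} chains $\mathscr C_{v,M}$, reflecting that $\aleph$ carries an $L$-avoiding chain on one side to a chain of either type on the other.

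The step I expect to require the most care — as in Proposition~\ref{prop:partialshiftdown} — is the boundary bookkeeping, not any conceptual point. One must check that the factorization of a chain in $\mathscr C_k^{*,(i)}(p)$ through $M_k$ is unique, that the $\mathscr C^L$ versus $\mathscr C^{\overline L}$ type of its $M_k$-portion is genuinely forced by whether the whole chain meets $L$, and that a chain leaves $M_k$ into the rectangle above it precisely along a cover $v'\lessdot v$ with $v'\in M_k$ in column $k+1$, which is what pins down the exact index ranges of the two sums. One also has to confirm that the surviving minimal and maximal elements of the intermediate posets are absorbed by the hypothesis $ne(M')=\varnothing$ of Lemma~\ref{lemma:trapezoidshiftdual}, and that degenerate cases — for instance $s=k+1$, where $I_{k+1}=T_{r,s}$ and the rectangle above $M_k$ is empty — are consistent with the stated formulas under the standing conventions on missing labels and empty sums. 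Granting these routine verifications, the argument is word-for-word dual to that of Proposition~\ref{prop:partialshiftdown}.
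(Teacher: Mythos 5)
Your proposal is correct and matches the paper's intent exactly: the paper's own ``proof'' of Proposition~\ref{prop:partialshiftdowndual} is the single sentence preceding it, ``Using a dual argument, one can likewise show the following two results by appealing to Lemma~\ref{lemma:trapezoidshiftdual},'' and you have spelled out that dual argument faithfully. Your replacement dictionary (top partial transfer maps for bottom ones, $ne/nw$ for $se/sw$, Lemma~\ref{lemma:trapezoidshiftdual} for Lemma~\ref{lemma:trapezoidshift}, and factoring each chain at its exit from $M_k$ into the rectangle $R_{r,s-k-1}$ above, which $\themap_k$ shifts weight-for-weight) is the right one, and your observation that no element of $I_k$ above $M_k$ lies on $L$ is the key point that makes the $\mathscr C^L$/$\mathscr C^{\overline L}$ dichotomy of a chain's $M_k$-portion determine whether the full chain meets $L$.

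One small asymmetry worth noting that your ``word-for-word dual'' framing glosses over: in Proposition~\ref{prop:partialshiftdown} the sum runs over the several minimal elements of $I_{k+1}$, two of which ($(k+1,1)$ and $(k,2)$) lie in $M_k$ and require the combined first-and-third equations of Lemma~\ref{lemma:trapezoidshift}(b), while the rest lie below $M_k$. In the dual, $I_k$ has a \emph{unique} maximal element, so there is no analogous sum over maxima; instead the bookkeeping moves entirely into the sum over exit points $v' \lessdot v$ at the top edge of $M_k$ and, in the boundary case $k=s-1$, the degenerate situation where the maximum of $I_k$ lies inside $M_k$ rather than above it. You flag this kind of boundary bookkeeping explicitly, so this is a matter of exposition rather than a gap.
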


\begin{Prop} \label{prop:partialshiftupdual}
    Let $x \in \RR_+^{I_{k+1}}$, $y = \phibar(\bar x) \in \RR_+^{M_k}$, and $z = \themap_k(x) \in \RR_+^{I_{k}}$. Suppose $p,q,v \in M_k$ such that $p \lessdot v$ and $q \lessdot v$ with $p$ to the left of $q$. Then
    \begin{align*}
        \phibar_1^*(z)_p \cdot \frac{y_p}{y_p+y_q} - \phibar_1^*(z)_q \cdot \frac{y_q}{y_p+y_q} &= \phibar_1^*(x)_v,\\
        \phibar_2^*(z)_p \cdot \frac{y_p}{y_p+y_q} - \phibar_2^*(z)_q \cdot \frac{y_q}{y_p+y_q} &= -(\phibar_1^*(x)_v+\phibar_2^*(x)_v).
    \end{align*}
\end{Prop}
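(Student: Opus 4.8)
The plan is to mirror the proof of Proposition~\ref{prop:partialshiftup}, substituting Proposition~\ref{prop:partialshiftdowndual} for Proposition~\ref{prop:partialshiftdown}. First I would record the local geometry at $v$. Writing $v = (i,j)$, the elements of $M_k$ covered by $v$ are $(i,j-1)$ and $(i-1,j)$, and since $M_k$ is a skew shape these are the only ones; as $p$ lies to the left of $q$, we have $p = (i,j-1)$ and $q = (i-1,j)$, so in particular $\sum_{v' \lessdot v,\, v' \in M_k} y_{v'} = y_p + y_q$. Unwinding the definitions of $ne$ and $nw$ as subsets of $M_k$ then yields the disjoint-union identities $ne(p) = ne(q) \sqcup \{v\}$ and $nw(q) = nw(p) \sqcup \{v\}$, which are the mirror images of the identities $se(p) = se(q) \sqcup \{u\}$ and $sw(q) = sw(p) \sqcup \{u\}$ used in Proposition~\ref{prop:partialshiftup}.

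Next I would apply Proposition~\ref{prop:partialshiftdowndual} at the indices $p$ and $q$, both of which are nonmaximal in $M_k$ since they are covered by $v \in M_k$, so the proposition applies. Clearing denominators in its first equation (multiplying the $p$-instance by $y_p$ and the $q$-instance by $y_q$) and subtracting, every term indexed by a $w \in ne(q)$ cancels, and what remains is
\[
y_p\,\phibar^*_1(z)_p - y_q\,\phibar^*_1(z)_q = \Big(\sum_{\substack{v' \lessdot v \\ v' \in M_k}} y_{v'}\Big)\phibar^*_1(x)_v = (y_p + y_q)\,\phibar^*_1(x)_v .
\]
Dividing by $y_p + y_q$ gives the first displayed equation of the proposition. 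The second follows identically from the second equation of Proposition~\ref{prop:partialshiftdowndual}: now it is $nw(q)$, not $nw(p)$, that contains the extra element $v$, so after clearing denominators the surviving term lies on the $q$-side of the difference, which is precisely what produces the minus sign in front of $\phibar^*_1(x)_v + \phibar^*_2(x)_v$.

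I do not anticipate a genuine obstacle, since the argument is a coordinate-and-sign reflection of the proof of Proposition~\ref{prop:partialshiftup}. The only point requiring care is the verification of the set identities $ne(p) = ne(q) \sqcup \{v\}$ and $nw(q) = nw(p) \sqcup \{v\}$, together with the degenerate cases in which $v$ or one of $p, q$ lies on the border of $M_k$ (where $\bar x$ has been assigned the arbitrary label $1$); these follow by direct inspection of the definitions, using that $ne(p)$ and $nw(p)$ are subsets of the rank-one-above elements of $M_k$ and that $v$ is exactly the element that $ne$ gains on passing from $q$ to $p$ and, symmetrically, that $nw$ gains on passing from $p$ to $q$.
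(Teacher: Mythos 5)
Your proof is correct and follows the route the paper intends: the paper gives no explicit argument for this proposition, saying only that it follows by ``a dual argument,'' and you supply precisely that by mirroring the derivation of Proposition~\ref{prop:partialshiftup} from Proposition~\ref{prop:partialshiftdown}, this time using Proposition~\ref{prop:partialshiftdowndual} together with the identities $ne(p) = ne(q) \sqcup \{v\}$ and $nw(q) = nw(p) \sqcup \{v\}$. The set identities and the sign bookkeeping do check out as you anticipate, and since $p$ and $q$ are both covered by $v \in M_k$ neither can be the maximal element $(r+k,k+1)$ excluded in Proposition~\ref{prop:partialshiftdowndual}, so that proposition applies at both.
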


\subsection{Main calculation}
The main calculation needed for the proof of Theorem~\ref{thm:equivariance} is the following result, which expresses the difference between a label before and after applying $\themap_k \circ \spicyrho$ in terms of certain weights of chains. Since $\spicyrho^{-1}$ and $\themap_k^{-1}$ are obtained by reversing the order of the toggles in $\spicyrho$ and $\themap_k^{-1}$, the dual version of this argument will give us the analogous result for $\themap_k^{-1} \circ \spicyrho^{-1}$ as well.

\begin{Prop} \label{prop:x-z}
    Let $x \in \RR_+^{I_{k+1}}$, $x' = \spicyrho(x)$, and $z' = \themap_k(x') \in \RR_+^{I_k}$. For any nonminimal and nonmaximal $p \in M_k$, let $v_1$ and $v_2$ be the northwest and northeast neighbors of $p$ in $I_{k+1}$, and let $u_1$ and $u_2$ be the southwest and southeast neighbors of $p$ in $I_k$ (if they exist). Then
    \begin{align*}
        x_p^{-1} - {z'_p}^{-1} = \sum_{t=1}^2 (-1)^{t-1}(\psi_1^*(x)_{v_t} \psi_2(z')_{u_t} - \psi_2^*(x)_{v_t}\psi_1(z')_{u_t}).
    \end{align*}
    (If $u_t$ or $v_t$ does not exist, then remove the $t$th term from the sum.)
\end{Prop}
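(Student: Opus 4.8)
I would prove this by induction from the top of the poset $M_k$ downward, mimicking the proofs of Lemma~\ref{lemma:dualtransfer} and Proposition~\ref{prop:rhotilde}, but tracking the two types of partial chains separately. The key observation is that when the polygonal toggle $\tau'_p$ is applied during the computation of $x' = \spicyrho(x)$, the labels strictly above $p$ already agree with $x'$, while the labels at $p$ and below still agree with $x$. By Proposition~\ref{prop:specialrho} (applied with the roles suitably set up, using $x' = \spicyrho(x)$), the value $x_p$ is determined by
\[
x_p^{-1} = \sum_{(i,j)\neq (2,2)} \phibar_i^*(x)_p\, \phibar_j(x')_p \cdot (x_p)^{-1} \cdot (x_p),
\]
or more precisely $\displaystyle x_p^{-1} = \sum_{(i,j)\neq(2,2)} \phibar_i^*(x')_p\,\phibar_j(x)_p$ read appropriately; in any case the weight of all polygonal chains through $p$ before toggling is $x_p^{-1}$, and it splits as a sum over pairs $(i,j)\neq(2,2)$ of (weight of bottom part of type $j$)$\times$(weight of top part of type $i$) divided by $x_p$. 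So I would first write
\[
x_p^{-1} = \phibar_1^*(x')_p\,\phibar_1(x)_p + \phibar_1^*(x')_p\,\phibar_2(x)_p + \phibar_2^*(x')_p\,\phibar_1(x)_p,
\]
being careful with which of $x$ and $x'$ carries the stars (the top parts involve the not-yet-toggled labels, which on $M_k\subset I_{k+1}$ are from $x$, so actually the stars go on $x$ and the unstarred on $x'$ — I would pin this down against Proposition~\ref{prop:specialrho}).

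**Translating through $\themap_k$.** Next I would do the analogous computation for $z'_p{}^{-1}$ in $I_k$: since $z' = \themap_k(x')$ and on $M_k$ the map $\themap_k$ is just $\tilde\rho_k^{-1}$, the label $z'_p$ is again governed by the polygonal toggle, giving $z'_p{}^{-1}$ as a sum over pairs $(i,j)\neq(2,2)$ of $\phibar_i^*(z')_p\,\phibar_j(\text{something})_p$. Then I would use Propositions~\ref{prop:outside1}, \ref{prop:outside2} to handle the $u_t, v_t$ that lie outside $M_k$, and Propositions~\ref{prop:partialshiftdown}--\ref{prop:partialshiftupdual} to rewrite the partial transfer values $\phibar_i(x')_p$, $\phibar_i^*(x')_p$ on $M_k$ in terms of $\phibar_i(z')_{u_t}$, $\phibar_i^*(x)_{v_t}$ and the edge weight ratios $y_p/y_{u_t'}$, $y_{v_t'}/y_p$ summed over the at most two children/parents of $p$ in $M_k$. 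The subtraction-friendly forms in Propositions~\ref{prop:partialshiftup} and \ref{prop:partialshiftupdual} are tailored exactly for this: they let me express the difference of two $\phibar_i$-values weighted by $\frac{y_q}{y_p+y_q}$ and $\frac{y_p}{y_p+y_q}$ as a single $\phibar$-value at the common child, which should make the cross-terms collapse. After substituting everything, $x_p^{-1} - z'_p{}^{-1}$ should reduce to a telescoping/cancellation in which the only surviving terms are the four products $\phibar_1^*(x)_{v_t}\phibar_2(z')_{u_t}$ and $\phibar_2^*(x)_{v_t}\phibar_1(z')_{u_t}$ for $t=1,2$, with signs $(-1)^{t-1}$ coming from which of the two children/parents lies to the left.

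**Where the difficulty lies.** The main obstacle is the bookkeeping: keeping straight which labeling ($x$ vs.\ $x'$, and on $I_k$ vs.\ $I_{k+1}$) each partial transfer value is computed with, and making sure the ``left/right'' orientation in Propositions~\ref{prop:partialshiftup}, \ref{prop:partialshiftupdual} is compatible with the $(-1)^{t-1}$ signs in the statement. I expect to need a short lemma — or a careful appeal to Proposition~\ref{prop:arbitrarya} — to justify that the arbitrary label chosen at $(k,1)$ (resp.\ $(r+k,k+1)$) when applying $\tilde\rho_k^{-1}$ does not affect $\phibar_i(z')_{u_t}$ at nonmaximal $u_t$, so that all the partial transfer values appearing are genuinely well-defined. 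A secondary subtlety is the pair $(i,j)=(2,2)$ being excluded: this is exactly the non-polygonal chain case, and I must check that in $I_k$ the missing $(2,2)$ term on both sides corresponds, so that it does not contribute to the difference; since $\themap_k$ maps polygonal chains of $I_{k+1}$ to polygonal chains of $I_k$ (Proposition~\ref{prop:intermediate} and Figure~\ref{fig:schematic}), the excluded parts match up and drop out. Once the orientation conventions are fixed, the remaining computation is a routine, if lengthy, algebraic simplification, and the dual statement for $\themap_k^{-1}\circ\spicyrho^{-1}$ follows by reversing all toggles and exchanging the roles of $\phibar_i$ and $\phibar_i^*$, i.e.\ by applying the same argument to the dual poset.
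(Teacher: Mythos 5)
The overall toolbox you reach for is right, but there is a concrete gap in the middle of the plan: you cannot apply Proposition~\ref{prop:specialrho} ``analogously'' to $z' = \themap_k(x')$. That proposition relates a labeling $w$ to $\spicyrho(w)$; writing $z'_p{}^{-1}$ as $\sum_{(i,j)\ne(2,2)} \psi_i^*(z')_p\,\psi_j(\cdot)_p$ would require knowing that $z'$ is $\spicyrho$ applied to some known labeling, which is precisely what Theorem~\ref{thm:equivariance} is trying to establish --- so this step is circular as written (and also has the starred/unstarred roles backwards: in Proposition~\ref{prop:specialrho} the $\psi^*$-factor carries the pre-toggle labeling). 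The paper's proof gets around this by going in a different direction: it expands $z'_p{}^{-1}$ via Corollary~\ref{cor:edgeweight} on $M_k$ (using that $\themap_k$ restricted to $M_k$ is $\tilde\rho_k^{-1}$, not a polygonal toggle), obtaining $z'_p{}^{-1} = \sum_{v\gtrdot p,\,v\in M_k} y'_p/y'_v$, and then multiplies each summand by $1 = \sum_{(i,j)\ne(2,2)} \psi_i^*(x)_v\,\psi_j(x')_v$ so that both $x_p^{-1}$ and $z'_p{}^{-1}$ are written over the common index set of covers $v\gtrdot p$.

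A second, related issue is that the plan stops at ``substitute everything and hope for cancellation.'' The paper's proof does not use induction on $p$ inside Proposition~\ref{prop:x-z} (the induction lives in the proof of Theorem~\ref{thm:equivariance}); instead it formulates a precise per-cover claim, equation \eqref{eq:psistarpsi}, asserting that for each $v_t\in M_k$
\[
\sum_{(i,j)\ne(2,2)} \psi_i^*(x)_{v_t}\Bigl(\psi_j(x')_p - \tfrac{y'_p}{y'_{v_t}}\psi_j(x')_{v_t}\Bigr)
= (-1)^{t-1}\bigl(\psi_1^*(x)_{v_t}\psi_2(z')_{u_t}-\psi_2^*(x)_{v_t}\psi_1(z')_{u_t}\bigr),
\]
(or $0$ when $u_t$ does not exist), and proves it by a case analysis on where $v_t$ sits relative to $L$ and the boundary of $M_k$. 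This is exactly where Propositions~\ref{prop:partialshiftup} and \ref{prop:partialshiftupdual} earn their keep, as you anticipated, but you need this claim stated explicitly to make the cancellation happen; without it the ``telescoping'' is not forced. Finally, the worry about Proposition~\ref{prop:arbitrarya} is unnecessary at this stage --- the independence of $\themap_k$ on the auxiliary label at $(k,1)$ is already built into its definition, and all partial transfer values appearing are on $I_k$ or $I_{k+1}$, not on $M_k$ with the extra label. So: right ingredients, but the expansion of $z'_p{}^{-1}$ needs to be replaced by the Corollary~\ref{cor:edgeweight} route, and the cancellation needs to be organized via the per-cover identity and its boundary cases.
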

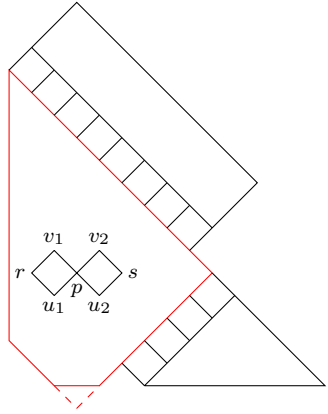
\begin{figure}
    \centering
    \begin{tikzpicture}[scale=.3]
        \draw[red] (-1,1)--(-3,3)--(-3,15)--(6,6)--(1,1)--(-1,1);
        \draw[red,dashed] (-1,1)--(0,0)--(1,1);
        \draw (-1,5)--(1,7) (-1,7)--(1,5) (-1,5)--(-2,6)--(-1,7) (1,5)--(2,6)--(1,7);
        \node[label={[label distance=-4pt]below:$\scriptstyle p$}] at (0,6){};
        \node[label={[label distance=-6pt]below:$\scriptstyle u_1$}] at (-1,5){};
        \node[label={[label distance=-6pt]below:$\scriptstyle u_2$}] at (1,5){};
        \node[label={[label distance=-6pt]above:$\scriptstyle v_1$}] at (-1,7){};
        \node[label={[label distance=-6pt]above:$\scriptstyle v_2$}] at (1,7){};
        \node[label={[label distance=-6pt]left:$\scriptstyle r$}] at (-2,6){};
        \node[label={[label distance=-6pt]right:$\scriptstyle s$}] at (2,6){};
        \draw (3,1)--(7,5)--(11,1)--(3,1); 
        \draw (6,8)--(-2,16)--(0,18)--(8,10)--(6,8); 
        \draw (3,1)--(2,2) (4,2)--(3,3) (5,3)--(4,4) (6,4)--(5,5) (7,5)--(6,6);
        \foreach \x in {1, ..., 9}
            \draw (6-\x,6+\x)--(7-\x,7+\x);
    \end{tikzpicture}
    \caption{The generic case of Proposition~\ref{prop:x-z}.}
    \label{fig:uv}
\end{figure}
\begin{proof}
    If $p \notin L$, then dividing Proposition~\ref{prop:specialrho} by $x_p$ gives
    \[x_p^{-1} = \sum_{(i,j) \neq (2,2)} \frac{\psi_i^*(x)_p}{x_p} \psi_j(x')_p = \sum_{(i,j) \neq (2,2)} \sum_{v \gtrdot p} \psi_i^*(x)_v \psi_j(x')_p \tag{$*$} \label{eq:xp-1}.\]
    If instead $p \in L$, then $\psi_2^*(x)_p = \psi_2(x')_p = 0$, so 
    \[x_p^{-1} = \frac{\psi_1^*(x)_p}{x_p} \psi_1(x')_p = (\psi_1^*(x)_{v_2} + \psi_2^*(x)_{v_2})\psi_1(x')_p,\] which also implies \eqref{eq:xp-1} since $\psi_2(x')_p=0$.
    
    Let $y' = \psi(\bar x') \in \RR_+^{M_k}$. We have by Corollary~\ref{cor:edgeweight} and Proposition~\ref{prop:specialrho} that
    \[{z_p'}^{-1} = \sum_{\substack{v \gtrdot p\\v \in M_k}} \frac{y'_p}{y'_v} \sum_{(i,j) \neq (2,2)}  \psi_i^*(x)_v {\psi_j(x')_v}.\]
    Thus subtracting from \eqref{eq:xp-1} gives
    \[x_p^{-1}-{z'_p}^{-1} = \sum_{(i,j) \neq (2,2)}\Bigg(\sum_{\substack{v \gtrdot p\\v \in M_k}}  \psi_i^*(x)_v (\psi_j(x')_p - \frac{y'_p}{y'_v} \psi_j(x')_v) + \sum_{\substack{v \gtrdot p\\v \notin M_k}} \psi_i^*(x)_v \psi_j(x')_p\Bigg). \]
    
    We claim that when $v_t \in M_k$,
    \begin{multline}\sum_{(i,j) \neq (2,2)} \psi_i^*(x)_{v_t} (\psi_j(x')_p - \frac{y'_p}{y'_{v_t}} \psi_j(x')_{v_t}) \\
    = \begin{cases}
        (-1)^{t-1}(\psi_1^*(x)_{v_t} \psi_2(z')_{u_t} - \psi_2^*(x)_{v_t} \psi_1(z')_{u_t}) &\text{if $u_t$ exists,}\\
        0&\text{otherwise.}
        \end{cases}\tag{$**$} \label{eq:psistarpsi}\end{multline}
    In fact, this will imply the result: this is clear if each $v_t$ that exists lies in $M_k$. However, it can occur that $v_2 \notin M_k$ when $p$ lies on the northeast boundary of $M_k$. But then $\psi_1^*(x)_{v_2} = 0$, while $\psi_1(x')_{p} = \psi_1(z')_{u_2}$ by Proposition~\ref{prop:partialshiftdown}. These imply that the extra term $\sum_{i,j \neq (2,2)} \psi_i^*(x)_{v_2} \psi_j(x')_{p}$ matches the right hand side of \eqref{eq:psistarpsi}, as needed.
    
    We now prove the claim. If $t=1$, the generic case is when $v_1$ has a southwest neighbor $r \in M_k$. (See Figure~\ref{fig:uv}.) Then 
    \begin{align*}
        \psi_j(x')_p - \frac{y'_p}{y'_{v_1}} \psi_j(x')_{v_1} &= \psi_j(x')_p - \frac{y'_p}{x'_{v_1}(y'_{r}+y'_p)}\cdot x'_{v_1} (\psi_j(x')_r+\psi_j(x')_p)\\
        &=\psi_j(x')_p \frac{y'_r}{y'_r+y'_p} - \psi_j(x')_r \frac{y'_p}{y'_r+y'_p}\\
        &=\begin{cases}-\psi_1(z')_{u_1} &\text{if }j=1,\\
        \psi_1(z')_{u_1} + \psi_2(z')_{u_1} &\text{if }j=2,
        \end{cases}
    \end{align*}
    by Proposition~\ref{prop:partialshiftup}. This easily implies \eqref{eq:psistarpsi}. A similar argument shows that \eqref{eq:psistarpsi} holds when $t=2$ and $v_2$ has southeast neighbor $s$ that lies in $M_k$. 

    If instead $t=1$ but $v_1$ has no southwest neighbor, then $\psi_2(x')_{v_1}=0$ since $v_1$ either lies in $L$ or on the southwest boundary of $M_k$. In either case, $\psi_1(x')_{v_1} = x'_{v_1}(\psi_1(x')_p+\psi_2(x')_p)$ (for if $v_1$ lies on the southwest edge of $M_k$, then so does $p$, so $\psi_2(x')_p=0$). These imply (using $y'_{v_1} = x'_{v_1}y'_p$) that
    \[\psi_j(x')_p - \frac{y'_p}{y'_{v_1}}\psi_j(x')_{v_1} = \begin{cases}
        -\psi_2(x')_p& \text{if }j=1,\\
        \psi_2(x')_p&\text{if }j=2.
    \end{cases}\]
    Then
    \[\sum_{(i,j) \neq (2,2)} \psi_i^*(x)_{v_1} (\psi_j(x')_p - \frac{y'_p}{y'_{v_1}} \psi_j(x')_{v_1}) = -\psi_2^*(x)_{v_1} \psi_2(x')_{p}.\]
    If $u_1$ exists, then it must lie in $L$, so $\psi_2(z')_{u_1}=0$. Then $\psi_2(x')_p = \psi_1(z')_{u_1}$ by Proposition~\ref{prop:partialshiftdown} if $u_1$ exists, while $\psi_2(x')_p = 0$ if $u_1$ does not exist. In either case, we again obtain \eqref{eq:psistarpsi}. A similar argument applies when $t=2$ and $v_2$ has no southeast neighbor.

    The final case is when $t=2$ and $v_2$ has a southeast neighbor $s \in I_{k+1}$ that does not lie in $M_k$. Then $p$ and $v_2$ lie on the southeast boundary of $M_k$. Therefore $\psi_1(x')_p=\psi_1(x')_{v_2}=0$ and $\psi_2(x')_{v_2} = x'_{v_2}(\psi_2(x')_p+\psi_2(x')_s)$. Using $y'_{v_2} = x'_{v_2}y'_p$, we can compute
    \[\sum_{(i,j) \neq (2,2)} \psi_i^*(x)_{v_2} (\psi_j(x')_p - \frac{y'_p}{y'_{v_2}} \psi_j(x')_{v_2}) = -\psi_1^*(x)_{v_2} \psi_2(x')_{s}.\] But since $\themap_k$ shifts labels outside $M_k$, this equals $-\psi_1^*(x)_{v_2}\psi_2(z')_{u_2}$, which implies \eqref{eq:psistarpsi} (as $\psi_1(z')_{u_2}=0$). This completes the proof of the claim and the result.
\end{proof}

Using a dual argument, we can similarly prove the following result.

\begin{Prop} \label{prop:x-zdual}
    Let $z'' \in \RR_+^{I_k}$, $z = \spicyrho^{-1}(z'')$, and $x = \zeta_k^{-1}(z) \in \RR_+^{I_{k+1}}$.    
    For any nonminimal and nonmaximal $p \in M_k$, let $v_1$ and $v_2$ be the northwest and northeast neighbors of $p$ in $I_{k+1}$, and let $u_1$ and $u_2$ be the southwest and southeast neighbors of $p$ in $I_k$ (if they exist). Then
    \begin{align*}
        {z''_p}^{-1} - x_p^{-1} = \sum_{i=1}^2 (-1)^{i-1}(\psi_1(z'')_{u_i} \psi_2^*(x)_{v_i} - \psi_2(z'')_{u_i}\psi_1^*(x)_{v_i}).
    \end{align*}
    (If $u_i$ or $v_i$ does not exist, then remove the $i$th term from the sum.)
\end{Prop}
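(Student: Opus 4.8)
The plan is to dualize the proof of Proposition~\ref{prop:x-z}, exchanging throughout the roles of the transfer map $\phibar$ with the dual transfer map $\phibar^*$, of $\spicyrho$ with $\spicyrho^{-1}$, and of $\themap_k$ with $\themap_k^{-1}$. Concretely, writing $z=\spicyrho^{-1}(z'')$ and $x=\themap_k^{-1}(z)$, we have $z''=\spicyrho_k(z)$ and $z=\themap_k(x)$, so that here $z''$ and $x$ play the roles that $x$ and $z'$ played in Proposition~\ref{prop:x-z}.

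First I would expand ${z''_p}^{-1}$. Applying Proposition~\ref{prop:specialrho} at $p$ to the labeling $z$ (for which $\spicyrho_k(z)=z''$) gives $\sum_{(i,j)\neq(2,2)}\phibar^*_i(z)_p\,\phibar_j(z'')_p=1$; dividing by $z''_p$ and expanding the downward recursion for $\phibar_j(z'')_p$ (with the usual modification when $p\in L$) expresses ${z''_p}^{-1}$ as a double sum over the pairs $(i,j)\neq(2,2)$ and the elements $u\lessdot p$, with $\phibar^*_i(z)$ evaluated at $p$ and $\phibar_j(z'')$ evaluated at $u$. Next I would expand $x_p^{-1}$: since $\themap_k$ acts as $\tilde\rho_k^{-1}$ on $M_k$, Corollary~\ref{cor:edgeweight} applied on $M_k$ writes $x_p^{-1}$ as a sum over those $u\lessdot p$ lying in $M_k$, and multiplying each summand by $1=\sum_{(i,j)\neq(2,2)}\phibar^*_i(z)_u\,\phibar_j(z'')_u$ (Proposition~\ref{prop:specialrho} at $u$) puts it in a form directly comparable to the previous expression. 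Subtracting the two and collecting the resulting terms according to $u\in\{u_1,u_2\}$ leaves, for each existing $u=u_t$, a quantity assembled from $\phibar^*(z)$ at $p$ and $u_t$ together with the weights $\phibar_j(z'')_{u_t}$, plus a correction term contributed by any element covered by $p$ that lies outside $M_k$.

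The crux is then the dual of the claim~\eqref{eq:psistarpsi}: each such quantity should collapse, via Propositions~\ref{prop:partialshiftdowndual} and \ref{prop:partialshiftupdual} applied on $M_k$, to $\pm\phibar^*_\bullet(x)_{v_t}$, so that after reorganizing, the $t$th contribution becomes exactly
\[(-1)^{t-1}\bigl(\phibar_1(z'')_{u_t}\,\phibar_2^*(x)_{v_t}-\phibar_2(z'')_{u_t}\,\phibar_1^*(x)_{v_t}\bigr)\]
while the correction term vanishes. As in the proof of Proposition~\ref{prop:x-z}, establishing this requires working through the same list of boundary cases --- according to whether $u_t$ exists, whether $v_t$ has a northwest or northeast neighbor lying in $M_k$, and whether $p$, $u_t$, or $v_t$ lies on $L$ or on the southwest, southeast, or northeast edge of $M_k$ --- and in the degenerate cases one invokes the vanishing of the appropriate $\phibar_\bullet$ or $\phibar^*_\bullet$ together with the fact that $\themap_k$ merely shifts the labels lying outside $M_k$. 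I expect this boundary bookkeeping --- transporting each case of the (already intricate) argument of Proposition~\ref{prop:x-z} faithfully through the duality while keeping the signs and the omission of the index pair $(2,2)$ straight --- to be the main obstacle; once the correct dual shifting lemmas are in hand, the algebra is forced.
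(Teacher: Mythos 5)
Your proposal is correct and matches the paper's approach exactly: the paper proves Proposition~\ref{prop:x-zdual} by a pure duality argument, replacing $x$ and $z'$ with $z''$ and $x$, flipping the poset vertically, and switching $\psi^*$ with $\psi$, north with south, $v_i$ with $u_i$, and $y'_p$ with $y_p^{-1}$. Your more detailed sketch --- expanding ${z''_p}^{-1}$ via Proposition~\ref{prop:specialrho} at $p$ with the downward recursion, expanding $x_p^{-1}$ via Corollary~\ref{cor:edgeweight} on $M_k$, and collapsing the resulting differences using the dual partial shifting lemmas Propositions~\ref{prop:partialshiftdowndual} and \ref{prop:partialshiftupdual} --- is a faithful elaboration of the paper's one-line proof.
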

\begin{proof}
    This follows from an analogous argument to Proposition~\ref{prop:x-z} by replacing $x$ and $z'$ with $z''$ and $x$, respectively, as well as flipping the poset vertically (thereby switching the roles of $\psi_i^*$ and $\psi_i$, north and south, $v_i$ and $u_i$, $y'_p$ and $y_p^{-1}$, and so forth).
\end{proof}

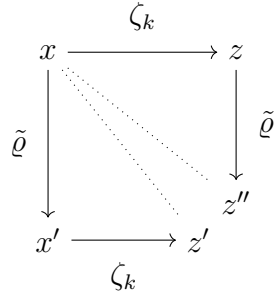
\begin{figure}
    \begin{tikzpicture}
    \node(x) at (0,2.5){$x$};
    \node(x') at (0,0){$x'$};
    \node(z) at (2.5,2.5){$z$};
    \node(z') at (2,0){$z'$};
    \node(z'') at (2.5,.5){$z''$};
    \draw[->] (x) to node[midway,label=left:$\spicyrho$]{} (x');
    \draw[->] (x') to node[midway,label=below:$\themap_k$]{} (z');
    \draw[->] (x) to node[midway,label=above:$\themap_k$]{} (z);
    \draw[->] (z) to node[midway,label=right:$\spicyrho$]{} (z'');
    \draw[dotted](x)--(z') (x)--(z'');
    \end{tikzpicture}
    \caption{A diagram showing the relationship between the labelings in Propositions~\ref{prop:x-z} and \ref{prop:x-zdual}. The labelings on the left are on $I_{k+1}$ while those on the right are on $I_k$.}
    \label{fig:cd}
\end{figure}

Figure~\ref{fig:cd} shows the relationship between the labelings in Propositions~\ref{prop:x-z} and \ref{prop:x-zdual}. Note that the equations  resulting from these two propositions are very similar: in fact, if $z''$ and $z'$ agree everywhere below $p$, then they immediately imply $z''_p = z'_p$ as well. This will serve as the main step of the induction in the proof of Theorem~\ref{thm:equivariance}.

\subsection{Proof of equivariance}

All that remains is to combine these pieces to give the final proof that the maps $\themap_k$ are equivariant with respect to polygonal rowmotion.

\begin{proof}[Proof of Theorem~\ref{thm:equivariance}]
Let $x \in \RR_+^{I_{k+1}}$, $x' = \spicyrho(x)$, $z' = \themap_k(x')$, $z = \themap_k(x)$, and $z'' = \spicyrho(z)$. (See Figure~\ref{fig:cd}.) We will prove that $z'_p = z''_p$ for all $p \in I_k$ by induction on $p$, so assume that $z'$ and $z''$ agree at all labels below $p$.

First suppose $p \in I_k$ lies below $M_k$, and let $q \in I_{k+1}$ be its northeast neighbor, so that $z'_p = x'_q$ by the definition of $\themap_k$. We then have by Propositions~\ref{prop:specialrho} and \ref{prop:outside1} that
\[{z'_p}^{-1} = {x'_q}^{-1} = \psi_1^*(x)_q \cdot \frac{\psi_2(x')_q}{x'_q} = \psi_1^*(z)_p \cdot \frac{\psi_2(z')_p}{z'_p}.\]
The second factor depends only on the labels of $z'$ below $p$, so by the inductive hypothesis, we can replace $z'$ with $z''$ to obtain 
\[\psi_1^*(z)_p \cdot \frac{\psi_2(z')_p}{z'_p} = \psi_1^*(z)_p \cdot \frac{\psi_2(z'')_p}{z''_p} = {z''_p}^{-1}\]
(using Proposition~\ref{prop:specialrho} applied to $z$), so that $z'_p = z''_p$.

Next, suppose $p \in M_k$. If $p=(k,1)$, the minimal element of $M_k$, then by the definition of $\spicyrho(z)$, ${z''_{k1}}^{-1}$ is the total weight of all polygonal chains through $p$ in $I_k$ with respect to $z$. By the proof of Proposition~\ref{prop:intermediate}, this is equal to the total weight of all polygonal chains through $(k+1,1)$ or $(k,2)$ in $I_{k+1}$ with respect to $x$. But by the definition of $\spicyrho(x)$, this equals $(x'_{k+1,1})^{-1} + (x'_{k2})^{-1}$, which equals ${z'_{k1}}^{-1}$ by the definition of $\zeta_k(x')$. It follows that $z'_{k1} = z''_{k1}$. If $p \neq (k,1)$, then comparing Propositions~\ref{prop:x-z} and \ref{prop:x-zdual}, we see that the inductive hypothesis implies $x_p^{-1}-{z'_p}^{-1}= x_p^{-1}-{z''_p}^{-1}$, so again $z'_p = z''_p$.

Finally, suppose $p \in I_k$ lies above $M_k$, and let $q \in I_{k+1}$ be its northwest neighbor, so that $z'_p = x'_q$. As in the first case above, we have by Propositions~\ref{prop:specialrho} and \ref{prop:outside2} that
\begin{align*}
    {z'_p}^{-1} = {x'_q}^{-1} = \psi_2^*(x)_q \cdot \frac{\psi_1(x')_q}{x'_q}
    &= \psi_2^*(z)_p \cdot \frac{\psi_1(z')_p}{z'_p}\\
    &= \psi_2^*(z)_p \cdot \frac{\psi_1(z'')_p}{z''_p} = {z''_p}^{-1},
\end{align*}
completing the proof.
\end{proof}

\section{Future directions} \label{sec:conclusion}

In this section we will discuss some potential directions for future study.

\subsection{Combinatorial properties}
In this work, we have defined a birational map $\themap$ between labelings of $T_{r,s}$ and $R_{r,s}$ and shown that it can be used to relate both plane partitions and rowmotion on these two posets. It would be interesting to study the combinatorial properties of this map, such as its effect on various combinatorial statistics such as described by Hopkins \cite{hopkins2}, or its relation to other combinatorial bijections between plane partitions such as the ones given in \cite{elizalde,hamakerpatriaspechenikwilliams}.

In prior work, Musiker and Roby \cite{musikerroby} give a combinatorial formula for iterated rowmotion on $R_{r,s}$. This formula was studied further by the present authors in \cite{johnsonliu}, where it is related to the Lindstr\"om-Gessel-Viennot lemma, Dodgson condensation, the octahedron recurrence, and birational RSK. Can a similar formula also be derived for iterated rowmotion on $T_{r,s}$ in terms of lattice paths, arborescences, or related combinatorial objects? It may also turn out that, even without explicit formulas, the description of $\themap$ we give here may be enough to prove dynamical properties of rowmotion on $T_{r,s}$ which have been proved for $R_{r,s}$ such as \emph{homomesy phenomena} \cite{propproby}.

\subsection{Other shapes and types of rowmotion}

Some of the results derived above apply to posets shaped like general skew shapes or other saturated subposets of $\ZZ \times \ZZ$. Can the relationship between $T_{r,s}$ and $R_{r,s}$ be generalized to other pairs of posets? It is worth noting that for most posets, rowmotion does not have finite order. However, this does not preclude the possibility of a birational map between two shapes with similar properties with respect to plane partitions and rowmotion.

For the intermediate posets $I_k$ we consider above, we define a special polygonal rowmotion $\spicyrho$ that has finite order. More generally, we could define an analogue of rowmotion where we replace $I_k$ with a general poset $P$ and the polygonal chains with any subset of chains. We might call this general notion \emph{restricted chain rowmotion}. As is the case with classical rowmotion, typically restricted chain rowmotion is poorly behaved. Moreover, as with polygonal chain rowmotion, there may be no order rowmotion analogue nor a natural transfer map for working with it. Our results give the first nontrivial instances of restricted chain rowmotion exhibiting desirable dynamical properties. It would be interesting to study this notion further for other posets, particularly other saturated subposets of $\ZZ \times \ZZ$ (such as \emph{moon polyominoes}, which were studied in related work by the present authors \cite{johnsonliu2}).

In \cite{galashinpylyavskyy}, Galashin and Pylyavskyy generalize the notion of rowmotion to \emph{$R$-systems}, in particular investigating the notions of \emph{singularity confinement} and \emph{algebraic entropy} to quantify their complexity. Their study suggests in particular that rowmotion on certain \emph{octagonal} posets (of which rectangles and trapezoids are special cases) may also exhibit relatively nice dynamical properties, so it would be interesting to study these posets or other related $R$-systems using these ideas. (It is worth noting that the polygonal rowmotion $\spicyrho$ that we define here cannot obviously be expressed in terms of an $R$-system.)

\subsection{Polytopes and combinatorial mutation}

The tropicalizations of the maps $\themap_k$ are piecewise-linear, continuous, volume-preserving maps that preserve the integer lattice. Since they are constructed out of toggles, they are essentially also (up to duality) examples of \emph{combinatorial mutations} as defined in \cite{akhtarcoatesgalkinkasprzyk}. (For other related work, see for instance \cite{ardilabliemsalazar,berensteinzelevinsky,rietschwilliams}.) In our work, we prove that the polygonal chain polytopes of each $I_k$ have the same Ehrhart polynomial using such combinatorial mutations, which, in particular, implies that they exhibit Ehrhart quasi-polynomial \emph{period collapse} if they are not lattice polytopes. (See \cite{haasemcallister,mcallisterwoods} for some discussion of this phenomenon.) Are there other nice examples of polytopes arising in combinatorics that can be shown to be Ehrhart equivalent or exhibit period collapse in a similar fashion? In particular, it would be interesting if such a map between polytopes could be constructed as the tropicalization of a birational map as we have done here.

\section{Acknowledgments}

The authors would like to thank Darij Grinberg, Sam Hopkins, Tom Roby, and Sylvester Zhang for interesting conversations.

\bibliographystyle{acm}
\bibliography{citations}

\end{document}